\documentclass[11pt, reqno]{amsart}
\usepackage{amsmath, amsthm, amscd, amsfonts, amssymb, graphicx, xcolor}
\usepackage[bookmarksnumbered, colorlinks, plainpages]{hyperref}
\usepackage{appendix}
\usepackage{graphicx}
\graphicspath{ {./draft/} } 
\usepackage{caption}

\textheight 22.5truecm \textwidth 14.5truecm
\setlength{\oddsidemargin}{0.35in}\setlength{\evensidemargin}{0.35in}

\setlength{\topmargin}{-.5cm}

\newtheorem{theorem}{Theorem}[section]
\newtheorem{lemma}[theorem]{Lemma}
\newtheorem{proposition}[theorem]{Proposition}
\newtheorem{corollary}[theorem]{Corollary}
\theoremstyle{definition}

\theoremstyle{remark}
\newtheorem{remark}[theorem]{Remark}

\DeclareMathOperator*{\diam}{\textup{diam}}

\numberwithin{equation}{section}

\textheight 22.5truecm \textwidth 16.5truecm
\setlength{\oddsidemargin}{0.1in}\setlength{\evensidemargin}{0.25in}
\setlength{\topmargin}{-.5cm}

\def\Xint#1{\mathchoice
	{\XXint\displaystyle\textstyle{#1}}%
	{\XXint\textstyle\scriptstyle{#1}}%
	{\XXint\scriptstyle\scriptscriptstyle{#1}}%
	{\XXint\scriptscriptstyle\scriptscriptstyle{#1}}%
	\!\int}
\def\XXint#1#2#3{{\setbox0=\hbox{$#1{#2#3}{\int}$ }
		\vcenter{\hbox{$#2#3$ }}\kern-.6\wd0}}

\def\dashint{\Xint-}        
\newcommand{\bs}{\symbol{92}}
\newcommand{\D}{ \left(\!\!\! 
	\begin{array}{c}
		N^T \nabla_{\theta} \\
		\partial_t
	\end{array}
	\!\!\!\right)}
\DeclareMathOperator*{\osc}{osc}
\DeclareMathOperator*{\Supp}{\textup{Supp}}
\DeclareMathOperator*{\dist}{\textup{dist}}

\newcommand{\Le}{\mathcal{L}_\varepsilon}
\usepackage{graphicx}

\begin{document}
	\setcounter{page}{1}

	\title[]{Asymptotic analysis of boundary layers\\ for Stokes systems in periodic homogenization}
	
	\author[]{Moustapha Agne$^1$ }
	
	\address{$^1$ AGM--Department of Mathematics, 2 av. Adolphe Chauvin,
		95302 CERGY-PONTOISE, CY Cergy Paris University, Cergy-Pontoise, France.}
	\email{\textcolor[rgb]{0.00,0.00,0.84}{moustapha.agne@cyu.fr}}

	
	\keywords{ Stokes systems, Boundary layers, Homogenization,  Oscillating boundary}

	\begin{abstract}
		We investigate the asymptotics of boundary layers in periodic homogenization. The analysis is focused on a  Stokes system with periodic coefficients and periodic Dirichlet data posed in the half-space $\{y\in \mathbb{R}^d: y\cdot n -s>0\}$. In particular, we establish the convergence of the velocity as $y\cdot n \rightarrow \infty$. We obtain this convergence for arbitrary normals $n\in \mathbb{S}^{d-1}$\!. Moreover, we build an asymptotic expansion of Poisson's kernel for the periodically oscillating Stokes operator in the half-space. 	The presence of the pressure and the incompressibility condition impose certain innovations. In particular, we provide a framework for the analysis of the boundary layers in homogenization that relies only on physical space techniques and not on techniques that rely on the quasiperiodic structure of the problem.
	\end{abstract}

	\maketitle
	
	\section{Introduction}
	
	\noindent In this paper we study the asymptotics of $u^s=u^s(y)\in\mathbb{R}^d$, $y\in\mathbb{R}^d,$ where $(u^s,p^s)$ is a solution to the Stokes system
	\begin{equation}\label{hsStokes}
		\left\{
		\begin{array}{rcll}
			-\nabla\cdot A (y) \nabla u^s +  \nabla p^s&=& 0 &   \mbox{ in } \{ y\cdot n-s>0 \},\\
			\nabla\cdot u^s&=& 0, \; \\
			u^s&=& g (y)  & \mbox{ on }  \{y\cdot n-s=0\}
		\end{array}
		\right. 
	\end{equation}
	set in the half-space $\{ y\cdot n-s>0 \}$  and with periodically oscillating coefficients $A=A(y)$ and boundary data $g=g(y)$.\\

	\noindent 	Homogenization can be defined as a process of averaging some micro-scale features  to obtain proper-\\ties at a  level that represent the overall behavior of an object of study. It is used  throughout the sciences and in particular in physics and mechanics with the study of composite media. In  mathematics and, in particular, in the theory of Partial Differential Equations (PDE) the topic of homogenization comes in the form  of equations as in (\ref{basic_Stokes}). In these equations or PDE the parameter $\varepsilon$ usually represents a small number accounting for a micro-scale.
	
	\subsection{Motivations}
	\noindent The solution to problem (\ref{hsStokes}) is a half-space boundary layer corrector. Indeed, it is used for refining asymptotic expansions in homogenization theory for problems involving the presence of a boundary. Let us describe briefly how this application works. A well-known method of analyzing the solution  	$(u^\varepsilon,p^\varepsilon)$ to the problem
		\begin{equation}\label{basic_Stokes}
		\left\{
		\begin{array}{rcll}
			-\nabla\cdot A (x/\varepsilon ) \nabla u^\varepsilon +  \nabla p^\varepsilon&=& f\;  & \mbox{ in } \{ x\cdot n-s>0 \},\\
			\nabla\cdot u^\varepsilon&=& h,\; &\\
			u^\varepsilon&=&g   &\mbox{ on }  \{x\cdot n-s=0\}
		\end{array}
		\right. 
	\end{equation}
	 with highly oscillating coefficients but non-oscillating boundary data	is by introducing a multi-scale expansion  
	of the form 
	\begin{align*}
		u^\varepsilon(x)&=u^0(x,x/\varepsilon)+ \varepsilon
		u^1(x,x/\varepsilon)+...\\
		p^\varepsilon(x)&=  
		p^0(x,x/\varepsilon)+\varepsilon p^1(x,x/\varepsilon)+ ...
	\end{align*}
	where $u^k=u^k(x,y), p^k=p^k(x,y)$, $k=0,1$, are periodic in the variable $y$ and the pair $(u^0,p^0)$ satisfies the so-called homogenized problem
	\begin{equation*}
		\left\{
		\begin{array}{rcll}
			-\nabla\cdot A^0\nabla u^0 +  \nabla p^0&=& f\;  &\mbox{ in } \{ x\cdot n-s>0 \},\\
			\nabla\cdot u^0&=& h,&\\
			u^0&=&g  & \mbox{ on }  \{x\cdot n-s=0\}.
		\end{array}
		\right. 
	\end{equation*}
	See subsection \ref{ansatz} for more details on the multi-scale expansion.\\
	
	\noindent 	Such an Ansatz provides a satisfactory approximation of the solution $u^\varepsilon$ in the interior of the domain as it  captures the periodic micro-structure of the domain. Nonetheless, in general, the boundary breaks the periodic structure : the periodicity of the term $u^1=u^1(x,x/\varepsilon)$ in the variable $x/\varepsilon$  is incompatible with the Dirichlet conditions
	\begin{align*}
		u^0(x,x/\varepsilon)+ \varepsilon
		u^1(x,x/\varepsilon)+...=g\quad  \mbox{and}\quad u^0=g \quad \mbox{ on } \{x\cdot n-s=0\}.
	\end{align*}
	Hence we add in the Ansatz a boundary layer term $u_{bl}^\varepsilon$,
	\begin{align*}
		u^\varepsilon(x)&=u^0(x,x/\varepsilon)+ \varepsilon
		u^1(x,x/\varepsilon)+  \varepsilon u_{bl}^\varepsilon + ...,
	\end{align*}
	that acts as to compensate for this incompatibility. In fact, the boundary layer term $u_{bl}^\varepsilon$ is chosen to satisfy $u_{bl}^\varepsilon + u^1=0$ on the boundary $ \{x\cdot n-s=0\}$.  In other words the boundary layer corrector is introduced to account for the effects of the boundary on the Ansatz at order $\varepsilon$. More precisely, the quantity $u^1(x,x/\varepsilon)$ is taken to equal $	\chi (x/\varepsilon)\cdot  \nabla u^0(x)$ which is highly oscillating and the boundary layer term 	$u_{bl}^\varepsilon$ satifies 
	\begin{equation*}
		\left\{
		\begin{array}{rcll}
			-\nabla\cdot A (x/\varepsilon ) \nabla u_{bl}^\varepsilon +  \nabla p_{bl}^\varepsilon&=& 0\; & \mbox{ in } \{ x\cdot n-s>0 \},\\
			\nabla\cdot u_{bl}^\varepsilon&=& 0,\; &\\
			u_{bl}^\varepsilon&=&-\chi (x/\varepsilon)\cdot  \nabla u^0(x)  & \mbox{ on }  \{x\cdot n-s=0\}.
		\end{array}
		\right. 
	\end{equation*} 
	As a consequence, the addition of the boundary layer term to the expansion  leads to refined estimates on the error $u^\varepsilon- u^0$ as shown for instance in \cite{AA,homogen_polygon} for elliptic equations and in \cite{Slip_cond} for Stokes in the slightly different context of oscillating boundaries.
	
	The study in the present article focuses on the geometry of the half-space. It is a simple case geometry but it retains the essential difficulties and it is also a basic element in the case of most general smooth curved domain where the half-space case is used as a building block (see \cite{AKMP,homogen_bl} and \textsc{Figure} \ref{domain_approximate}).
	
	Among the interests of problem (\ref{basic_Stokes}) is that it models  some physical and mechanical objects with a composite micro-structure. It is also used in the  modeling of flows in a domain with a rough boundary as in \cite{Slip_cond, KLS}. Indeed, let us outline the connection between the flow over a rough boundary and system (\ref{basic_Stokes}). Via a change of variables one flattens the boundary which transfers the oscillations from the boundary into the coefficients of the Stokes operator. Moreover, the system (\ref{basic_Stokes}) is related to the first-order expansion of eigenvalue problems for the Stokes operator with oscillating coefficients as in the elliptic case (\cite{Vogelius,Prange_eigen,hom_eigen}).

	\subsection{State of the art}
	There is an important body of works dealing with the topic of boundary layers in homogenization and related topics such as the oscillating boundary problem as well as the oscillating data problem (\cite{ aleyksan,AA,fully_nonlin,homogen_bl,Slip_cond,Prange,num_bound,regul_hmgnized_data,hom_finite_dom}). Let us examine this literature with the angle of the equations and geometry.	\subsubsection{A variety of equations}	The  oscillating boundary value problems have drawn the attention of several authors. The common feature is the fact that the boundary data is oscillating. In particular, elliptic systems with an oscillating boundary condition  were extensively investigated by many authors \cite{aleyksan,AA,Feldman,homogen_bl,homogen_polygon,KLS,Prange,num_bound,
	hom_finite_dom}. Moreover, Dirichlet \cite{aleyksan,AKMP} and  Neumann boundary conditions were studied in \cite{regul_hmgnized_data}. Fully nonlinear equations with oscillating Dirichlet boundary conditions were also studied in \cite{Feldman,fully_nonlin}. Besides, the Stokes system  and the stationary Navier-Stokes equations were examined in the context of rough  boundaries \cite{Slip_cond,HigPrZhuge}.
	\subsubsection{The geometry of the domain}	
	The boundary of the domain plays an important role in many PDE problems. This is particularly the case when the problem presents oscillations phenomena  near the boundary such as the oscillating boundary data problems.
	A fundamental feature of the problem that is present in the elliptic as well as in the Stokes case is the way the boundary interacts with the periodic structure. Locally, this interaction is the way the tangent hyperplane intersects the lattice. The number of hyperplanes needed to determine or approximate the domain may be one or an arbitrary large number.

 \noindent(1) \textbf{Half-space :} This corresponds to the case where the domain is determined by a unique hyperplane. This hyperplane being defined by its unit normal vector $n\in \mathbb{S}^{d-1}$  one can distinguish two situations: $n $ is rational and $n$ is irrational. When $n$ is rational i.e. $n\in \mathbb{R}\mathbb{Z}^{d}$ one is in fact in a periodic setting. Indeed, in this case one has $n=\lambda v, \lambda\in\mathbb{R}, v\in\mathbb{Z}^d$, which is a definite intersection of the line $\{\lambda n, \lambda\in\mathbb{R} \}$ with the lattice $\mathbb{Z}^d$. Then up to a normalization, considering $|v|$ as the reference length, and by the periodicity of $A$ and $g$ we get $A(y+hv)=A(y), \ g(y+hv)=g(v)$, for all $h\in\mathbb{Z}$. This is the periodicity in the normal direction which, by the orthogonality of the periodic structure, yields the periodicity in the tangential direction. In the case  $n$ is irrational the line $\{\lambda n, \lambda\in\mathbb{R} \}$ has no intersection with the lattice $\mathbb{Z}^d$. Yet there are points of the lattice that are arbitrarily close to the line. However, there are irrational $n$ for which  the distance  $\dist(h, \{\lambda n, \lambda\in\mathbb{R} \} ), h\in \mathbb{Z}^d,$ is, in some sense, bounded below by a positive number. More precisely, such irrational direction $n$ satisfy a condition of the type : there exist $A(n),\tau>0$ such that
		\begin{align}\label{Dioph}
			| P_{n^\perp} (h)| \geq \frac{A(n)}{|h|^{d+\tau}},\quad  \mbox{ for all } h\in \mathbb{Z}^d\bs \{0\},
		\end{align}
	 where $P_{n^\perp}$ denotes the orthogonal projection onto the hyperplane $n^\perp =\{y\in\mathbb{R}^d:y\cdot n= s\}.$ 
		This so-called Diophantine condition is a non-resonance condition. It implies a quantitative ergodic property  of the half-space boundary layer which yields a convergence to a boundary layer tail faster than any polynomial rate of convergence  (see \cite{homogen_polygon,Slip_cond}).

		The case of general irrational normals  $n$ for elliptic systems is treated in the article \cite{Prange}. It is shown in that article that for an arbitrary $n\notin \mathbb{R}\mathbb{Z}^{d}$ the convergence of the boundary layer can be arbitrarily slow.
		
		Studies concerning the half-space domain are found in \cite{AKMP,fully_nonlin,homogen_bl}. Some authors take the polygon to be the intersection of half-spaces and thus consider the half-space problem as a part in their analysis of  equations posed in a polygonal domain. Others approximate the smooth and uniformly convex domain by a polygon and thus approximate the boundary by a sequence of hyperplanes (see \textsc{Figure} \ref{domain_approximate}).

	\noindent(2) \textbf{Polygons :}  A polygon can be determined by a finite sequence of hyperplanes. Polygons make a part of the family of bounded domains which are extensively considered in the literature. The article \cite{AA} treats the problem in a rectangular domain which is a special case of a polygon. A study considering more general polygon is found in \cite{aleyksan,homogen_polygon}.
	 For  polygons, the non-smoothness at the vertices is  a source of difficulty.
	
\noindent(3)	\textbf{Smooth domain :}  For curved domains $\Omega$ we require the domain  to have a certain regularity e.g. to be smooth and uniformly convex (\cite{regul_hmgnized_data,AKMP,homogen_bl}). The assumption of smoothness of the boundary  implies that the unit normal $n(x),  \ x\in \partial \Omega,$ is smooth in $x$, which plays a part in the  regularity of the homogenized boundary data in \cite{AKMP}. This is important because it allows applications of boundary regularity estimates. The uniform convexity allows to get a control on the constant $A(n)$ in the Diophantine condition (\ref{Dioph}) for instance in \cite{AKMP} we have that
 $A^{-1}(n(x)), \, x\in\partial\Omega$, as a function of $x\in\partial\Omega$, belongs to the weak Lebesgue space
 $L^{d-1,\infty}(\partial\Omega)$ .

		\begin{figure}
		\centering
		\includegraphics[scale=0.6]{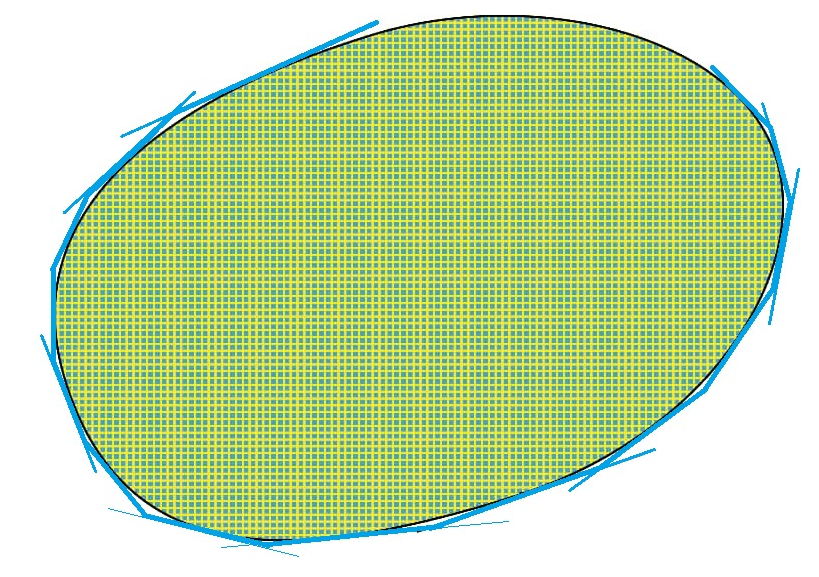}
		\caption{Approximating a smooth domain by a set of hyperplanes.}
		\label{domain_approximate}
	\end{figure}
	
\noindent Let us also mention the articles \cite{AKMP, homogen_bl, num_bound, hom_finite_dom} that deal with the homogenization of an elliptic system in bounded smooth domains. In these articles, the authors use a technique by which the boundary is broken into pieces and each piece is  approximated by a hyperplane, thus, making the half-space problem an entire step in their study. The article \cite{AKMP}
treats important issues  such as the thickness of the boundary layer. Knowing this thickness allows to estimate how far from the boundary one can start to use correctly the interior Ansatz without the boundary layer corrector.

One of the goals of the study of the asymptotics of the boundary layer in the half-space is to find the term that must be added to the interior Ansatz so that it remains valid next to  the boundary.  	Moreover, the question of the convergence far from the boundary is related to the problem of homogenization of the oscillating boundary data. In fact,  the homogenized boundary data is constructed from the boundary layer tail as shown in \cite{regul_hmgnized_data}.  In the present article, one intends to adapt the methods of \cite{Prange} to the Stokes system. 
	
	\subsection{Difficulties specific to the Stokes system}
	\noindent The questions of existence and uniqueness of a solution to system 
		\begin{equation*}
		\left\{
		\begin{array}{rcll}
			-\nabla\cdot A (y ) \nabla u^s &=& 0  & \mbox{ in } \{ y\cdot n>s \},\\
			u^s&=&g  &\mbox{ on }  \{y\cdot n=s\}.
		\end{array}
		\right. 
	\end{equation*}
	have been handled in many articles by using the quasiperiodicity of the data in the hyperplane $\{ y\cdot n=s\}$ (see \cite{homogen_bl, homogen_polygon,regul_hmgnized_data}),
	\newcommand{\bigB}{\mathcal{B}}
	\begin{align*}
		g( M(z',z_d) )=  V_0( N z',z_d), \quad	  M^TA( M (z',z_d))M=  \mathcal{B} (N z',z_d), \quad
	\end{align*}
	where $ V_0=V_0(\theta, t), \,\mathcal{B}=\bigB(\theta, t)$ are periodic in $\theta$,
		and by turning the problem into one posed in the cylinder $\mathbb{T}^d\times \mathbb{R}_+$
	\begin{equation*}
		\left\{
		\begin{array}{rcll}
			\vspace{.1in}
			-\D\cdot \bigB(\theta, t) \D V &=& 0, &(\theta,t) \mbox{ in }\mathbb{T}^d\times \mathbb{R}_+,\\
			V&=& V_0 (\theta,0),   &\theta\mbox{ in }\mathbb{T}^d.
		\end{array}
		\right. 
	\end{equation*}
	 There is a cost in the move from the half-space to the higher dimension cylinder which is the loss of the ellipticity of the differential operator. Indeed, through this move we get a degenerate gradient. This degeneracy is handled by a regularization method. For instance, the existence of a solution is obtained by some authors through this regularization argument. It consists in using the compactness of the sequence $(V_\delta)_{\delta>0}$ of solutions e.g. to  elliptic systems
	\begin{equation*}
		\left\{
		\begin{array}{rcll}
			\vspace{.1in}
			-\delta \Delta_\theta V_\delta	-\D\cdot \bigB(\theta, t) \D V_\delta &=& 0,  &(\theta,t) \mbox{ in }\mathbb{T}^d\times \mathbb{R}_+,\\
			V_\delta&=& V_0 (\theta,0),   &\theta\mbox{ in }\mathbb{T}^d.
		\end{array}
		\right. 
	\end{equation*}
	 The article \cite{Slip_cond} follows the same lines as for the well-posedness issue yet it studies a Stokes system. Nonetheless the Stokes problem treated therein,  posed in a half-space, is divergence-free and has a homogeneous boundary data
	 	\begin{equation*}
	 	\left\{
	 	\begin{array}{rcll}
	 		-\nabla\cdot A (y ) \nabla u +  \nabla p&=& f & \mbox{ in } \{ y\cdot n-s>0 \},\\
	 		\nabla\cdot u&=&0 ,&\\
	 		u&=&0   &\mbox{ on }  \{y\cdot n-s=0\}.
	 	\end{array}
	 	\right. 
	 \end{equation*}
	  Therefore the operation that turns this problem posed in a half-space into one posed in the cylinder preserve the solenoidal aspect of the problem and the homogenous Dirichlet condition
	\begin{equation}\label{big_Stokes}
		\left\{
		\begin{array}{rcll}
			\vspace{.1in}
			-\D\cdot \bigB(\theta, t) \D V +\;  M\D R&=& G , &(\theta,t) \mbox{ in }\mathbb{T}^d\times \mathbb{R}_+,\\
			\vspace{.1in}
			\D\cdot\ M^T V&=& 0 ,\\
			V&=& 0,   &\theta\mbox{ in }\mathbb{T}^d.
		\end{array}
		\right. 
	\end{equation} 
	In this case one successfully carries out energy estimates that lead to uniqueness for instance. In contrast, in the case of a Stokes problem with a non-homogenous boundary condition such as g in (\ref{hsStokes}), lifting the boundary data leads to technical difficulties. Indeed, there is no obvious divergence-free lifting of the boundary data. 
	The switching  to the higher dimension cylinder  $\mathbb{T}^d\times \mathbb{R}_+$ is also used in the article \cite{AKMP} for comparing two boundary layers $v_1$ and $v_2$ associated respectively to normal vectors $n_1$ and $n_2$. The authors study the difference $W=V_1-V_2$ where  $V_1, V_2$ are  defined on the same domain $\mathbb{T}^d\times \mathbb{R}_+$ and are respective representations of $u_1, u_2.$ 
	
\noindent	Moreover, there are  formulae given in \cite{regul_hmgnized_data} (adapted in the present orientation of the normal $n$) :
	when  a solution $u^s$   to problem (\ref{hsStokes})
	is in the form
		\begin{align}
			u^s(y)&=V(y-(y\cdot n- s)n,y\cdot n- s)\label{def_u^s},
		\end{align} 
		 then $V=V(\theta,t)$ has satisfy
 	\begin{equation}\label{prob_V}
		\left\{
		\begin{array}{rcll}
			\vspace{.1in}
			-\D\cdot \bigB(\theta, t) \D V +\;  M\D R&=& 0, &(\theta,t) \mbox{ in }\mathbb{R}^d\times \mathbb{R}_+,\\
			\vspace{.1in}
			\D\cdot M^T V&=& 0 ,\\
			V&=& g (\theta),   &\theta\mbox{ in }\mathbb{R}^d.
		\end{array}       
		\right. 
	\end{equation} 
	Conversely, the identity (\ref{def_u^s}) yields
		\begin{align}
		V(\theta,t)&=u^{\theta\cdot n}(\theta+t n),\quad (\theta,t)\in \mathbb{R}^d\times \mathbb{R}_+.\label{def_Vtheta,t}
	\end{align} 
		By these formulae one can readily switch from the system (\ref{prob_V}) 
	to the  problem (\ref{hsStokes}) set in the physical space.
	In particular, one can perform computations directly on $u^s$ to derive   estimates on $V$.
	
	\subsection{Statement of the main Result}
	\noindent	In this article we aim at showing the following :
	\begin{theorem}\label{intro_ThConverge}
	There exists a unique solution $u^s\in L^\infty(\mathbb{H}_n(s) )$ to problem \textup{(\ref{hsStokes})} such that 
  \begin{equation}\label{class}
      \sup_{y\in\mathbb{H}_n(s)} \;  (y\cdot n - s) |\nabla u^s (y)| <\infty.
  \end{equation}
Moreover, as  $y\cdot n \rightarrow \infty$	this solution  converges to a constant vector fields $U^{s,\infty}(n)$ \textup{(}see formula \textup{
(\ref{formul_bltail})} in  \textup{Section  \ref{section_converge}}\textup{)} and  this limit does not depend on $s$ when $n$ is irrational.
	\end{theorem}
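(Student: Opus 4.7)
The plan is to split the argument into three parts: uniqueness, existence, and the analysis of the limit at infinity; the Poisson-kernel expansion announced in the abstract will be the main tool for the third part.

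\textbf{Uniqueness.} Let $u_1,u_2$ be two solutions in the class (\ref{class}). Their difference $w=u_1-u_2$ is bounded, solves the homogeneous oscillating Stokes system with zero boundary trace, and inherits the decay $(y\cdot n-s)|\nabla w|\in L^\infty$. I would prove $w\equiv 0$ by a Liouville-type statement for the oscillating Stokes operator in the half-space: after transporting to the cylinder via (\ref{def_Vtheta,t}), a Caccioppoli inequality on slabs $\{T<t<2T\}$ combined with the gradient bound shows the energy on such slabs decays in $T$. Iterating via a Saint-Venant / Payne--Weinberger scheme gives exponential decay of the $L^2$ energy, and the vanishing trace at $t=0$ then forces the associated $W\equiv 0$.

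\textbf{Existence.} Build $u^s$ through the cylinder problem (\ref{prob_V}). Since the boundary data is non-homogeneous and no canonical solenoidal lifting is available, I would regularize by adding $-\delta\Delta_\theta V_\delta$, which restores uniform ellipticity. For each $\delta>0$ construct a weak solution by (i) choosing a non-solenoidal lifting $\widetilde{G}$ of $V_0$ in the cylinder, (ii) correcting the resulting divergence error through a Bogovski\u{\i}-type operator adapted to $\mathbb{T}^d\times\mathbb{R}_+$, and (iii) applying Lax--Milgram on the divergence-free subspace. Establish energy bounds uniform in $\delta$ by testing with $V_\delta$ minus the lifting and controlling the pressure through an inf--sup condition. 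Passing to the limit $\delta\to 0$ and translating back via (\ref{def_u^s}) yields $u^s$ in the announced class. This is the step I expect to be the main obstacle: there is no natural divergence-free lifting of the oscillating boundary data, and the pressure produced by the lifting must be controlled uniformly in $\delta$ despite the anisotropy of the degenerate gradient $\D$.

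\textbf{Limit at infinity.} Here I would invoke the Poisson-kernel expansion built elsewhere in the paper. For $y$ with $y\cdot n-s$ large, write
\[
u^s(y)=\int_{\{z\cdot n=s\}}P(y,z)\,g(z)\,d\sigma(z)
\]
and decompose $P$ into a leading-order part tied to the homogenized Stokes operator plus a remainder decaying in $y\cdot n-s$. Using the periodicity of $g$, the leading contribution averages to a constant, producing the desired $U^{s,\infty}(n)$, while the remainder vanishes as $y\cdot n\to\infty$ and provides the formula (\ref{formul_bltail}). For irrational $n$, apply Weyl equidistribution: the orbit $\{(\theta+tn)\bmod \mathbb{Z}^d:t\ge 0\}$ is dense and equidistributes in $\mathbb{T}^d$, so by (\ref{def_Vtheta,t}) the limit $\lim_{t\to\infty} V(\theta,t)$ cannot depend on $\theta$, hence $U^{s,\infty}(n)$ is independent of $s$.
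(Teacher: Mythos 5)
The convergence step is essentially the paper's approach: Poisson-kernel representation, asymptotic expansion of the kernel, leading term averaged via the ergodic lemma. For uniqueness and existence, however, you opt precisely for the cylinder lift $\D$ and the $-\delta\Delta_\theta$ regularization that the paper's introduction singles out as the technique it wants to \emph{avoid} for Stokes with non-homogeneous Dirichlet data (no natural solenoidal lifting, degenerate gradient). The paper instead proves uniqueness entirely in physical space by a duality argument: test against $f\in C^\infty_c(\mathbb{H}_n(s))$, solve the adjoint Stokes system for $(V,S)$, integrate by parts, and use the Green-function decay bounds of Proposition~\ref{Propo_estim_Green_y} together with (\ref{class}) to kill the boundary terms at infinity. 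Your Saint-Venant/Caccioppoli iteration in the cylinder could plausibly be made to work, but it is a genuinely different route, and it inherits the lifting/pressure difficulties you yourself flag as the main obstacle.

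The real gap is in the $s$-independence argument. Weyl equidistribution of the line $\{(\theta+tn)\bmod\mathbb{Z}^d\}$ does not, by itself, force $\lim_{t\to\infty}V(\theta,t)$ to be independent of $\theta$: you are taking a limit in $t$ at fixed $\theta$, and equidistribution of the orbit in $\mathbb{T}^d$ gives no control on how the (a priori $\theta$-dependent) pointwise limit behaves across different $\theta$. What is actually needed, and what the paper proves, is a two-step argument: first, by $\mathbb{Z}^d$-periodicity of $A$ and $g$, the translate $u(\cdot - h)$ solves the half-space problem with $s=h\cdot n$ and has the same tail, so $U^{s,\infty}(n)=U^{0,\infty}(n)$ on the dense subset $\{h\cdot n : h\in\mathbb{Z}^d\}\subset\mathbb{R}$ when $n$ is irrational; second, $s\mapsto U^{s,\infty}(n)$ is H\"older continuous (the paper estimates $|U^{s,\infty}-U^{t,\infty}|\le C|s-t|^\eta$ term by term in the Poisson-kernel expansion, using the H\"older regularity of $A$, $g$, the correctors, and the boundary layers). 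Without this continuity in $s$ your argument does not close: density of the orbit alone, without a modulus of continuity for the tail, cannot upgrade constancy on a dense set to constancy everywhere.
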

	
	\subsection{Strategy of the proof}
	
	\noindent This article aims at establishing the proof of Theorem \ref{intro_ThConverge}. It focuses on the convergence to a boundary layer tail. Besides, a proof of the uniqueness of the solution $u^s$ under hypothesis (\ref{class}) is presented in the Appendix. 	Our method uses an integral representation of the solution to problem (\ref{hsStokes}). To put it in more details, we use a representation via the Poisson kernel associated to the Stokes operator and the domain $ \{ y\cdot n>0 \}$. In fact, for most of the paper we will consider the case $s=0$ in problem (\ref{hsStokes}).  One of the reasons for such a choice is that one can make re-scalings without changing the domain. Most of the  results and estimates are uniform in $s$ and continue to hold in the case of an arbitrary $s\in\mathbb{R}$ up to a translation in the formulae. A pivotal point of our method is obtaining  an asymptotic expansion  of the Poisson kernel which subsequently leads to the asymptotic behavior of the boundary layer. We use  uniform estimates to control this expansion.
	These are important tools for the study of  problems involving oscillating coefficients. In this context they correspond to  estimates of $u^\varepsilon, p^\varepsilon$ uniformly in $\varepsilon$. For instance, in the elliptic case, Avellaneda and Lin established uniform Lipschitz estimates. In the case of a Stokes system, the article \cite{GZ} presents a uniform Lipschitz estimate for the velocity $u^\varepsilon$ and a uniform oscillation estimate for the pressure, see also \cite{hom_stokes, optimBoundEstimate, HigPrZhuge}.
	\subsection{Organization of the paper}
	\noindent	This article is organized as follows. It starts with an introduction that gives an overview of the question. In Section \ref{section_prepa} we recall the construction of the interior correctors and give  an Ansatz for the heterogeneous quantities $(u^\varepsilon, \, p^\varepsilon)$ satisfying a Stokes system. Estimates on Green's functions are also mentioned in this section. Then we move on to Section \ref{section_hom}, where a homogenization result is established for a Stokes system posed in the half-space. This homogenization result is applied in Section \ref{section_hom_G}, where we study the homogenization of the Green function. In Section \ref{section_expans} we get  asymptotic expansions of Green's functions and of Poisson's kernel.
	Section \ref{section_converge} is where we give the proof of the convergence towards a boundary layer tail.
	\subsection{Notation and main assumptions}

	\noindent In this paper we use the small case letter $d$ to denote the dimension of the domain and we shall consider $d\geq3$ unless stated otherwise.	For $n\in\mathbb{S}^{d-1}$ and $s\in \mathbb{R}$, we use  $\mathbb{H}_n(s)$ to denote the set $\{ y\in\mathbb{R}^d: y\cdot n 
	-s>0 \}$. Further, a disk in $\mathbb{H}_n(s)$ is denoted by $D_r(y)=B_r(y)\cap \mathbb{H}_n(s)$ where $B_r(y)$ is the ball in $\mathbb{R}^d$ with center $y$ and radius $r$ and $\Gamma_r(y):=B_r(y)\cap \partial\mathbb{H}_n(s)$.	
		We occasionally use  $Y$ to denote the fundamental  cell : $Y=\mathbb{T}^d\simeq(0,1]^d.$\\
	{\bf The case $s=0$} : the half-space  $\mathbb{H}_n(0)$ is simply written  $\mathbb{H}_n$ and in this case the solutions $(u^s, p^s)$ to (\ref{hsStokes}) are simply denoted by $(u, p)$. Our analysis of problem (\ref{hsStokes}) will mostly  be focused on the case $s=0$ and occasionally when referring to problem (\ref{hsStokes}) we may implicitely consider the case $s=0$.  	The vector $e_j$ is the $j^{th}$ element of the canonical basis of $\mathbb{R}^d$.
	\noindent We frequently use $M$ to denote a rotation sending $e_d=(0, ...,0,1)$ to $n$	and we shall employ $N$ to denote the matrix made of the $d-1$ first columns of the matrix $M$.
	The oscillation of a function $f$ over a domain $D\subset \mathbb{R}^d$ will be denoted 
	$\underset{D}{\textup{osc}} f$ and it is defined by $$\osc_D f := \sup_{w,z \in D} |f(w) -f(z)|.$$
	
	\noindent For $A(y)=(A^{\alpha\beta}_{ij}(y)),$ we set $\mathcal{L}_\varepsilon := -\nabla\cdot A (x/\varepsilon) \nabla\cdot  $  to be a divergence form differential operator that  reads
	$$( \mathcal{L}_\varepsilon )^i=-\partial_{x_{\alpha}}\left( A^{\alpha\beta}_{ij}(x/\varepsilon) \partial_{x_{\beta}}  \right).$$
	We define the matrix $A^*$ to be the adjoint matrix of $A$ i.e. $(A^*)^{\alpha\beta}_{ij}=A^{\beta\alpha}_{ji}$ for all $\alpha,\beta\in\{ 1,..., d\} $ and for all $i,j\in\{ 1,..., d\} $ and the adjoint operator is  $\mathcal{L}^*_\varepsilon := -\nabla\cdot A^* (x/\varepsilon) \nabla $. We shall frequently use the H\"older semi-norm $[A]_{C^{0,\eta}(Y)}$, $\eta>0$,	of $A$ which we write  $[A]_{C^{0,\eta}}$ in short.
	The term heterogeneous refers to quantities $u^\varepsilon, p^\varepsilon, G^\varepsilon, P^\varepsilon ...$ that depend on $\varepsilon$.\\

	\noindent We assume that the  matrix of coefficients is real and satisfies the following conditions:
	\begin{enumerate}
		\item ellipticity: there exists $\mu>0,$ such that 
		\begin{align}
			\mu |w|^2 \leq  A^{\alpha\beta}_{ij} (y)w^{\alpha}_{i} w^{\beta}_{j} \leq  \frac{1}{\mu }|w|^2 \quad \mbox{  for all } w\in \mathbb{R}^{d\times d},  \mbox{  and for all } y\in \mathbb{R}^d;\label{ellipticity}
		\end{align}
		\item  periodicity:
		\begin{align}
			A (y+h)=A(y)\quad \mbox{  for all } y\in \mathbb{R}^{d} \mbox{  and for all } h\in \mathbb{Z}^d;\label{periodicity}
		\end{align}
		\item regularity:
		\begin{equation}
			\mbox{ A belongs to }C^\infty (\mathbb{R}^{d} ). \label{regular_A}\\
		\end{equation}
	\end{enumerate}
	We also assume that the boundary data $g$ satisfies
	\begin{enumerate}
		\item  periodicity : the boundary data is assumed to be periodic, i.e.
		\begin{align}
			g (y+h)=g(y)\quad \mbox{  for all } y\in \mathbb{R}^{d} \mbox{  and for all } h\in \mathbb{Z}^d.
		\end{align}
		\item regularity : 
		\begin{equation}
			\mbox{the boundary data g  belongs to }C^\infty (\mathbb{R}^{d} ). \label{regular_g}
		\end{equation}
		
	\end{enumerate}
	
	\section{Preliminaries}\label{section_prepa}

\noindent	This section is, for the most part, dedicated to introducing some  preparatory  elements and tools that will be used repeatedly. These preliminaries include among others the construction of an Ansatz for the Stokes equations. It is also the occasion to present some useful estimates. Let us start with  a result that gives a control of the pressure field by the derivative of the velocity.

\begin{lemma} [\cite{choi}, Lemma 3.4]
	Let $r>0$. For a pair $(u^s,p^s)$ satisfying the two first equations of system \textup{(\ref{hsStokes})} we have \begin{align}
		\|p^s-(p^s)_{D_r(0)} \|_{L^2(D_r(0))}\leq C \|\nabla u^s \|_{L^2(D_r(0))},\label{estim_Bogov}
	\end{align}
		where $C$ depends $d$ and $\mu$.
\end{lemma}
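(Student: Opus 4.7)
The plan is to apply the Bogovskii (or Nečas) construction on the half-ball $D_r(0)$ to test the oscillation of the pressure against itself. Concretely, set $f := p^s - (p^s)_{D_r(0)}$, which has zero mean on $D_r(0)$. Since $D_r(0)$ is a bounded Lipschitz domain (a half-ball is star-shaped with respect to a ball centered at its flat face), there exists a vector field $\varphi \in H_0^1(D_r(0); \mathbb{R}^d)$ such that
\begin{equation*}
\nabla \cdot \varphi = f \quad \text{in } D_r(0), \qquad \|\nabla \varphi\|_{L^2(D_r(0))} \leq C \|f\|_{L^2(D_r(0))},
\end{equation*}
with $C$ depending only on $d$ and (through the geometry) a scale-invariant constant of the half-ball.

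Next I would compute $\|f\|_{L^2}^2$ by pairing $f$ with $\nabla \cdot \varphi$ and integrating by parts. Since $\varphi$ vanishes on $\partial D_r(0)$, the constant $(p^s)_{D_r(0)}$ contributes nothing, and
\begin{equation*}
\|f\|_{L^2(D_r(0))}^2 = \int_{D_r(0)} p^s\, \nabla\cdot\varphi \, dy = -\int_{D_r(0)} \nabla p^s \cdot \varphi \, dy.
\end{equation*}
Now I would use the first equation of (\ref{hsStokes}), namely $\nabla p^s = \nabla \cdot (A(y)\nabla u^s)$, and integrate by parts once more (again using $\varphi|_{\partial D_r(0)} = 0$) to obtain
\begin{equation*}
\|f\|_{L^2(D_r(0))}^2 = \int_{D_r(0)} A(y)\nabla u^s : \nabla \varphi \, dy.
\end{equation*}

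From here the estimate is immediate: Cauchy--Schwarz together with the ellipticity/boundedness bound (\ref{ellipticity}) gives
\begin{equation*}
\|f\|_{L^2(D_r(0))}^2 \leq \tfrac{1}{\mu} \|\nabla u^s\|_{L^2(D_r(0))} \|\nabla \varphi\|_{L^2(D_r(0))} \leq \tfrac{C}{\mu} \|\nabla u^s\|_{L^2(D_r(0))} \|f\|_{L^2(D_r(0))},
\end{equation*}
and dividing by $\|f\|_{L^2(D_r(0))}$ yields (\ref{estim_Bogov}). The only subtle point is obtaining the Bogovskii right inverse of the divergence on the half-ball $D_r(0)$ with a constant independent of $r$; this follows by rescaling from the unit half-ball, whose Lipschitz character provides the estimate once and for all, so the constant depends only on $d$ (and, through the ellipticity bound, on $\mu$). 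No information about the boundary datum $g$ is needed since $\varphi$ is compactly supported away from $\partial\mathbb{H}_n$.
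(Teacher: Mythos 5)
Your proof is correct and follows essentially the same approach as the paper's proof of the generalized $L^q$ version (Lemma \ref{Bogov_p} in Appendix \ref{append_estim}): construct a Bogovskii right inverse of the divergence, pair with the pressure oscillation, integrate by parts twice using the Stokes equation, and apply Cauchy--Schwarz with ellipticity. The only cosmetic difference is that you test directly against $f = p^s - (p^s)_{D_r(0)}$, which works at $q=2$ thanks to the Hilbert-space structure, whereas the paper's $L^q$ argument proceeds by duality against an arbitrary $\varphi \in L^{q'}$; at $q=2$ the two coincide.
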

\noindent This Lemma has a generalization in the following sense.
\begin{lemma} \label{Bogov_p}
	Let $r>0$. For $q\in(1,\infty)$ and for a pair $(u^s,p^s)$ satisfying the first two equations of system \textup{(\ref{hsStokes})} we have \begin{equation}
		\|p^s-(p^s)_{D_r(0)} \|_{L^q(D_r(0))}\leq C \|\nabla u^s \|_{L^q(D_r(0))}, \label{estim_Bogov_p}\nonumber
	\end{equation}
	where $C$ depends $d$ and $\mu$.
\end{lemma}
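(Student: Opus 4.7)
The plan is to obtain the $L^q$ estimate by the standard duality argument based on the Bogovskiĭ operator, which extends the $L^2$ proof of the preceding lemma in a transparent way. Throughout, I will write $q'=q/(q-1)$.

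First, I will recall the construction, on the half-ball $D_r(0)=B_r(0)\cap\mathbb{H}_n$, of a right inverse of the divergence. More precisely, for any $\phi\in L^{q'}(D_r(0))$ with $\int_{D_r(0)}\phi=0$, one can build $\psi\in W^{1,q'}_0(D_r(0);\mathbb{R}^d)$ such that
\begin{equation*}
\nabla\cdot\psi=\phi\quad\text{in }D_r(0),\qquad \|\nabla\psi\|_{L^{q'}(D_r(0))}\le C(d,q)\,\|\phi\|_{L^{q'}(D_r(0))}.
\end{equation*}
This is Bogovskiĭ's lemma, available on any bounded Lipschitz (or more generally John) domain. Since $D_r(0)$ is a half-ball, hence a Lipschitz domain whose Lipschitz character is independent of $r$, a simple rescaling $y\mapsto y/r$ shows that the constant $C(d,q)$ does not depend on $r$.

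Next, I will use $\psi$ as a test function in the weak formulation of the momentum equation $-\nabla\cdot(A\nabla u^s)+\nabla p^s=0$ posed on $D_r(0)$. Since $\psi$ vanishes on $\partial D_r(0)$ in the trace sense, all boundary contributions drop and integration by parts gives
\begin{equation*}
\int_{D_r(0)} A\nabla u^s:\nabla\psi\;dy=\int_{D_r(0)} p^s\,\nabla\cdot\psi\;dy=\int_{D_r(0)}p^s\,\phi\;dy.
\end{equation*}
Because $\phi$ has zero mean, the left member equals $\int_{D_r(0)}\bigl(p^s-(p^s)_{D_r(0)}\bigr)\phi\;dy$. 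Applying Hölder's inequality and the uniform bound $|A|\le\mu^{-1}$ from \eqref{ellipticity}, together with the Bogovskiĭ estimate, yields
\begin{equation*}
\left|\int_{D_r(0)}\bigl(p^s-(p^s)_{D_r(0)}\bigr)\phi\;dy\right|\le \frac{C(d,q)}{\mu}\,\|\nabla u^s\|_{L^q(D_r(0))}\|\phi\|_{L^{q'}(D_r(0))}.
\end{equation*}
Taking the supremum over all $\phi\in L^{q'}(D_r(0))$ with zero mean and $\|\phi\|_{L^{q'}}\le1$ identifies the left-hand side, by the duality $L^q=(L^{q'})^*$ restricted to mean-zero functions, with $\|p^s-(p^s)_{D_r(0)}\|_{L^q(D_r(0))}$, which gives the desired estimate.

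I do not foresee a serious obstacle: the one point that deserves care is confirming that the Bogovskiĭ constant on the half-ball $D_r(0)$ is independent of $r$, which I handle by a dilation argument, and that the weak momentum equation may be tested against $\psi\in W^{1,q'}_0(D_r(0))$, which follows because such $\psi$ extended by zero belongs to $W^{1,q'}(\mathbb{H}_n)$ with vanishing trace on $\partial\mathbb{H}_n\cap B_r(0)$, so it is an admissible test function for the interior PDE. Everything else is routine duality and Hölder.
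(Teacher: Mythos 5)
Your proposal is correct and follows essentially the same route as the paper's proof in Appendix~\ref{append_estim}: both invoke Bogovski\u{\i}'s lemma to produce a divergence-potential $\psi\in W^{1,q'}_0(D_r(0))$, test the momentum equation against $\psi$, integrate by parts, and close with H\"older and the Bogovski\u{\i} bound, with the $r$-independence of the constant handled by dilation. The only cosmetic difference is that you fix a mean-zero test function $\phi$ from the start (where the paper takes arbitrary $\varphi$ and subtracts its mean inside the pairing), which makes the closing ``restricted duality'' identity hold only up to a harmless factor absorbed into $C$.
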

\noindent A proof of Lemma \ref{Bogov_p} is given in Appendix \ref{append_estim}.\\

	\noindent We will repeatedly use asymptotic expansions of heterogeneous quantities. These are multi-scale expansions that are constructed with the aid of the following correctors
	\subsection{Interior correctors}\label{ansatz}
	\subsubsection{First-order correctors.}
	Let $\chi^{\beta}_{j}=(\chi^{\beta}_{kj})_k$ and   $\pi^{\beta}_{j}$ be 1-periodic functions that satisfy the following system of equations
	\begin{equation}\label{chipi}
		\left\{
		\begin{array}{rcl}
			-\nabla\cdot A(y) \: \nabla (\chi^{\beta}_{j}+ P^{\beta}_{j} ) +  \nabla \pi^{\beta}_{j} &=& 0  \;  \quad \mbox{ in } Y,\\
			\nabla\cdot \chi^{\beta}_{j} &=& 0 \;  \quad   \mbox{ in } Y,\\
			\int_{\mathbb{T}^d} \chi^{\beta}_{j}(y) dy&=& 0.
		\end{array}
		\right. 
	\end{equation}
	where $ P^{\beta}_{j}$ is the function defined by $ P^{\beta}_{j}(y)=y_j e_\beta$. These are the first-order interior correctors for the Stokes system. \\
	\subsubsection{Second-order correctors}
	Let  $\Gamma^{\alpha\beta}_{j}=(\Gamma^{\alpha\beta}_{kj})_k$ and   $Q^{\alpha\beta}_{j}$ be 1-periodic functions satisfying 
	\begin{equation}\label{GamQ}
		\left\{
		\begin{array}{rcll}
			-\partial_{y_{\gamma}}A^{\gamma\delta}_{ik}(y) \partial_{ y_{\delta}}\Gamma^{\alpha\beta}_{kj}	+\partial_{y_i}Q^{\alpha\beta}_{j}
			&=& \, B^{\alpha\beta}_{ij}(y)\!-\!\overline{B^{\alpha\beta}_{ij}}  - \delta_{i\alpha} \, \pi^{\beta}_{j}(y)& \mbox{ in } Y,\\
			\nabla_y \cdot\Gamma^{\alpha\beta}_{j}(y)	&=&-  \chi^{\beta}_{\alpha j}(y) &  \mbox{ in } Y, \\
			\int_{\mathbb{T}^d} \Gamma^{\alpha\beta}_{j}(y) dy&=& 0,
		\end{array}
		\right. \\
	\end{equation}
	where
	\begin{align*}
		B^{\alpha\beta}_{ij}(y)&:= A^{\alpha\beta}_{ij}(y)\!+\!\partial_{y_{\gamma}}[A^{\gamma\alpha}_{ik}(y) \chi^{\beta}_{kj}(y)]\!+\! A^{\alpha\gamma}_{ik}(y) \partial_{y_{\gamma}} \chi^{\beta}_{kj}(y)\\
		\mbox{and } \delta_{i\alpha} & \mbox{  is the Kronecker delta}.
	\end{align*}

	\subsubsection{Estimates on the correctors} Many quantities that we are going to study are built from the above correctors. We shall particularly need to control these quantities and for this reason we would like the correctors to be bounded. 
	\begin{lemma}\label{bound_on_correctors}
		Seeing that the coefficients $A$ satisfy assumption \textup{(\ref{regular_A})} we have the following estimates on the above  correctors :
		\begin{align*}
		\|\chi \|_{L^\infty(Y)}&\leq C,\quad   \|\pi \|_{L^\infty(Y)}\leq C, \\
			\| \Gamma \|_{L^\infty(Y)}&\leq C,\quad\| Q\|_{L^\infty(Y)}\leq C.
		\end{align*}
		The constant $C$ in these estimates depends on $[A]_{C^{0,\eta}}$. Furthermore, the derivatives of these functions are similarly bounded:
			\begin{align*}
			\| \partial^\lambda_y \,\chi \|_{L^\infty(Y)}&\leq C,\quad   \|  \partial^\lambda_y \,\pi \|_{L^\infty(Y)}\leq C, \\
			\|\partial^\lambda_y \, \Gamma \|_{L^\infty(Y)}&\leq C,\quad\|\partial^\lambda_y \, Q\|_{L^\infty(Y)}\leq C,
		\end{align*}
		for any multi-index $\lambda\in\mathbb{N}^d$.
		
	\end{lemma}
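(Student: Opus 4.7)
The plan is to exploit the compactness of the torus $Y = \mathbb{T}^d$ together with the smoothness assumption (\ref{regular_A}): once existence and uniqueness of each set of correctors is established in a suitable Sobolev space, the global Schauder/Sobolev regularity theory on a compact manifold (interior estimates and global estimates coincide when there is no boundary) will upgrade the solutions to $C^\infty(\mathbb{T}^d)$, and compactness will then produce uniform $L^\infty$ bounds on all derivatives.

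For the first-order correctors, I would first set up the weak formulation of (\ref{chipi}) in the Hilbert space of divergence-free, mean-zero vector fields in $H^1(\mathbb{T}^d;\mathbb{R}^d)$. Testing against divergence-free vectors eliminates $\pi^\beta_j$, and the ellipticity assumption (\ref{ellipticity}) yields coercivity, so Lax--Milgram gives existence and uniqueness of $\chi^\beta_j$. The pressure $\pi^\beta_j$ is then recovered via de Rham's theorem and normalized by $\int_{\mathbb{T}^d}\pi^\beta_j = 0$. A bootstrap argument based on global Stokes regularity on $\mathbb{T}^d$---starting from the first Schauder step, whose constant depends on $[A]_{C^{0,\eta}}$, and iterating by differentiating the equation---then shows $\chi^\beta_j, \pi^\beta_j \in C^\infty(\mathbb{T}^d)$. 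Compactness of the torus converts this into the claimed $L^\infty$ bounds for $\chi$, $\pi$ and all their derivatives.

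For the second-order correctors, the extra feature is the non-zero divergence condition $\nabla\cdot\Gamma^{\alpha\beta}_j=-\chi^\beta_{\alpha j}$, which must be lifted before the same machinery applies. Since $\chi^\beta_{\alpha j}$ is smooth, periodic and has zero mean by (\ref{chipi}), a Bogovskii-type operator on $\mathbb{T}^d$ produces a smooth periodic $\Phi^{\alpha\beta}_j$ with $\nabla\cdot\Phi^{\alpha\beta}_j = -\chi^\beta_{\alpha j}$. Setting $\widetilde\Gamma = \Gamma-\Phi$ turns (\ref{GamQ}) into a divergence-free Stokes system on $\mathbb{T}^d$ with smooth forcing; its solvability requires the average of this forcing to vanish, and averaging the equation, together with the fact that $B-\overline{B}$ has mean zero by construction, reduces the compatibility condition exactly to $\int_{\mathbb{T}^d}\pi^\beta_j = 0$, already enforced in the previous step. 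The same Lax--Milgram argument then yields $\widetilde\Gamma$ and $Q^{\alpha\beta}_j$, and the bootstrap gives their smoothness on $\mathbb{T}^d$, hence the uniform bounds on $\Gamma$ and $Q$.

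The main obstacle---not a deep one---is the compatibility bookkeeping: one must track the normalizations ($\int\chi = 0$, $\int\pi = 0$, $\int\Gamma = 0$) so that both the second-order Stokes system and the Bogovskii lifting have the data they require. Beyond that, the argument is a standard application of Stokes regularity on the torus, where the absence of a boundary removes the usual need to distinguish between interior and boundary estimates.
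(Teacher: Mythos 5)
The paper does not actually supply a proof of this lemma---it is stated as a standard fact about periodic correctors---so there is no argument of the author's to compare against. Your proposal is a correct and complete proof: Lax--Milgram on the space of mean-zero divergence-free fields in $H^1(\mathbb{T}^d)$ yields $\chi^{\beta}_{j}$, de Rham gives $\pi^{\beta}_{j}$, the Bogovskii lift handles the nonzero divergence data $-\chi^{\beta}_{\alpha j}$ (whose mean vanishes thanks to $\int_Y \chi = 0$), and you correctly identify that the solvability of (\ref{GamQ}) after lifting boils down to the normalization $\int_Y \pi^{\beta}_{j}=0$. The Schauder bootstrap on the compact torus, followed by $C^\infty$ smoothness and compactness, delivers the claimed $L^\infty$ bounds for the correctors and all of their derivatives; the only caveat, which you implicitly handle by invoking $A\in C^\infty$, is that the constant for $\partial^{\lambda}\chi$ with $|\lambda|\geq 1$ depends on higher-order norms of $A$ than $[A]_{C^{0,\eta}}$, consistent with the phrasing of the lemma.
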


	\subsection{Asymptotic expansion}
	With these  correctors one builds an interior multi-scale expansion of the solution $(u^\varepsilon, p^\varepsilon)$ to problem (\ref{basic_Stokes}) :
	\begin{align}
		u^\varepsilon(x)&=u^0(x)+ \varepsilon	u^1(x,x/\varepsilon)  + \varepsilon^2 u^2(x,x/\varepsilon)   +...\label{expansu}\\
		p^\varepsilon(x)&=  p^0(x,x/\varepsilon) +\varepsilon p^1(x,x/\varepsilon)  + ...\label{expansp}
	\end{align} 
	where $(u^0,p^0)$ satisfies the homogenized problem associated to (\ref{basic_Stokes})
		\begin{equation*}
		\left\{
		\begin{array}{rcll}
			-\nabla\cdot A^0\nabla u^0 +  \nabla p^0&=& f& \mbox{ in } \{ x\cdot n-s>0 \},\\
			\nabla\cdot u^0&=& h,&\\
			u^0&=& g
			&\mbox{ on } \{ x\cdot n-s=0 \}
		\end{array}
		\right. 
	\end{equation*}
	and where
		\begin{align*}
	(A^0)_{ij}^{\alpha\beta}:= \frac{1}{|Y|}\int_Y [A^{\alpha\beta}_{ij}(y) + A^{\alpha\gamma}_{ik}(y)\partial_{y_{\gamma}} \chi^{\beta}_{kj}(y)] dy,
	\end{align*}
and
	\begin{align}
		u^1(x,x/\varepsilon) =	\chi^{\beta}_{kj}(x/\varepsilon) \partial _{x_{\beta}} u^0_j  (x)	+ \tilde{u}_k^1(x),\quad u^2(x,x/\varepsilon)=& \Gamma^{\alpha\beta}_{kj}(x/\varepsilon)\partial_{x_{\alpha}}\partial_{x_{\beta}} u^0_j  \\
		p^0(x,x/\varepsilon)= \tilde{p}^0(x)+ \pi^{\beta}_{j}(x/\varepsilon) \partial_{x_{\beta}} u^0_j(x),\quad p^1(x,x/\varepsilon) =& Q^{\alpha\beta}_{j}(x/\varepsilon)\partial_{x_{\alpha}}\partial_{x_{\beta}} u^0_j 
	\end{align} 
	with 
	$ \tilde{u}_k^1$ and $ \tilde{p}^0$ being functions independent of $y$ 
and  $(\chi^{\beta}_{j},\pi^{\beta}_{j})$, $(\Gamma^{\alpha\beta}_{j}, Q^{\alpha\beta}_{j})$ are the correctors given in (\ref{chipi}) and (\ref{GamQ}) respectively.

	\begin{remark}\label{Rmk_starcorrectors} We occasionally use the homogenized equation written into coordinates
	\begin{align}
			- (A^0)_{ij}^{\alpha\beta} \partial_{x_{\alpha}}\partial_{x_{\beta}}u_j^0 (x)  +   \partial_{x_i} p^0(x)	=& f^i(x).\label{eq_homgn}
		\end{align}
	We moreover define the correctors $(\chi^*, \pi^*)$ and $(\Gamma^*, Q^*)$ associated with the adjoint operator $\mathcal{L}^*_1$ in a similar way. These correctors have the above properties of  $(\chi, \pi)$ and $(\Gamma, Q)$ respectively. In particular, they satisfy the estimates in \textup{Lemma \ref{bound_on_correctors}}.
	
	\end{remark}

	\subsection{Some uniform regularity estimates}
	
	\noindent In our study, we take advantage of the following results to derive estimates uniform in $\varepsilon$ for the heterogeneous Green function.\\
	
	\noindent First, we present a result of a uniform H\"older regularity estimate.
	\begin{lemma}\label{Holder_estim}
		Let $l>0$. Suppose the pair $(u^\varepsilon, p^\varepsilon)$ satisfies 
		\begin{equation}
			\left\{
			\begin{array}{rcll}
				-\nabla\cdot A(x/\varepsilon) \: \nabla u^\varepsilon  +  \nabla p^\varepsilon &=&  f \quad& \mbox{in } D_1(0),\\
				\nabla\cdot u^\varepsilon&=& 0   \quad & \mbox{in } D_1(0),\\
				u^\varepsilon &=& g\quad & \mbox{on } \Gamma_1(0),
			\end{array}
			\right. 
		\end{equation}
		with \begin{align*}
			\| u^\varepsilon \|_{L^2(D_{1}(0))}< \infty ,\qquad     \| f\|_{L^{d+l}(D_{1}(0))}< \infty ,\qquad     [g]_{C^{0,1}(\Gamma _{1}(0))}< \infty .
		\end{align*}
		Then there exists a constant $C>0$ depending only on $d, \mu, \|A\|_{C^{0,\eta}}$ and $l,$ such that for $\lambda=1- d/(d+l)$, 
		\begin{align*}
			[u^\varepsilon]_{C^{0,\lambda}(D_{1/2}(0))} \leq C \left(  \| u^\varepsilon \|_{L^2(D_{1}(0))}+    \| f\|_{L^{d+l}(D_{1}(0))}+  [g]_{C^{0,1}(\Gamma _{1}(0))}\right) .
		\end{align*}
	\end{lemma}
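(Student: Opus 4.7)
The natural route is the compactness method of Avellaneda--Lin, adapted to the Stokes system (in the style of \cite{GZ}), combined with a Campanato-type characterization of $C^{0,\lambda}$. The guiding principle is that on scales above $\varepsilon$ the oscillating operator is well approximated by the homogenized (constant-coefficient) Stokes operator, while on scales below $\varepsilon$ a rescaling reduces the matter to classical Schauder theory for Stokes with $C^{0,\eta}$ coefficients.

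\textbf{Step 1 (One-step improvement).} I would first establish the following improvement lemma: there exists $\theta \in (0,1/4)$ (depending on $d,\mu,\lambda$) and $\varepsilon_0 \in (0,\theta)$ such that whenever $\varepsilon \le \varepsilon_0$ and $(u^\varepsilon,p^\varepsilon)$ is as in the hypothesis with
$$ \|u^\varepsilon\|_{L^2(D_1(0))}+\|f\|_{L^{d+l}(D_1(0))}+[g]_{C^{0,1}(\Gamma_1(0))}\le 1,$$
one has $\inf_{c\in\mathbb{R}^d}\|u^\varepsilon - c\|_{L^2(D_\theta(0))}\le \theta^{\lambda+d/2}$. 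The proof is by contradiction and compactness: assume the lemma fails along sequences $\varepsilon_k \to 0$, $(u^{\varepsilon_k},p^{\varepsilon_k},A^{\varepsilon_k},f^{\varepsilon_k},g^{\varepsilon_k})$. Standard energy/Caccioppoli estimates together with Lemma \ref{Bogov_p} (to control the pressure by $\nabla u^{\varepsilon_k}$ after subtracting the mean) yield $H^1$ compactness of $u^{\varepsilon_k}$ and $L^2$ compactness of $p^{\varepsilon_k}-(p^{\varepsilon_k})_{D_1(0)}$. The qualitative homogenization result for Stokes in the half-ball (established in Section \ref{section_hom} and applicable here since the flat portion $\Gamma_1(0)$ plays the role of the half-space boundary) identifies the limit as a solution $(u^0,p^0)$ of a constant-coefficient Stokes system in $D_1(0)$ with Lipschitz Dirichlet data and $L^{d+l}$ source.

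\textbf{Step 2 (Regularity of the limit).} The limit pair $(u^0,p^0)$ satisfies classical boundary $C^{1,\alpha}$ estimates for any $\alpha\in(0,1-d/(d+l))$ by Schauder/Adams theory for constant-coefficient Stokes on the half-space with Lipschitz data. In particular, choosing the constant $c=u^0(0)$, one has $\|u^0 - c\|_{L^2(D_\theta(0))}\le C\theta^{1+d/2}$, which for $\theta$ small enough beats $\theta^{\lambda+d/2}$ and contradicts the failure assumption.

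\textbf{Step 3 (Iteration down to scale $\varepsilon$).} Apply the improvement lemma iteratively on dyadic scales $r_k=\theta^k$ as long as $r_k \ge \varepsilon/\varepsilon_0$. Rescaling $u^\varepsilon$ at scale $r_k$ (via $v(x):=u^\varepsilon(r_k x)/r_k^\lambda$) transforms the problem into one of the same form with oscillation parameter $\varepsilon/r_k\le\varepsilon_0$, so the improvement may be reapplied. This yields constants $c_k$ with
$$ \Bigl(\fint_{D_{r_k}(0)}|u^\varepsilon - c_k|^2\Bigr)^{1/2}\le C\,r_k^{\lambda}\bigl(\|u^\varepsilon\|_{L^2(D_1)}+\|f\|_{L^{d+l}(D_1)}+[g]_{C^{0,1}(\Gamma_1)}\bigr).$$

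\textbf{Step 4 (Small scales and Campanato).} For $0<r<\varepsilon$, the rescaling $\tilde u(x):=u^\varepsilon(\varepsilon x)$ satisfies a Stokes system with non-oscillating coefficients $A$, which is $C^{0,\eta}$. Classical boundary Schauder/Campanato estimates then provide the same $r^{\lambda}$ mean-oscillation bound at small scales. Combining the two regimes, the mean oscillation of $u^\varepsilon$ on $D_r(y)$, $y\in D_{1/2}(0)$, decays as $r^\lambda$ uniformly in $\varepsilon$; by Campanato's characterization this is equivalent to the claimed $C^{0,\lambda}$ estimate on $D_{1/2}(0)$.

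The main obstacle is \textbf{Step 1}: executing the compactness argument rigorously for the Stokes system requires simultaneous control of $u^{\varepsilon_k}$ and $p^{\varepsilon_k}$, and the identification of the limit equation uses the homogenization theorem for Stokes with non-homogeneous Dirichlet data (Section \ref{section_hom}), together with Lemma \ref{Bogov_p} to keep the pressures bounded in $L^2_{\mathrm{loc}}$ up to additive constants. Once this is in place, the iteration and the small-scale argument are standard.
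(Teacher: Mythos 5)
The paper does not in fact prove Lemma~\ref{Holder_estim}: it states it without argument, and only cites \cite{GZ} for the companion \emph{Lipschitz} estimate (Theorem~\ref{unif_estim}). Your proposal supplies what the literature supplies, namely the Avellaneda--Lin compactness scheme adapted to Stokes, and the outline (one-step improvement by compactness, regularity of the homogenized limit, dyadic iteration down to scale $\varepsilon$, then a rescaling to non-oscillating $C^{0,\eta}$ coefficients at sub-$\varepsilon$ scales, assembled by Campanato) is the right one. The exponent bookkeeping is also consistent: with $\lambda=1-d/(d+l)$ the rescaled source decays like $\theta^{2-\lambda-d/(d+l)}=\theta$ and the rescaled Lipschitz boundary seminorm decays like $\theta^{1-\lambda}$, so the iteration is self-improving, and in Step~2 Lipschitz boundary regularity of the constant-coefficient limit ($\theta^{1+d/2}$ decay) beats the required $\theta^{\lambda+d/2}$ because $\lambda<1$.

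One thing to fix is the citation inside Step~1. You invoke ``the qualitative homogenization result for Stokes in the half-ball established in Section~\ref{section_hom}.'' That section proves a \emph{quantitative} $L^\infty$ estimate for a Dirichlet problem in the full half-space with zero boundary data, and, more importantly, its proof rests on the Green-function bounds of Proposition~\ref{Propo_estim_Green_y}, whose proof (Appendix~\ref{append_Gfunction}, Step~1) itself uses Lemma~\ref{Holder_estim}. Invoking Section~\ref{section_hom} here would therefore be circular. What your compactness step actually needs is the much softer \emph{qualitative} $H^1$-weak passage to the limit for Stokes with $H^1$-bounded velocities, $L^2$-bounded (mean-zero) pressures via Lemma~\ref{Bogov_p}, fixed (non-oscillating) Lipschitz boundary data, and $L^{d+l}$ source on a half-ball; this is standard Stokes homogenization (it is independent of Lemma~\ref{Holder_estim}) and should be cited to the Stokes homogenization literature such as \cite{hom_stokes,GZ} rather than to the paper's Section~\ref{section_hom}. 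With that reference corrected, the argument is sound.
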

	
	\noindent Besides the H\"older estimate there is a uniform Lipschitz estimate given by the following Theorem.
	\begin{theorem}\label{unif_estim} \textup{  (\cite{GZ}, Theorem 3.1.)}.
		Suppose that the matrix $A$ of the coefficients satisfies the ellipticity condition \textup{(\ref{ellipticity})} and periodicity \textup{(\ref{periodicity})} and $A\in C^{0,\eta}(D(0,1))$  for some $\eta\in(0,1]$. For $\varepsilon>0,$ let $(u^\varepsilon,  p^\varepsilon)$ be a weak solution of 
		\begin{equation}\label{Stokes_system_D1}
			\left\{
			\begin{array}{rcll}
				-\nabla\cdot A (x/\varepsilon) \: \nabla u^\varepsilon +  \nabla p^\varepsilon&=& f+ \nabla\cdot F, & x\in D(0,1),\\
				\nabla\cdot u^\varepsilon &=& h,&   x\in D(0,1),\\
				u^\varepsilon &=& g , & 
				x\in  \Gamma(0,1),
			\end{array}
			\right. 
		\end{equation}
		where $f\in L^{d+l}(D(0,1)), $ for some $l>0$, $F\in C^{0,\eta}(D(0,1)), \; h\in C^{0,\eta}(D(0,1))$ and $ g\in C^{1,\eta}(\Gamma(0,1) )$, for some $\eta \in (0,1)$. Then there exists a constant $C>0$ such that 
		\begin{align}\label{unif_estim_thGZ}
			\|\nabla u^\varepsilon \|_{L^\infty(D(0,1/2))} + \
			\underset{D(0,1/2)}{\textup{osc}} [p^\varepsilon ]\leq   C \left\{ \| u^\varepsilon \|_{L^\infty(D(0,1))} \right. &+ \| f \|_{L^{d+l}(D(0,1))} +  [F]_{C^{0,\eta}(D(0,1))}\\
			&+     \| h \|_{L^\infty(D(0,1))} +  [h]_{C^{0,\eta}(D(0,1))}  \nonumber\\
			+\| g \|_{L^\infty(\Gamma(0,1))} &+ \| \nabla_{\!\!\textup{ tan}} \, g \|_{L^\infty(\Gamma(0,1))} + \left.  [\nabla_{\!\!\textup{ tan}}\; g]_{C^{0,\eta}(\Gamma(0,1))} \right\} \nonumber
		\end{align}
		where $C$ depends on $d, l, \eta, \mu$ and $[A]_{C^{0,\eta}(D(0,1))}$  but $C$ is uniform in $\varepsilon $.
	\end{theorem}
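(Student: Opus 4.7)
The plan is to follow the Avellaneda--Lin compactness scheme, suitably adapted to the Stokes system. The heart of the method is to reduce the uniform Lipschitz bound to a \emph{one-step improvement} proved by contradiction at a fixed scale, and then to iterate dyadically. Concretely, I aim to prove that there exist $\theta\in(0,1/4)$, $\sigma\in(0,\eta)$ and $\varepsilon_0>0$ such that, whenever $\varepsilon\le\varepsilon_0 r$ and $(u^\varepsilon,p^\varepsilon)$ satisfies a version of (\ref{Stokes_system_D1}) on $D_r(0)$, one can find a linear function $L$ and a constant $\pi_0$ with
\[
\dashint_{D_{\theta r}(0)}\!\!\bigl(|u^\varepsilon-L|^2+|p^\varepsilon-\pi_0|^2\bigr)dx \;\le\; \theta^{2(1+\sigma)}\dashint_{D_{r}(0)}\!\!\bigl(|u^\varepsilon|^2+|p^\varepsilon|^2\bigr)dx
\]
plus matching contributions from $f,F,h,g$. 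Summing a geometric series of such controls, together with a $C^{0,\eta}$ Campanato-type characterization, yields the pointwise bound on $\nabla u^\varepsilon$ and the oscillation of $p^\varepsilon$ in $D(0,1/2)$.

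The one-step improvement is proved by contradiction in the Avellaneda--Lin style. Assume it fails: then there exist sequences $\varepsilon_k\downarrow 0$, matrices $A_k$ satisfying (\ref{ellipticity}), (\ref{periodicity}) with a uniform $C^{0,\eta}$ bound, data converging to zero in the relevant norms, and solutions $(u^{\varepsilon_k},p^{\varepsilon_k})$ of unit $L^2$ size on $D_1(0)$ violating the decay at scale $\theta$. Normalize the pressure by its mean and extract a subsequential weak limit, using the uniform $C^{0,\lambda}$ bound of Lemma \ref{Holder_estim} for the velocity and Lemma \ref{Bogov_p} for the pressure to obtain strong $L^2$ compactness. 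The homogenization result of Section \ref{section_hom} identifies the limit $(u^0,p^0)$ as a weak solution of a constant-coefficient Stokes system on $D_1(0)$ with $C^{1,\eta}$ Dirichlet data on $\Gamma_1(0)$. Classical boundary Schauder theory for the constant-coefficient Stokes problem gives a linear approximation of $u^0$ at the origin with decay of any exponent $1+\sigma<1+\eta$, and this contradicts the blow-up assumption once $\theta$ is chosen small enough.

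The pressure is handled in tandem via the Bogovski\u{\i}-type Lemma \ref{Bogov_p}: once the decay of the velocity gradient is obtained at a given scale, the Bogovski\u{\i} estimate immediately yields a matching decay for $p^\varepsilon-(p^\varepsilon)_{D_r(0)}$. This is the main point where the argument departs from the Avellaneda--Lin elliptic case, since the pressure has no equation of its own and must be read off from $\nabla u^\varepsilon$ and the divergence constraint. The inhomogeneous term $h$ is absorbed by subtracting a Bogovski\u{\i} lifting $w$ with $\nabla\cdot w=h$ and $[w]_{C^{1,\eta}}\lesssim\|h\|_{L^\infty}+[h]_{C^{0,\eta}}$, which turns the problem for $u^\varepsilon-w$ into a divergence-free one with modified source $f+\nabla\!\cdot\!F$ still lying in the hypothesized classes.

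The hardest step is the compactness in the contradiction argument: one must ensure that the weak limit $(u^0,p^0)$ solves the correct constant-coefficient Stokes problem with the correct flat-boundary trace. This requires (i) showing that after subtracting means the sequence $p^{\varepsilon_k}$ is uniformly bounded in $L^2(D_1(0))$, which Lemma \ref{Bogov_p} provides; (ii) passing to the limit in the oscillating coefficients $A_k(x/\varepsilon_k)$ against the weak-$\ast$ limit of $\nabla u^{\varepsilon_k}$, where one needs a $\textup{div}$-$\textup{curl}$ or corrector-based argument tailored to the Stokes structure; and (iii) verifying that the Dirichlet trace is preserved, which follows from the uniform H\"older bound of Lemma \ref{Holder_estim} up to $\Gamma_1(0)$. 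Once these are in place, the constant-coefficient $C^{1,\sigma}$ theory closes the loop, and tracking the dependence of constants through the iteration shows that the final Lipschitz constant depends only on $d,l,\eta,\mu$ and $[A]_{C^{0,\eta}(D(0,1))}$, uniformly in $\varepsilon$, as claimed.
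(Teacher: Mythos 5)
This theorem is not proved in the paper: it is quoted verbatim, with a citation, from Gu--Zhuge (\cite{GZ}, Theorem 3.1), so there is no internal proof to compare against. Your Avellaneda--Lin compactness scheme — one-step improvement by contradiction, compactness furnished by the uniform H\"older estimate of Lemma~\ref{Holder_estim} and the Bogovski\u{\i}-type pressure estimate of Lemma~\ref{Bogov_p}, identification of the blow-up limit as a constant-coefficient Stokes problem, then flat-boundary Schauder theory and dyadic iteration — is a standard and essentially viable route for uniform Lipschitz estimates in Stokes homogenization, so the outline is on the right track. (The invocation of ``the homogenization result of Section~\ref{section_hom}'' is a slight mismatch, since Proposition~\ref{propolinfestim} there concerns the half-space with compactly supported data rather than the local qualitative compactness lemma you actually need, but that is a conceptually correct substitution.)

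There is, however, a scaling error in the one-step improvement as you state it: you demand the same decay exponent $2(1+\sigma)$ for the velocity excess and the pressure excess,
\[
\dashint_{D_{\theta r}}\bigl(|u^\varepsilon-L|^2+|p^\varepsilon-\pi_0|^2\bigr)\;\le\;\theta^{2(1+\sigma)}\dashint_{D_{r}}\bigl(|u^\varepsilon|^2+|p^\varepsilon|^2\bigr),
\]
which is impossible even for the constant-coefficient limit. If $u^0\in C^{1,\sigma}$ near the flat boundary then $p^0$ is only $C^{0,\sigma}$, one derivative less regular, and the best attainable pressure decay is $\dashint_{D_{\theta r}}|p^0-\pi_0|^2\lesssim\theta^{2\sigma}$, which for $\sigma>0$ and $\theta<1$ is strictly larger than $\theta^{2(1+\sigma)}$; so the displayed inequality cannot be established for the pressure part. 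Your own later observation — that the pressure oscillation at scale $r$ is slaved to $\|\nabla u^\varepsilon\|_{L^2(D_r)}\sim r^\sigma$ via Lemma~\ref{Bogov_p} — already contradicts it. The cure is to weight the pressure excess by one power of the scale, e.g.\ to iterate
\[
\Phi(r):=\inf_L\Bigl(\dashint_{D_r}|u^\varepsilon-L|^2\Bigr)^{1/2}+r\,\inf_{\pi_0}\Bigl(\dashint_{D_r}|p^\varepsilon-\pi_0|^2\Bigr)^{1/2},
\]
for which a one-step decay $\Phi(\theta r)\le\theta^{1+\sigma}\Phi(r)$ plus data contributions is attainable, and then the iteration closes and yields both $\|\nabla u^\varepsilon\|_{L^\infty(D(0,1/2))}$ and $\osc_{D(0,1/2)}p^\varepsilon$. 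The Bogovski\u{\i} lifting of $h$, the compactness in the contradiction step (note the pressure's strong $L^2$ compactness after subtracting means comes from the equation $\nabla p^\varepsilon = f+\nabla\cdot F+\nabla\cdot A\nabla u^\varepsilon$ rather than from Lemma~\ref{Bogov_p} alone), and the div--curl identification of the homogenized limit are all fine modulo this bookkeeping, as long as you also track how each data norm re-enters the excess at every dyadic scale.
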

	\noindent \begin{remark}
	The original estimate in \textup{\cite{GZ}, Theorem 3.1,} is given in a rescaled version. Using the uniformity of $C$ in $\varepsilon$ one can  go from this version to the rescaled one and conversely.	
		By the inequality
	\begin{align} \left(  \dashint_{D(0,1)} | u^\varepsilon |^2 \right)^{1/2} \leq   \| u^\varepsilon \|_{L^\infty(D(0,1))}  \nonumber
	\end{align}
	 the original estimate implies estimate (\ref{unif_estim_thGZ}) because in the original estimate we have the term $\left(  \dashint_{D(0,1)} | u^\varepsilon |^2 \right)^{1/2}$ in the place of $ \| u^\varepsilon \|_{L^\infty(D(0,1))}  $ in estimate (\ref{unif_estim_thGZ}).\\
		\end{remark}

	\subsubsection{Green's function and Poisson's kernel for the Stokes system}
	
	\noindent 	Our strategy is based on a representation formula via the Poisson kernel. 
Because of this representation the study of the behaviour of $u$ far away from the boundary  is transferred to that of Poisson's kernel. Yet, the Poisson kernel is constructed from Green's functions. Therefore these functions are going to be frequently used in our study.\\
	
	\noindent  Green's functions for Stokes system (\ref{hsStokes}) in the half-space ${\mathbb{H}}_n$ can be regarded as a family $(G,\Pi)=(G_j,\Pi_j)_{j=1,...,d}$ where for each $j=1,...,d,$ and for $y\neq \tilde{y}$, $G_j(y,\tilde{y})\in\mathbb{R}^d $ is a vector and $\Pi_j(y,\tilde{y})\in\mathbb{R}$ is a scalar, satisfying 
	\begin{equation*}
		\left\{
		\begin{array}{rcll}
			-\nabla_y\cdot A(y) \: \nabla_y G_j(\cdot ,\tilde{y}) +  \nabla_y \Pi_j(\cdot ,\tilde{y})&=& \delta(\cdot -\tilde{y}) e_j   &\mbox{in }{\mathbb{H}}_n ,\\
			\nabla_y\cdot G_j(\cdot,\tilde{y})&=& 0 , &\\
			G_j(\cdot,\tilde{y}) &=& 0   & \mbox{on } \partial{\mathbb{H}}_n\\
		\end{array}
		\right. 
	\end{equation*}  	
	where $\delta$ is the Dirac distribution.

	\noindent 	The $i^{th}$ component of vector $G_j(y,\tilde{y})$ is denoted 	 $G_{ij}(y,\tilde{y})$.\\

	\noindent Occasionally we write the above system of equations in the following more condensed form
	\begin{equation}\label{G}
		\left\{
		\begin{array}{rcll}
			-\nabla_y\cdot A(y) \: \nabla_y G(\cdot, \tilde{y}) +  \nabla_y \Pi(\cdot,\tilde{y})&=& \delta(\cdot - \tilde{y}) I & \mbox{in } {\mathbb{H}}_n,\\
			\nabla_y\cdot G(\cdot,\tilde{y})&=& 0,    &\\
			G(\cdot ,\tilde{y}) &=& 0  \quad & \mbox{on } \partial{\mathbb{H}}_n.
		\end{array}
		\right. 
	\end{equation}

	\noindent  Green's functions  $(G^*,\Pi^*)=(G_j^*,\Pi_j^*)_{j=1,...,d}$ associated to the adjoint operator and to the half-space $\mathbb{H}_n$ are defined to be the solution to problem 
	\begin{equation}\label{GPj}
		\left\{
		\begin{array}{rcll}
			-\nabla_{\tilde{y}} \cdot A^*(\tilde{y}) \: \nabla_{\tilde{y}} G^*(\cdot,y) +  \nabla_{\tilde{y}} \Pi^*(\cdot,y )&=& \delta(\cdot -y) I  &\mbox{in }\mathbb{H}_n ,\\
			\nabla_{\tilde{y}}\cdot G^*(\cdot,y)&=& 0  ,  &\\
			G^*(\cdot ,y) &=& 0  \quad & \mbox{on } \partial\mathbb{H}_n.\\
		\end{array}
		\right. 
	\end{equation} 
	
	\noindent The Green function in (\ref{G}) and its adjoint counterpart are also related by the symmetry property 
	\begin{equation}\label{sym}
		G^T(y,\tilde{y})\\
		=    \,  G^*(\tilde{y},y) \quad \mbox{ i.e. } \,  G_{ij}(y,\tilde{y})=	G_{ji}^*(\tilde{y},y).
	\end{equation}
		Further, for $\varepsilon>0$ we shall use  Green's functions $(G^{*,\varepsilon},\,\Pi^{*,\varepsilon})$ associated to the coefficient $A^*(\cdot/\varepsilon)$ in the domain $\mathbb{H}_n$
	\begin{equation}\label{system_Greeneps}
		\left\{	\begin{array}{rcll}
			-\nabla_{\tilde{x}}\cdot A^* (\tilde{x}/\varepsilon) \, \nabla_{\tilde{x}} G^{*,\varepsilon}(\cdot,x) +  \nabla_{\tilde{x}} \Pi^{*,\varepsilon}(\cdot,x)&=&  \delta(\cdot -x) I  & \mbox{in }\mathbb{H}_n,\\
			\nabla_{\tilde{x}} \cdot  G^{*,\varepsilon}(\cdot,x)&=& 0 , \\
			 G^{*,\varepsilon}(\cdot,x)&=& 0  \ & \mbox{on } \partial\mathbb{H}_n.\\
		\end{array}
		\right. 
	\end{equation} 
	
	\noindent We shall also use Green's functions associated to the homogenized operated $\mathcal{L}_0:= -\nabla\cdot A^0 \: \nabla $ in the half-space $\mathbb{H}_n$, precisely, the pair  $(G^0,\Pi^0)=(G^0_j,\Pi_j)$ solution to problem 
	\begin{equation}\label{GP0}
		\left\{
		\begin{array}{rcll}
			-\nabla\cdot A^0 \: \nabla G^0(\cdot,\tilde{y}) +  \nabla \Pi^0(\cdot,\tilde{y})&=& \delta(\cdot -\tilde{y}) I   &\mbox{in }\mathbb{H}_n,\\
			\nabla\cdot G^0(\cdot,\tilde{y})&=& 0  ,    &\\
			G^0(\cdot,\tilde{y}) &=& 0 & \mbox{on } \partial\mathbb{H}_n.\\
		\end{array}
		\right. 
	\end{equation}

	\noindent  The existence 	and uniqueness of Green's functions are established in \cite{choi}, Theorem 2.7,  under the Assumption 2.5 therein that requires  the weak solutions $(u,p)$ of (\ref{hsStokes}) to be locally H\"older continuous. Since  $A$ is assumed to be smooth the local H\"older continuity is automatically satisfied by $(u,p)$. 
	Moreover we have the following pointwise estimates on these Green functions $(G,\Pi)$, solution to problem (\ref{G}) that shall be frequently used in our work.
	
	\begin{proposition}\label{Propo_estim_Green_y}
		There exists a constant $C>0$ depending on $d,\mu, [A]_{C^{0,\eta}}$  such that for all~ $y,\tilde{y} \in~\mathbb{H}_n$ we have : 
		\begin{itemize}
			\item 	estimates on the velocity component $G(y,\tilde{y})$
			\begin{equation}\label{estim_G_y}
				| G(y,\tilde{y})| \leq C \min \left\{  \frac{1 }{|y-\tilde{y}|^{d-2}};	\frac{y\cdot n}{|y-\tilde{y}|^{d-1}}; 
				\frac{\tilde{y}\cdot n }{|y-\tilde{y}|^{d-1}}; \frac{(y\cdot n)(\tilde{y}\cdot n) }{|y-\tilde{y}|^d}  \right\}\!,
			\end{equation}
		\item 	estimates  on the first-order derivatives 
			\begin{equation}\label{estim_Dy_G}
				|\nabla_y G(y,\tilde{y})| \leq  C \min \left\{  \frac{1 }{|y-\tilde{y}|^{d-1}}; 
				\frac{\tilde{y}\cdot n }{|y-\tilde{y}|^{d}}  \right\}
			\end{equation}		
			and
			\begin{equation}\label{estim_dertildG}
				|\nabla_{\tilde{y}} G(y,\tilde{y})| \leq  C \min \left\{  \frac{1 }{|y-\tilde{y}|^{d-1}}; 
				\frac{y\cdot n }{|y-\tilde{y}|^{d}}\right\}\!,
			\end{equation}
		\item 	an estimate on the mixed derivative
			\begin{equation}\label{estim_mixder}
				| \nabla_y\!\nabla_{\tilde{y}} G(y,\tilde{y})| \leq  \; \frac{C }{|y-\tilde{y}|^d} ,
			\end{equation}
		\item	estimates  on the pressure component $\Pi(y,\tilde{y}) $ :   for all $z\in \mathbb{H}_n$ satisfying $|y-\tilde{y}|\leq |z-\tilde{y}|$ we have
			\begin{equation}\label{estim_diff_Pi_y}
				| \Pi(y,\tilde{y}) -  \Pi(z,\tilde{y})| \leq C \min \;\left\{\frac{1 }{|y-\tilde{y}|^{d-1}}  ,\frac{\tilde{y}\cdot n }{|y-\tilde{y}|^{d}} \right\} 
			\end{equation}
			and 
			\begin{equation}\label{estimPi_y}
				| \Pi(y,\tilde{y})| \leq C \min \;\left\{\frac{1 }{|y-\tilde{y}|^{d-1}}  ,\frac{\tilde{y}\cdot n }{|y-\tilde{y}|^{d}}   \right\} ,
			\end{equation}		
		\item 	an estimate on the derivative $\nabla_{\tilde{y}} \Pi$  :   for all $z\in \mathbb{H}_n$ satisfying $|y-\tilde{y}|\leq |z-\tilde{y}|$ we have
			\begin{equation}\label{estim_diff_derPi_y}
				|\nabla_{\tilde{y}} \Pi(y,\tilde{y}) - \nabla_{\tilde{y}} \Pi(z,\tilde{y})|\leq \;C \min \;\left\{\frac{ 1}{|y-\tilde{y}|^{d}}; \frac{\tilde{y}\cdot n }{|y-\tilde{y}|^{d+1}}   \right\} 
			\end{equation}	
			and 	\begin{equation}\label{estimderPi_y}
				|\nabla_{\tilde{y}} \Pi(y,\tilde{y})|\leq \;\frac{C }{|y-\tilde{y}|^{d}}  .\\
			\end{equation}				
			
		\end{itemize}

	\end{proposition}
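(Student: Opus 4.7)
The strategy is to derive every estimate of the proposition from three ingredients: the interior/boundary uniform regularity estimate of Theorem \ref{unif_estim} (applied to the non-oscillating operator, i.e.\ taken at $\varepsilon=1$ after rescaling), the symmetry relation (\ref{sym}) between $(G,\Pi)$ and $(G^*,\Pi^*)$, and the pressure control of Lemma \ref{Bogov_p}. Throughout, fix $\tilde y\in\mathbb{H}_n$ and set $R := |y-\tilde y|/4$. Away from the diagonal, on $D_{2R}(y)$ the pair $(G(\cdot,\tilde y),\Pi(\cdot,\tilde y))$ is a smooth solution of the homogeneous Stokes system with either zero data on $\Gamma_{2R}(y)$ (when the ball touches $\partial\mathbb{H}_n$) or no boundary at all (interior case), so Theorem \ref{unif_estim} applies on $D_R(y)$ with only the $L^\infty$-norm of $G$ on $D_{2R}(y)$ appearing on the right-hand side.

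\textbf{Velocity bounds.} The starting point is the classical interior fundamental-solution bound $|G(y,\tilde y)|\le C|y-\tilde y|^{2-d}$, established by combining global $L^2$-energy estimates for $G(\cdot,\tilde y)$ on dyadic annuli centered at $\tilde y$ with the uniform Hölder estimate of Lemma \ref{Holder_estim}; this also follows from the construction of the Stokes Green function in \cite{choi}. To promote this to the boundary-decay bound $|G(y,\tilde y)|\le C(y\cdot n)/|y-\tilde y|^{d-1}$, I use that $G(y',\tilde y)=0$ for $y' := y-(y\cdot n)n\in\partial\mathbb{H}_n$ together with the Lipschitz estimate $|\nabla_y G(\cdot,\tilde y)|\le C R^{-1}\|G(\cdot,\tilde y)\|_{L^\infty(D_{2R}(y))}\le C|y-\tilde y|^{1-d}$ obtained from Theorem \ref{unif_estim} at scale $R$; integration along the segment from $y'$ to $y$ then gives the claim. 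The symmetry (\ref{sym}) immediately transfers this to the symmetric bound $|G(y,\tilde y)|=|G^*(\tilde y,y)|\le C(\tilde y\cdot n)/|y-\tilde y|^{d-1}$. The joint bound with factor $(y\cdot n)(\tilde y\cdot n)/|y-\tilde y|^d$ is obtained by iteration: feeding the refined bound with factor $(\tilde y\cdot n)$ back into Theorem \ref{unif_estim} gives a Lipschitz estimate that already contains the factor $(\tilde y\cdot n)$, after which the same segment-integration trick in $y$ produces the second factor $(y\cdot n)$.

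\textbf{Derivative and pressure bounds.} Estimates (\ref{estim_Dy_G}) and (\ref{estim_dertildG}) are immediate from Theorem \ref{unif_estim} on $D_R(y)$, using the velocity bounds above as input. The mixed derivative (\ref{estim_mixder}) is obtained by noting that for each fixed index $\nabla_{\tilde y} G(\cdot,\tilde y)$ itself solves a homogeneous Stokes system in $y$ on $D_{2R}(y)$; applying Theorem \ref{unif_estim} to this system and inserting (\ref{estim_dertildG}) yields $|\nabla_y\nabla_{\tilde y} G(y,\tilde y)|\le C R^{-1}\|\nabla_{\tilde y} G\|_{L^\infty(D_{2R}(y))}\le C|y-\tilde y|^{-d}$. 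The pressure difference estimate (\ref{estim_diff_Pi_y}) is exactly the oscillation part of Theorem \ref{unif_estim} applied on $D_R(y)$, refined in the boundary-decay variable by the same segment argument. The pointwise bound (\ref{estimPi_y}) then follows by fixing the canonical normalization $\Pi(\cdot,\tilde y)\to 0$ as $|y|\to\infty$ (which is admissible in a half-space and comes out of the Green-function construction) and telescoping (\ref{estim_diff_Pi_y}) over dyadic annuli. Finally, (\ref{estim_diff_derPi_y}) and (\ref{estimderPi_y}) are obtained by differentiating the Stokes system in the parameter $\tilde y$, so that $(\nabla_{\tilde y} G, \nabla_{\tilde y}\Pi)$ solves a homogeneous Stokes system in $y$, and then combining Lemma \ref{Bogov_p} with the gradient bounds just proven for $G$.

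\textbf{Main obstacle.} The most delicate point is the doubly-refined velocity bound $C(y\cdot n)(\tilde y\cdot n)/|y-\tilde y|^d$: for the iteration to close one must verify that applying Theorem \ref{unif_estim} successively in the two variables does not lose scaling, which requires choosing dyadic scales adapted simultaneously to $y\cdot n$, $\tilde y\cdot n$ and $|y-\tilde y|$, and distinguishing the regimes where these quantities are comparable from those where they are not. The second subtle issue is the normalization of $\Pi(\cdot,\tilde y)$ needed to pass from the oscillation estimate (\ref{estim_diff_Pi_y}) to the pointwise bound (\ref{estimPi_y}); this requires separately establishing decay of $\Pi(y,\tilde y)$ as $|y|\to\infty$, which in turn rests on the $L^2$-integrability of $\nabla G(\cdot,\tilde y)$ far from the singularity.
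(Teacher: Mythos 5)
Your plan is essentially the paper's proof (Appendix \ref{append_Gfunction}): the baseline bound $|G|\le C|y-\tilde y|^{2-d}$, then Theorem \ref{unif_estim} at scale $\sim|y-\tilde y|$ for the gradient bounds, vanishing boundary data plus a mean-value/segment argument for each factor of $y\cdot n$ or $\tilde y\cdot n$, symmetry (\ref{sym}) to swap variables, iteration of Steps~3--4 to get the doubly-refined bound, differentiation of the Green system in $\tilde y_k$ for the mixed derivative, and a dyadic covering plus telescoping plus decay at infinity for the pressure.

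Two small points of divergence worth noting. First, the paper proves the baseline $|G|\le C|y-\tilde y|^{2-d}$ by a clean duality argument: take $f\in C^\infty_0(B(\tilde y,r/3))$, apply Lemma~\ref{Holder_estim} and Sobolev/energy estimates to the corresponding solution $(u,p)$, and dualize to bound $\|G(y,\cdot)\|_{L^2(B(\tilde y,r/3))}$; your dyadic-annuli route would also work but is not what is done. Second, what you flag as the delicate point — the doubly-refined bound needing careful simultaneous dyadic scaling in $y\cdot n$, $\tilde y\cdot n$, $|y-\tilde y|$ — is in fact handled by an elementary case split ($y\cdot n\gtrless r/3$) in the paper's Steps~3 and 5, with no multi-scale regime analysis required; once (\ref{estim_Dy_G}) with factor $\tilde y\cdot n$ is in hand, the same two-line mean-value argument in $y$ closes the iteration. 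Also, the paper's pressure estimates rely on the oscillation bound from Theorem~\ref{unif_estim} rather than on Lemma~\ref{Bogov_p}, which you list as an ingredient but which does not actually enter this proof.
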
	
	
\noindent	A proof of  Proposition \ref{Propo_estim_Green_y} is given in Appendix \ref{append_Gfunction}.

	\begin{remark}
	The Green functions  $(G^*, \Pi^*)$ associated to the adjoint operator $A^*$ and defined in (\ref{GPj}) satisfy the estimates in Proposition \ref{Propo_estim_Green_y} with the interchanging of $y$ and $\tilde{y}$. For instance, we have a "star-version" of estimate (\ref{estim_mixder})
		\begin{equation}\label{estim_mixderG*}
		|\nabla_{\tilde{y}} \nabla_y G^*(\tilde{y},y)| \leq  \; \frac{C }{|y-\tilde{y}|^d} ,
	\end{equation}
and estimate	(\ref{estimderPi_y})
		\begin{equation}\label{estimderPi*_y}
		|\nabla_{y} \Pi^*(\tilde{y},y)|\leq \;\frac{C }{|y-\tilde{y}|^{d}}  .
	\end{equation}
		The homogenized Green functions $(G^0, \Pi^0)$ satisfy  estimates \textup{(\ref{estim_G_y})} through \textup{(\ref{estimderPi_y})}. Moreover, we have, for all $\lambda\in \mathbb{N}^d$, 
		\begin{equation}\label{estim_der_G0}
			| \partial^\lambda_y  G^0(y,\tilde{y})| \leq  \; \frac{C }{|y-\tilde{y}|^{d-2+|\lambda|}} .
		\end{equation}
		
	\end{remark}

	\begin{lemma}\label{rescaled_GP}
		The funtions $(G,\Pi)$ and $(G^{\varepsilon},\Pi^{\varepsilon} )$ given respectively by problems \textup{(\ref{G})} and \textup{(\ref{system_Greeneps})} are related as follows
		\begin{align*}
			G^{\varepsilon} (x,\tilde{x})&= \frac{1}{\varepsilon^{d-2}} G\left( \frac{x}{\varepsilon}, \frac{\tilde{x}}{\varepsilon}\right)\quad \mbox{and }\quad 	\Pi^{\varepsilon} (x,\tilde{x})=  \frac{1}{\varepsilon^{d-1}} \Pi\left( \frac{x}{\varepsilon}, \frac{\tilde{x}}{\varepsilon}\right) \mbox{ for all } x,\tilde{x}\in\mathbb{H}_n .
		\end{align*}
	The functions $(G^0,\Pi^0)$ given by \textup{(\ref{GP0})} satisfy, for all $x,\tilde{x}\in\mathbb{H}_n$,
		\begin{align*}
			G^{0}(x,\tilde{x})&=  \frac{1}{\varepsilon^{d-2}}G^{0}\left( \frac{x}{\varepsilon}, \frac{\tilde{x}}{\varepsilon}\right),\quad
			\partial^\lambda_{\!x} G^{0}(x,\tilde{x})=  \frac{1}{\varepsilon^{d-2+|\lambda|}} \, \partial^\lambda_{\!x} G^{0}\left( \frac{x}{\varepsilon}, \frac{\tilde{x}}{\varepsilon}\right),\quad 
			\Pi^{0}(x,\tilde{x})= \frac{1}{\varepsilon^{d-1}} \Pi^{0}\left( \frac{x}{\varepsilon}, \frac{\tilde{x}}{\varepsilon}\right).
		\end{align*}
		
	\end{lemma}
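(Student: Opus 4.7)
My plan is to prove this lemma by a direct verification combined with uniqueness of the Green function. The idea is to define the candidate rescaled functions, show that they satisfy the defining PDE system for the corresponding Green function, and conclude by uniqueness.

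First I would set $\widetilde{G}(x,\tilde{x}) := \varepsilon^{-(d-2)} G(x/\varepsilon, \tilde{x}/\varepsilon)$ and $\widetilde{\Pi}(x,\tilde{x}) := \varepsilon^{-(d-1)} \Pi(x/\varepsilon, \tilde{x}/\varepsilon)$, and verify that $(\widetilde{G},\widetilde{\Pi})$ solves the defining system of $(G^\varepsilon, \Pi^\varepsilon)$ (the analogue of (\ref{system_Greeneps}) with coefficients $A(\cdot/\varepsilon)$ in place of $A^*(\cdot/\varepsilon)$). By the chain rule, $\nabla_x \widetilde{G} = \varepsilon^{-(d-1)} (\nabla G)(x/\varepsilon, \tilde{x}/\varepsilon)$ and $\nabla_x \widetilde{\Pi} = \varepsilon^{-d} (\nabla \Pi)(x/\varepsilon, \tilde{x}/\varepsilon)$, so
\begin{equation*}
-\nabla_x \cdot A(x/\varepsilon)\nabla_x \widetilde{G}(x,\tilde{x}) + \nabla_x \widetilde{\Pi}(x,\tilde{x}) = \frac{1}{\varepsilon^d}\bigl[-\nabla\cdot A\nabla G + \nabla \Pi\bigr]\!\bigl(x/\varepsilon,\tilde{x}/\varepsilon\bigr) = \frac{1}{\varepsilon^d}\, \delta(x/\varepsilon - \tilde{x}/\varepsilon)\, I,
\end{equation*}
and using the scaling rule of the Dirac mass in $\mathbb{R}^d$, namely $\varepsilon^{-d}\delta(y/\varepsilon)=\delta(y)$, the right-hand side equals $\delta(x-\tilde{x}) I$.

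Next I would check the remaining conditions. The divergence-free condition transfers directly: $\nabla_x \cdot \widetilde{G} = \varepsilon^{-(d-1)}(\nabla\cdot G)(x/\varepsilon,\tilde{x}/\varepsilon)=0$. The boundary condition uses the fact that $\partial\mathbb{H}_n$ is a hyperplane through the origin, hence invariant under the dilation $x\mapsto x/\varepsilon$: for $x\in\partial\mathbb{H}_n$ we have $x/\varepsilon\in\partial\mathbb{H}_n$, so $\widetilde{G}(x,\tilde{x})=0$. The pair $(\widetilde{G},\widetilde{\Pi})$ therefore solves the same system as $(G^\varepsilon, \Pi^\varepsilon)$, and uniqueness of the Green function (cited from \cite{choi}, Theorem 2.7) yields the claimed identities.

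For the homogenized statements the argument is even simpler, because $A^0$ is constant: the operator $-\nabla\cdot A^0\nabla$ is dilation-equivariant. Setting $\widetilde{G}^0(x,\tilde{x}):=\varepsilon^{-(d-2)}G^0(x/\varepsilon,\tilde{x}/\varepsilon)$ and $\widetilde{\Pi}^0(x,\tilde{x}):=\varepsilon^{-(d-1)}\Pi^0(x/\varepsilon,\tilde{x}/\varepsilon)$, the same chain-rule computation and the Dirac scaling give that $(\widetilde{G}^0,\widetilde{\Pi}^0)$ solves (\ref{GP0}); uniqueness then gives $G^0(x,\tilde{x})=\varepsilon^{-(d-2)}G^0(x/\varepsilon,\tilde{x}/\varepsilon)$, and differentiating this identity $|\lambda|$ times in $x$ produces the formula for $\partial^\lambda_x G^0$. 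The whole proof is essentially routine; the only point needing care is the scaling of the Dirac distribution, which accounts for the exponents $d-2$ on the velocity component and $d-1$ on the pressure component, so I would state that identity explicitly at the outset.
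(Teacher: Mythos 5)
Your proof is correct and is the standard rescaling argument one would expect here; the paper states Lemma~\ref{rescaled_GP} without a written proof, and your route (define the rescaled pair, verify it satisfies the rescaled Stokes system using the chain rule and the Dirac scaling identity $\varepsilon^{-d}\delta(\cdot/\varepsilon)=\delta$, check the divergence-free and boundary conditions exploiting that $\partial\mathbb{H}_n$ is invariant under dilations about the origin in the $s=0$ case, then invoke uniqueness from \cite{choi}) is exactly the intended argument.
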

	
	\begin{remark} The above estimates remain valid when the Green functions  $(G,\Pi)$ resp. $(G^{\varepsilon},\Pi^{\varepsilon})$ are replaced with their adjoint counterparts $(G^*,\Pi^*)$ resp. $(G^{*,\varepsilon},\Pi^{*,\varepsilon})$.
	\end{remark}
	
	\subsubsection{Representation of the solution to the system \textup{(\ref{hsStokes})}}
	
	\noindent	When a solution $u$ to problem (\ref{hsStokes}) satisfies \textup{(\ref{class})} then it is unique (proved in  Appendix \ref{uniqueness}) and it  has the following integral representation
	\begin{align*}
		u(y)= \int_{\partial\mathbb{H}_n} \left[-A^*(\tilde{y}) \nabla_{\tilde{y}} G^*(\tilde{y},y)\cdot n\,g (\tilde{y})+ \Pi^*(\tilde{y},y)g(\tilde{y}) \cdot n\right] d\tilde{y}
	\end{align*}
	that can be written into coordinates as follows
	\begin{align}\label{represintgral}
		u_i(y)&= \int_{\partial\mathbb{H}_n} P_{ij}(\tilde{y},y)g_j(\tilde{y})  d\tilde{y}\\\nonumber\\
		\mbox{ where }\quad 	P_{ij}(y,\tilde{y})
		&=-(A^*)_{jk}^{\alpha\beta}(\tilde{y})\, \partial_{\tilde{y}_\beta}G_{ki}^*(\tilde{y},y) n_\alpha (\tilde{y})+  \Pi_i^*(\tilde{y},y) n_j(\tilde{y})\\
		&=  -n_\alpha A^{\beta\alpha}_{kj}(\tilde{y})\partial_{\tilde{y}_\beta} G_{ik} (y,\tilde{y}) +\Pi^*_i (\tilde{y},y) n_j	\nonumber.
	\end{align}
	This function $	P(y,\tilde{y})$ is called  Poisson's kernel. In a condensed form we have
	\begin{align*}
		P(y,\tilde{y})
		&=-A^*(\tilde{y}) \nabla_{\tilde{y}} G^*(\tilde{y},y)\cdot n (\tilde{y})+ \Pi^*(\tilde{y},y)\otimes n.
	\end{align*}

		\section{Homogenization in the half-space}\label{section_hom}
	\noindent The aim of this section is to establish a  homogenization   result  which is going to be applied later in Section \ref{section_hom_G}. More precisely, we get an estimate of the difference $u^\varepsilon -u^0$ between a heterogenous quantity and its associate homogenized counterpart. In the present context, the quantities $u^\varepsilon, u^0$ represent velocities respectively associated to some Stokes problems. More precisely, they are defined as follows.
	Let $f$ be a vector valued-function defined in $\overline{\mathbb{H}}_n$ and consider the following system of equations
	\begin{equation}\label{hpstoks}
		\left\{
		\begin{array}{rcll}
			-\nabla\cdot A (x/\varepsilon)\nabla u^\varepsilon +  \nabla p^\varepsilon&=& f  \quad &\mbox{in } {\mathbb{H}}_n,\\
			\nabla\cdot u^\varepsilon&=& 0 ,&\\
			u^\varepsilon &=& 0\quad  &\mbox{on } \partial{\mathbb{H}}_n.
		\end{array}
		\right. 
	\end{equation}
	
	\noindent The solution  $(u^\varepsilon$, $p^\varepsilon)$  can be formally expanded as in (\ref{expansu}), (\ref{expansp}). We do so with the addition of  boundary layer correctors so as to take into account the effects of the boundary :
	\begin{align}
		u_k^\varepsilon(x)&=u_k^0(x)+ \varepsilon	\chi^{\beta}_{kj}(x/\varepsilon)\partial_{x_{\beta}}u_j^0 (x)	 + \varepsilon u^\varepsilon_{bl,k}  (x) +...\label{expansubl}\\
		p^\varepsilon(x)&=  p^0(x)+ \pi^{\beta}_{j}(x/\varepsilon)\partial_{x_{\beta}}u^0_j(x)+\varepsilon  p^\varepsilon_{bl}  (x) + ...\label{expanspbl}
	\end{align}
	where $(u^0,p^0)$ is the solution of the homogenized system of equations
	\begin{equation}\label{hphom0bv}
		\left\{
		\begin{array}{rcll}
			-\nabla\cdot A^0 \: \nabla u^0 +  \nabla p^0&=& f \quad &\mbox{in } {\mathbb{H}}_n,\\
			\nabla\cdot u^0&=& 0 ,\\
			u^0 &=& 0\quad  &\mbox{on } \partial{\mathbb{H}}_n
		\end{array}
		\right. 
	\end{equation}
	and $(u^\varepsilon_{bl} , p^\varepsilon_{bl} )$ being the solution of the problem 
	\begin{equation}\label{eq_uepsbl}
		\left\{
		\begin{array}{rcll}
			-\partial_{x_\alpha}( A^{\alpha\beta}_{ik}(x/\varepsilon)  \partial_{x_\beta}u^\varepsilon_{bl,k})  + \partial_{x_i} p^\varepsilon_{bl} &=& 0  &\mbox{in } {\mathbb{H}}_n,\\
			\partial_{x_k} u^\varepsilon_{bl,k} &=& 0 , &\\
			u^\varepsilon_{bl,k} (x) &=& -\chi^{\beta}_{kj}(x/\varepsilon)\partial_{ x_{\beta}}  u^j_0   (x)\quad  &\mbox{on } \partial{\mathbb{H}}_n.
		\end{array}
		\right. 
	\end{equation}
	Now we intend to estimate the error terms in the approximations (\ref{expansubl}) and (\ref{expanspbl}). To that end, we introduce $r^\varepsilon=(r^\varepsilon_k)_k$ and $q^\varepsilon$ defined as follows
	\begin{align}\label{errors_bl}
		r_k^\varepsilon(x)&:= u_k^\varepsilon(x) -u_k^0(x) - \varepsilon\chi^{\beta}_{kj}(x/\varepsilon)\partial_{ x_{\beta}}  u^0_j   (x) - \varepsilon u^\varepsilon_{bl,k}(x) ,\\
		q^\varepsilon(x)&:=p^\varepsilon(x)- p^0(x)- \pi^{\beta}_{j}(x/\varepsilon)\partial_{ x_{\beta}}  u^0_j   (x)   - \varepsilon p^\varepsilon_{bl}(x).
	\end{align}
	\noindent Obtaining a bound on  $r^\varepsilon$ allows to get a control on the error $ u^\varepsilon -u^0.$\\
	
	\noindent Moreover we assume that the source term $f$ in (\ref{hpstoks}) and (\ref{hphom0bv}) belongs to $C_c^1 (\overline{\mathbb{H}}_n)$. This property of the source term will turn out to be relevant in an application later in Section  \ref{section_hom_G}. Indeed, the proof of the main result in that section is based on a duality argument.

	\begin{proposition}\label{propolinfestim}
		Let $\delta\in(0,d/2)$ and $f\in C^1_c(B(0,1/2))$.	There exists $C>0$ depending  on $d,\mu,[A]_{C^{0,\eta}}$ and $\delta$
		such that for all $\varepsilon>0$ we have the following estimates 
		\begin{align}
			\| r^\varepsilon \|_{L^\infty(\mathbb{H}_n)}&\leq C\varepsilon\| f\|_{W^{1,d/2+\delta }(\mathbb{H}_n)},\nonumber\\
			\| u^\varepsilon_{bl}\|_{L^\infty(\mathbb{H}_n)}&\leq C\| f\|_{W^{1,d/2+ \delta }(\mathbb{H}_n)},\nonumber\\
			\| u^\varepsilon - u^0 \|_{L^\infty(\mathbb{H}_n)}&\leq C\varepsilon\| f \|_{W^{1,d/2+\delta }(\mathbb{H}_n)}. \label{estimL8uepsu0}  
		\end{align}
	\end{proposition}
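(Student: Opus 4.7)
The plan is to follow the classical two-scale asymptotic expansion strategy combined with Green's function/Poisson's kernel representations to obtain the three $L^\infty$ bounds. First, I would derive the equations satisfied by the remainder $(r^\varepsilon, q^\varepsilon)$ by substituting the expansion (\ref{expansubl})--(\ref{expanspbl}) into the Stokes system (\ref{hpstoks}) and using the interior corrector equations (\ref{chipi}) and (\ref{GamQ}). A direct computation, in which the $O(1)$ terms are absorbed using (\ref{chipi}) and the residual $O(1)$ terms on the momentum side are recast using the second-order correctors $(\Gamma, Q)$ satisfying (\ref{GamQ}), yields a Stokes system of the form
\begin{equation*}
-\mathcal{L}_\varepsilon r^\varepsilon + \nabla q^\varepsilon = \varepsilon\,\nabla\cdot F^\varepsilon + \varepsilon\, H^\varepsilon, \qquad \nabla\cdot r^\varepsilon = \varepsilon\, h^\varepsilon, \qquad r^\varepsilon|_{\partial\mathbb{H}_n} = 0,
\end{equation*}
where $F^\varepsilon, H^\varepsilon, h^\varepsilon$ are built from the correctors $\chi(\cdot/\varepsilon), \pi(\cdot/\varepsilon), \Gamma(\cdot/\varepsilon), Q(\cdot/\varepsilon)$ contracted with $\nabla^2 u^0$ and $\nabla^3 u^0$. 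The boundary condition $r^\varepsilon = 0$ is precisely what the choice of data for $u^\varepsilon_{bl}$ in (\ref{eq_uepsbl}) enforces. By Lemma \ref{bound_on_correctors} the correctors are uniformly bounded, so $\|F^\varepsilon\|_{L^p}, \|H^\varepsilon\|_{L^p}, \|h^\varepsilon\|_{L^p}$ are all controlled by $\|\nabla^2 u^0\|_{L^p} + \|\nabla^3 u^0\|_{L^p}$ for any $p$.

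Next, I would invoke regularity for the homogenized problem (\ref{hphom0bv}). Since $A^0$ is constant and $f \in C^1_c(B(0,1/2))$, the representation $u^0(x) = \int G^0(x,y) f(y)\,dy$ combined with the pointwise bounds on $G^0$ from the remark after Proposition \ref{Propo_estim_Green_y} and Calder\'on--Zygmund theory for the constant-coefficient Stokes operator delivers
\begin{equation*}
\|\nabla^2 u^0\|_{L^{d/2+\delta}(\mathbb{H}_n)} + \|\nabla^3 u^0\|_{L^{d/2+\delta}(\mathbb{H}_n)} \leq C\,\|f\|_{W^{1,d/2+\delta}(\mathbb{H}_n)}.
\end{equation*}
The condition $\delta < d/2$ ensures $d/2 + \delta < d$, so $W^{1,d/2+\delta}$ embeds into $L^{p^*}$ with $p^* > d$, and Morrey's inequality gives $\|\nabla u^0\|_{L^\infty} \leq C\|f\|_{W^{1,d/2+\delta}}$. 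This immediately yields the bound on $u^\varepsilon_{bl}$: the boundary data $-\chi(x/\varepsilon)\cdot \nabla u^0(x)$ has $L^\infty(\partial\mathbb{H}_n)$ norm at most $\|\chi\|_\infty\|\nabla u^0\|_\infty \leq C\|f\|_{W^{1,d/2+\delta}}$, and by the Poisson kernel representation (\ref{represintgral}) together with the uniform $L^1(\partial\mathbb{H}_n,d\tilde y)$ control of $P^\varepsilon(x,\cdot)$ (obtained from the rescaling Lemma \ref{rescaled_GP} applied to the pointwise bounds on $\nabla_{\tilde y} G^\varepsilon$ and $\Pi^\varepsilon$) we conclude $\|u^\varepsilon_{bl}\|_{L^\infty} \leq C\|f\|_{W^{1,d/2+\delta}}$.

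For the estimate on $r^\varepsilon$, I would use the Green's function representation associated with the heterogeneous Stokes operator in $\mathbb{H}_n$:
\begin{equation*}
r^\varepsilon(x) = -\varepsilon\!\int_{\mathbb{H}_n}\! \nabla_y G^\varepsilon(x,y) F^\varepsilon(y)\,dy + \varepsilon\!\int_{\mathbb{H}_n}\! G^\varepsilon(x,y) H^\varepsilon(y)\,dy + \varepsilon\,\mathcal{R}^\varepsilon(x),
\end{equation*}
where $\mathcal{R}^\varepsilon$ collects the contribution of the non-zero divergence $\varepsilon h^\varepsilon$ handled by a Bogovskii-type lifting (as in Lemma \ref{Bogov_p}). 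Using the rescaling of Lemma \ref{rescaled_GP} and the pointwise estimates (\ref{estim_G_y})--(\ref{estim_dertildG}) uniformly in $\varepsilon$, one verifies that $\|G^\varepsilon(x,\cdot)\|_{L^{q'}(\mathbb{H}_n)} + \|\nabla_y G^\varepsilon(x,\cdot)\|_{L^{q'}(\mathbb{H}_n)} \leq C$ for $q'$ the dual exponent of $d/2+\delta$; the restriction $\delta < d/2$ is exactly what makes these integrals converge both at the diagonal and at infinity. H\"older's inequality then gives $\|r^\varepsilon\|_{L^\infty} \leq C\varepsilon\|f\|_{W^{1,d/2+\delta}}$, and the third estimate (\ref{estimL8uepsu0}) follows by the triangle inequality applied to (\ref{errors_bl}) together with $\|\chi\|_\infty \|\nabla u^0\|_\infty \leq C\|f\|_{W^{1,d/2+\delta}}$.

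The hard part will be the $L^\infty$ control of the Green function integrals in $r^\varepsilon$. One must simultaneously manage the diagonal singularity of $G^\varepsilon$ and $\nabla_y G^\varepsilon$, the boundary decay encoded by the $y\cdot n$ factors in Proposition \ref{Propo_estim_Green_y}, and the Bogovskii correction for the non-zero divergence $\varepsilon h^\varepsilon$. The Sobolev exponent $d/2+\delta$ with $\delta < d/2$ is dictated precisely by the range for which the integrals of the dual powers of the Green's function over $\mathbb{H}_n$ converge, while still being strong enough that Morrey's inequality yields $L^\infty$ control of $\nabla u^0$.
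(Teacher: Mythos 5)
The broad strategy you outline — recast the residual as order $\varepsilon$, represent $r^\varepsilon$ and $u^\varepsilon_{bl}$ via Green's function/Poisson kernel, estimate with the pointwise Green function bounds of Proposition \ref{Propo_estim_Green_y} and the regularity of $u^0$ — is the same as the paper's. Two of your presentation choices differ from the paper but would work: you propose to absorb the $O(1)$ residual in the momentum equation via the second-order correctors $(\Gamma,Q)$, whereas the paper keeps $r^\varepsilon$ as a first-order remainder and instead invokes the flux correctors $(\phi,q)$ of Lemma \ref{divphi}; and you propose a Bogovskii lifting of the nonzero divergence, whereas the paper incorporates it directly through the adjoint pressure kernel $\Pi^{*,\varepsilon}$ in the representation formula. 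Neither choice is a defect.

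There is, however, a genuine gap in the core estimate for $r^\varepsilon$. You assert that
\[
\|G^\varepsilon(x,\cdot)\|_{L^{q'}(\mathbb{H}_n)} + \|\nabla_y G^\varepsilon(x,\cdot)\|_{L^{q'}(\mathbb{H}_n)} \le C, \qquad q'=(d/2+\delta)',
\]
and conclude by a single application of H\"older. This bound is false for any single $q'$. Near the diagonal $\nabla_y G^\varepsilon(x,y)\sim|x-y|^{1-d}$, so local integrability of $|\nabla_y G^\varepsilon|^{q'}$ forces $q'<d/(d-1)$, while integrability at infinity forces $q'>d/(d-1)$; for $G^\varepsilon\sim|x-y|^{2-d}$ the analogous dichotomy occurs around $d/(d-2)$. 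Hence $\nabla_y G^\varepsilon(x,\cdot)$ and $G^\varepsilon(x,\cdot)$ belong to no global $L^{q'}(\mathbb{H}_n)$, and the claim that "$\delta<d/2$ is exactly what makes these integrals converge both at the diagonal and at infinity" cannot hold. The paper's proof gets around this by splitting each integral at $|x-\tilde x|=1$ and using different H\"older exponent pairs in the two regions: near the diagonal one takes $p<d/(d-1)$, hence $p'=d+\delta>d$, which requires the estimate $\|\nabla^2 u^0\|_{L^{d+\delta}}\le C\|f\|_{W^{1,d/2+\delta}}$ obtained by Sobolev embedding of $W^{1,d/2+\delta}$ into $L^{d+\delta}$; far from the diagonal one takes $q'=(d/2+\delta)'$ but must then further split according to whether $\tilde x$ lies in $B(0,1)$ or not and use the pointwise decay $|h^\varepsilon(\tilde x)|\le C|\tilde x|^{-d}\|f\|_{L^{d/2+\delta}}$, $|g^\varepsilon(\tilde x)|\le C|\tilde x|^{-d-1}\|f\|_{L^{d/2+\delta}}$ for $|\tilde x|\ge 1$ established in Lemma \ref{fepsordereps} and the following lemma from the compact support of $f$ and the Green function decay of $u^0$. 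You would need to add this near/far splitting and the decay lemma to make the $r^\varepsilon$ bound go through; the $L^\infty$ bound on $u^\varepsilon_{bl}$ you give is fine.
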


	\begin{remark}
		The pair $(r^\varepsilon,q^\varepsilon)$ satisfies
		\begin{equation}\label{rqeps}
			\left\{
			\begin{array}{rcll}
				-\partial_{x_\alpha} ( A^{\alpha\beta}_{ik}(x/\varepsilon)\partial_{x_\beta}  r_k^\varepsilon) + \partial_{x_i} q^\varepsilon&=&  f_i^\varepsilon  , \quad i=1,...,d, &\mbox{in } \mathbb{H}_n,\,\\
				\partial_{x_k} r_k^\varepsilon&=& - \varepsilon \chi^{\beta}_{kj}(x/\varepsilon)\partial_{x_k}\partial_{ x_{\beta}}  u^j_0   (x)   &\mbox{in } {\mathbb{H}}_n,\\
				r^\varepsilon &=& 0\quad & \mbox{on } \partial{\mathbb{H}}_n.
			\end{array}
			\right. 
		\end{equation}
		where
		\begin{align}\label{feps}
			f_i^\varepsilon
			= f_i&	+\partial_{x_\alpha} \left( A^{\alpha\beta}_{ik}(x/\varepsilon)\partial_{x_\beta}	\! u_k^0   \right)- \partial_{x_i}  p_0 \nonumber\\
			& 	+\partial_{x_\alpha} \left( A^{\alpha\gamma}_{ik}(x/\varepsilon)\partial_{x_\gamma}	\! ( \varepsilon\chi^{\beta}_{kj}(x/\varepsilon)\partial_{ x_{\beta}}  u^j_0  ) \right)- \partial_{x_i} \left( \pi^{\beta}_{j}(x/\varepsilon)\partial_{ x_{\beta}}  u^j_0 \right).
		\end{align}
		Here we have used 	the first equations in  systems \textup{(\ref{eq_uepsbl})} and \textup{(\ref{hpstoks})}:
		\begin{align*}
			f_i 
			= & 	-\partial_{x_\alpha} ( A^{\alpha\beta}_{ik}(x/\varepsilon)\partial_{x_\beta}	u_k^\varepsilon ) + \partial_{x_i} p^\varepsilon \quad
			\mbox{  and }\quad
			0
			= 	 -\partial_{x_\alpha} (A^{\alpha\beta}_{ik}(x/\varepsilon)\partial_{x_\beta}	\!  u^\varepsilon_{bl,k})  +  \partial_{x_i} p^\varepsilon_{bl}.\nonumber\\
		\end{align*}		
		
	\end{remark}
	\noindent	The quantity $f^\varepsilon$, given in the expression (\ref{feps}), apparently contains terms of order zero in $\varepsilon$. Yet 
	we would like to factorize it by $\varepsilon$. To that end, we make some transformation of $f^\varepsilon$ given in (\ref{feps}).

	\begin{lemma}\label{fepsordereps} The source term $ f^\varepsilon$ in \textup{(\ref{rqeps})} can be factorized in the following way
		\begin{align}
			f_i^\varepsilon(x)
			=& \,\varepsilon \left[ \partial_{x_\gamma} \{ 	(h^\varepsilon_1)_i^\gamma (x) \}	+ \partial_{x_i} \{ 	h^\varepsilon_2(x) \}
			+  \partial_{x_\alpha} \{ 	(h^\varepsilon_3)^\alpha_i(x) \}+ \partial_{x_\alpha}\{	(h_4^\varepsilon)_i^\alpha(x) \} \right] \nonumber \\
			&-\varepsilon \left[ 	(g^\varepsilon_1)_i(x)+	(g^\varepsilon_2)_i(x) \right] \label{Lemm_feps}
		\end{align}
		where we have set
		\begin{align}
			(h^\varepsilon_1)_i^\gamma (x)	:=& \phi^{\gamma\alpha\beta}_{ij}(x/\varepsilon)\partial_{x_\alpha}\partial_{x_\beta} u_j^0 (x), \quad
			h^\varepsilon_2(x):= q^{\alpha\beta}_{j}(x/\varepsilon)\partial_{x_\alpha}\partial_{x_\beta} u_j^0(x), \nonumber\\
			(h^\varepsilon_3)^\alpha_i(x):=&q^{\alpha\beta}_{j}(x/\varepsilon)\partial_{x_i}\partial_{x_\beta} u_j^0(x),
			\label{def_h_i_eps}\\
			(h_4^\varepsilon)_i^\alpha(x):= & A^{\alpha\gamma}_{ik} (x/\varepsilon) \chi^{\beta}_{kj}(x/\varepsilon)\partial_{x_\gamma}	\partial_{ x_{\beta}}  u_j^0 (x)\nonumber
		\end{align}
	and
		\begin{align}
			(g^\varepsilon_1)_i(x):=\phi^{\gamma\alpha\beta}_{ij}(x/\varepsilon)\partial_{x_\alpha}\partial_{x_\beta} \partial_{x_\gamma} u_j^0(x), \qquad 
			(g^\varepsilon_2)_i(x) := 2 q^{\alpha\beta}_{j}(x/\varepsilon)\partial_{x_i} \partial_{x_\alpha}\partial_{x_\beta} u_j^0(x)\label{def_geps}
		\end{align}
	 with the functions $ \phi^{\gamma\alpha\beta}_{ij}, \, q^{\alpha\beta}_{j}$  being given by the following Lemma. 
	\end{lemma}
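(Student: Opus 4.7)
The plan is to substitute the homogenized equation (\ref{eq_homgn}) into the expression (\ref{feps}) for $f^\varepsilon_i$ so as to eliminate $f_i$ and $\partial_{x_i} p^0$, and then to factor out an overall $\varepsilon$ with the help of the cell equations (\ref{chipi}) together with the flux-corrector pair $(\phi^{\gamma\alpha\beta}_{ij},q^{\alpha\beta}_j)$ furnished by the following Lemma. Concretely, after replacing $f_i-\partial_{x_i}p^0$ by $-\partial_{x_\alpha}\bigl((A^0)^{\alpha\beta}_{ij}\partial_{x_\beta} u^0_j\bigr)$ and expanding $\partial_{x_\alpha}\bigl[A^{\alpha\gamma}_{ik}(x/\varepsilon)\partial_{x_\gamma}\bigl(\varepsilon\chi^\beta_{kj}(x/\varepsilon)\partial_{x_\beta} u^0_j\bigr)\bigr]$ by the chain rule, the sub-contribution in which $\partial_{x_\gamma}$ lands on $u^0$ already carries the factor $\varepsilon$ and is collected as $\varepsilon\partial_{x_\alpha}(h^\varepsilon_4)^\alpha_i$. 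Grouping the remaining pieces yields
\[
f^\varepsilon_i = \partial_{x_\alpha}\Bigl[\bigl(B^{\alpha\beta}_{ij}(x/\varepsilon)-\overline{B^{\alpha\beta}_{ij}}-\delta_{i\alpha}\pi^\beta_j(x/\varepsilon)\bigr)\partial_{x_\beta} u^0_j\Bigr] + \varepsilon\partial_{x_\alpha}(h^\varepsilon_4)^\alpha_i,
\]
with $B$ as defined after (\ref{GamQ}) and $\overline{B^{\alpha\beta}_{ij}}=(A^0)^{\alpha\beta}_{ij}$.

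Next, distribute the outer $\partial_{x_\alpha}$. The chain rule produces a potentially singular $\varepsilon^{-1}$ contribution $(1/\varepsilon)\bigl(\partial_{y_\alpha}B^{\alpha\beta}_{ij}-\partial_{y_i}\pi^\beta_j\bigr)(x/\varepsilon)\partial_{x_\beta} u^0_j$, which vanishes identically by the $(\chi^\beta_j,\pi^\beta_j)$-cell equation (\ref{chipi}). What remains is the pointwise product
\[
T^{\alpha\beta}_{ij}(x/\varepsilon)\,\partial_{x_\alpha}\partial_{x_\beta} u^0_j,\qquad T^{\alpha\beta}_{ij}(y):=B^{\alpha\beta}_{ij}(y)-\overline{B^{\alpha\beta}_{ij}}-\delta_{i\alpha}\pi^\beta_j(y),
\]
a periodic, mean-zero tensor field. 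The forthcoming Lemma supplies periodic $\phi^{\gamma\alpha\beta}_{ij}$, antisymmetric in the pair $(\gamma,\alpha)$, together with periodic $q^{\alpha\beta}_j$ satisfying $\partial_{y_\alpha}q^{\alpha\beta}_j=0$, such that $T^{\alpha\beta}_{ij}(y)=\partial_{y_\gamma}\phi^{\gamma\alpha\beta}_{ij}(y)+\partial_{y_i}q^{\alpha\beta}_j(y)$. Substituting this identity, converting $(\partial_y\cdot)(x/\varepsilon)=\varepsilon\partial_x[\cdot(x/\varepsilon)]$ and running the product rule in reverse then recasts the residual as the four divergences $\varepsilon\partial_{x_\gamma}(h^\varepsilon_1)_i^\gamma+\varepsilon\partial_{x_i}h^\varepsilon_2+\varepsilon\partial_{x_\alpha}(h^\varepsilon_3)^\alpha_i+\varepsilon\partial_{x_\alpha}(h^\varepsilon_4)^\alpha_i$ minus the cubic-derivative remainders $\varepsilon(g^\varepsilon_1)_i+\varepsilon(g^\varepsilon_2)_i$, which is exactly (\ref{Lemm_feps}).

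The principal technical point is to match the book-keeping. The contribution $(\partial_{y_\alpha}q^{\alpha\beta}_j)(x/\varepsilon)\partial_{x_i}\partial_{x_\beta} u^0_j$ that would appear when expanding $\varepsilon\partial_{x_\alpha}(h^\varepsilon_3)^\alpha_i$ vanishes thanks to the normalization $\partial_{y_\alpha}q^{\alpha\beta}_j=0$; hence the $\partial_{x_\alpha}$-divergence of $(h^\varepsilon_3)$ contributes only at the cubic level. The cubic piece coming from $(h^\varepsilon_1)$ is precisely $\varepsilon(g^\varepsilon_1)_i$ while the cubic pieces coming from $(h^\varepsilon_2)$ and $(h^\varepsilon_3)$ both reduce, by commutativity of partial derivatives, to $\varepsilon q^{\alpha\beta}_j(x/\varepsilon)\partial_{x_i}\partial_{x_\alpha}\partial_{x_\beta} u^0_j$, so they combine into $\varepsilon(g^\varepsilon_2)_i$ with the displayed prefactor $2$. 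Granting these algebraic features of $(\phi,q)$ — which is the content of the following Lemma — the identity (\ref{Lemm_feps}) follows by the computation above, and the only nontrivial step that remains to be justified elsewhere is the actual construction of the flux-corrector pair $(\phi,q)$ with the required antisymmetry and divergence-free properties.
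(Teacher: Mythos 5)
Your overall strategy — plug in the homogenized equation to kill $f_i-\partial_{x_i}p^0$, peel off the $\varepsilon$-carrying term as $\varepsilon\partial_{x_\alpha}(h^\varepsilon_4)$, and then use the flux-corrector pair $(\phi,q)$ to write the remaining mean-zero residual as $\varepsilon$ times divergences plus cubic remainders — is the same outline as the paper's. But the way you set it up contains two concrete errors that derail the bookkeeping.

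First, you misquote Lemma \ref{divphi}. The lemma states $b^{\alpha\beta}_{ij}=\partial_{y_\gamma}\phi^{\gamma\alpha\beta}_{ij}+\partial_{y_i}q^{\alpha\beta}_j$ \emph{and} $\pi^{\beta}_{j}=\partial_{y_\alpha}q^{\alpha\beta}_{j}$; you replaced the second identity by the normalization $\partial_{y_\alpha}q^{\alpha\beta}_j=0$ and also asserted an antisymmetry of $\phi$ in $(\gamma,\alpha)$ that the lemma does not claim. This is not a cosmetic change: with $\partial_{y_\alpha}q^{\alpha\beta}_j=0$ the pressure corrector $\pi^{\beta}_{j}(x/\varepsilon)\,\partial_{x_i}\partial_{x_\beta}u^0_j$ has nowhere to go and there is no natural source for the term $\varepsilon\partial_{x_\alpha}(h^\varepsilon_3)^\alpha_i$ at all. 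In the paper, $(h^\varepsilon_3)$ arises precisely from rewriting $\pi^{\beta}_{j}=\partial_{y_\alpha}q^{\alpha\beta}_{j}$ and then converting $\partial_{y_\alpha}q(x/\varepsilon)=\varepsilon\partial_{x_\alpha}\{q(x/\varepsilon)\cdot\}-\varepsilon\, q(x/\varepsilon)\partial_{x_\alpha}(\cdot)$; the two $q\,\partial^3 u^0$ residues (one from $\partial_{y_i}q$, one from $\partial_{y_\alpha}q$) are what combine into the factor $2$ in $(g^\varepsilon_2)$. Your explanation of the factor $2$ (\emph{``both reduce, by commutativity, to $\varepsilon q\,\partial_{x_i}\partial_{x_\alpha}\partial_{x_\beta}u^0_j$''}) presupposes the missing identity and contradicts your own normalization.

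Second, the tensor you feed into the flux corrector is wrong. After peeling off $\varepsilon\partial_{x_\alpha}(h^\varepsilon_4)$ with $(h^\varepsilon_4)^\alpha_i=A^{\alpha\gamma}_{ik}(x/\varepsilon)\chi^\beta_{kj}(x/\varepsilon)\partial_{x_\gamma}\partial_{x_\beta}u^0_j$, the piece where $\partial_{x_\gamma}$ hit $\chi$ leaves $\partial_{x_\alpha}\bigl[A^{\alpha\gamma}_{ik}(x/\varepsilon)\,\partial_{y_\gamma}\chi^{\beta}_{kj}(x/\varepsilon)\,\partial_{x_\beta}u^0_j\bigr]$, which combines with the other $\partial_{x_\alpha}$-terms into $\partial_{x_\alpha}\bigl[b^{\alpha\beta}_{ij}(x/\varepsilon)\,\partial_{x_\beta}u^0_j\bigr]$, with $b$ exactly as in Lemma \ref{divphi}. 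What you wrote instead, $B^{\alpha\beta}_{ij}-\overline{B^{\alpha\beta}_{ij}}$, differs from $b^{\alpha\beta}_{ij}$ by the extra total-divergence term $\partial_{y_\gamma}\bigl(A^{\gamma\alpha}_{ik}\chi^\beta_{kj}\bigr)$; the tensor $B$ belongs to the second-order corrector equation (\ref{GamQ}), not to the flux-corrector Lemma \ref{divphi}. So $T$ is not the object the lemma decomposes, and the subsequent identity $T=\partial_{y_\gamma}\phi+\partial_{y_i}q$ is not available.

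To repair the argument you should keep the residual tensor as $b^{\alpha\beta}_{ij}(x/\varepsilon)$, use \emph{both} identities of Lemma \ref{divphi} (including $\pi=\partial_{y_\alpha}q$, which is what generates $h^\varepsilon_3$), and then carefully track the two $q\,\partial^3 u^0$ residues from $\partial_{y_i}q$ and $\partial_{y_\alpha}q$ to assemble $(g^\varepsilon_2)$.
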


		\noindent	We now state the following classical lemma
	\begin{lemma}\label{divphi} \textup{ (\cite{GZ}, Lemma 2.2.)}
		Let us set 
		\begin{equation*}
			b^{\alpha\beta}_{ij}(y):= A^{\alpha\beta}_{ij} (y) +  A^{\alpha\gamma}_{ik} (y)\partial_{y_\gamma}\chi ^{\beta}_{kj}(y) -  (A^ 0)^{\alpha\beta}_{ij}.
		\end{equation*}
		Then 
		for fixed $\alpha,\beta,j \in \{1,...,d\}$  there exists $(\phi^{\gamma\alpha\beta}_{ij} , q^{\alpha\beta}_{j}) \in H^1 (Y)^d\times H^1(Y)$ such that 
		\begin{equation*}
			b^{\alpha\beta}_{ij}= \frac{\partial}{\partial y_\gamma}\phi^{\gamma\alpha\beta}_{ij} + \frac{\partial}{\partial y_i}q^{\alpha\beta}_{j},\qquad 	\pi^{\beta}_{j}=\frac{\partial}{\partial y_\alpha}q^{\alpha\beta}_{j}.
		\end{equation*}
		and
		\begin{align*}
			\| \phi^{\gamma\alpha\beta}_{ij} \|_{L^\infty(Y)} + \| q^{\alpha\beta}_{j} \|_{L^\infty(Y)} \leq C,
		\end{align*}
		where $C>0$ depends only on $d, \mu$ and $[A]_{C^{0,\eta}}$.
	\end{lemma}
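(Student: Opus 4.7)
My plan is to build $q^{\alpha\beta}_j$ and $\phi^{\gamma\alpha\beta}_{ij}$ separately by reducing the decomposition to two elliptic problems on the torus, following the classical flux-corrector construction. The first step is to extract the structural identities for $b^{\alpha\beta}_{ij}$ from the cell problem (\ref{chipi}). Writing the first line of (\ref{chipi}) componentwise, separating off the contribution of the affine part $P^\beta_j$, and using the definition of $(A^0)^{\alpha\beta}_{ij}$ as the $Y$-average of the non-pressure part, one reads off, for each fixed triple $(\beta,i,j)$,
\[
\partial_{y_\alpha}b^{\alpha\beta}_{ij}(y) \,=\, \partial_{y_i}\pi^\beta_j(y) \quad \text{and} \quad \int_Y b^{\alpha\beta}_{ij}(y)\,dy \,=\, 0.
\]

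Next I would construct $q^{\alpha\beta}_j$ so as to absorb the pressure divergence on the right-hand side. Since $\pi^\beta_j$ is smooth, periodic, and zero-mean, let $\psi^\beta_j$ be the zero-mean periodic solution of $\Delta\psi^\beta_j = \pi^\beta_j$ on $\mathbb{T}^d$ and set $q^{\alpha\beta}_j := \partial_{y_\alpha}\psi^\beta_j$. Then $\partial_{y_\alpha}q^{\alpha\beta}_j = \pi^\beta_j$ by construction. The residual tensor $\widetilde b^{\alpha\beta}_{ij} := b^{\alpha\beta}_{ij} - \partial_{y_i}q^{\alpha\beta}_j$ is then zero-mean on $Y$ and divergence-free in $\alpha$.

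The flux corrector $\phi^{\gamma\alpha\beta}_{ij}$ is then produced in the standard antisymmetric form: solve $\Delta\Phi^{\alpha\beta}_{ij} = \widetilde b^{\alpha\beta}_{ij}$ on $\mathbb{T}^d$ with zero mean, and define
\[
\phi^{\gamma\alpha\beta}_{ij} \,:=\, \partial_{y_\gamma}\Phi^{\alpha\beta}_{ij} - \partial_{y_\alpha}\Phi^{\gamma\beta}_{ij}.
\]
Antisymmetry in $(\gamma,\alpha)$ is built in, and the identity $\partial_{y_\gamma}\phi^{\gamma\alpha\beta}_{ij} = \widetilde b^{\alpha\beta}_{ij}$ reduces to the observation that $\partial_{y_\gamma}\Phi^{\gamma\beta}_{ij}$ is harmonic and periodic (its Laplacian equals $\partial_{y_\gamma}\widetilde b^{\gamma\beta}_{ij} = 0$), hence a constant that may be normalized to zero.

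The $L^\infty$ bounds then follow from periodic elliptic regularity: assumption (\ref{regular_A}) together with Lemma \ref{bound_on_correctors} gives $\chi,\pi\in C^\infty(\mathbb{T}^d)$ with norms controlled by $[A]_{C^{0,\eta}}$, hence $b^{\alpha\beta}_{ij}$ and $\pi^\beta_j$ are smooth with quantitative norms, and classical Schauder estimates for $-\Delta$ on $\mathbb{T}^d$ bound $\psi^\beta_j$ and $\Phi^{\alpha\beta}_{ij}$ in $C^2$. I expect the main obstacle to be not any of the three constructions in isolation but the verification that the structural identity $\partial_{y_\alpha}b^{\alpha\beta}_{ij} = \partial_{y_i}\pi^\beta_j$ is precisely of the divergence form in the $\alpha$ index needed for $\partial_{y_i}q^{\alpha\beta}_j$ to cancel it: this is what enables the antisymmetric potential step, and it must be extracted carefully from the componentwise expansion of (\ref{chipi}) with the specific choice of the affine part $P^\beta_j$.
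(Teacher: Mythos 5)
Your proof is correct and follows the standard flux-corrector construction that underlies the cited Lemma 2.2 of \cite{GZ}: extract the two structural identities $\partial_{y_\alpha}b^{\alpha\beta}_{ij}=\partial_{y_i}\pi^\beta_j$ and $\int_Y b^{\alpha\beta}_{ij}=0$ from the cell problem (\ref{chipi}) and the definition of $A^0$, build $q^{\alpha\beta}_j=\partial_{y_\alpha}\psi^\beta_j$ via one torus Poisson problem to absorb the pressure, build the antisymmetric $\phi^{\gamma\alpha\beta}_{ij}$ via a second torus Poisson problem applied to the divergence-free residual, and close with Schauder estimates. The one normalization you quietly use — $\int_Y\pi^\beta_j\,dy=0$, which is what makes $\Delta\psi^\beta_j=\pi^\beta_j$ solvable and is also forced by the target identity $\pi^\beta_j=\partial_{y_\alpha}q^{\alpha\beta}_j$ — is the standard convention for the Stokes cell pressure and is harmless to impose, so there is no real gap.
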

	\begin{proof}[Proof  of \textup{Lemma \ref{fepsordereps}}]
		By the first equation  in (\ref{chipi} )  $f_i^\varepsilon$ can be rewritten
		\begin{align}
			f_i^\varepsilon=f_i^0 + f_i^1 
		\end{align}			
		with	\begin{align*}
			f_i^0:
			=&f_i- \partial_{x_i}  p^0 +\partial_{x_\alpha}\left( A^{\alpha\beta}_{ik}(y)\partial_{x_\beta}	\! u_k^0 \right)+  \partial_{x_\alpha} \left(A^{\alpha\gamma}_{ik}(y)\partial_{y_\gamma}	\chi^{\beta}_{kj}(y)\partial_{ x_{\beta}}  u^0_j \right)-  \pi^{\beta}_{j}(y)\partial_{x_i} \partial_{ x_{\beta}}  u^0_j
		\end{align*} 		
		and 
		\begin{align*}
			f_i^1:&=	\partial_{y_\alpha} \left(A^{\alpha\gamma}_{ik}(y)  \chi^{\beta}_{kj}(y) \right)\partial_{x_\gamma}	\partial_{ x_{\beta}}  u^0_j  +\varepsilon \partial_{x_\alpha}\left(  A^{\alpha\gamma}_{ik} (y) \chi^{\beta}_{kj}(y) \partial_{x_\gamma}	\partial_{ x_{\beta}}  u^0_j \right) \\ &= \varepsilon \partial_{x_\alpha} \left(A^{\alpha\gamma}_{ik} (x/\varepsilon) \chi^{\beta}_{kj}(x/\varepsilon) \partial_{x_\gamma}	\partial_{ x_{\beta}}  u^0_j  \right) .
		\end{align*}
		
		\noindent 	Using the homogenized equation,  $	-\partial_{x_\alpha}(A^0)^{\alpha\beta}_{ij}\partial_{x_\beta}	\! u_j^0   +  \partial_{x_i} p^0 =f_i,\ $    we rewrite  $f^0$ as
		\begin{align*}
			f_i^0=&-\partial_{x_\alpha}(A^0)^{\alpha\beta}_{ij}\partial_{x_\beta}	\! u_j^0 +\partial_{x_\alpha}\left(A^{\alpha\beta}_{ij}(y)\partial_{x_\beta}	\! u_j^0 \right) + [ \partial_{x_\alpha} A^{\alpha\gamma}_{ik}(y)\partial_{y_\gamma}	\chi^{\beta}_{kj}(y)\partial_{ x_{\beta}}  u^0_j]-  \pi^{\beta}_{j}(y)\partial_{x_i} \partial_{ x_{\beta}}  u^0_j\\
			=&\left(  - (A^0)^{\alpha\beta}_{ij} +A^{\alpha\beta}_{ij}(y) +   A^{\alpha\gamma}_{ik}(y)\partial_{y_\gamma}	\chi^{\beta}_{kj}(y)  \right) \partial_{x_\alpha} \partial_{ x_{\beta}}  u^0_j-  \pi^{\beta}_{j}(y)\partial_{x_i} \partial_{ x_{\beta}}  u^0_j.
		\end{align*}  
		Now we use Lemma \ref{divphi} to obtain
		\begin{align*}
			f_i^0=& \left(\partial_{y_\gamma} \phi^{\gamma\alpha\beta}_{ij}(y) + \partial_{y_i} q^{\alpha\beta}_{j} (y)  \right) \partial_{x_\alpha}  \partial_{x_\beta} u_j^0  -\partial_{y_\alpha}q^{\alpha\beta}_{j}(y) \partial_{ x_i}\partial_{ x_{\beta}}  u^0_j\\
			=	& \,\partial_{y_\gamma} \phi^{\gamma\alpha\beta}_{ij}(y)\partial_{x_\alpha}\partial_{x_\beta} u_j^0  + \partial_{y_i} q^{\alpha\beta}_{j} (y)\partial_{x_\alpha}\partial_{x_\beta} u_j^0  -\partial_{y_\alpha}q^{\alpha\beta}_{j}(y) \partial_{ x_i}\partial_{ x_{\beta}}  u^0_j.
		\end{align*} 
		The terms in the right hand side of the above equation can be factorized as follows. From the identity 
		\begin{align*}
		\partial_{x_\gamma}\!\left( \phi^{\gamma\alpha\beta}_{ij}\!(x/\varepsilon)\partial_{x_\alpha}\!\partial_{x_\beta} u_j^0(x) \right)\! =&\: (1/\varepsilon)[  \partial_{y_\gamma}  \phi^{\gamma\alpha\beta}_{ij}(y)\partial_{x_\alpha}\!\partial_{x_\beta} u_j^0] (x,x/\varepsilon) +  [  \partial_{x_\gamma}  \phi^{\gamma\alpha\beta}_{ij}(y)\partial_{x_\alpha}\!\partial_{x_\beta} u_j^0] (x,x/\varepsilon)
		\end{align*}
		we deduce
		\begin{align*}
			[ \partial_{y_\gamma} \phi^{\gamma\alpha\beta}_{ij}(y)\partial_{x_\alpha}\partial_{x_\beta} u_j^0] (x,x/\varepsilon) =&\; \varepsilon \partial_{x_\gamma} \{ \phi^{\gamma\alpha\beta}_{ij}(x/\varepsilon)\partial_{x_\alpha}\partial_{x_\beta} u_j^0(x) \} - \varepsilon [  \partial_{x_\gamma}  \phi^{\gamma\alpha\beta}_{ij}(y)\partial_{x_\alpha}\partial_{x_\beta} u_j^0] (x,x/\varepsilon).
		\end{align*}
		Similarly we have 
		\begin{align*}
			[ \partial_{y_i} q^{\alpha\beta}_{j} (y)\partial_{x_\alpha}\partial_{x_\beta} u_j^0 ] (x,x/\varepsilon) =&\, \varepsilon \partial_{x_i} \{ q^{\alpha\beta}_{j}(x/\varepsilon)\partial_{x_\alpha}\partial_{x_\beta} u_j^0(x) \} - \varepsilon [  \partial_{x_i}  q^{\alpha\beta}_{j}(y)\partial_{x_\alpha}\partial_{x_\beta} u_j^0] (x,x/\varepsilon)
		\end{align*}
		and 
		\begin{align*}
			[\partial_{y_\alpha}q^{\alpha\beta}_{j}(y) \partial_{ x_i}\partial_{ x_{\beta}}  u^0_j] (x,x/\varepsilon) 		=& \,  \varepsilon \partial_{x_\alpha} \{ q^{\alpha\beta}_{j}(x/\varepsilon)\partial_{x_i}\partial_{x_\beta} u_j^0(x) \} - \varepsilon [  \partial_{x_\alpha}  q^{\alpha\beta}_{j}(y)\partial_{x_i}\partial_{x_\beta} u_j^0] (x,x/\varepsilon).
		\end{align*}
	Thus, one obtains a factorization of  $f^0_i$ in $\varepsilon$ 
		\begin{align*}
			f_i^0 (x)=&\; \varepsilon \partial_{x_\gamma} \{ \phi^{\gamma\alpha\beta}_{ij}(x/\varepsilon)\partial_{x_\alpha}\partial_{x_\beta} u_j^0 \} - \varepsilon [  \partial_{x_\gamma}  \phi^{\gamma\alpha\beta}_{ij}(y)\partial_{x_\alpha}\partial_{x_\beta} u_j^0] (x,x/\varepsilon)\\
			&	+ \varepsilon \partial_{x_i} \{ q^{\alpha\beta}_{j}(x/\varepsilon)\partial_{x_\alpha}\partial_{x_\beta} u_j^0 \} - \varepsilon [  \partial_{x_i}  q^{\alpha\beta}_{j}(y)\partial_{x_\alpha}\partial_{x_\beta} u_j^0] (x,x/\varepsilon)\\
			&	+  \varepsilon \partial_{x_\alpha} \{ q^{\alpha\beta}_{j}(x/\varepsilon)\partial_{x_i}\partial_{x_\beta} u_j^0 \} - \varepsilon [  \partial_{x_\alpha}  q^{\alpha\beta}_{j}(y)\partial_{x_i}\partial_{x_\beta} u_j^0] (x,x/\varepsilon).
		\end{align*}
	Consequently we get
		\begin{align*}
			f_i^\varepsilon
			=& \,\varepsilon \left( \partial_{x_\gamma} \{ \phi^{\gamma\alpha\beta}_{ij}(x/\varepsilon)\partial_{x_\alpha}\partial_{x_\beta} u_j^0 \}	+ \partial_{x_i} \{ q^{\alpha\beta}_{j}(x/\varepsilon)\partial_{x_\alpha}\partial_{x_\beta} u_j^0 \}\right. \\
			&\qquad \left.	+  \partial_{x_\alpha} \{ q^{\alpha\beta}_{j}(x/\varepsilon)\partial_{x_i}\partial_{x_\beta} u_j^0 \}+ \partial_{x_\alpha}\{ A^{\alpha\gamma}_{ik} (x/\varepsilon) \chi^{\beta}_{kj}(x/\varepsilon)\partial_{x_\gamma}	\partial_{ x_{\beta}}  u_j^0 \} \right)  \\
			&-\varepsilon \left( \phi^{\gamma\alpha\beta}_{ij}(y)\partial_{x_\alpha}\partial_{x_\beta} \partial_{x_\gamma} u_j^0+   2 q^{\alpha\beta}_{j}(y)\partial_{x_i} \partial_{x_\alpha}\partial_{x_\beta} u_j^0\right) (x,x/\varepsilon).
		\end{align*}
		Now, by setting, on the one hand
		\begin{align*}
			(h^\varepsilon_1)_i^\gamma (x)	:=& \phi^{\gamma\alpha\beta}_{ij}(x/\varepsilon)\partial_{x_\alpha}\partial_{x_\beta} u_j^0(x) , \qquad
			h^\varepsilon_2(x):= q^{\alpha\beta}_{j}(x/\varepsilon)\partial_{x_\alpha}\partial_{x_\beta} u_j^0(x), \\
			(h^\varepsilon_3)^\alpha_i(x):=&q^{\alpha\beta}_{j}(x/\varepsilon)\partial_{x_i}\partial_{x_\beta} u_j^0(x),\,\quad
			(h_4^\varepsilon)_i^\alpha(x):=  A^{\alpha\gamma}_{ik} (x/\varepsilon) \chi^{\beta}_{kj}(x/\varepsilon)\partial_{x_\gamma}	\partial_{ x_{\beta}}  u_j^0 (x)
		\end{align*}
		 and on the other hand,	\begin{align*}
			(g^\varepsilon_1)_i(x):=\phi^{\gamma\alpha\beta}_{ij}(y)\partial_{x_\alpha}\partial_{x_\beta} \partial_{x_\gamma} u_j^0(x), \qquad 
			(g^\varepsilon_2)_i(x) := 2q^{\alpha\beta}_{j}(y)\partial_{x_i} \partial_{x_\alpha}\partial_{x_\beta} u_j^0(x)
		\end{align*}  
		one gets 
		\begin{align*}
			f_i^\varepsilon(x)
			=& \,\varepsilon \left[ \partial_{x_\gamma} \{ 	(h^\varepsilon_1)_i^\gamma (x) \}	+ \partial_{x_i} \{ 	h^\varepsilon_2(x) \}
			+  \partial_{x_\alpha} \{ 	(h^\varepsilon_3)^\alpha_i(x) \}+ \partial_{x_\alpha}\{	(h_4^\varepsilon)_i^\alpha(x) \} \right]  \\
			&-\varepsilon \left[ 	(g^\varepsilon_1)_i(x)+	(g^\varepsilon_2)_i(x) \right] .\qquad\qquad\qedhere
		\end{align*}
		
	\end{proof}
	\begin{lemma}
		Assume that the source term $f$ in \textup{(\ref{hpstoks})} and \textup{(\ref{hphom0bv})} satisfies 
		$\Supp(f)\subset B(0, 1/2)$.	Then there exists a constant $C>0$ depending on $d,\mu, \delta$ and $[A]_{C^{0,\eta}}$ 
		such that, for $x\in \mathbb{H}_n$ and $|x|\geq 1$,   we have
		\begin{align}
			|h^\varepsilon_j (x) |&\leq \frac{C}{|x|^{d}}\, 
			\| f\|_{L^{d/2+\delta}(\mathbb{H}_n)}, \quad j=1,2,3,4,\label{boundh}\\
			|	g^\varepsilon_j (x) |&\leq \frac{C}{|x|^{d+1}}\| f\|_{L^{d/2+\delta}(\mathbb{H}_n)}, \quad j=1,2. \label{boundg}
		\end{align}
		and
\begin{align}
	\|\nabla^2 u^0 \|_{L^{d+\delta}( \mathbb{H}_n)}&\leq  C \|f \|_{W^{1,d/2+\delta}( \mathbb{H}_n)}\\
	\|\nabla^2 u^0  \|_{L^{d/2+\delta}( \mathbb{H}_n)}&\leq  C \|f \|_{L^{d/2+\delta}( \mathbb{H}_n)}
\end{align}		
	\end{lemma}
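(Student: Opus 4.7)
The plan is to reduce everything to pointwise and $L^p$ estimates on the derivatives of $u^0$, and then exploit the explicit formulas defining $h_j^\varepsilon$ and $g_j^\varepsilon$ given in Lemma~\ref{fepsordereps}.

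First, observe that each $h_j^\varepsilon(x)$ is, up to a factor that is uniformly bounded independently of $\varepsilon$, just $\nabla^2 u^0(x)$. Indeed, by Lemma~\ref{bound_on_correctors} and Lemma~\ref{divphi}, the coefficients $\phi^{\gamma\alpha\beta}_{ij}(x/\varepsilon)$, $q^{\alpha\beta}_j(x/\varepsilon)$, $A^{\alpha\gamma}_{ik}(x/\varepsilon)$ and $\chi^\beta_{kj}(x/\varepsilon)$ appearing in (\ref{def_h_i_eps}) are all in $L^\infty$ with bound depending only on $d,\mu,[A]_{C^{0,\eta}}$. Therefore $|h_j^\varepsilon(x)|\leq C |\nabla^2 u^0(x)|$ for $j=1,2,3,4$, and analogously $|g_j^\varepsilon(x)|\leq C|\nabla^3 u^0(x)|$ for $j=1,2$.

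Second, I will obtain the pointwise decay (\ref{boundh}) and (\ref{boundg}) via the Green representation. Since $u^0$ solves the homogenized Stokes problem (\ref{hphom0bv}) with boundary data zero, one has
\begin{equation*}
u^0(x)=\int_{\mathbb{H}_n} G^0(x,\tilde x)\,f(\tilde x)\,d\tilde x,
\end{equation*}
and differentiating under the integral together with the derivative bound (\ref{estim_der_G0}) yields
\begin{equation*}
|\nabla^{|\lambda|} u^0(x)|\leq C\int_{\mathrm{Supp}(f)}\frac{|f(\tilde x)|}{|x-\tilde x|^{d-2+|\lambda|}}\,d\tilde x.
\end{equation*}
Because $\mathrm{Supp}(f)\subset B(0,1/2)$ and $|x|\geq 1$ imply $|x-\tilde x|\geq |x|/2$, one pulls the kernel out and applies H\"older's inequality on the bounded set $B(0,1/2)$ to convert the $L^1$ norm of $f$ into its $L^{d/2+\delta}$ norm. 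This gives $|\nabla^2 u^0(x)|\leq C|x|^{-d}\|f\|_{L^{d/2+\delta}(\mathbb{H}_n)}$ and $|\nabla^3 u^0(x)|\leq C|x|^{-(d+1)}\|f\|_{L^{d/2+\delta}(\mathbb{H}_n)}$ for $|x|\geq 1$, producing (\ref{boundh}) and (\ref{boundg}) after combining with the first step.

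Third, for the global $L^p$ bounds on $\nabla^2 u^0$, I would invoke the standard Calder\'on--Zygmund regularity for the constant-coefficient Stokes operator $\mathcal{L}_0$ in the half-space (which follows from the boundedness of the second-order kernel $\nabla_x^2 G^0$, singular of size $|x-\tilde x|^{-d}$, as a Calder\'on--Zygmund operator). Applied directly with $p=d/2+\delta\in(1,\infty)$, this yields
\begin{equation*}
\|\nabla^2 u^0\|_{L^{d/2+\delta}(\mathbb{H}_n)}\leq C\|f\|_{L^{d/2+\delta}(\mathbb{H}_n)}.
\end{equation*}
For the estimate with exponent $d+\delta$, I combine the same Calder\'on--Zygmund bound at the level $L^{d+\delta}$ with the Sobolev embedding $W^{1,d/2+\delta}(\mathbb{H}_n)\hookrightarrow L^{p^*}(\mathbb{H}_n)$, where $p^*=d(d+2\delta)/(d-2\delta)$. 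A short computation shows $p^*>d+\delta$ for $\delta\in(0,d/2)$, so compact support of $f$ in $B(0,1/2)$ and H\"older's inequality give $\|f\|_{L^{d+\delta}}\leq C\|f\|_{L^{p^*}}\leq C\|f\|_{W^{1,d/2+\delta}}$, whence $\|\nabla^2 u^0\|_{L^{d+\delta}}\leq C\|f\|_{W^{1,d/2+\delta}}$.

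The only genuinely delicate point in all of this is the Calder\'on--Zygmund $L^p$ bound for the half-space Stokes system with Dirichlet data zero; once that is accepted as classical, the rest of the argument is only manipulations of the explicit formulas and pointwise estimates on the homogenized Green function, both of which are already in hand.
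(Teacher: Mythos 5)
Your proof of the pointwise decay estimates (\ref{boundh}) and (\ref{boundg}) is essentially the paper's own argument: represent $u^0$ via the homogenized Green function, differentiate under the integral, invoke the kernel decay (\ref{estim_der_G0}), use $|x-\tilde x|\gtrsim|x|$ when $|x|\geq1$ and $\tilde x\in B(0,1/2)$, and finish with H\"older's inequality on the compact support of $f$ to trade $L^1$ for $L^{d/2+\delta}$; the reduction of $h_j^\varepsilon,g_j^\varepsilon$ to $\nabla^2u^0,\nabla^3u^0$ via the $L^\infty$ bounds on the correctors in Lemma~\ref{bound_on_correctors} and Lemma~\ref{divphi} is likewise identical. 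Where you diverge is on the two $L^p$ bounds for $\nabla^2u^0$, which in fact the paper does not prove inside this lemma at all --- it establishes them only in the subsequent proof of Proposition~\ref{propolinfestim}, by chaining the Sobolev embedding $\|\nabla^2 u^0\|_{L^{d+\delta}}\leq C\|\nabla^2 u^0\|_{W^{1,d/2+\delta}}$ with the Stokes regularity estimate $\|u^0\|_{W^{3,d/2+\delta}}\leq C\|f\|_{W^{1,d/2+\delta}}$ from Galdi, Theorem~IV.3.2. Your route instead applies the Calder\'on--Zygmund (Cattabriga/ADN) $L^p$ bound directly at the level of $\nabla^2u^0$, obtaining the $L^{d/2+\delta}$ estimate immediately, and for the $L^{d+\delta}$ estimate you apply Sobolev embedding to $f$ rather than to $\nabla^2u^0$ and exploit the compact support of $f$ via H\"older; the arithmetic checking $p^*=d(d+2\delta)/(d-2\delta)>d+\delta$ for $\delta\in(0,d/2)$ is correct. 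Both routes rest on the same underlying fact --- maximal $L^p$ regularity for the constant-coefficient Stokes system in the half-space with homogeneous Dirichlet data --- but your version makes the support hypothesis on $f$ do the work of one extra derivative, whereas the paper's version uses it only for the pointwise decay; yours is marginally more self-contained since it only needs the Cattabriga estimate at the level of second derivatives rather than third. Both are valid, and your treatment of the lemma is if anything more complete than the paper's own, which leaves the two $L^p$ bounds unproved at the point where they are stated.
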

	\begin{proof}[Proof]
		Recall that $u^0$ is the solution to problem (\ref{hphom0bv}) and as such it can be represented through the Green kernel as follows$$u^0(x)= \int_{\mathbb{H}_n} G^{0} (x,\tilde{x}) f(\tilde{x})d\tilde{x}.$$
		Hence, for a multi-index $\lambda\in \mathbb{N}^d,$ we have 
		$$\partial^\lambda_x u^0(x)= \int_{\mathbb{H}_n} \partial^\lambda_x G^{0} (x,\tilde{x}) f(\tilde{x})d\tilde{x}.$$
		which implies 
		\begin{align*}
			|\partial^\lambda_x u^0(x)|&\leq \int_{\mathbb{H}_n}| \partial^\lambda_x G^{0} (x,\tilde{x})| |f(\tilde{x})|d\tilde{x}.
		\end{align*}	 
		Then estimate (\ref{estim_der_G0}) implies 
		\begin{align*}
			|	\partial^\lambda_x u^0(x)| &\leq  \int_{\mathbb{H}_n} \frac{C}{|x-\tilde{x}|^{d-2+|\lambda|}}\, | f(\tilde{x})|d\tilde{x}\\
			&\leq  \int_{B(0,1/2)} \frac{C}{|x-\tilde{x}|^{d-2+|\lambda|}}\, | f(\tilde{x})|d\tilde{x}.
		\end{align*}
		Since for $|x|\geq 1$ and  $\tilde{x}\in B(0,1/2)$ we have $|x-\tilde{x}|\geq |x| -1/2$ the inequality right above leads to 
		\begin{align*}
			|\partial^\lambda_x u^0(x)| &\leq  \int_{B(0,1/2)} \frac{C}{\left(|x|-\frac{1}{2}\right)^{d-2+|\lambda|}}\, | f(\tilde{x})|d\tilde{x}.
		\end{align*}
		H\"older inequality then yields 	
		\begin{align*}
			|\partial^\lambda_x u^0(x)| &\leq  \frac{C}{(|x|-\frac{1}{2})^{d-2+|\lambda|}}\, 
			\left(\int_{B(0,1/2)} 1 d\tilde{x}\right)^{1/p'} \left(
			\int_{B(0,1/2)} | f(\tilde{x})|^pd\tilde{x}\right)^{1/p}\\
			&\leq  \frac{C}{(|x|-\frac{1}{2})^{d-2+|\lambda|}}\, 
			C(d,p) \| f\|_{L^{p}(\mathbb{H}_n)}
		\end{align*} with $p=d/2+\delta$ and $\frac{1}{p}+\frac{1}{p'}=1  $.
		Then by noticing that
		\begin{align*}
			\frac{1}{(|x|-\frac{1}{2})^{d-2+|\lambda|}}= \frac{1}{|x|^{d-2+|\lambda|}\left(1-\frac{1}{2|x|}\right)^{d-2+|\lambda|}}
			\qquad \mbox{	and } \quad
			1-\frac{1}{2|x|}\geq \frac{1}{2}
		\end{align*} one obtains 
		\begin{align*}
			|	\partial^\lambda_x u^0(x)|
			&\leq  \frac{C}{|x|^{d-2+|\lambda|}}\, 
			\| f\|_{L^{d/2+\delta}(\mathbb{H}_n)}, \quad \mbox{ for all } |x|\geq 1.
		\end{align*}
		The constant $C$ in the above lines depends on $d,\mu, \delta$ and $[A]_{C^{0,\eta}}$.
		This last estimate  with $|\lambda|=2,3$ together with the boundedness of the functions  $\,A^{\alpha\beta}_{ij}, \chi^{\beta}_{kj},\, q^{\alpha\beta}_{j} $ given in Lemma \ref{bound_on_correctors} and Lemma \ref{divphi}    leads to
		\begin{align*}
			|h^\varepsilon_j (x) |&\leq \frac{C}{|x|^{d}}\, 
			\| f\|_{L^{d/2+\delta}(\mathbb{H}_n)}, \quad j=1,2,3,4 
		\end{align*}
		and to 
		\begin{align*}
			|	g^\varepsilon_j (x) |&\leq \frac{C}{|x|^{d+1}}\| f\|_{L^{d/2+\delta}(\mathbb{H}_n)}, \quad j=1,2. 
		\end{align*}  where $C$ depends on $d,\mu, \delta$ and $[A]_{C^{0,\eta}}$.
			\end{proof}
	\noindent Now we continue the proof of Proposition \ref{propolinfestim}.	\begin{proof}[Proof of Proposition \textup{\ref{propolinfestim}}] Recall that the aim is to establish a bound on $r^\varepsilon$ solution to problem (\ref{rqeps}).
		Let us start with an integral representation of $r^\varepsilon$ :
		\begin{align*}
			r_i^\varepsilon(x)= &	\int_{\mathbb{H}_n} f_k^\varepsilon(\tilde{x}) G_{ki}^{*,\varepsilon} (\tilde{x}, x)\, d\tilde{x}-\int_{\mathbb{H}_n}  \Pi_i^{*,\varepsilon}(\tilde{x} ,x)  \left(- \varepsilon \chi^{\beta}_{kj}(\tilde{x}/\varepsilon)\partial_{x_k}\partial_{ x_{\beta}}  u^0_j   (\tilde{x})\right)d\tilde{x} .
		\end{align*}
		We  replace $f^\varepsilon_i$ in the first integral  by its expression given in (\ref{Lemm_feps})
		\begin{align*}
			r_i^\varepsilon(x)
			= &	\quad \varepsilon \int_{\mathbb{H}_n} \left[ \partial_{\tilde{x}_\gamma} \{ 	(h^\varepsilon_1)_k^\gamma (\tilde{x}) \}	+ \partial_{\tilde{x}_k} \{ 	h^\varepsilon_2(\tilde{x}) \}
			+  \partial_{\tilde{x}_\alpha} \{ 	(h^\varepsilon_3)^\alpha_k(\tilde{x}) \}+ \partial_{\tilde{x}_\alpha}\{	(h_4^\varepsilon)_k^\alpha(\tilde{x}) \} \right]  G_{ki}^{*,\varepsilon} (\tilde{x}, x)\, d\tilde{x} \\
			&-\varepsilon \int_{\mathbb{H}_n} \left[ 	(g^\varepsilon_1)_k(\tilde{x})+	(g^\varepsilon_2)_k(\tilde{x}) \right]  G_{ki}^{*,\varepsilon} (\tilde{x}, x)\, d\tilde{x}+\varepsilon\int_{\mathbb{H}_n} \Pi_i^{*,\varepsilon}(\tilde{x} ,x)  \chi^{\beta}_{kj}(\tilde{x}/\varepsilon)\partial_{\tilde{x}_k}\partial_{\tilde{x}_{\beta}}  u^0_j   (\tilde{x}) \,d\tilde{x} .
		\end{align*}
		Then one performs integration by parts for some of the above terms		
		\begin{align*}
			r_i^\varepsilon(x)
			= &	\quad -\varepsilon \int_{\mathbb{H}_n} \left[  	(h^\varepsilon_1)_k^\alpha (\tilde{x}) 
			+  	(h^\varepsilon_3)^\alpha_k(\tilde{x}) + 	(h_4^\varepsilon)_k^\alpha(\tilde{x})  \right] \partial_{\tilde{x}_\alpha} G_{ki}^{*,\varepsilon} (\tilde{x}, x)\, d\tilde{x}  -\varepsilon \int_{\mathbb{H}_n}   	h^\varepsilon_2(\tilde{x}) 
			\partial_{\tilde{x}_k} G_{ki}^{*,\varepsilon} (\tilde{x}, x)\, d\tilde{x}\\
			&	\quad -\varepsilon \int_{\mathbb{H}_n} \left[ 	(g^\varepsilon_1)_k(\tilde{x})+	(g^\varepsilon_2)_k(\tilde{x}) \right]  G_{ki}^{*,\varepsilon} (\tilde{x}, x)\, d\tilde{x}+\varepsilon\int_{\mathbb{H}_n} \Pi_i^{*,\varepsilon}(\tilde{x} ,x)  \chi^{\beta}_{kj}(\tilde{x}/\varepsilon) \partial_{\tilde{x}_k}\partial_{\tilde{x}_{\beta}}  u^0_j   (\tilde{x}) \,d\tilde{x} 
		\end{align*}
		which implies 
		\begin{align}
			|r_i^\varepsilon(x)|\leq  \, \varepsilon \!\int_{\mathbb{H}_n} &   \left|(h^\varepsilon)^\alpha_k(\tilde{x})\right| \left|\partial_{\tilde{x}_\alpha} G_{ki}^{*,\varepsilon} (x,\tilde{x})\right|    d\tilde{x}+  \varepsilon \!\int_ \mathbb{H} \left|(h_2^\varepsilon)(\tilde{x})\right|\left|\partial_{\tilde{x}_k} G_{ki}^{*,\varepsilon} (x,\tilde{x})\right|  d\tilde{x}  \nonumber\\
			&+  \varepsilon \int_{\mathbb{H}_n}|g_k^\varepsilon(\tilde{x})|  \left|G_{ki}^{*,\varepsilon} (x,\tilde{x})\right| 
			d\tilde{x}+\varepsilon\int_{\mathbb{H}_n} \left|\Pi_i^{*,\varepsilon}(\tilde{x} ,x)\right|\, \left|\chi^{\beta}_{kj}(\tilde{x}/\varepsilon) \partial_{\tilde{x}_k}\partial_{\tilde{x}_{\beta}} u^0_j   (\tilde{x})\right| \,d\tilde{x}\label{1boundreps}
		\end{align}
		with 	
		\begin{align}
			|(h^\varepsilon)^\alpha_k|:=& |(h^\varepsilon_1)_k^\alpha|
			+  |(h^\varepsilon_3)^\alpha_k| + 	|(h_4^\varepsilon)_k^\alpha|, \label{def_heps}\\
			|g^\varepsilon|:=& |g^\varepsilon_1|+	|g^\varepsilon_2|\nonumber.
		\end{align}	
		
		\noindent{\bf Step 1 : an estimate of $r^\varepsilon$ in the  $L^\infty$ norm }\\
		We rewrite the bound (\ref{1boundreps}) as follows:
		\begin{align}
			|r^\varepsilon(x)|\leq 
			& \: \varepsilon \,(I_1 +I_2+ I_3+I_4) \label{repsIk}
		\end{align}
		with
		\begin{align*}
			&	I_1:= \!\int_{\mathbb{H}_n}  |(h^\varepsilon)^\alpha_k(\tilde{x})|\left|\partial_{\tilde{x}_\alpha}G_{ki}^{*,\varepsilon} (x,\tilde{x})\right|    d\tilde{x},\\ &I_2 :=\!\int_ \mathbb{H} |(h_2^\varepsilon)(\tilde{x})|\left|\partial_{\tilde{x}_k} G_{ki}^{*,\varepsilon} (x,\tilde{x})\right|  d\tilde{x},  \nonumber\\
			&I_3:=\int_{\mathbb{H}_n}|g_k^\varepsilon(\tilde{x})|  \left|G_{ki}^{*,\varepsilon} (x,\tilde{x})\right| 
			d\tilde{x},\\&I_4:=\int_{\mathbb{H}_n} \left|\Pi_i^{*,\varepsilon}(\tilde{x} ,x)\right| \left|\chi^{\beta}_{kj}(\tilde{x}/\varepsilon)\partial_{\tilde{x}_k}\partial_{\tilde{x}_\beta} u^0_j   (\tilde{x})\right| \,d\tilde{x}.
		\end{align*}
		
		\noindent Now we seek to bound the quantities $I_k$,  $k=1,2,3,4.$
		
		\noindent	We  extend  $  (h^\varepsilon)^\alpha_k,\, (h_2^\varepsilon),\, g^\varepsilon,
		\,    u^0  $ to the whole space $\mathbb{R}^d$ by setting 
		\begin{align*}
			(h^\varepsilon)^\alpha_k(\tilde{x})= (h_2^\varepsilon)(\tilde{x})= g^\varepsilon(\tilde{x})=  u^0 (\tilde{x}) =0 \quad \mbox{for all } \tilde{x}\in\mathbb{R}^d \bs \overline{\mathbb{H}}_n.
		\end{align*}

		\noindent 	\underline{Step 1-a :	Control of  $I_1$}\\
		The bound (\ref{estim_dertildG}) on  Green's function $\partial_{\tilde{x}_\alpha} G_{ki}^{*,\varepsilon} (x,\tilde{x})$ gives
		\begin{align*}
			I_1&\leq  \,\!\int_{\mathbb{R}^d} \frac{C}{|x- \tilde{x}|^{d-1}} \mathbf{1}_{B(0,1)} (x- \tilde{x}) |h^\varepsilon(\tilde{x})| d\tilde{x}
			+   \int_{\mathbb{R}^d} \,  \frac{C}{|x- \tilde{x}|^{d-1}}   \mathbf{1}_{B(0,1)^c} (x- \tilde{x})|h^\varepsilon(\tilde{x}) |  d\tilde{x}\\
			&=: \, I_{1,1} + I_{1,2}
		\end{align*}
		where the constant $C$ depends on $d,\mu, [A]_{C^{0,\eta}}$.
		Let us first control $I_{1,1}$. We take $p,p'\in(1,+\infty)$ satisfying $\frac{1}{p}+\frac{1}{p'}=1$ and $p<\frac{d}{d-1}$ so that
		\begin{align*}
			\int_{| \tilde{x}|<1}\, \left( \frac{1}{|\tilde{x}|^{d-1}}  \right)^{\!\!^p} \! d\tilde{x}			=&  \int_{| \tilde{x}|<1} \,  \frac{1}{| \tilde{x}|^{(d-1)p}}  d\tilde{x}\\
			=&\, C(d) \int_0^1 \,  \frac{r^{d-1}}{r^{(d-1)p}}  dr\\	
			=&  \, \frac{C(d)}{d-(d-1)p} =C(d,p)<\infty.
		\end{align*}	
		So we have
		$$\tilde{x}\mapsto  \frac{1}{|\tilde{x}|^{d-1}}\mathbf{1}_{B(0,1)} (\tilde{x})\in L^p(\mathbb{R}^d).$$ 
		
		\noindent	The condition $\frac{1}{p}+\frac{1}{p'}=1$ combined with $p<\frac{d}{d-1}$  implies  $p'>d$ i.e. $p'=d+\delta$ for some $\delta>0$.
		From the definitions (\ref{def_h_i_eps}) and (\ref{def_heps}) and from Lemma \ref{bound_on_correctors} and Lemma \ref{divphi} we derive the bound
		\begin{align}\label{bound_h_nab2u}
			|h^\varepsilon(\tilde{x})|&\leq C ([A]_{C^{0,\eta}},d,\mu)|\nabla^2 u^0(\tilde{x})|, \quad \mbox{ for all } \tilde{x}\in \mathbb{H}_n,
		\end{align}
		which implies
		\begin{align*}
			\|h^\varepsilon \|_{L^{d+\delta}(\mathbb{R}^d)}&=\|h^\varepsilon \|_{L^{d+\delta}(\mathbb{H}_n)}\\&\leq C([A]_{C^{0,\eta}},d,\mu)	\|\nabla^2 u^0 \|_{L^{d+\delta}(\mathbb{H}_n)}.
		\end{align*}
		Moreover, Sobolev's embedding theorem yields 
		\begin{align*}
			\|\nabla^2 u^0 \|_{L^{d+\delta}(\mathbb{H}_n)}
			& \leq C(d, \delta) 	 \|\nabla^2 u^0\|_{W^{1,d/2+\delta}(\mathbb{H}_n)}.
		\end{align*}
		This Sobolev's embedding holds for $\delta\in(0,d/2)$.
		By the regularity theory of the Stokes problem (see \cite{Gld}, Theorem IV.3.2), we get
		\begin{align*}
			\|u^0\|_{W^{3,d/2+\delta}(\mathbb{H}_n)}\leq C \|f\|_{W^{1,	d/2+\delta}(\mathbb{H}_n)}.
		\end{align*}
		Hence, we obtain
		\begin{align*}
			\|h^\varepsilon \|_{L^{p'}(\mathbb{H}_n)}&\leq C([A]_{C^{0,\eta}},d,\mu, \delta)  \|f\|_{W^{1,d/2+\delta}(\mathbb{H}_n)}. 
		\end{align*}
		Finally we apply Hölder's inequality to $I_{1,1}$
		\begin{align*}
			I_{1,1}	=\int_{\mathbb{R}^d} \frac{C}{|x- \tilde{x}|^{d-1}} \mathbf{1}_{B(0,1)} (x- \tilde{x}) |h^\varepsilon(\tilde{x})| d\tilde{x} & \leq \left\| \frac{1}{| \cdot - x|^{d-1}}\mathbf{1}_{B(x,1)}  \right\|_{L^p(\mathbb{R}^d)}  \|h^\varepsilon\|_{L^{p'}(\mathbb{R}^d)}	\\
			&\leq  C([A]_{C^{0,\eta}},d,\mu, \delta)  \|f\|_{W^{1,d/2+\delta}(\mathbb{H}_n)}.
		\end{align*}
		
		We now control the term $I_{1,2}$ :
		\begin{align}
			I_{1,2}=& \int_{\mathbb{R}^d} \,  \frac{C}{|x- \tilde{x}|^{d-1}}   \mathbf{1}_{B(0,1)^c} (x- \tilde{x})|h^\varepsilon(\tilde{x}) |  d\tilde{x}\nonumber\\
			\leq& \int_{\mathbb{R}^d} \,  \frac{C}{|x- \tilde{x}|^{d-1}}   \mathbf{1}_{B(0,1)^c} (x- \tilde{x})|h^\varepsilon(\tilde{x}) |\mathbf{1}_{B(0,1)} (\tilde{x})  d\tilde{x}\nonumber\\
			&+  \int_{\mathbb{R}^d} \,  \frac{C}{|x- \tilde{x}|^{d-1}}   \mathbf{1}_{B(0,1)^c} (x- \tilde{x})|h^\varepsilon(\tilde{x}) | \mathbf{1}_{B(0,1)^c} (\tilde{x}) d\tilde{x}\label{A+B}
		\end{align}
		If  $q=d/2+\delta$ and $q'$ is defined by $\frac{1}{q}+\frac{1}{q'}=1$ then we have
		\begin{align*}
			\int_{\mathbb{R}^d} \,  \frac{C}{|x- \tilde{x}|^{d-1}}   \mathbf{1}_{B(0,1)^c} (x- \tilde{x})|h^\varepsilon(\tilde{x}) |\mathbf{1}_{B(0,1)} (\tilde{x})  d\tilde{x}\leq C\left\| \frac{1}{|\cdot|^{d-1}}\mathbf{1}_{B(0,1)^c} \right\|_{L^{q'}(\mathbb{R}^d)} \, \left\| |h^\varepsilon |\mathbf{1}_{B(0,1)} \right\|_{L^{q}(\mathbb{H}_n)}
		\end{align*}
		because with such a $q'$, one has
		\begin{align*}
			\int_{{B(0,1)^c}} \left|  \frac{1}{|\tilde{x}|^{d-1}}   \right|^{q'} d\tilde{x}
			=&	\int_{|\tilde{x}|>1} \frac{1}{|\tilde{x}|^{(d-1)q'}} d\tilde{x}<\infty.
		\end{align*}
		So by recalling the fact that
		\begin{align*}
			\left\| h^\varepsilon \right\|_{L^{q}(\mathbb{H}_n)}\leq C \left\| \nabla^2 u^0\right\|_{L^{d/2+\delta}(\mathbb{H}_n)}
			\leq\; C \left\|f\right\|_{L^{d/2+\delta}(\mathbb{H}_n)}
			\leq\; C\left\|f\right\|_{W^{1,d/2+\delta}(\mathbb{H}_n)}
		\end{align*}
		we obtain 
		\begin{align}\label{bound_of_A}
			\int_{\mathbb{R}^d} \,  \frac{C}{|x- \tilde{x}|^{d-1}}   \mathbf{1}_{B(0,1)^c} (x- \tilde{x})|h^\varepsilon(\tilde{x}) |\mathbf{1}_{B(0,1)} (\tilde{x}) \, d\tilde{x}
			\leq&\; C\left\|f\right\|_{W^{1,d/2+\delta}(\mathbb{H}_n)}
		\end{align}
		with $C=	 C(d,\mu,[A]_{C^{0,\eta}},\delta)$.

		The term 
		\begin{align*}
			\int_{\mathbb{R}^d} \,  \frac{C}{|x- \tilde{x}|^{d-1}}   \mathbf{1}_{B(0,1)^c} (x- \tilde{x})|h^\varepsilon(\tilde{x}) | \mathbf{1}_{B(0,1)^c} (\tilde{x}) d\tilde{x}
		\end{align*} 
		is estimated as follows. On the one hand, for $p>\frac{d}{d-1}$,  we have
		\begin{align*}
			\int_{\mathbb{R}^d} \,  \frac{1}{|\tilde{x}|^{(d-1)p}}  \mathbf{1}_{B(0,1)^c} (\tilde{x}) d\tilde{x}&=  \int_{| \tilde{x}|>1} \,  \frac{1}{|\tilde{x}|^{(d-1)p}}  d\tilde{x}\\
			&= C(d,p)<\infty, 
		\end{align*} 
		and on the other hand,  by (\ref{boundh}), we have 
		\begin{align*}
			\int_{\mathbb{R}^d} \,  |h^\varepsilon(\tilde{x}) |^{p'} \mathbf{1}_{B(0,1)^c} (\tilde{x}) d\tilde{x} & \leq  C\left\|f\right\|^{p'}_{L^{d/2+\delta}(\mathbb{H}_n)}	\int_{|\tilde{x}|>1} \frac{1}{|\tilde{x}|^{dp'}} d\tilde{x} \\
			& \leq  C\left\|f\right\|^{p'}_{L^{d/2+\delta}(\mathbb{H}_n)}	
		\end{align*}
		whenever the inequality $p'>1$ holds. Which is indeed te case when $p'$ is given by $\frac{1}{p}+\frac{1}{p'}=1$. We then obtain 
		\begin{align}
			\int_{\mathbb{R}^d} \,  \frac{C}{|x- \tilde{x}|^{d-1}}   \mathbf{1}_{B(0,1)^c} (x- \tilde{x})|h^\varepsilon(\tilde{x}) | \mathbf{1}_{B(0,1)^c} (\tilde{x}) d\tilde{x}\leq  C(d,\mu,\delta,[A]_{C^{0,\eta}})\left\|f\right\|^{}_{L^{d/2+\delta}(\mathbb{H}_n)}.\label{bound_of_B}
		\end{align} 
		Putting together (\ref{A+B}), (\ref{bound_of_A}) and (\ref{bound_of_B}) we get 
		\begin{align}
			I_{1,2}\leq  C(d,\mu,\delta,[A]_{C^{0,\eta}})\left\|f\right\|^{}_{L^{d/2+\delta}(\mathbb{H}_n)}.
		\end{align}
		We thus have obtained 
		\begin{align*}
			I_1 =I_{1,1}+I_{1,2}\leq C \|f\|_{W^{1,d/2+\delta}(\mathbb{H}_n)}
		\end{align*}
		where 	$C= C(d, \mu,[A]_{C^{0,\eta}},\delta)>0$	 and $\delta\in(0,d/2)$.\\

		\noindent \underline{Step 1-b : Control of the term $I_2$} \\The reasoning that has lead to the above bound of $I_1$ remains valid when $I_1$ is replaced with the integral $I_2$. Therefore, we also have
		\begin{align*}
			I_2\leq C(d,\mu,[A]_{C^{0,\eta}},\delta)\, \|f\|_{W^{1,d/2+\delta}(\mathbb{H}_n)}.
		\end{align*}
		
		\noindent \underline{Step 1-c :	Control of the term $I_3$}\\
		By the estimate (\ref{estim_G_y}) we have
		\begin{align*}
			I_3&\leq 	\int_{\mathbb{R}^d} \frac{C}{|x- \tilde{x}|^{d-2}}  \mathbf{1}_{B(0,1)} (x-\tilde{x}) |g^\varepsilon(\tilde{x})| d\tilde{x}	+ 	\int_{\mathbb{R}^d} \frac{C}{|x- \tilde{x}|^{d-2}}  \mathbf{1}_{B(0,1)^c} (x-\tilde{x})   |g^\varepsilon(\tilde{x})| d\tilde{x}\\
			&=	I_{3,1}+ I_{3,2}.
		\end{align*}
	First, we  bound the term $I_{3,1}$ as follows. Let  $p,p'\in(1,+\infty)$ with $\frac{1}{p}+\frac{1}{p'}=1$ and $p<\frac{d}{d-2}$ so that
		\begin{align*}
			\int_{\mathbb{R}^d\cap B(0,1)}    \left| \frac{1}{|\tilde{x}|^{d-2}} \right|^{p}  d\tilde{x}=& 	\int_{\mathbb{R}^d\cap B(0,1) }    \frac{1}{|\tilde{x}|^{(d-2)p}}  d\tilde{x} \\
			=& C(d,p)<\infty.	\end{align*} 
		Under the above condition on $p$ the exponent $p'$     satisfies $p'>d/2$   i.e. $p'=d/2+\delta$ for some $\delta>0$. From the definitions of $g^\varepsilon_j$ in (\ref{def_geps}) we get 
		\begin{align*}
			|g^\varepsilon_j(\tilde{x})|&\leq C (d,\mu)|\nabla^3 u^0(\tilde{x})|, \quad \mbox{ for all } \tilde{x}\in \mathbb{H}_n,
		\end{align*}
		which implies
		\begin{align}
			\|g^\varepsilon \|_{L^{d/2+\delta}(\mathbb{H}_n)}&\leq C (d,\mu) \|\nabla^3 u^0\|_{L^{d/2+\delta}(\mathbb{H}_n)}\nonumber\\
			&\leq C (d, \mu, \delta) \|f\|_{W^{1,d/2+\delta}(\mathbb{H}_n)}.\label{g_ineq_f}
		\end{align}
		Now we apply Hausdorff-Young inequality to obtain
		\begin{align}
			I_{3,1}=\int_{\mathbb{R}^d} \frac{1}{|x- \tilde{x}|^{d-2}} \mathbf{1}_{B(0,1)}(x-\tilde{x}) |g^\varepsilon(\tilde{x})| d\tilde{x} & \leq \left\| \frac{1}{|\cdot |^{d-2}} \mathbf{1}_{B(0,1)} \right\|_{L^p(\mathbb{R}^d)} \!\!\! \|g^\varepsilon\|_{L^{p'}(\mathbb{R}^d)}\nonumber	\\
			&\leq  C  \|f\|_{W^{1,d/2+\delta}(\mathbb{H}_n)}.\label{L1}
		\end{align}
		with $C=C(d,\mu,[A]_{C^{0,\eta}},\delta)$.\\

	\noindent	We now control  the integral 
		\begin{align}
			I_{3,2}=  &\int_{\mathbb{R}^d} \,  \frac{C}{|x- \tilde{x}|^{d-2}}   \mathbf{1}_{B(0,1)^c} (x- \tilde{x})|g^\varepsilon(\tilde{x}) |  d\tilde{x}\nonumber\\
			= & \int_{\mathbb{R}^d} \,  \frac{C}{|x- \tilde{x}|^{d-2}}   \mathbf{1}_{B(0,1)^c} (x- \tilde{x})|g^\varepsilon(\tilde{x})| \mathbf{1}_{B(0,1)} (\tilde{x}) d\tilde{x} \nonumber\\ &+ \int_{\mathbb{R}^d} \,  \frac{C}{|x- \tilde{x}|^{d-2}}   \mathbf{1}_{B(0,1)^c} (x- \tilde{x})|g^\varepsilon(\tilde{x}) |  \mathbf{1}_{B(0,1)^c} (\tilde{x})   d\tilde{x}.\label{I32_split}
		\end{align}
	One handles the term 
	\begin{align*}
	\int_{\mathbb{R}^d} \,  \frac{C}{|x- \tilde{x}|^{d-2}}   \mathbf{1}_{B(0,1)^c} (x- \tilde{x})|g^\varepsilon(\tilde{x})| \mathbf{1}_{B(0,1)} (\tilde{x}) d\tilde{x} 
	\end{align*}
	through an application of Hausdorff-Young inequality. 
		To do so, we set $q=d/2+\delta$ and define $q'$ by $\frac{1}{q}+\frac{1}{q'}=1$.
		Then we have $q'>\frac{d}{d-2}$ and consequently we obtain
		\begin{align*}
			\int_{\mathbb{R}^d} \,  \left( \frac{1}{|\tilde{x}|^{d-2}}  \right)^{\!\! q'}\!\!\!\mathbf{1}_{B(0,1)^c} ( \tilde{x}) d\tilde{x} = &\int_{B(0,1)^c} \left|  \frac{1}{|\tilde{x}|^{d-2}}  \right|^{q'} d\tilde{x}\\
			= \, &C(d,\delta) <\infty.
		\end{align*}
		The Hausdorff-Young inequality yields 
			\begin{align*}
		\int_{\mathbb{R}^d} \,  \frac{C}{|x- \tilde{x}|^{d-2}}   \mathbf{1}_{B(0,1)^c} (x- \tilde{x})|g^\varepsilon(\tilde{x}) | \mathbf{1}_{B(0,1)} (\tilde{x}) d\tilde{x} \leq C (d,\mu,\delta,[A]_{C^{0,\eta}} ) 	\|g^\varepsilon\|_{L^{d/2+\delta}(\mathbb{R}^d)}
		\end{align*}
		and subsequently estimate (\ref{g_ineq_f}) implies
			\begin{align}
			\int_{\mathbb{R}^d} \,  \frac{C}{|x- \tilde{x}|^{d-2}}   \mathbf{1}_{B(0,1)^c} (x- \tilde{x})|g^\varepsilon(\tilde{x}) | \mathbf{1}_{B(0,1)} (\tilde{x}) d\tilde{x} 
				&\leq  C (d, \mu, \delta, ,[A]_{C^{0,\eta}}  ) \|f\|_{W^{1,d/2+\delta}(\mathbb{H}_n)}.\label{I321}
		\end{align}
			The term 
		\begin{align*}
		\int_{\mathbb{R}^d} \,	\frac{C}{|x- \tilde{x}|^{d-2}}   \mathbf{1}_{B(0,1)^c} (x- \tilde{x})|g^\varepsilon(\tilde{x}) |  \mathbf{1}_{B(0,1)^c} (\tilde{x})  	d\tilde{x}
		\end{align*} 
		is estimated as follows. On the one hand, for $p>\frac{d}{d-2}$,  we have
		\begin{align*}
			\int_{\mathbb{R}^d} \,  \frac{1}{|\tilde{x}|^{(d-2)p}}  \mathbf{1}_{B(0,1)^c} (\tilde{x}) d\tilde{x}&=  \int_{| \tilde{x}|>1} \,  \frac{1}{|\tilde{x}|^{(d-2)p}}  d\tilde{x}\\
			&= C(d,p)<\infty, 
		\end{align*} 
		and on the other hand,  by (\ref{boundg}), we have 
		\begin{align*}
			\int_{\mathbb{R}^d} \,  |g^\varepsilon(\tilde{x}) |^{p'} \mathbf{1}_{B(0,1)^c} (\tilde{x}) d\tilde{x} & \leq  C\left\|f\right\|^{p'}_{L^{d/2+\delta}(\mathbb{H}_n)}	\int_{|\tilde{x}|>1} \frac{1}{|\tilde{x}|^{(d+1)p'}} d\tilde{x} \\
			& \leq  C\left\|f\right\|^{p'}_{L^{d/2+\delta}(\mathbb{H}_n)}	
		\end{align*}
		whenever $p'>\frac{d}{d+1}$, which is indeed the case when $p'$ is given by $\frac{1}{p}+\frac{1}{p'}=1$. We then obtain 
		\begin{align}
			\int_{\mathbb{R}^d} \,  \frac{C}{|x- \tilde{x}|^{d-2}}   \mathbf{1}_{B(0,1)^c} (x- \tilde{x})|g^\varepsilon(\tilde{x}) | \mathbf{1}_{B(0,1)^c} (\tilde{x}) d\tilde{x}\leq  C(d,\mu,\delta,[A]_{C^{0,\eta}})\left\|f\right\|^{}_{L^{d/2+\delta}(\mathbb{H}_n)}\label{I322}
		\end{align} 
	where $C=	 C (d, \mu, \delta,[A]_{C^{0,\eta}})$. 
	Putting together (\ref{I32_split}), (\ref{I321}) and (\ref{I322}) we obtain
		\begin{align*}
			I_{3}= 	I_{3,1}+	I_{3,2}
			\leq  C\|f\|_{W^{1,d/2+\delta}(\mathbb{H}_n)}
		\end{align*}
		with	 $C=C\left(A,d, \mu, [A]_{C^{0,\eta}},\delta\right)$.\\
		
		\noindent \underline{Step 1-d :	Control of $I_4$}\\ The method carried out to bound integral $I_1$ still applies because on the one hand we have the pointwise estimate 
		(\ref{estimPi_y})
		for $\Pi_i^{*,\varepsilon}$ similar to that of $\partial_{\tilde{x}_\alpha} G_{ki}^{*,\varepsilon}$: for some $C$ depending on $d, \mu, [A]_{C^{0,\eta}}$ one has 
		\begin{align*}
			|\Pi_i^{*,\varepsilon}(\tilde{x},x)|\leq\frac{C}{|x- \tilde{x}|^{d-1}}, \quad \mbox{  for all } x,\tilde{x}\in \mathbb{H}_n,
		\end{align*}
		and on the other hand we have the bound
		\begin{align*}
			\left| \chi^\beta_{kj}(\tilde{x}/\varepsilon)\partial_{\tilde{x}_k} \partial_{\tilde{x}_\beta} u_j^0(\tilde{x}) \right|&\leq C (d,\mu)|\nabla^2 u^0(\tilde{x})|, \mbox{  for all } \tilde{x}\in \mathbb{H}_n.
		\end{align*}	
		Thus one has the estimate
		\begin{align*}
			I_4	&\leq   C \|f\|_{W^{1,\,d/2+\delta}(\mathbb{H}_n)}
		\end{align*}	
		where 	$C= C(d, \mu, [A]_{C^{0,\eta}},\delta)>0$	with $\delta\in(0,d/2)$.\\
		
		\noindent \emph{Conclusion.}	The inequality (\ref{repsIk}) together with the preceding bounds on $I_k$, $k=1,2,3,4,$	yields
		\begin{align}\label{boundrepseps}
			|r^\varepsilon (x)| \leq \, C  \varepsilon \| f\|_{W^{1,d/2+\delta}}, \quad \mbox{ for all } x\in \mathbb{H}_n,
		\end{align}
		where 	$C=C(A,d, \mu, [A]_{C^{0,\eta}},\delta)>0$	with $\delta\in(0,d/2)$.\\
		
		\noindent{\bf Step 2 : an estimate of $u_{bl}^\varepsilon$ in the  $L^\infty$-norm}\\
		Let us remind that the function $u_{bl}^\varepsilon$ is the solution to problem (\ref{eq_uepsbl}). Therefore it has the following representation   
		\begin{align*}
			u^\varepsilon_{bl,i} (x)&= \int_{\partial\mathbb{H}_n} -P_{ik}^{*,\varepsilon} (x,\tilde{x})\,\left(\chi (\tilde{x}/ \varepsilon) \cdot\nabla u^0 (\tilde{x})\right)_k  d\tilde{x},\qquad \mbox{for }x\in\mathbb{H}_n,\\
			&= \int_{\partial\mathbb{H}_n} -P_{ik}^{*,\varepsilon} (x,\tilde{x})\,\chi^\beta_{kj} (\tilde{x}/ \varepsilon) \,\partial_{\tilde{x}_\beta} u_j^0 (\tilde{x})  d\tilde{x}\\
			\mbox{ where }\quad 	P_{ik}^{*,\varepsilon} (x,\tilde{x})
			&=-(A^*)_{kr}^{\alpha\gamma}(\tilde{x}/\varepsilon)\, \partial_{\tilde{x}_\gamma}G_{ri}^{*,\varepsilon}(\tilde{x},x) n_\alpha (\tilde{x})+  \Pi_i^{*,\varepsilon}(\tilde{x},x) n_k(\tilde{x}).
		\end{align*}
		Then we get, using the boundedness of $\chi$ and on $A$,	given in Lemma \ref{bound_on_correctors},
		\begin{align*}
			|	u^\varepsilon_{bl,i} (x)|
			\leq& \int_{\partial\mathbb{H}_n} \left| (A^*)_{kr}^{\gamma\beta}(\tilde{x}/\varepsilon)\, \partial_{\tilde{x}_\gamma}G_{ri}^{*,\varepsilon}(\tilde{x},x) n_\alpha (\tilde{x})\right|\,| \partial_{\tilde{x}_\beta} u_j^0 (\tilde{x})| d\tilde{x}\\
			&\qquad\qquad  + \int_{\partial\mathbb{H}_n} \left| \Pi_i^{*,\varepsilon}(\tilde{x},x) n_k(\tilde{x})\right|\,|\partial_{\tilde{x}_\beta} u_j^0 (\tilde{x})| d\tilde{x}  \\
			\leq&\, C\int_{\partial\mathbb{H}_n} \left(\left|  \partial_{\tilde{x}_\beta}G_{ri}^{*,\varepsilon}(\tilde{x},x)\right|+\left| \Pi_i^{*,\varepsilon}(\tilde{x},x)\right|\right) | \partial_{\tilde{x}_\beta} u_j^0 (\tilde{x})| d\tilde{x} 
		\end{align*}	where $C$ depends on $A$.
		Next we apply the star-version of estimates (\ref{estim_dertildG}) and (\ref{estimPi_y}) on Green's functions to obtain
		\begin{align}
			|	u^\varepsilon_{bl,i} (x)|\leq& \, C\int_{\partial\mathbb{H}_n}  \frac{x\cdot n}{|x-\tilde{x}|^d} \, |\nabla u^0 (\tilde{x})| d\tilde{x}\nonumber\\
			\leq& \, C  \|\nabla u^0 \|_{L^\infty(\mathbb{H}_n)} \int_{\partial\mathbb{H}_n}  \frac{x\cdot n}{|x-\tilde{x}|^d} \, \,  d\tilde{x} \label{3bounduepsbl}
		\end{align}
		with $C$  depending on $ d,\mu,[A]_{C^{0,\eta}}$ and $A$.
		Further, since $2\cdot ((d/2 )+ \delta)= d+2\delta > d$, the Sobolev embedding theorem gives 
		\begin{align}
			\|\nabla u^0\|_{L^\infty(\mathbb{H}_n)} &\leq C(d,\delta) \|\nabla u^0\|_{W^{2,d/2+\delta}(\mathbb{H}_n)},\nonumber\\
			&\leq C(d,\delta) \|f\|_{W^{1,d/2+\delta}(\mathbb{H}_n)} \label{nabu0fw1}.
		\end{align}	
		Hence, taking into account the fact that there exists $C>0$ depending on $d, \mu$ and $[A]_{C^{0,\eta}}$ such that
		\begin{align}\label{bound_int|y-tildy|^d}
			\int_{\partial\mathbb{H}_n}  \frac{x\cdot n}{|x-\tilde{x}|^d} \,  d\tilde{x}\leq C <\infty ,
		\end{align}
		one obtains			\begin{align}
			|	u^\varepsilon_{bl,i} (x)|\leq& \,  C(d,\mu, [A]_{C^{0,\eta}},\delta) \|f\|_{W^{1,d/2+\delta}(\mathbb{H}_n)},\quad\mbox{ for all }x\in\mathbb{H}_n  .\\\nonumber
		\end{align}

		\noindent{\bf Step 3 : estimating $u^\varepsilon-u^0$ in the $L^\infty$-norm}\\
		From relation (\ref{errors_bl}) defining $r^\varepsilon$ we get 
		\begin{align*}
			u^\varepsilon(x) -u^0(x)= \varepsilon \chi(x/ \varepsilon) \nabla u^0(x)+\varepsilon u^\varepsilon_{bl}(x)+r^\varepsilon(x), \quad \mbox{ for all }x\in \mathbb{H}_n,
		\end{align*}
		which implies 
		\begin{align*}
			|u^\varepsilon(x) -u^0(x)| \leq  \varepsilon |\chi(x/ \varepsilon)|\,|\nabla u^0(x)|+\varepsilon |u^\varepsilon_{bl}(x)|+|r^\varepsilon(x)|, \quad \mbox{ for all }x\in \mathbb{H}_n.
		\end{align*}
		The estimates in $L^\infty$-norm obtained in the preceeding lines  allows then to write
		\begin{align}\label{order0error}
			|u^\varepsilon(x) -u^0(x)| \leq  C \varepsilon \,\|\nabla u^0\|_{L^\infty(\mathbb{H}_n)}+ C  \varepsilon\|f\|_{W^{1,d/2+\delta}} +C\varepsilon\|f\|_{W^{1,d/2+\delta}} .
		\end{align}
		Finally, we take (\ref{nabu0fw1}) into account in inequality (\ref{order0error}) to obtain
		\begin{align}
			\|u^\varepsilon -u^0 \|_{L^\infty(\mathbb{H}_n)} \leq  C  \, \varepsilon\|f\|_{W^{1,d/2+\delta}(\mathbb{H}_n)} 
		\end{align}
		where 	$C= C(d,\mu,[A]_{C^{0,\eta}},\delta )>0$	with  $\delta\in(0,d/2)$.
		Here ends the proof of Proposition \ref{propolinfestim}.\qedhere\\
	\end{proof}

	\section{Homogenization of  Green's function $G^\varepsilon$}\label{section_hom_G}

	\noindent	The purpose of this section is to establish a quantitative homogenization result on the  Green function $G^\varepsilon(x,\tilde{x})$. In other words, we show that the heterogenous kernel $G^\varepsilon$ converges to the homogenized one $G^0($ as $\varepsilon$ vanishes. This fact is precisely stated in the following Proposition whose proof is based on a duality argument.
	\begin{proposition}\label{propo_GepsG0}
		For arbitrary $x\in\mathbb{H}_n$ and for all $s\in \left(\frac{1}{d+\frac{1}{d}},  \frac{1}{d-1 +\frac{2}{d}}\right)$ there exists $C>0$  such that for all $\varepsilon> 0$ we have
		\begin{align*}
			|G^\varepsilon(x,\tilde{x}) - G^0(x,\tilde{x}) | \leq C \varepsilon^s, \quad \mbox{ for all } \tilde{x}\in \overline{\mathbb{H}}_n \quad \mbox{with }\quad 1/3\leq |x-\tilde{x}|\leq 5/3.
		\end{align*}
		The constant $C$ in this estimate depends on $d,\mu$ and $[A]_{C^{0,\eta}} $.  
	\end{proposition}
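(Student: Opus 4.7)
The plan is to run a duality argument against Proposition \ref{propolinfestim}. Fix $x_0\in\mathbb{H}_n$, $\tilde x_0\in\overline{\mathbb{H}}_n$ with $1/3\leq|x_0-\tilde x_0|\leq 5/3$, and let $i,j\in\{1,\dots,d\}$; it suffices to control $|G^\varepsilon_{ij}(x_0,\tilde x_0)-G^0_{ij}(x_0,\tilde x_0)|$. Two easy cases are disposed of first: if $\tilde x_0\in\partial\mathbb{H}_n$, both Green functions vanish there by the boundary condition in \eqref{G}, \eqref{GP0} combined with the symmetry \eqref{sym}; if $\tilde x_0\cdot n\leq \varepsilon^{s}$, the pointwise bound \eqref{estim_G_y} applied to both $G^\varepsilon$ and $G^0$ yields $|G^\varepsilon|,|G^0|\leq C(\tilde x_0\cdot n)/|x_0-\tilde x_0|^{d-1}\leq C\varepsilon^{s}$. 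The remaining interior case is where the mollifier argument takes place: we assume $\tilde x_0\cdot n>\rho$ for a parameter $\rho\in(0,1/6)$ to be fixed in terms of $\varepsilon$.

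Fix $\phi\in C_c^\infty(B(0,1))$ nonnegative with $\int\phi=1$ and set $\phi_\rho(\tilde x):=\rho^{-d}\phi((\tilde x-\tilde x_0)/\rho)$, so $\Supp\,\phi_\rho\subset B(\tilde x_0,\rho)\subset\mathbb{H}_n$. Let $(u^\varepsilon,p^\varepsilon)$ and $(u^0,p^0)$ solve \eqref{hpstoks}, \eqref{hphom0bv} with source $f=\phi_\rho\,e_j$. The Green-kernel representation then gives
\begin{align*}
u^\varepsilon_i(x_0)=\int_{\mathbb{H}_n} G^\varepsilon_{ij}(x_0,\tilde x)\phi_\rho(\tilde x)\,d\tilde x,\qquad u^0_i(x_0)=\int_{\mathbb{H}_n} G^0_{ij}(x_0,\tilde x)\phi_\rho(\tilde x)\,d\tilde x,
\end{align*}
and since $\int\phi_\rho=1$ one splits
\begin{align*}
G^\varepsilon_{ij}(x_0,\tilde x_0)-G^0_{ij}(x_0,\tilde x_0)=\bigl[G^\varepsilon_{ij}(x_0,\tilde x_0)-u^\varepsilon_i(x_0)\bigr]-\bigl[G^0_{ij}(x_0,\tilde x_0)-u^0_i(x_0)\bigr]+\bigl[u^\varepsilon_i(x_0)-u^0_i(x_0)\bigr].
\end{align*}

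The last term is the duality input: an adaptation of Proposition \ref{propolinfestim} to a source supported in $B(\tilde x_0,\rho)$ (the proposition's hypothesis $\Supp f\subset B(0,1/2)$ is only used through pointwise control of $u^0$ away from $\Supp f$, which only depends on the size of the support) yields $|u^\varepsilon_i(x_0)-u^0_i(x_0)|\leq C\varepsilon\,\|\phi_\rho\|_{W^{1,d/2+\delta}}$, and a direct rescaling computation gives $\|\phi_\rho\|_{W^{1,d/2+\delta}}\leq C\,\rho^{-(d-1)-\theta(\delta)}$ for an exponent $\theta(\delta)\to 0$ as $\delta\to 0$. For the two Green-function pieces, note that by the symmetry \eqref{sym} the map $\tilde x\mapsto G^\varepsilon_{ij}(x_0,\tilde x)=G^{*,\varepsilon}_{ji}(\tilde x,x_0)$ solves, as a function of $\tilde x$, the homogeneous adjoint Stokes system away from $x_0$; since $|x_0-\tilde x|\geq 1/6$ for $\tilde x\in B(\tilde x_0,\rho)$, Theorem \ref{unif_estim} together with the pointwise bound \eqref{estim_G_y} delivers $|\nabla_{\tilde x}G^\varepsilon(x_0,\tilde x)|\leq C$ uniformly in $\varepsilon$, and analogously $|\nabla_{\tilde x}G^0(x_0,\tilde x)|\leq C$. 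Consequently
\begin{align*}
|G^\varepsilon_{ij}(x_0,\tilde x_0)-u^\varepsilon_i(x_0)|\leq\int\!\!\!\bigl|G^\varepsilon_{ij}(x_0,\tilde x_0)-G^\varepsilon_{ij}(x_0,\tilde x)\bigr|\phi_\rho(\tilde x)d\tilde x\leq C\rho,
\end{align*}
and the same bound holds with $G^0$ in place of $G^\varepsilon$.

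Collecting the three pieces gives $|G^\varepsilon(x_0,\tilde x_0)-G^0(x_0,\tilde x_0)|\leq C\bigl(\varepsilon\,\rho^{-(d-1)-\theta(\delta)}+\rho\bigr)$, and optimizing yields $\rho=\varepsilon^{1/(d+\theta(\delta))}$ and a convergence rate $\varepsilon^{1/(d+\theta(\delta))}$. Letting $\delta$ vary in the admissible window $(0,d/2)$ sweeps out an interval of exponents $s$ that lies inside the claimed range $\bigl(\tfrac{1}{d+1/d},\,\tfrac{1}{d-1+2/d}\bigr)$, and the preliminary cases above match the same rate because $\rho\sim\varepsilon^{s}$. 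The main obstacle is sharpening the Green-function approximation step: the Lipschitz bound in $\tilde x$ only yields an $O(\rho)$ remainder, so to reach the upper endpoint $1/(d-1+2/d)$ one should exploit the mean-zero property of a symmetric mollifier together with a $C^{1,\alpha}$-type oscillation estimate on $\nabla_{\tilde x}G^\varepsilon$ inherited from the smoothness of $A$, producing an $O(\rho^{1+\alpha})$ remainder and retuning the optimization. The remaining steps (the adaptation of Proposition \ref{propolinfestim} to translated supports and the explicit computation of $\theta(\delta)$) are routine once this regularity is in hand.
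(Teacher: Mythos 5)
Your argument takes a genuinely different route from the paper's. The paper fixes a plain cutoff $\varphi_{\tilde x_0,\rho}$, defines $\rho$ proportionally to the unknown supremum $S(x,\varepsilon)=\sup_{\tilde x\in\overline D_x}|G^\varepsilon-G^0|$, establishes a plateau lower bound $(u^\varepsilon-u^0)_i\geq C\rho^{d+1}$, and closes a self-referential inequality; you instead use a normalized mollifier and the clean triangle-inequality split $G^\varepsilon-G^0=[G^\varepsilon-u^\varepsilon]-[G^0-u^0]+[u^\varepsilon-u^0]$, with the Lipschitz bound on $G^\varepsilon(x_0,\cdot),G^0(x_0,\cdot)$ (from Theorem \ref{unif_estim}) controlling the first two brackets by $O(\rho)$ and Proposition \ref{propolinfestim} the third. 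Both schemes ultimately feed the same two ingredients (uniform Lipschitz regularity of Green's function plus the $O(\varepsilon)$ homogenization error for a source of size $\rho$) into the same one-parameter optimization, and yours is arguably cleaner.

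There is, however, a real quantitative gap between what you obtain and what the proposition claims, and you have mis-stated where your range sits. Scaling gives $\|\phi_\rho\|_{W^{1,d/2+\delta}}\sim\rho^{d/(d/2+\delta)-d-1}=\rho^{-(d-1)-\theta(\delta)}$ with $\theta(\delta)=2-\tfrac{2d}{d+2\delta}$, so $\theta$ sweeps $(0,1)$ as $\delta$ sweeps $(0,d/2)$; balancing $\varepsilon\rho^{-(d-1)-\theta}$ against $\rho$ gives the exponent $s=\tfrac{1}{d+\theta}\in(\tfrac{1}{d+1},\tfrac{1}{d})$. This interval does \emph{not} lie inside $\bigl(\tfrac{1}{d+1/d},\tfrac{1}{d-1+2/d}\bigr)$: its lower part $(\tfrac{1}{d+1},\tfrac{1}{d+1/d})$ sits below the claim, and more importantly the top segment $[\tfrac{1}{d},\tfrac{1}{d-1+2/d})$ of the stated range (where the statement is strongest) is not reached at all. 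You do acknowledge the shortfall at the top, but you should be aware that the paper's own proof does not actually reach it either: in the step estimating $\|\nabla\varphi_{\tilde x_0,\rho}\|_{L^{d/2+\delta}}$, the paper integrates $C/\rho$ instead of $(C/\rho)^{d/2+\delta}$, producing the exponent $\rho^{(2d-2)/(d+2\delta)}$ in place of the correct $\rho^{(d-2\delta)/(d+2\delta)}$. Carrying the corrected exponent through the paper's balance $\rho^{d+1}\lesssim\varepsilon\rho^{(d-2\delta)/(d+2\delta)}$ yields $s=\tfrac{d+2\delta}{d^2+2\delta(d+2)}$, which sweeps precisely your interval $(\tfrac{1}{d+1},\tfrac{1}{d})$ as $\delta$ varies in $(0,d/2)$. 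So your computation appears to be the correct one, the proposition's stated range reflects the arithmetic slip, and the "sharpening via mean-zero mollifier and $C^{1,\alpha}$" route you sketch at the end is indeed what would be needed to actually reach a rate above $\varepsilon^{1/d}$.
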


			\begin{proof}[Proof]
			Let $x\in\overline{\mathbb{H}}_n$ and $\varepsilon>0$ and hold them fixed. We set
			\begin{equation}\label{def_D_x}
				D_x:= \{ \tilde{x}\in\overline{\mathbb{H}}_n : 1/3<  |x-\tilde{x}|<5/3 \}. \qquad \mbox{  (see {\scshape Figure} \ref{domain_D_x})   }
			\end{equation}
			The function $\tilde{x}\mapsto |G^\varepsilon(x,\tilde{x}) - G^0(x,\tilde{x}) |$ is continuous on $ \overline{D}_x$ which is compact. Therefore there exists $\tilde{x}_0\in   \overline{D}_x$ satisfying $	|G^\varepsilon(x,\tilde{x}_0) - G^0(x,\tilde{x}_0) |=	S(x,\varepsilon)$
			with	\begin{align*}
				S(x,\varepsilon):=  \sup \ \{ |G^\varepsilon(x,\tilde{x}) - G^0(x,\tilde{x}) |: \tilde{x} \in  \overline{D}_x \}.
			\end{align*}
			From the estimates 
			\begin{align*}
				| \nabla G^\varepsilon(x,\tilde{x})| \leq \; \frac{ C}{|x-\tilde{x}|^{d-1}}, \quad  | \nabla G^0(x,\tilde{x})| \leq \; \frac{ C}{|x-\tilde{x}|^{d-1}}, \quad  \mbox{ for all } x,\tilde{x}\in\mathbb{H}_n ,
			\end{align*} 
			with $C=C(d,\mu,[A]_{C^{0,\eta}}),$
			we derive the existence of a constant $C_1=C_1(d,\mu,[A]_{C^{0,\eta}} )>0$ such that 
			\begin{align*}
				|\nabla G^\varepsilon(x,\tilde{x})| +  |\nabla G^0(x,\tilde{x})| \leq C_1, \quad  \mbox{ for all }   \tilde{x} \in A(x,1/6,2):=\{ z\in\overline{\mathbb{H}}_n : 1/6<  |x-\tilde{x}|<2 \}.
			\end{align*}
			Now let  $\rho := \frac{S(x,\varepsilon)}{2 d^2 C_1}$ and suppose $\rho\in(0,1/6).$  In fact one can always increase $C_1$ so that $\rho<1/6$. With such a $\rho$ one has
			\begin{align*}
				B(\tilde{x}_0,\rho) \subset B(x,2)\bs \overline{B(x,1/6)}.
			\end{align*}	
		
			\begin{figure}
			\centering
			\includegraphics[scale=0.3]{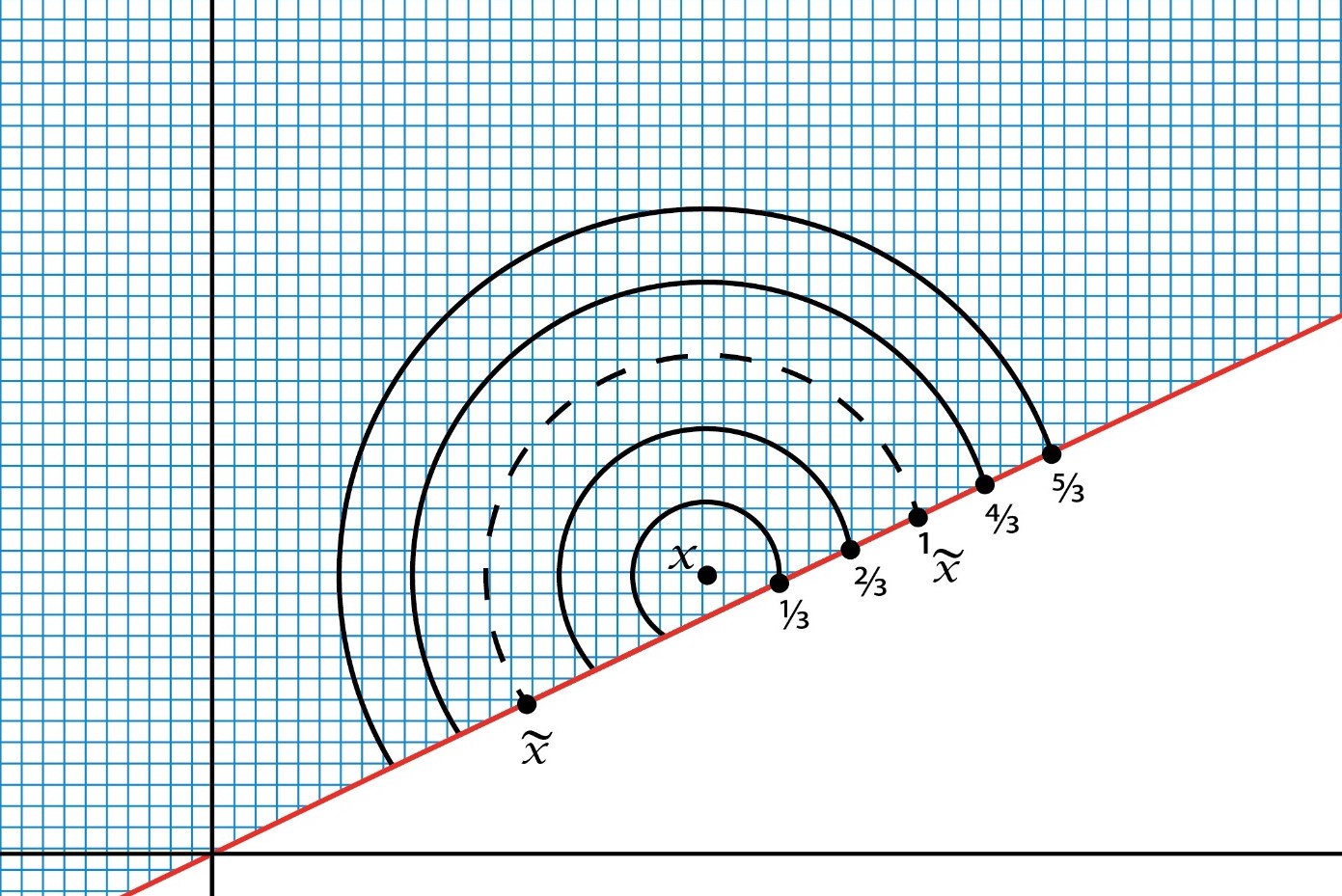}
			\caption{Domain $D_x$.}
			\label{domain_D_x}
		\end{figure}
			
			\noindent {\bfseries Step 1- lower bound : } \label{ij}	
			There exist $i,j\in \{ 1,...,d\}$ such that   for all   $\tilde{x} \in  D(\tilde{x}_0,\rho)$ we have
			\begin{align}\label{GijS}
				| G_{ij}^\varepsilon(x,\tilde{x})- G_{ij}^0(x,\tilde{x})| \geq \frac{S(x,\varepsilon)}{2d^2}.
			\end{align}
			
			\noindent The norm of matrices that we consider here is the maximum norm. From the equality
			\begin{align*}
				| G^\varepsilon(x,\tilde{x_0})- G^0(x,\tilde{x}_0)|=S(x,\varepsilon)
			\end{align*}
			we obtain the existence of $i,j\in\{ 1,...,d\}$ satisfying 
			\begin{align}
				| G_{ij}^\varepsilon(x,\tilde{x}_0)- G_{ij}^0(x,\tilde{x}_0)|\geq \frac{S(x,\varepsilon)}{d^2}.\label{low_bound_G}
			\end{align}
			Now let such integers $i,j$ be fixed. For all $\tilde{x}\in D(\tilde{x}_0, \rho)$ we have either 	$| G_{ij}^\varepsilon(x,\tilde{x})- G_{ij}^0(x,\tilde{x})|\geq \frac{S(x,\varepsilon)}{2d^2}$ or 	$| G_{ij}^\varepsilon(x,\tilde{x})- G_{ij}^0(x,\tilde{x})|< \frac{S(x,\varepsilon)}{2d^2}$. We show that the latter condition leads to a contradiction. Indeed, it implies 
			\begin{align*}
				0<	\frac{S(x,\varepsilon)}{d^2} -| G_{ij}^\varepsilon(x,\tilde{x})- G_{ij}^0(x,\tilde{x})|
			\end{align*}
			and by use of 
			inequality (\ref{low_bound_G}) we have
			\begin{align*}
				\frac{S(x,\varepsilon)}{d^2} -| G_{ij}^\varepsilon(x,\tilde{x})- G_{ij}^0(x,\tilde{x})|&\leq	| G_{ij}^\varepsilon(x,\tilde{x}_0)- G_{ij}^0(x,\tilde{x}_0)|-| G_{ij}^\varepsilon(x,\tilde{x})- G_{ij}^0(x,\tilde{x})|\\
				&\leq	| G_{ij}^\varepsilon(x,\tilde{x}_0)-G_{ij}^\varepsilon(x,\tilde{x}) |+|G_{ij}^0(x,\tilde{x}_0) - G_{ij}^0(x,\tilde{x})|\\
				&\leq	\left\{  \| \nabla G^\varepsilon(x,\cdot) \|_{L^\infty(A(x,1/6,2))} + \| \nabla G^0(x,\cdot) \|_{L^\infty(A(x,1/6,2))} \right\}|\tilde{x}_0-\tilde{x}|  \\
				&\leq	C_1 \rho =\frac{S(x,\varepsilon)}{2d^2} .
			\end{align*}
			So the condition 	$| G_{ij}^\varepsilon(x,\tilde{x})- G_{ij}^0(x,\tilde{x})|< \frac{S(x,\varepsilon)}{2d^2}$ yields \begin{align*}
				0<\frac{S(x,\varepsilon)}{d^2} -| G_{ij}^\varepsilon(x,\tilde{x})- G_{ij}^0(x,\tilde{x})|\leq	\frac{S(x,\varepsilon)}{2d^2} 
			\end{align*}
			which implies (\ref{GijS}).\\
			
			\noindent	{\bfseries Step 2 :} Let $\varphi\in C^1_c (B(0,1))	$ with $0\leq \varphi \leq 1$ and $\varphi(\tilde{x})=1$ for $|\tilde{x}|< 1/2$ and define $\varphi_{\tilde{x}_0, \rho}$ by $\varphi_{\tilde{x}_0, \rho} (\tilde{x})=\varphi(\frac{\tilde{x} - \tilde{x}_0}{\rho})$. Then we have 
			\begin{align}
				\varphi_{\tilde{x}_0, \rho}\in C^1_c (B(\tilde{x}_0,\rho))\quad \mbox{ and }	\quad  0 \leq \varphi_{\tilde{x}_0, \rho}\leq 1,\quad \| \nabla \varphi_{\tilde{x}_0, \rho}\|_{L^\infty(\mathbb{H}_n)} \leq \frac{C}{\rho} . \label{bound_phi}
			\end{align}
			Let $i,j$ be the integers given by Lemma \ref{ij}. The inequality (\ref{GijS}) together with intermediate value theorem imply that  $ G_{ij}^\varepsilon(x,\tilde{x})- G_{ij}^0(x,\tilde{x})$ has a constant sign on $D(\tilde{x}_0, \rho)$. This sign can be assumed to be positive, $ G_{ij}^\varepsilon(x,\tilde{x})- G_{ij}^0(x,\tilde{x})\geq 0$. Then (\ref{GijS}) implies
			\begin{align}
				G_{ij}^\varepsilon(x,\tilde{x})- G_{ij}^0(x,\tilde{x})\geq \frac{S(x,\varepsilon)}{2d^2} ,  \mbox{ for all  } \tilde{x} \in  D(\tilde{x}_0,\rho).
			\end{align}
			Now consider the solutions $u^\varepsilon$ and $u^0$ of the respective problems (\ref{hpstoks}) and (\ref{hphom0bv}) with source term $f$ defined by $f(\tilde{x})= \varphi_{\tilde{x}_0, \rho} (\tilde{x}) e_j$. Using Green representations of both solutions one obtains 
			\begin{align*}
				(u^\varepsilon (x) -u^0(x))_i &= \int_{\mathbb{H}_n} \{(G^\varepsilon(x,\tilde{x})- G^0(x,\tilde{x}) ) f(\tilde{x}) \}_i  d\tilde{x}\\
				&= \int_{D(\tilde{x}_0,\rho)} (G_{ij}^\varepsilon(x,\tilde{x})- G_{ij}^0(x,\tilde{x}))\varphi_{\tilde{x}_0, \rho} (\tilde{x})  d\tilde{x}
			\end{align*}
			and since $\varphi_{\tilde{x}_0, \rho}(\tilde{x})=1$, for $|\tilde{x}- \tilde{x}_0 | < \rho /2 $, one deduces
			\begin{align*}
				(u^\varepsilon (x) -u^0(x))_i 
				&\geq \int_{D(\tilde{x}_0,\rho/2)} G_{ij}^\varepsilon(x,\tilde{x})- G_{ij}^0(x,\tilde{x}) \;  d\tilde{x}\\
				&\geq  \int_{D(\tilde{x}_0,\rho/2)}  \frac{S(x,\varepsilon)}{2d^2} d\tilde{x}.
			\end{align*}
			Then by the definition of $\rho$	we get
			\begin{align*}
				(u^\varepsilon (x) -u^0(x))_i 
				&\geq  \int_{D(\tilde{x}_0,\rho/2)}  C_1 \rho \; d\tilde{x} = C_2 \rho (\rho/2)^{d} = C_2 \rho^{d+1}.
			\end{align*}	
			In summary, we have 
			\begin{align}
				(u^\varepsilon (x) -u^0(x))_i \geq C_2 \rho^{d+1}\label{boundrho1}
			\end{align}	with	$C_2=C_2(d,\mu,[A]_{C^{0,\eta}})>0$. 
			Rewriting  estimate (\ref{estimL8uepsu0}) with this specific $f$  yields 
			\begin{align*}
				\|u^{\varepsilon} - u^0\|_{L^\infty(\mathbb{H}_n)} &\leq C\, \varepsilon \| f \|_{W^{1,d/2+\delta}(\mathbb{H}_n)} \\
				&\leq C \, \varepsilon \left\{  \|  \varphi_{\tilde{x}_0, \rho}\|_{L^{d/2+\delta}(\mathbb{H}_n)} +\| \nabla \varphi_{\tilde{x}_0, \rho} \|_{L^{d/2+\delta}(\mathbb{H}_n)}   \right\}    \\
				&\leq C\, \varepsilon \left\{  \left(  \int_{D(\tilde{x}_0,\rho)}\|  \varphi_{\tilde{x}_0, \rho}\|^{d/2+\delta } _\infty d\tilde{x}  \right)^{\frac{1}{d/2+\delta  }}   +  \left(\int_{D(\tilde{x}_0,\rho)}    \| \nabla \varphi_{\tilde{x}_0, \rho} \|^{d/2+\delta }_\infty d\tilde{x} \right)^{\frac{1}{d/2+\delta  }} \right\}  
			\end{align*}
			for $\delta\in(0,d/2)$ and $C=C(d,\mu,[A]_{C^{0,\eta}}, \delta  )$.
			Then one employs the bounds of $\varphi_{\tilde{x}_0, \rho}$ and of its derivative given in (\ref{bound_phi})
			\begin{align*}
				\|u^{\varepsilon} - u^0\|_{L^\infty(\mathbb{H}_n)} 
				&\leq C \, \varepsilon \left\{  \left(  \int_{D(\tilde{x}_0,\rho)}\!\! 1\, d\tilde{x}  \right)^{\frac{1}{d/2+\delta}}   +  \left(\int_{D(\tilde{x}_0,\rho)}  \frac{C}{\rho} \, d\tilde{x} \right)^{\frac{1}{d/2+\delta }} \right\}    \\
				&\leq C \, \varepsilon \left\{  \left( C\rho ^d  \right)^{\frac{1}{d/2+\delta }}   +  \left( C\rho^{d-1} \right)^{\frac{1}{d/2+\delta }} \right\}    \\
				&\leq C \, \varepsilon \rho^{\frac{2d-2}{d+2\delta}}  .
			\end{align*}	
			The above lines are summarized in
			\begin{align}
				\|u^{\varepsilon} - u^0\|_{L^\infty(\mathbb{H}_n)} 
				&\leq C  \varepsilon \rho^{\frac{2d-2}{2\delta +d }} \label{boundrho2}
			\end{align}with $C=C (d,\mu,[A]_{C^{0,\eta}}, \delta  )>0$  	with  $\delta\in(0,d/2)$.
			
			\noindent 	Connecting inequality (\ref{boundrho1}) to (\ref{boundrho2}) we get 
			\begin{align}
				C_2 \rho^{d+1} \leq\|u^{\varepsilon} - u^0\|_{L^\infty(\mathbb{H}_n)} 
				&\leq C \varepsilon \rho^{\frac{2d-2}{2\delta +d }}  
			\end{align}
			from which  we extract 
			\begin{align}
				C_2 \rho^{d+1}   &\leq C  \varepsilon \rho^{\frac{2d-2}{2\delta +d }}  \nonumber
			\end{align}		
			which in turn yields
			\begin{align}\label{exposant_s}
				\rho &\leq C \varepsilon^s, \: \mbox{ with } s:= \frac{1}{d+\frac{2+2\delta -d}{2\delta +d }} 	 
			\end{align}		
			with the constant $C$ depending on $d,\mu,[A]_{C^{0,\eta}}$ and  $\delta$. Recalling the definition of 	$\rho$ one sees that
			\begin{align}
				S(x,\varepsilon)= 2d^2	C_1 \rho \leq C  \varepsilon^s\nonumber
			\end{align} with $C=C(d,\mu,[A]_{C^{0,\eta}},\delta)$.

			\noindent	By the definition of $	S(x,\varepsilon)$ this last inequality implies 
			\begin{align*}
				|G^{\varepsilon}(x,\tilde{x}) - G^0(x,\tilde{x}) | \leq C \varepsilon^s, \quad \mbox{ for all } \tilde{x}\in D_x
				\mbox{ and for all } s\in \left(\frac{1}{d+\frac{1}{d}},  \frac{1}{d-1 +\frac{2}{d}}\right)
			\end{align*}
			where 	 $C=C(d,\mu,[A]_{C^{0,\eta}} ) $ .
		\end{proof}
		\begin{remark}
			For $d\geq 3$ the range of $s$ in \textup{(\ref{exposant_s})} includes $1/d$ and the case $s=1/d$ corresponds to $\delta=d/2-1$. Moreover, Proposition \textup{\ref{propo_GepsG0}} continues to hold  for the adjoint Green functions i.e. we also have 
			\begin{align*}
				|G^{*,\varepsilon}(\tilde{x},x) - G^{*,0}(\tilde{x},x) | \leq C \varepsilon^s, \quad \mbox{ for all } (x,\tilde{x})\in     \overline{\mathbb{H}}_n  \times  \overline{\mathbb{H}}_n   \mbox{ with } 1/3<|x-\tilde{x}|<5/3\end{align*}where  $C=C(d,\mu,[A]_{C^{0,\eta}} ) $.
		\end{remark}

	\section{Asymptotic expansion of the Poisson kernel $P=P(y,\tilde{y})$}\label{section_expans}
	\noindent 	In this section we aim at establishing an asymptotic expansion of the Poisson kernel $P=P(y,\tilde{y}$)  as $|y-\tilde{y}|\rightarrow \infty$. Seeing that this kernel is expressed in terms of  Green's functions, its expansion can be obtained via those of Green's functions. In a first step we establish expansions of 	$(\nabla G^{*,\varepsilon},\Pi^{*,\varepsilon})$ in powers of $\varepsilon$. Then, in a next step we apply the uniform estimate (\ref{unif_estim_thGZ})  to control these expansions.\\

	\noindent By restricting the domain  in (\ref{system_Greeneps})  the adjoint Green's functions 
	$(G^{*,\varepsilon}(\cdot, x),\Pi^{*,\varepsilon}(\cdot, x))$  satisfy 
	\begin{equation}\label{probGreenD}
		\left\{	\begin{array}{rcl}
			-\nabla\cdot A^* (\tilde{x}/\varepsilon) \, \nabla_{\tilde{x}} G^{*,\varepsilon}(\tilde{x},x) +  \nabla_{\tilde{x}} \Pi^{*,\varepsilon}(\tilde{x},x)&=& 0,  \quad \tilde{x}\in D_x,\\
			\nabla_{\tilde{x}} \cdot G^*_\varepsilon(\tilde{x},x)&=& 0,  \quad \tilde{x}\in D_x,\\
			G^*_\varepsilon(\tilde{x},x)&=& 0  ,  \quad \tilde{x}\in \Gamma_x,\\
		\end{array}
		\right. 
	\end{equation}
	where $\Gamma_x :=  D_x \cap\partial\mathbb{H}_n$ and $D_x$ is defined in (\ref{def_D_x}).
	
	\subsection {Expansion of  Green's functions}
	
	\newcommand{\pbar}{	\overline{p}^{*,\beta}_j}
	
	\noindent We apply the expansions (\ref{expansu}), (\ref{expansp}) to Green's functions $(G^{*,\varepsilon}(\cdot, x),\Pi^{*,\varepsilon}(\cdot, x))$  satisfying (\ref{probGreenD}) and by doing so we include boundary layer correctors $(u_{bl}^*,p_{bl}^*):$
	\begin{align}
		G_{ki}^{*,\varepsilon} (\tilde{x},x)= G_{ki}^{*,0} (\tilde{x},x)&+ \varepsilon	\chi^{*,\beta}_{kj}(\tilde{x}/\varepsilon)\partial_{\tilde{x}_{\beta}} G_{ji}^{*,0}  (\tilde{x},x) + \varepsilon	(u_{bl}^*)^{\beta}_{kj}(\tilde{x}/\varepsilon)\partial_{\tilde{x}_{\beta}} G_{ji}^{*,0}  (\tilde{x},x)	\nonumber\\
		& + \varepsilon^2 \Gamma^{*,\alpha\beta}_{kj}(\tilde{x}/\varepsilon) \partial_{\tilde{x}_{\alpha}}\partial_{\tilde{x}_\beta} G_{ji}^{*,0}  (\tilde{x},x)\nonumber\\&+ \varepsilon^2	\chi^{*,\alpha}_{kl}(\tilde{x}/\varepsilon)(u_{bl}^*)^{\beta}_{lj}	(\tilde{x}/\varepsilon)  \partial_{\tilde{x}_{\alpha}}\partial_{\tilde{x}_\beta}G_{ji}^{*,0}  (\tilde{x},x)+	W_{ki}^{\varepsilon} (\tilde{x}),\label{expansG}	\\\nonumber\\
		\Pi_i^{*,\varepsilon} (\tilde{x},x)=  \Pi_i^{*,0} (\tilde{x},x)&+	\pi^{\beta}_{j}(\tilde{x}/\varepsilon)\partial_{\tilde{x}_\beta} G_{ji}^{*,0}   (\tilde{x},x)	 + (p_{bl}^*)^{\beta}_{j}(\tilde{x}/\varepsilon)  \partial_{\tilde{x}_\beta}G_{ji}^{*,0}  (\tilde{x},x)\nonumber\\
		&	+ \varepsilon Q^{*,\alpha\beta}_{j}(\tilde{x}/\varepsilon) \partial_{\tilde{x}_{\alpha}}\partial_{\tilde{x}_\beta}G_{ji}^{*,0}  (\tilde{x},x)\nonumber\\& +  \varepsilon	\pi^{*,\alpha}_{k}(\tilde{x}/\varepsilon)	(u^*_{bl})^{\beta}_{kj}(\tilde{x}/\varepsilon)\partial_{\tilde{x}_{\alpha}}\partial_{\tilde{x}_\beta} G_{ji}^{*,0} (\tilde{x},x)+	S_{i}^{\varepsilon} (\tilde{x})\label{expansPi}
	\end{align}	
	where functions $\chi^*, \pi^*, \Gamma^*$ and $Q^*$ are the interior correctors for the adjoint operator $\mathcal{L}_1^*$ given in Remark \ref{Rmk_starcorrectors}	and for fixed $\beta,j\in\{ 1,...,d\}$ the function  $(u^*_{bl})^{\beta}_{j}= ((u^*_{bl})^{\beta}_{kj})_k$  together with $    (p_{bl}^*)^{\beta}_{j} $ are boundary layer correctors solving 
	\begin{equation}\label{u*p*}
		\left\{
		\begin{array}{rcll}
			-\nabla\cdot A^*(\tilde{y}) \: \nabla (u^*_{bl})^{\beta}_{j}  +  \nabla(p_{bl}^*)^{\beta}_{j}  &=& 0   \quad &\mbox{ in } {\mathbb{H}}_n,\\
			\nabla\cdot(u^*_{bl})^{\beta}_{j} &=& 0  \quad  &\mbox{ in } {\mathbb{H}}_n,\\
			(u^*_{bl})^{\beta}_{j}  &=& -\chi_j^{*,\beta}\quad & \mbox{ on } \partial{\mathbb{H}}_n.
		\end{array}
		\right. 
	\end{equation}
	\begin{remark}
		By the regularity of $A$ and $\chi$ we have 
		\begin{align}\label{bound_Linf_u}
			\|u^*_{bl} \|_{L^\infty(\overline{\mathbb{H}}_n)} \leq C(d, \mu, [A]_{C^{0,\eta}})<\infty.
		\end{align}
	\end{remark}

	\noindent Notice that the remainder terms $W^\varepsilon$, $S^\varepsilon$ in the respective expansions are
	\begin{align}
		W_{ki}^{\varepsilon} (\tilde{x}):=	G_{ki}^{*,\varepsilon} (\tilde{x},x)-& G_{ki}^{*,0} (\tilde{x},x)- \varepsilon	\chi^{*,\beta}_{kj}(\tilde{x}/\varepsilon)\partial_{\tilde{x}_{\beta}} G_{ji}^{*,0}  (\tilde{x},x) - \varepsilon	(u^*_{bl})^{\beta}_{kj}(\tilde{x}/\varepsilon)\partial_{\tilde{x}_{\beta}} G_{ji}^{*,0}  (\tilde{x},x)	\nonumber\\
		-&  \varepsilon^2 \Gamma^{*,\alpha\beta}_{kj}(\tilde{x}/\varepsilon) \partial_{\tilde{x}_{\alpha}}\partial_{\tilde{x}_\beta} G_{ji}^{*,0}  (\tilde{x},x)\nonumber\\ -& \varepsilon^2	\chi^{*,\alpha}_{kl}(\tilde{x}/\varepsilon)(u^*_{bl})^{\beta}_{lj}	(\tilde{x}/\varepsilon)  \partial_{\tilde{x}_{\alpha}}\partial_{\tilde{x}_\beta}G_{ji}^{*,0}  (\tilde{x},x)\label{errorW} 
	\end{align} 
	and 
	\begin{align}
		S_i^{\varepsilon} (\tilde{x}):=	\Pi_i^{*,\varepsilon} (\tilde{x},x)-&  \Pi_i^{*,0} (\tilde{x},x)-	\pi^{\beta}_{j}(\tilde{x}/\varepsilon)\partial_{\tilde{x}_\beta} G_{ji}^{*,0}   (\tilde{x},x)- (p_{bl}^*)^{\beta}_{j}(\tilde{x}/\varepsilon)\partial_{\tilde{x}_\beta}G_{ji}^{*,0}  (\tilde{x},x)\nonumber\\
		-	&	 \varepsilon Q^{*,\alpha\beta}_{j}(\tilde{x}/\varepsilon) \partial_{\tilde{x}_{\alpha}}\partial_{\tilde{x}_\beta}G_{ji}^{*,0}  (\tilde{x},x)\nonumber \\ -&  \varepsilon	\pi^{*,\alpha}_{k}(\tilde{x}/\varepsilon)	(u^*_{bl})^{\beta}_{kj}(\tilde{x}/\varepsilon)\partial_{\tilde{x}_{\alpha}}\partial_{\tilde{x}_\beta} G_{ji}^{*,0} (\tilde{x},x)	\label{errorP}.
	\end{align}

	\subsubsection{Estimates on the error terms  $ (\nabla W^{\varepsilon}, S^{\varepsilon}) $ }
	
	\noindent Here
	we find out the equations satisfied by  the pair $(W^{\varepsilon}, S^{\varepsilon}) $.
Taking into account the fact that  	 $$[\Le^*( G_{i}^{*,\varepsilon} (\tilde{x},x)) + \nabla	\Pi_i^{*,\varepsilon} (\tilde{x},x)]_r =0$$ one obtains
	the following  equation satisfied by
	the pair $ (W^{\varepsilon}, S^{\varepsilon}) $
	\begin{equation*}
		[\Le^*(W_i^\varepsilon) +\nabla S_i^\varepsilon]_r	= F^\varepsilon +F_{bl}^{\varepsilon}, \quad   \tilde{x}\in D_x,
	\end{equation*}	
	where	 
	\begin{align*}
		F^\varepsilon:= \varepsilon^{-1} F
		^{(-1)} +F^{(0)}+\varepsilon F^{(1)} + \varepsilon^2 F^{(2)}\quad \mbox{and }\quad
		F^{\varepsilon}_{bl}:= \varepsilon^{-1} F^{(-1)}_{bl} +F^{(0)}_{bl}+\varepsilon F^{(1)}_{bl} + \varepsilon^2 F^{(2)}_{bl}.
	\end{align*}
Let us analyze each order separately. First we have
	\begin{align*}
		F^{(-1)}:=& [\{\partial_{\tilde{y}_\gamma} ( A^{*,\gamma\delta}_{rk}(\tilde{y})\partial_{\tilde{y}_\delta}\chi^{*,\beta}_{kj}(\tilde{y})) -\partial_{\tilde{y}_r}\pi^{*,\beta}_{j}(\tilde{y}) \} \partial_{\tilde{x}_\beta}  G_{ji}^{*,0}] (\tilde{x},x)\\
		&+[ \partial_{\tilde{y}_\gamma}  A^{*,\gamma\delta}_{rk}(\tilde{y})\partial_{\tilde{x}_\delta}  G_{ki}^{*,0} (\tilde{x},x) ]	(\tilde{x},\tilde{x}/\varepsilon)\\
		=& [\{\partial_{\tilde{y}_\gamma} ( A^{*,\gamma\delta}_{rk}(\tilde{y})\partial_{\tilde{y}_\delta}\chi^{*,\beta}_{kj}(\tilde{y})) -\partial_{\tilde{y}_r}\pi^{*,\beta}_{j}(\tilde{y})  
		+ \partial_{\tilde{y}_\gamma}  A^{*,\gamma\beta}_{rj}(\tilde{y}) \}\partial_{\tilde{x}_\beta}  G_{ji}^{*,0} (\tilde{x},x) ]	(\tilde{x},\tilde{x}/\varepsilon)
	\end{align*}
	and 
		\begin{align*}
		F^{(-1)}_{bl}:=& [\{\partial_{\tilde{y}_\gamma} ( A^{*,\gamma\delta}_{rk}(\tilde{y})\partial_{\tilde{y}_\delta}(u^*_{bl})^{\beta}_{kj}(\tilde{y})) -\partial_{\tilde{y}_r}(p_{bl}^*)^{\beta}_{j}(\tilde{y}) \} \partial_{\tilde{x}_\beta}  G_{ji}^{*,0} (\tilde{x},x)]	(\tilde{x},\tilde{x}/\varepsilon).
	\end{align*}
		These terms of order $\varepsilon^{-1}$ are both zero, $	F^{(-1)}=	F^{(-1)}_{bl}=0$, 	according to equations (\ref{chipi}) and (\ref{u*p*}).
	 Some of the terms of order 0 in $\varepsilon$ involve boundary layer quantities $ (u^*_{bl})^{\beta}_{kj}, (p_{bl}^*)^{\beta}_{j}$ whereas others do not. We shall gather the terms following this difference. The quantity $F^{(0)}$ gathers the terms of order $0$ in $\varepsilon$ without a boundary layer term:
	\begin{align*}
		F^{(0)}:=&	\partial_{\tilde{x}_\gamma} ( A^{*,\gamma\delta}_{rk}(\tilde{y})\partial_{\tilde{x}_\delta}  G_{ki}^{*,0} (\tilde{x},x))- \partial_{\tilde{x}_r} \Pi_i^{*,0} (\tilde{x},x)+	\partial_{\tilde{y}_\gamma} ( A^{*,\gamma\delta}_{rk}(\tilde{y}) \chi^{*,\beta}_{kj}(\tilde{y}))\partial_{\tilde{x}_\delta}\partial_{\tilde{x}_{\beta}} G_{ji}^{*,0} (\tilde{x},x)\\		
		&+	  A^{*,\gamma\delta}_{rk}(\tilde{y})\partial_{\tilde{y}_\delta} \chi^{*,\beta}_{kj}(\tilde{y})\partial_{\tilde{x}_\gamma}\partial_{\tilde{x}_{\beta}} G_{ji}^{*,0} (\tilde{x},x)- \pi^{*,\beta}_{j}(\tilde{y}) \partial_{\tilde{x}_r}\partial_{\tilde{x}_{\beta}} G_{ji}^{*,0}  (\tilde{x},x)\\
		&+\partial_{\tilde{y}_\gamma} ( A^{*,\gamma\delta}_{rk}(\tilde{y})\partial_{\tilde{y}_\delta}\Gamma^{*,\alpha\beta}_{kj}(\tilde{y}))\partial_{ \tilde{x}_{\alpha}} \partial_{\tilde{x}_{\beta}} G_{ji}^{*,0}  (\tilde{x},x)	-   \partial_{\tilde{y}_r} Q^{*,\alpha\beta}_{j}(\tilde{y})\partial_{\tilde{x}_{\alpha}} \partial_{\tilde{x}_{\beta}} G_{ji}^{*,0}  (\tilde{x},x).
	\end{align*}
	Using the fact that $ \nabla\cdot A^{*,0}\,\nabla G_i^{*,0}= \nabla \Pi_i^{*,0}$  in $D_x$ and some change of indices we get 
	\begin{align*}
		F^{(0)}=&	\partial_{\tilde{x}_\alpha} ( A^{*,\alpha\beta}_{rk}(\tilde{y})\partial_{\tilde{x}_\beta}  G_{ki}^{*,0} (\tilde{x},x))- 	\partial_{\tilde{x}_\alpha}  (A^{*,0})^{\alpha\beta}_{rk}\partial_{\tilde{x}_\beta}  G_{ki}^{*,0} (\tilde{x},x) +	\partial_{\tilde{y}_\gamma} ( A^{*,\gamma\alpha}_{rk}(\tilde{y}) \chi^{*,\beta}_{kj}(\tilde{y}))\partial_{\tilde{x}_\alpha}\partial_{\tilde{x}_{\beta}} G_{ji}^{*,0} (\tilde{x},x)\\		
		&+	 A^{*,\alpha\delta}_{rk}(\tilde{y})\partial_{\tilde{y}_\delta} \chi^{*,\beta}_{kj}(\tilde{y})\partial_{\tilde{x}_\alpha}\partial_{\tilde{x}_{\beta}} G_{ji}^{*,0} (\tilde{x},x)- \pi^{*,\beta}_{j}(\tilde{y}) \partial_{\tilde{x}_r}\partial_{\tilde{x}_{\beta}} G_{ji}^{*,0}  (\tilde{x},x)\\
		&+\partial_{\tilde{y}_\gamma} ( A^{*,\gamma\delta}_{rk}(\tilde{y})\partial_{\tilde{y}_\delta}\Gamma^{*,\alpha\beta}_{kj}(\tilde{y}))\partial_{ \tilde{x}_{\alpha}} \partial_{\tilde{x}_{\beta}} G_{ji}^{*,0}  (\tilde{x},x))	-   \partial_{\tilde{y}_r} Q^{*,\alpha\beta}_{j}(\tilde{y})\partial_{ \tilde{x}_{\alpha}} \partial_{\tilde{x}_{\beta}} G_{ji}^{*,0}  (\tilde{x},x).
	\end{align*}
	Then, factorizing the second-order derivative of the Green function yields
	\begin{align*}
		F^{(0)}
		=&	 \left( A^{*,\alpha\beta}_{rk}(\tilde{y})+	 A^{\alpha\delta}_{rk}(\tilde{y})\partial_{\tilde{y}_\delta} \chi^{*,\beta}_{kj}(\tilde{y})+\partial_{\tilde{y}_\gamma}(  A^{*,\gamma\alpha}_{rk}(\tilde{y}) \chi^{*,\beta}_{kj}(\tilde{y}))-  (A^{*,0})^{\alpha\beta}_{rk}\right)\partial_{\tilde{x}_\alpha}\partial_{\tilde{x}_{\beta}} G_{ji}^{*,0} (\tilde{x},x)\\
		&- \pi^{*,\beta}_{j}(\tilde{y}) \partial_{\tilde{x}_r}\partial_{\tilde{x}_{\beta}} G_{ji}^{*,0}  (\tilde{x},x)\\
		&+\partial_{\tilde{y}_\gamma} ( A^{*,\gamma\delta}_{rk}(\tilde{y})\partial_{\tilde{y}_\delta}\Gamma^{*,\alpha\beta}_{kj}(\tilde{y}))\partial_{ \tilde{x}_{\alpha}} \partial_{\tilde{x}_{\beta}} G_{ji}^{*,0}  (\tilde{x},x)	-   \partial_{\tilde{y}_r} Q^{*,\alpha\beta}_{j}(\tilde{y})\partial_{ \tilde{x}_{\alpha}} \partial_{\tilde{x}_{\beta}} G_{ji}^{*,0}  (\tilde{x},x).
	\end{align*}
	Now one can see that $F^{(0)}=0$ by the first equation of (\ref{GamQ}).\\
	
	\noindent The terms of order 0 in $\varepsilon$ involving the boundary layer correctors
	$u^*_{bl},p^*_{bl} $  are gathered in
	\begin{align*}
		F^{(0)}_{bl}:=	F^{(0)}_{bl,0} + F^{(0)}_{bl,1}+ F^{(0)}_{bl,2}+F^{(0),*}_{bl}.
	\end{align*}
	The term $F^{(0)}_{bl,0}$ corresponds to the quantities involving zeroth-order derivatives of  $u^*_{bl}$ and it is 
	\begin{align*}
		F^{(0)}_{bl,0}=	[	\partial_{\tilde{y}_\gamma} (& A^{*,\gamma\delta}_{rk}(\tilde{y})\partial_{\tilde{y}_\delta} \chi^{*,\alpha}_{kl}(\tilde{y}))](u^*_{bl})^{\beta}_{lj}	(\tilde{y})\partial_{ \tilde{x}_{\alpha}}\partial_{\tilde{x}_{\beta}} G_{ji}^{*,0}  (\tilde{x},x) -  \partial_{\tilde{y}_r}	\pi^{*,\alpha}_{l}(\tilde{y})(u^*_{bl})^{\beta}_{lj}	(\tilde{y})\partial_{\tilde{x}_\alpha}\partial_{\tilde{x}_{\beta}} G_{ji}^{*,0} (\tilde{x},x)   \\
		+	&	\partial_{\tilde{y}_\gamma}  A^{*,\gamma\delta}_{rk}(\tilde{y}) (u^*_{bl})^{\beta}_{kj}(\tilde{y})\partial_{\tilde{x}_\delta}\partial_{\tilde{x}_{\beta}} G_{ji}^{*,0} (\tilde{x},x).
	\end{align*}
	By some change of indices and factorization of $(u^*_{bl})^{\beta}_{lj}(\tilde{y})\partial_{\tilde{x}_\alpha}\partial_{\tilde{x}_{\beta}} G_{ji}^{*,0} (\tilde{x},x)$ we find
	\begin{align*}
		F^{(0)}_{bl,0}
		&= \{	\partial_{\tilde{y}_\gamma} ( A^{*,\gamma\delta}_{rk}(\tilde{y})\partial_{\tilde{y}_\delta} \chi^{*,\alpha}_{kl}(\tilde{y})) -  \partial_{\tilde{y}_r}	\pi^{*,\alpha}_{l}(\tilde{y})	
		+	\partial_{\tilde{y}_\gamma}  A^{*,\gamma\alpha}_{rl}(\tilde{y})\} (u^*_{bl})^{\beta}_{lj}(\tilde{y})\partial_{\tilde{x}_\alpha}\partial_{\tilde{x}_{\beta}} G_{ji}^{*,0} (\tilde{x},x)\\
		&=0
	\end{align*}
	by the first equation of (\ref{chipi}).
	\noindent Then the quantity $F^{(0)}_{bl}$ reduces to 
	\begin{align}\label{F0bl}
		F^{(0)}_{bl}:= F^{(0)}_{bl,1}+ F^{(0)}_{bl,2}+F^{(0),*}_{bl}	
	\end{align}
	where the term $F^{(0)}_{bl,1}$ relates to first-order derivatives of  $(u^*_{bl})^{\beta}_{j}$ and it reads
	\begin{align*}
		F^{(0)}_{bl,1}	:=	&	[  A^{*,\alpha\delta}_{rk}(\tilde{y})+	  A^{*,\delta\alpha}_{rk}(\tilde{y})]\partial_{\tilde{y}_\delta} (u^*_{bl})^{\beta}_{kj}(\tilde{y})\partial_{\tilde{x}_\alpha}\partial_{\tilde{x}_{\beta}} G_{ji}^{*,0} (\tilde{x},x)\nonumber\\
		& +[	\partial_{\tilde{y}_\gamma} ( A^{*,\gamma\delta}_{rk}(\tilde{y}) \chi^{*,\alpha}_{kl}(\tilde{y})) + A^{*,\delta\gamma}_{rk}(\tilde{y})\partial_{\tilde{y}_\gamma} \chi^{*,\alpha}_{kl}(\tilde{y})]	\partial_{\tilde{y}_\delta}(u^*_{bl})^{\beta}_{lj}	(\tilde{y})\partial_{ \tilde{x}_{\alpha}}\partial_{\tilde{x}_{\beta}} G_{ji}^{*,0}  (\tilde{x},x)\nonumber \\
		&-  	\pi^{*,\alpha}_{l}(\tilde{y})	\partial_{\tilde{y}_r} (u^*_{bl})^{\beta}_{lj}(\tilde{y}) \partial_{ \tilde{x}_{\alpha}}\partial_{\tilde{x}_{\beta}}G_{ji}^{*,0} (\tilde{x},x),
	\end{align*}
	the quantity $F^{(0)}_{bl,2}$ involves a second-order derivative of  $(u^*_{bl})^{\beta}_{lj}$ 
	\begin{align*}
		F^{(0)}_{bl,2}	:= A^{*,\gamma\delta}_{rk}(\tilde{y}) \chi^{*,\alpha}_{kl}(\tilde{y})	\partial_{\tilde{y}_\gamma}\partial_{\tilde{y}_\delta}(u^*_{bl})^{\beta}_{lj}	(\tilde{y})\partial_{ \tilde{x}_{\alpha}}\partial_{\tilde{x}_{\beta}} G_{ji}^{*,0}  (\tilde{x},x) 
	\end{align*}		
	and $F^{(0),*}_{bl}$ is  the term containing the pressure component of the boundary layer	
	\begin{align}
		F^{(0),*}_{bl}:=	& - (p_{bl}^*)^{\beta}_{j}(\tilde{x}/\varepsilon)  \partial_{\tilde{x}_r}\partial_{\tilde{x}_{\beta}}G_{ji}^{*,0}  (\tilde{x},x).\nonumber
	\end{align}
	The terms of  order 1 in $\varepsilon$ without boundary layer terms form
	\begin{align}
		F^{(1)}:=& \;  [A^{*,\gamma\alpha}_{rk}(\tilde{y})	\chi^{*,\beta}_{kj}(\tilde{y})+ \partial_{\tilde{y}_\delta} ( A^{*,\delta\gamma}_{rk}(\tilde{y})\Gamma^{*,\alpha\beta}_{kj}(\tilde{y}))+ A^{*,\gamma\delta}_{rk}(\tilde{y})\partial_{\tilde{y}_\delta}\Gamma^{*,\alpha\beta}_{kj}(\tilde{y})] \partial_{\tilde{x}_\gamma}\partial_{ \tilde{x}_{\alpha}} \partial_{\tilde{x}_{\beta}} G_{ji}^{*,0}  (\tilde{x},x)\nonumber\\&		-    Q^{*,\alpha\beta}_{j}(\tilde{y})\partial_{\tilde{x}_r}\partial_{ \tilde{x}_{\alpha}} \partial_{\tilde{x}_{\beta}} G_{ji}^{*,0}  (\tilde{x},x).\label{F1}
	\end{align}
	The quantities of  order 1 in $\varepsilon$ involving boundary layer terms are gathered in
	\begin{align}\label{F1bl}
		F^{(1)}_{bl}:=\;	& F^{(1)}_{bl,0}+ F^{(1)}_{bl,1}
	\end{align}
	with $	F^{(1)}_{bl,0}$ involving the quantity $(u^*_{bl})^{\beta}_{lj}$  without any of its derivatives
	\begin{align*}
		F^{(1)}_{bl,0}:=\;	&  [A^{*,\gamma\alpha}_{rk}(\tilde{y})(u^*_{bl})^{\beta}_{kj}	(\tilde{y})		
		+ A^{*,\gamma\delta}_{rk}(\tilde{y})\partial_{\tilde{y}_\delta}\chi^{*,\beta}_{kl}(\tilde{y})
		+ \partial_{\tilde{y}_\delta} (A^{*,\delta\gamma}_{rk}(\tilde{y})	\chi^{*,\beta}_{kl}(\tilde{y}))](u^*_{bl})^{\beta}_{lj}(\tilde{y})\partial_{\tilde{x}_\gamma}\partial_{\tilde{x}_\alpha}\partial_{\tilde{x}_{\beta}} G_{ji}^{*,0}  (\tilde{x},x)\nonumber\\	
		&	-\pi^{*,\beta}_{k}(\tilde{y})(u^*_{bl})^{\beta}_{kj}(\tilde{y})\partial_{\tilde{x}_r}\partial_{ \tilde{x}_{\alpha}} \partial_{\tilde{x}_{\beta}} G_{ji}^{*,0}  (\tilde{x},x)
	\end{align*}
	and  $	F^{(1)}_{bl,0}$ containing first-order derivatives of $  (u^*_{bl})^{\beta}_{lj}$
	\begin{align*}
		F^{(1)}_{bl,1}:=\;		
		& [A^{*,\delta\gamma}_{rk}(\tilde{y})	\chi^{*,\beta}_{kl}(\tilde{y}) + A^{*,\gamma\delta}_{rk}(\tilde{y})\chi^{*,\beta}_{kl}(\tilde{y})]\partial_{\tilde{y}_\delta}(u^*_{bl})^{\beta}_{lj}(\tilde{y}) \partial_{\tilde{x}_\gamma}\partial_{ \tilde{x}_{\alpha}} \partial_{\tilde{x}_{\beta}} G_{ji}^{*,0}  (\tilde{x},x).
	\end{align*}
	Finally, the terms of order 2 in $\varepsilon$ are 
	\begin{align}
		F^{(2)}:=&	  A^{*,\gamma\delta}_{rk}(\tilde{x}/\varepsilon)\Gamma^{*,\alpha\beta}_{kj}(\tilde{x}/\varepsilon)\partial_{\tilde{x}_\gamma}\partial_{\tilde{x}_\delta}\partial_{ \tilde{x}_{\alpha}} \partial_{\tilde{x}_{\beta}} G_{ji}^{*,0}  (\tilde{x},x)\label{F2}\\
		\mbox{and }\quad 	F^{(2)}_{bl}:=	&   A^{*,\gamma\delta}_{rk}(\tilde{x}/\varepsilon) \chi^{*,\alpha}_{kl}(\tilde{x}/\varepsilon)(u^*_{bl})^{\beta}_{lj}	(\tilde{x}/\varepsilon)	\partial_{\tilde{x}_\gamma}\partial_{\tilde{x}_\delta}\partial_{ \tilde{x}_{\alpha}}\partial_{\tilde{x}_{\beta}} G_{ji}^{*,0}  (\tilde{x},x).\label{F2bl}\\\nonumber
	\end{align}
	As for  the equation of the divergence, let us first remind that 
	\begin{align*}
		\nabla_{\tilde{x}} \cdot G^{*,\varepsilon}(\cdot,x)=\nabla_{\tilde{x}}  \cdot G^{*,0}(\cdot,x)=\nabla_{\tilde{y}} \cdot (u^*_{bl})^{\beta}_{j} =\nabla_{\tilde{y}} \cdot \chi^{*,\beta}_{j}= \partial_{y_k} \chi^{*,\beta}_{kj}= \partial_{y_k} (u^*_{bl})^{\beta}_{kj}=0.
	\end{align*}
	Then a straightforward computation consisting mainly of expanding the right-hand-side of
	\begin{align*}
		\partial_{\tilde{x}_k} W_{ki}^{\varepsilon}=& 	\partial_{\tilde{x}_k}  \left( G_{ki}^{*,\varepsilon} (\tilde{x},x) -  G_{ki}^{*,0} (\tilde{x},x)-\varepsilon	\chi^{*,\beta}_{kj}(\tilde{x}/\varepsilon)\partial_{\tilde{x}_{\beta}} G_{ji}^{*,0}  (\tilde{x},x) -\varepsilon	(u^*_{bl})^{\beta}_{kj}(\tilde{x}/\varepsilon)\partial_{\tilde{x}_{\beta}} G_{ji}^{*,0}  (\tilde{x},x)\right. \\
		&\qquad\qquad \left. - \varepsilon^2 \Gamma^{*,\alpha\beta}_{kj}(\tilde{x}/\varepsilon)\partial_{ \tilde{x}_{\alpha}}\partial_{\tilde{x}_{\beta}} G_{ji}^{*,0}  (\tilde{x},x) -\varepsilon^2	\chi^{*,\alpha}_{kl}(\tilde{x}/\varepsilon)(u^*_{bl})^{\beta}_{lj}	(\tilde{x}/\varepsilon)\partial_{ \tilde{x}_{\alpha}}\partial_{\tilde{x}_{\beta}} G_{ji}^{*,0}  (\tilde{x},x) \right)
	\end{align*}
	gives
	\begin{align*}
		\partial_{\tilde{x}_k} W_{ki}^{\varepsilon}
		=& - \varepsilon\left(	\chi^{*,\beta}_{kj}(\tilde{x}/\varepsilon)\partial_{\tilde{x}_k} \partial_{\tilde{x}_{\beta}} G_{ji}^{*,0}  (\tilde{x},x) +\partial_{\tilde{y}_k}\Gamma^{*,\alpha\beta}_{kj}(\tilde{x}/\varepsilon) \partial_{ \tilde{x}_{\alpha}}\partial_{\tilde{x}_{\beta}} G_{ji}^{*,0}  (\tilde{x},x)\right) \\
		&-\varepsilon \left(	(u^*_{bl})^{\beta}_{kj}(\tilde{x}/\varepsilon)\partial_{\tilde{x}_k}\partial_{\tilde{x}_{\beta}} G_{ji}^{*,0}  (\tilde{x},x)+  \chi^{*,\alpha}_{kl}(\tilde{x}/\varepsilon)\partial_{\tilde{y}_k}(u^*_{bl})^{\beta}_{lj}	(\tilde{x}/\varepsilon) \partial_{ \tilde{x}_{\alpha}}\partial_{\tilde{x}_{\beta}}G_{ji}^{*,0}  (\tilde{x},x) \right)  \\
		& - \varepsilon^2 \left(  \Gamma^{*,\alpha\beta}_{kj}(\tilde{x}/\varepsilon)\partial_{\tilde{x}_k}\partial_{ \tilde{x}_{\alpha}}\partial_{\tilde{x}_{\beta}} G_{ji}^{*,0}  (\tilde{x},x) + 	\chi^{*,\alpha}_{kl}(\tilde{x}/\varepsilon)(u^*_{bl})^{\beta}_{lj}	(\tilde{x}/\varepsilon)\partial_{\tilde{x}_k} \partial_{ \tilde{x}_{\alpha}}\partial_{\tilde{x}_{\beta}}G_{ji}^{*,0}  (\tilde{x},x) \right).
	\end{align*}
	The first line on the right hand side is zero according to the second equation of  (\ref{GamQ}). Then we obtain
	\begin{align}
		H^{\varepsilon}(\tilde{x}):=	&-\varepsilon \left(	(u^*_{bl})^{\beta}_{kj}(\tilde{x}/\varepsilon)\partial_{\tilde{x}_k}\partial_{\tilde{x}_{\beta}} G_{ji}^{*,0}  (\tilde{x},x)+  \chi^{*,\alpha}_{kl}(\tilde{x}/\varepsilon)\partial_{\tilde{y}_k}(u^*_{bl})^{\beta}_{lj}	(\tilde{x}/\varepsilon) \partial_{ \tilde{x}_{\alpha}}\partial_{\tilde{x}_{\beta}}G_{ji}^{*,0}  (\tilde{x},x) \right)\nonumber  \\
		& - \varepsilon^2 \left(  \Gamma^{*,\alpha\beta}_{kj}(\tilde{x}/\varepsilon)\partial_{\tilde{x}_k}\partial_{ \tilde{x}_{\alpha}}\partial_{\tilde{x}_{\beta}} G_{ji}^{*,0}  (\tilde{x},x) + 	\chi^{*,\alpha}_{kl}(\tilde{x}/\varepsilon)(u^*_{bl})^{\beta}_{lj}	(\tilde{x}/\varepsilon)\partial_{\tilde{x}_k}\partial_{ \tilde{x}_{\alpha}}\partial_{\tilde{x}_{\beta}} G_{ji}^{*,0}  (\tilde{x},x) \right). \label{h_divW}
	\end{align}
Using the cancellations \; $ G_{ki}^{*,\varepsilon} (\tilde{x},x)=	\chi^{*,\beta}_{kj}(\tilde{x}/\varepsilon) +(u^*_{bl})^{\beta}_{kj}(\tilde{x}/\varepsilon)=0 =  G_{ki}^{*,0} (\tilde{x},x)$,\; for $\tilde{x}\in \partial\mathbb{H}_n$,\;
	the boundary value is $\varphi^\varepsilon = (\varphi^\varepsilon_{k})_k$ defined by	\begin{align}
		\varphi_k^{\varepsilon}(\tilde{x}):=
		& - \varepsilon^2 \Gamma^{*,\alpha\beta}_{kj}(\tilde{x}/\varepsilon)\partial_{ \tilde{x}_{\alpha}}\partial_{\tilde{x}_{\beta}} G_{ji}^{*,0}  (\tilde{x},x) - \varepsilon^2	\chi^{*,\alpha}_{kl}(\tilde{x}/\varepsilon)(u^*_{bl})^{\beta}_{lj}	(\tilde{x}/\varepsilon)\partial_{ \tilde{x}_{\alpha}}\partial_{\tilde{x}_{\beta}}G_{ji}^{*,0}  (\tilde{x},x).\label{g_bv_of_W}
	\end{align}
	To summarize, 		the pair $ (W^{\varepsilon}\!, S^{\varepsilon}) $ satisfies the system of equations
	\begin{equation}
		\label{Pb_WS}	\left\{
		\begin{array}{rcll}
			-\nabla\cdot A^* (\tilde{x}/\varepsilon) \, \nabla W^{\varepsilon}  +  \nabla S^{\varepsilon}&=&F^{(0)}_{bl}+ \varepsilon\left( F^{(1)}+ F^{(1)}_{bl} \right)  +\varepsilon^2 \left(F^{(2)}+F^{(2)}_{bl}\right),   &\tilde{x}\in D_x,\\ 
			\nabla \cdot W^{\varepsilon} &=& H^\varepsilon (\tilde{x})  ,&\tilde{x}\in D_x,\\ \vspace{0.1in} 
			W^{\varepsilon}&=& \varphi^\varepsilon(\tilde{x}),&\tilde{x}\in \Gamma_x,
		\end{array}
		\right. 
	\end{equation}	
	with the functions $ F^{(0)}_{bl}, F^{(1)}, F^{(1)}_{bl} ,F^{(2)}, F^{(2)}_{bl},  h^\varepsilon$ and $g^\varepsilon$  defined respectively  as in (\ref{F0bl}), (\ref{F1}), (\ref{F1bl}), (\ref{F2}), (\ref{F2bl}), (\ref{h_divW}) and (\ref{g_bv_of_W}).\\

	\noindent 	Now we apply the uniform Lipschitz estimate of Theorem \ref{unif_estim} to the  system (\ref{Pb_WS}). To that end, we introduce the domain  $$D^0_x:= \{ \tilde{x}\in\overline{\mathbb{H}}_n : 2/3<  |x-\tilde{x}|<4/3 \} \subset D_x.$$
	\noindent Theorem  \ref{unif_estim}  yields the following inequality
	\begin{align}
		\|\nabla W^{\varepsilon}\|_{L^\infty(D_x^0)} + \osc_{D_x^0}  \;    S^{\varepsilon}
		\leq   C  \left\{ \|W^{\varepsilon} \|_{L^\infty(D_x)} \right. &+ \| F^{(0)}_{bl} \|_{L^{d+l}(D_x)}+     \| H^\varepsilon \|_{L^\infty(D_x)} +  [H^\varepsilon]_{C^{0,\eta}(D_x)} \nonumber \\
		&+ \varepsilon  \,(\| F^{(1)} \|_{L^{d+l}(D_x)}+ \| F^{(1)}_{bl} \|_{L^{d+l}(D_x)} )\label{unif_estim_applied}\\
		&+\varepsilon^2 (\| F^{(2)} \|_{L^{d+l}(D_x)}+ \| F^{(2)}_{bl} \|_{L^{d+l}(D_x)} )\nonumber\\ 
		&+\| \varphi^\varepsilon \|_{L^\infty(\Gamma_x)} + \| \nabla_{\!\!\textup{ tan}} \, \varphi^\varepsilon \|_{L^\infty(\Gamma_x)} + \left.  [\nabla_{\!\!\textup{ tan}}\; \varphi^\varepsilon]_{C^{0,\eta}(\Gamma_x)} \right\}\nonumber \nonumber
	\end{align} for $l>0$ and 	where $C$ depends on $d, l, \eta, \mu$ and $[A]_{C^{0,\eta}(D_x)}$ .
	We aim at showing that the right hand side of this inequality is of size $\varepsilon^\kappa$ for some $\kappa>0$. Therefore, in what follows we are going to establish a bound of the form $C \varepsilon^\kappa$ to each of the terms  in the right hand  side of (\ref{unif_estim_applied}). Let us begin with the term $	\| W^{\varepsilon}\|_{L^\infty(D_x)}$.
	\begin{lemma}
		We have the following estimate for $W^\varepsilon$:
		\begin{align*}
			\| W^{\varepsilon}\|_{L^\infty(D_x)}	 \leq   C  \varepsilon^{1/d}, 
		\end{align*}where the constant $C$ depends essentially on $A$ and on $d, \mu$ and $[A]_{C^{0,\eta}}$.
	\end{lemma}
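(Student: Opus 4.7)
The plan is to estimate each of the six contributions in the definition (\ref{errorW}) of $W^\varepsilon$ separately via the triangle inequality, showing that the difference $G^{*,\varepsilon}-G^{*,0}$ supplies the dominant term of size $\varepsilon^{1/d}$ while the four corrector products are of order $\varepsilon$ or $\varepsilon^{2}$.

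For the leading difference, I would invoke the adjoint analogue of Proposition \ref{propo_GepsG0} (as noted in the remark following it): for any $s \in \bigl(1/(d+\tfrac{1}{d}),\; 1/(d-1+\tfrac{2}{d})\bigr)$,
\[
|G^{*,\varepsilon}(\tilde x,x) - G^{*,0}(\tilde x,x)| \leq C \varepsilon^s, \qquad \tilde x \in D_x .
\]
Since $d\ge 3$ the value $s=1/d$ lies in this admissible range, yielding the bound $C\varepsilon^{1/d}$ on this first contribution.

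For the remaining four terms in (\ref{errorW}), the key observation is that by the very definition of $D_x$ one has $|x-\tilde x|\ge 1/3$, so the homogenized Green function estimate (\ref{estim_der_G0}) gives $|\partial^\lambda_{\tilde x} G^{*,0}(\tilde x,x)|\le C$ uniformly for $|\lambda|\le 2$. Combined with the uniform $L^\infty$ bounds on the interior correctors $\chi^*, \Gamma^*$ (Lemma \ref{bound_on_correctors} together with Remark \ref{Rmk_starcorrectors}) and on the boundary-layer profile $u^*_{bl}$ (estimate (\ref{bound_Linf_u})), each of the two terms carrying the prefactor $\varepsilon$ is pointwise bounded by $C\varepsilon$, and each of the two terms with prefactor $\varepsilon^2$ by $C\varepsilon^2$.

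Summing the six estimates and noting that $\varepsilon^{1/d}$ dominates both $\varepsilon$ and $\varepsilon^2$ as $\varepsilon \to 0^+$, we arrive at $\|W^\varepsilon\|_{L^\infty(D_x)} \leq C\varepsilon^{1/d}$, with the constant depending only on $d,\mu$ and $[A]_{C^{0,\eta}}$ (the dependence on $A$ enters through these quantities and through the normalization of the correctors). There is no serious obstacle here: the heavy lifting lies in Proposition \ref{propo_GepsG0}, and the present lemma merely propagates that rate of convergence through the asymptotic ansatz, exploiting that all remaining objects are either bounded periodic functions or derivatives of $G^{*,0}$ evaluated away from the singularity.
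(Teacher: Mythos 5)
Your proposal matches the paper's own proof: a termwise triangle inequality applied to the definition (\ref{errorW}) of $W^\varepsilon$, with the adjoint version of Proposition \ref{propo_GepsG0} (and the remark noting $1/d$ lies in the admissible range for $d\ge 3$) giving the dominant $\varepsilon^{1/d}$ bound, while the corrector products are controlled by Lemma \ref{bound_on_correctors}, estimate (\ref{bound_Linf_u}), and the boundedness of $\nabla G^{*,0}$ and $\nabla^2 G^{*,0}$ on $D_x$ via (\ref{estim_der_G0}). The only cosmetic difference is that the paper groups $\chi^*+u_{bl}^*$ and $\Gamma^*+\chi^*u_{bl}^*$ into pairs before taking norms, but the content is identical.
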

	\begin{proof}[Proof]
		The term  $\| W^{\varepsilon}\|_{L^\infty(D_x)}$  is bounded as follows. By the definition of $W^{\varepsilon}$ and by the triangle inequality, we have
		\begin{align*}
			| W^{\varepsilon} (\tilde{x})|\leq& \;|G^{*,\varepsilon} (\tilde{x},x)-G^{*,0} (\tilde{x},x)|+ \varepsilon |\chi^*(\tilde{x}/\varepsilon) + u_{bl}^*(\tilde{x}/\varepsilon)|\cdot|\nabla G^{*,0}(\tilde{x},x)|\\
			&+\varepsilon^2 |\Gamma^*(\tilde{x}/\varepsilon) \nabla^2 G^{*,0} (\tilde{x},x)|+\varepsilon^2 |\chi^* (\tilde{x}/\varepsilon)u_{bl}^*(\tilde{x}/\varepsilon) \nabla^2 G^{*,0} (\tilde{x},x)|, \quad \mbox{ for all } \tilde{x}\in D_x.
		\end{align*}
		Hence 
		\begin{align*}
			\| W^{\varepsilon} \|_{L^\infty(D_x)}\leq \:&   \|G^{*,\varepsilon} (\cdot,x)-G^{*,0} (\cdot,x)\|_{L^\infty(D_x)}+ \varepsilon \, (\|\chi^*\|_\infty + \|u^*_{bl}\|_\infty) \|\nabla G^{*,0}(\cdot,x)\|_{L^\infty(D_x)} \\
			&\varepsilon^2 \left(\|\Gamma ^*\|_\infty   + \|\chi^*\|_\infty  \|u_{bl}^*\|_\infty\right) \|\nabla G^{*,0}(\cdot,x)\|_{L^\infty(D_x)}. 
		\end{align*}
		Therefore, by Proposition \ref{propo_GepsG0} together with the Remark that follows it, and by the boundednes of the correctors given in Lemma \ref{bound_on_correctors} along with the facts  that $ \nabla G^{*,0}(\cdot,x)$ is bounded in $D_x$ and  $ \|u_{bl}^*\|_\infty \leq C(d, \mu, [A]_{C^{0,\eta}})<\infty$ , we get
		\begin{align*}
			\| W^{\varepsilon} \|_{L^\infty(D_x)}&\leq C\varepsilon^{1/d} 
		\end{align*}
		with $C$ depending  on $d, \mu$ and $[A]_{C^{0,\eta}}$.
	\end{proof}
	
	\begin{remark}\label{bound_inf_derG0} Here we would like to clarify some tools used in the preceding proof and that we might use again.
		\begin{enumerate}
			\item We implicitely use the bound $\|\partial^\lambda G^{*,0}(\cdot,x) \|_{L^\infty(D_x)} \leq \sup_{\tilde{x}\in D_x} \frac{C}{|\tilde{x} -x|^{d-2+|\lambda|}} =C(1/3)^{d-2+|\lambda|}$ for a multi-index $\lambda\in \mathbb{N}^d,$ with $|\lambda|=1,2,3,4,$ throughout our study. The  constant $C $ in this estimate depends on $d, \mu$ and $[A]_{C^{0,\eta}}.$
			\item In the previous  proof and on other occasions we use abbreviations of the type $$ \chi^*\cdot \nabla G^{*,0},\quad    u_{bl}^* \cdot \nabla G^{*,0},\quad \Gamma^*:\nabla^2 G^{*,0},\quad \chi^* u_{bl}^*: \nabla^2 G^{*,0}.$$
			\noindent	Let us give the meanings	of these notations
			\begin{align*}
				\chi^*\cdot \nabla G^{*,0}&=\chi^{*,\beta}_{kj}\partial_{\tilde{x}_{\beta}} G_{ji}^{*,0} ,\qquad    u_{bl}^* \cdot \nabla G^{*,0}=(u^*_{bl})^{\beta}_{kj}\partial_{\tilde{x}_{\beta}} G_{ji}^{*,0} \\
				\Gamma^*:\nabla^2 G^{*,0}&=\Gamma^{*,\alpha\beta}_{kj}\partial_{ \tilde{x}_{\alpha}}\partial_{\tilde{x}_{\beta}} G_{ji}^{*,0}  , \qquad   Q^*:\nabla^2 G^{*,0}=Q^{*,\alpha\beta}_{j}\partial_{ \tilde{x}_{\alpha}}\partial_{\tilde{x}_{\beta}} G_{ji}^{*,0} \\ \chi^* u_{bl}^*: \nabla^2 G^{*,0}&=		\chi^{*,\alpha}_{kl}(u^*_{bl})^{\beta}_{lj}	\partial_{ \tilde{x}_{\alpha}}\partial_{\tilde{x}_{\beta}}G_{ji}^{*,0},\quad   \pi^{*}	u_{bl}^{*}:\nabla^2 G_{}^{*,0} =\pi^{*,\alpha}_{k}	(u^*_{bl})^{\beta}_{kj}	:\partial_{ \tilde{x}_{\alpha}}\partial_{\tilde{x}_{\beta}}G_{ji}^{*,0} .\\
			\end{align*}
			
		\end{enumerate} 
	\end{remark}
	\begin{lemma}
		We have the following estimate for $H^\varepsilon$ where $H^\varepsilon$ is defined in \textup{(\ref{h_divW})}
		\begin{align*}
			\| H^\varepsilon \|_{L^\infty(D_x)} +  [H^\varepsilon]_{C^{0,\eta}(D_x)} \leq   C  \varepsilon^{1-\eta}
		\end{align*}where $C $ depends on $d$, $\mu$ and $[A]_{C^{0,\eta}}$.
	\end{lemma}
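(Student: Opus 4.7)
The strategy is to bound $H^{\varepsilon}$ term by term, separating the two explicit scales: the visible prefactor $\varepsilon$ or $\varepsilon^{2}$, and the $\varepsilon^{-\eta}$ cost that appears when computing a H\"older seminorm of a $Y$-periodic (or boundary-layer) function evaluated at $\tilde{x}/\varepsilon$. The final size $\varepsilon^{1-\eta}$ will come from the two first-order-in-$\varepsilon$ summands; the $\varepsilon^{2}$ summands will turn out to contribute only $\varepsilon^{2-\eta}$, which is smaller.

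First I would dispose of the sup-norm estimate. Each of the four summands in \textup{(\ref{h_divW})} has the form $\varepsilon^{m}\,f(\tilde{x}/\varepsilon)\,g(\tilde{x})$ with $m\in\{1,2\}$, where $f$ is a product of $\chi^{*}$, $\Gamma^{*}$, $u^{*}_{bl}$ or $\nabla u^{*}_{bl}$, and $g$ is a derivative of $G^{*,0}(\cdot,x)$ of order two or three. By Lemma \ref{bound_on_correctors} and \textup{(\ref{bound_Linf_u})} (together with standard interior regularity for the boundary-layer pair solving \textup{(\ref{u*p*})}, giving boundedness of $\nabla u^{*}_{bl}$ on $\mathbb{H}_n$), $f$ is bounded uniformly; and for $\tilde{x}\in D_x$ we have $|\tilde{x}-x|\ge 1/3$, so item (1) of Remark \ref{bound_inf_derG0} bounds $g$ uniformly. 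Thus $\|H^{\varepsilon}\|_{L^{\infty}(D_x)}\le C\varepsilon$.

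The H\"older seminorm is the heart of the lemma. For every bounded, Lipschitz function $f$ on $\mathbb{H}_n$ the elementary interpolation
\[
|f(\tilde{x}/\varepsilon)-f(\tilde{y}/\varepsilon)|
\le (2\|f\|_\infty)^{1-\eta}\bigl(\|\nabla f\|_\infty/\varepsilon\bigr)^{\eta}|\tilde{x}-\tilde{y}|^{\eta}
\]
gives $[f(\cdot/\varepsilon)]_{C^{0,\eta}(D_x)}\le C\varepsilon^{-\eta}$. I will apply this to each factor $\chi^{*}(\tilde{x}/\varepsilon)$, $\Gamma^{*}(\tilde{x}/\varepsilon)$, $u^{*}_{bl}(\tilde{x}/\varepsilon)$ and $\nabla u^{*}_{bl}(\tilde{x}/\varepsilon)$; the products of two such factors are again controlled by $C\varepsilon^{-\eta}$ via the product rule for H\"older seminorms combined with their uniform boundedness. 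The smooth factor $g(\tilde{x})$, being a derivative of $G^{*,0}(\cdot,x)$, has a bounded $C^{0,\eta}(D_x)$ seminorm by Remark \ref{bound_inf_derG0}. Another application of the product rule then yields, for each summand,
\[
\bigl[\varepsilon^{m}f(\tilde{x}/\varepsilon)g(\tilde{x})\bigr]_{C^{0,\eta}(D_x)}
\le C\,\varepsilon^{m}\bigl(\varepsilon^{-\eta}\|g\|_\infty+\|f\|_\infty[g]_{C^{0,\eta}(D_x)}\bigr)
\le C\varepsilon^{m-\eta}.
\]
Summing over the four contributions and keeping the worst exponent $m=1$ gives $[H^{\varepsilon}]_{C^{0,\eta}(D_x)}\le C\varepsilon^{1-\eta}$, which combined with Step 1 yields the claim.

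The only non-cosmetic obstacle is making sure that $\nabla u^{*}_{bl}$ (which appears through the second summand in the first line of \textup{(\ref{h_divW})}) is bounded on $\overline{\mathbb{H}}_n$ with quantitative control depending only on $d,\mu,[A]_{C^{0,\eta}}$; this follows from Theorem \ref{unif_estim} applied to the problem \textup{(\ref{u*p*})} at $\varepsilon=1$ (the coefficients are fixed and smooth, the boundary data $-\chi^{*,\beta}_{j}$ is smooth and periodic), together with the representation via the Poisson kernel that we have already set up, and is the only place where we quietly borrow information beyond Lemma \ref{bound_on_correctors} and Remark \ref{bound_inf_derG0}.
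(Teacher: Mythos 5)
Your proof follows essentially the same route as the paper's: both bound the sup-norm directly by $C\varepsilon$, both then split each of the four summands of $H^\varepsilon$ via the Leibniz rule for H\"older seminorms, and both extract the crucial $\varepsilon^{-\eta}$ from the rapidly oscillating factors so the final powers are $\varepsilon^{1-\eta}$ and $\varepsilon^{2-\eta}$. The one cosmetic difference is how the $\varepsilon^{-\eta}$ is produced: you use the interpolation $[f(\cdot/\varepsilon)]_{C^{0,\eta}}\le C\|f\|_\infty^{1-\eta}(\|\nabla f\|_\infty/\varepsilon)^\eta$, while the paper uses the pure-rescaling identity $[f(\cdot/\varepsilon)]_{C^{0,\eta}(D_x)}=\varepsilon^{-\eta}[f]_{C^{0,\eta}(\tfrac{1}{\varepsilon}D_x)}$ together with a uniform H\"older bound on $f$. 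These are interchangeable, though the interpolation is formally one derivative more greedy.

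That extra derivative is where your last paragraph is slightly off target. You correctly note that $\nabla u^{*}_{bl}$ needs to be bounded and attribute this to Theorem~\ref{unif_estim}, which is fine. But to apply your interpolation inequality to the factor $f=\nabla u^{*}_{bl}(\tilde{x}/\varepsilon)$ (which appears in the term $\chi^{*}(\tilde{x}/\varepsilon)\,\nabla u^{*}_{bl}(\tilde{x}/\varepsilon)\,\nabla^{2}G^{*,0}$), you need $\|\nabla f\|_\infty=\|\nabla^{2}u^{*}_{bl}\|_\infty$ bounded, not merely $\|\nabla u^{*}_{bl}\|_\infty$. Theorem~\ref{unif_estim} by itself gives only the Lipschitz bound for the velocity, not a uniform second-derivative bound. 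The missing ingredient is genuinely available — since $A\in C^\infty$ and the Dirichlet data $-\chi^{*,\beta}_{j}$ is smooth and periodic, interior and boundary Schauder estimates for the Stokes operator give uniform $C^{1,\eta}$ (hence bounded $[\nabla u^{*}_{bl}]_{C^{0,\eta}}$ or, with a bit more, bounded $\nabla^{2}u^{*}_{bl}$) on translated unit-size patches of $\mathbb{H}_n$ — but it should be stated explicitly, because it is precisely the quantity that your interpolation consumes. The paper's rescaling variant needs the analogous $[\nabla u^{*}_{bl}]_{C^{0,\eta}(\mathbb{H}_n)}<\infty$ and also does not spell it out; you were right that this is the only place extra information is borrowed, but the quantity you flagged is one derivative short of what your own inequality requires.
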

	\begin{proof}[Proof] We have
		\begin{align*}
			H^{\varepsilon}(\tilde{x}):=	&-\varepsilon \left((u^*_{bl})^{\beta}_{kj}	(\tilde{x}/\varepsilon)\partial_{\tilde{x}_k}\partial_{\tilde{x}_{\beta}} G_{ji}^{*,0}  (\tilde{x},x)+  \chi^{*,\alpha}_{kl}(\tilde{x}/\varepsilon)\partial_{\tilde{y}_k}(u^*_{bl})^{\beta}_{lj}	(\tilde{x}/\varepsilon)\partial_{\tilde{x} _{\alpha}}\partial_{\tilde{x}_{\beta}} G_{ji}^{*,0}  (\tilde{x},x) \right)\nonumber  \\
			& - \varepsilon^2 \left(  \Gamma^{*,\alpha\beta}_{kj}(\tilde{x}/\varepsilon)\partial_{\tilde{x}_k}\partial_{\tilde{x} _{\alpha}}\partial_{\tilde{x}_{\beta}} G_{ji}^{*,0}  (\tilde{x},x) + 	\chi^{*,\alpha}_{kl}(\tilde{x}/\varepsilon)(u^*_{bl})^{\beta}_{lj}		(\tilde{x}/\varepsilon)\partial_{\tilde{x}_k}\partial_{\tilde{x} _{\alpha}}\partial_{\tilde{x}_{\beta}} G_{ji}^{*,0}  (\tilde{x},x) \right).
		\end{align*}	
	By  Lemma \ref{bound_on_correctors} 	and (\ref{bound_Linf_u}) this identity implies
		\begin{align*}
			\| H^\varepsilon \|_{L^\infty(D_x)}   \leq \: & \varepsilon  \| u_{bl}^{*}(\cdot /\varepsilon)\nabla^2 G_{ji}^{*,0}  (\cdot,x) \|_{L^\infty(D_x)} +   \varepsilon \| \chi^{*}(\cdot/\varepsilon)\nabla u_{bl}^{*}	(\cdot/\varepsilon)\nabla G_{ji}^{*,0}  (\tilde{x},x) \|_{L^\infty(D_x)}\\
			& +  \varepsilon^2 \|\Gamma^*(\cdot/\varepsilon) \nabla^3 G_{ji}^{*,0}  (\cdot,x) \|_{L^\infty(D_x)} + \varepsilon^2 \|\chi(\cdot/\varepsilon)u_{bl}^*(\cdot/\varepsilon) \nabla^3 G_{ji}^{*,0} (\cdot,x) \|_{L^\infty(D_x)} \\
			\leq \: & C\varepsilon.
		\end{align*}
		
		\noindent Now we estimate the H\"older semi-norm	of $H^\varepsilon$
		\begin{align*}		
			[H^\varepsilon]_{C^{0,\eta}(D_x)}	\leq &\: \varepsilon  [ u_{bl}^{*}(\cdot /\varepsilon)\nabla^2 G_{ji}^{*,0}  (\cdot,x) ]_{C^{0,\eta}(D_x)} +   \varepsilon [ \chi^{*}(\cdot/\varepsilon)\nabla u_{bl}^{*}	(\cdot/\varepsilon)\nabla G_{ji}^{*,0}  (\tilde{x},x) ]_{C^{0,\eta}(D_x)}\\
			& +  \varepsilon^2 [\Gamma^*(\cdot/\varepsilon) \nabla^3 G_{ji}^{*,0}  (\cdot,x) ]_{C^{0,\eta}(D_x)} + \varepsilon^2 [\chi^*(\cdot/\varepsilon)u_{bl}^*(\cdot/\varepsilon) \nabla^3 G_{ji}^{*,0} (\cdot,x) ]_{C^{0,\eta}(D_x)} .
		\end{align*}
		These semi-norms in the right hand side are estimated as follows : e.g. for $ [ u_{bl}^{*}(\cdot /\varepsilon)\nabla^2 G_{ji}^{*,0}  (\cdot,x) ]_{C^{0,\eta}(D_x)}$
		\begin{align*}		
			[ u_{bl}^{*}(\cdot /\varepsilon)\nabla^2 G_{ji}^{*,0}  (\cdot,x) ]_{C^{0,\eta}(D_x)} \leq& \: [ u_{bl}^{*}(\cdot /\varepsilon) ]_{C^{0,\eta}(D_x)} \| \nabla^2 G_{ji}^{*,0}  (\cdot,x)\|_{L^\infty(D_x)}\\&+ \|u_{bl}^{*}(\cdot /\varepsilon)\|_{L^\infty(D_x)}  [ \nabla^2 G_{ji}^{*,0}  (\cdot,x) ]_{C^{0,\eta}(D_x)}\\
			\leq&\: (1/\varepsilon)^\eta[ u_{bl}^{*} ]_{C^{0,\eta}(\frac{1}{\varepsilon}D_x)} \| \nabla^2 G_{ji}^{*,0}  (\cdot,x)\|_{L^\infty(D_x)}\\&+ \|u_{bl}^{*}\|_{L^\infty(\frac{1}{\varepsilon}D_x)}  [ \nabla^2 G_{ji}^{*,0}  (\cdot,x) ]_{C^{0,\eta}(D_x)}
		\end{align*}
		where $\frac{1}{\varepsilon}D_x:=\{\tilde{y}= \tilde{x}/\varepsilon : \tilde{x}\in D_x\}$. We then obtain
		\begin{align*}		
			[ u_{bl}^{*}(\cdot /\varepsilon)\nabla^2 G_{ji}^{*,0}  (\cdot,x) ]_{C^{0,\eta}(D_x)} 
			\leq &\: C\varepsilon^{-\eta}
		\end{align*}
		with $C$ depending on $A$ and $d$.	 Similarly, one has
		\begin{align*}		
			[\chi^{*}(\cdot /\varepsilon)  u_{bl}^{*}(\cdot /\varepsilon)\nabla^3 G_{ji}^{*,0}  (\cdot,x) ]_{C^{0,\eta}(D_x)} 
			\leq &\: C\varepsilon^{-\eta}\\
			[ \chi^{*}(\cdot /\varepsilon) \nabla u_{bl}^{*}(\cdot /\varepsilon)\nabla^2 G_{ji}^{*,0}  (\cdot,x) ]_{C^{0,\eta}(D_x)} 
			\leq &\: C\varepsilon^{-\eta}\\
			[ \Gamma^{*}(\cdot /\varepsilon)\nabla^3 G_{ji}^{*,0}  (\cdot,x) ]_{C^{0,\eta}(D_x)} 
			\leq &\: C\varepsilon^{-\eta}
		\end{align*}
		where $C$ depends on $A$ and on $d$.
		Thus we get 
		\begin{align*}		
			[H^\varepsilon]_{C^{0,\eta}(D_x)} 
			\leq &\: C(\varepsilon\varepsilon^{-\eta} + \varepsilon^2 \varepsilon^{-\eta})\\
			\leq 	&\: C \varepsilon^{1-\eta}.\qedhere
		\end{align*}
	\end{proof}
	
	\noindent	As for the boundary data in system (\ref{Pb_WS}) we have the following Lemma
	\begin{lemma}
		We have 
		\begin{align*}
			\| \varphi^\varepsilon \|_{L^\infty(\Gamma_x)}+
			\| \nabla_{\!\!\textup{ tan}} \, \varphi^\varepsilon \|_{L^\infty(\Gamma_x)} +
			[\nabla_{\!\!\textup{ tan}}\; \varphi^\varepsilon]_{C^{0,\eta}(\Gamma_x)}  
			&\leq C \varepsilon^{1-\eta}
		\end{align*}
		where the constant $C$ depends on $A, d,\mu$ and $[A]_{C^{0,\eta}}.$
	\end{lemma}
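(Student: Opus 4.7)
The plan is to mirror, almost verbatim, the strategy used in the previous lemma for $H^\varepsilon$: factor each of the two summands of $\varphi^\varepsilon$ in (\ref{g_bv_of_W}) as a product of an $\varepsilon$-periodic oscillating function with $\nabla^2 G^{*,0}(\cdot,x)$, exploit the explicit prefactor $\varepsilon^2$, and keep track of the $\varepsilon^{-1}$ produced whenever a derivative hits the fast variable and of the $\varepsilon^{-\eta}$ produced whenever a Hölder seminorm is applied to a function oscillating at scale $\varepsilon$. The boundedness of $\chi^*$, $\Gamma^*$, $u_{bl}^*$ together with that of their derivatives and Hölder seminorms is furnished by Lemma \ref{bound_on_correctors} and by (\ref{bound_Linf_u}); the boundedness of $\partial^\lambda G^{*,0}(\cdot,x)$ for $|\lambda|\le 4$ on $D_x$ (and a fortiori on $\Gamma_x$) is supplied by Remark \ref{bound_inf_derG0}.

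For the sup norm, both summands in (\ref{g_bv_of_W}) are of the form ``$\varepsilon^2$ times bounded'', which gives immediately $\|\varphi^\varepsilon\|_{L^\infty(\Gamma_x)}\le C\varepsilon^2$. For the tangential gradient, the product and chain rules produce either a slow derivative landing on $\nabla^2 G^{*,0}(\cdot,x)$, which contributes a term of size $\varepsilon^2$, or a fast derivative landing on one of the periodic factors $\Gamma^*(\cdot/\varepsilon)$, $\chi^*(\cdot/\varepsilon)$, $u_{bl}^*(\cdot/\varepsilon)$, which brings out a factor $\varepsilon^{-1}$ and therefore contributes a term of size $\varepsilon$; altogether $\|\nabla_{\mathrm{tan}}\varphi^\varepsilon\|_{L^\infty(\Gamma_x)}\le C\varepsilon$. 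For the Hölder seminorm, each summand of $\nabla_{\mathrm{tan}}\varphi^\varepsilon$ writes as $f(\cdot/\varepsilon)\,g(\cdot)$ with $f$ a periodic $C^{0,\eta}$ function and $g$ smooth on $D_x$; the scaling $[f(\cdot/\varepsilon)]_{C^{0,\eta}(\Gamma_x)}\le \varepsilon^{-\eta}[f]_{C^{0,\eta}}$ combined with the Leibniz rule for Hölder seminorms costs an extra $\varepsilon^{-\eta}$, so $[\nabla_{\mathrm{tan}}\varphi^\varepsilon]_{C^{0,\eta}(\Gamma_x)}\le C\varepsilon^{1-\eta}$. Summing the three bounds and noting that $\varepsilon^2+\varepsilon\lesssim \varepsilon^{1-\eta}$ when $\eta\in(0,1)$ yields the announced estimate.

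The only point that might a priori require some care is uniform Hölder control of the tangential derivatives of $u_{bl}^*(\cdot/\varepsilon)$ restricted to $\Gamma_x$. This, however, is immediate from the Dirichlet identity $u_{bl}^*=-\chi^*$ on $\partial\mathbb{H}_n$ (see (\ref{u*p*})), which reduces every tangential contribution of the boundary-layer corrector on $\Gamma_x$ to a purely interior-corrector expression already handled by Lemma \ref{bound_on_correctors}. In this sense there is no genuine obstacle: the proof is essentially a bookkeeping of powers of $\varepsilon$, a direct transcription of the $H^\varepsilon$ estimate from the previous lemma applied to the two terms defining $\varphi^\varepsilon$.
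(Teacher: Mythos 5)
Your proof is correct and follows essentially the same route as the paper: bound the sup norm by $C\varepsilon^2$ directly, the tangential gradient by $C\varepsilon$ after one derivative costs a factor $\varepsilon^{-1}$ when it hits a fast variable, and the H\"older seminorm by $C\varepsilon^{1-\eta}$ using the scaling $[f(\cdot/\varepsilon)]_{C^{0,\eta}}\lesssim\varepsilon^{-\eta}[f]_{C^{0,\eta}}$ together with the Leibniz rule, then observe $\varepsilon^2+\varepsilon\lesssim\varepsilon^{1-\eta}$. Your closing remark, that on $\Gamma_x\subset\partial\mathbb{H}_n$ the trace identity $u_{bl}^*=-\chi^*$ from (\ref{u*p*}) reduces the tangential derivatives of $u_{bl}^*$ to those of $\chi^*$, is a small but genuine refinement: the paper dominates $\nabla_{\textup{tan}}\varphi^\varepsilon$ by the full gradient $\nabla\varphi^\varepsilon$ and thus tacitly relies on boundedness of the \emph{normal} derivative of $u_{bl}^*$ along $\partial\mathbb{H}_n$ as well, whereas by differentiating tangentially at the outset you avoid that dependence entirely.
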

	
	\begin{proof}
		By Lemma \ref{bound_on_correctors} and Remark \ref{bound_inf_derG0} and estimate (\ref{bound_Linf_u}) we have a straightforward bound on $	g^{\varepsilon}$
		\begin{align*}
			\|	\varphi^{\varepsilon}\|_{L^\infty(\Gamma_x)}=
			& \left\|- \varepsilon^2 \Gamma^{*,\alpha\beta}_{kj}(\cdot/\varepsilon)\partial_{\tilde{x} _{\alpha}}\partial_{\tilde{x}_{\beta}} G_{ji}^{*,0}  (\cdot,x) - \varepsilon^2	\chi^{*,\alpha}_{kl}(\cdot/\varepsilon)(u^*_{bl})^{\beta}_{lj}	(\cdot/\varepsilon)\partial_{\tilde{x} _{\alpha}}\partial_{\tilde{x}_{\beta}} G_{ji}^{*,0}  (\cdot,x)\right\|_{L^\infty(\Gamma_x)}
			\leq  C\varepsilon^2
		\end{align*}	where $C$ depends on $d,\mu$ and $[A]_{C^{0,\eta}}.$	Each term in the derivative of $	\varphi^{\varepsilon}$ is either  of order 1 or of order 2 in $\varepsilon$. 	Thus we have
		\begin{align*}
			\| \nabla_{\!\!\textup{ tan}} \, \varphi^{\varepsilon} \|_{L^\infty(\Gamma_x)} &\leq \| \nabla \varphi^{\varepsilon}  \|_{L^\infty(\Gamma_x)}\
			\leq C\varepsilon
		\end{align*}with $C$ depending on $d,\mu$ and $[A]_{C^{0,\eta}}.$		Finally, the H\"older semi-norm related to the boundary value is bounded as follows
		\begin{align*}
			[\nabla_{\!\!\textup{ tan}}\; \varphi^\varepsilon]_{C^{0,\eta}(\Gamma_x)}  &\leq    \left[\nabla \left(-\varepsilon^2\Gamma^*(\cdot/\varepsilon): \nabla^2 G^{*,0}  (\cdot,x)-\varepsilon^2\chi^* (\cdot/\varepsilon)u^*_{bl}(\cdot/\varepsilon):\nabla^2 G^{*,0}  (\cdot,x) \right) \right]_{C^{0,\eta}(\Gamma_x)}\\
			&\leq  \varepsilon^2  \left[\nabla \left(\Gamma^*(\cdot/\varepsilon): \nabla^2 G^{*,0}  (\cdot,x)\right)\right]_{C^{0,\eta}(\Gamma_x)} \\
			&\qquad	+ \varepsilon^2\left[ \nabla \left(\chi^* (\cdot/\varepsilon)u^*_{bl}(\cdot/\varepsilon): \nabla^2 G^{*,0}  (\cdot,x)\right) \right]_{C^{0,\eta}(\Gamma_x)} .
		\end{align*}
		The terms in the right hand side of this above inequality are bounded as follows:
		\begin{align*}
			\left[\nabla \left(\Gamma^*(\cdot/\varepsilon): \nabla^2 G^{*,0}  (\cdot,x)\right)\right]_{C^{0,\eta}(\Gamma_x)}&
			= [  \varepsilon^{-1}\nabla_y \Gamma^*(\cdot/\varepsilon) :\nabla^2 G^{*,0} (\cdot,x)+ \Gamma^*(\cdot/\varepsilon): \nabla^3 G^{*,0} (\cdot,x)   ]_{C^{0,\eta}(\Gamma_x)}\\
			&\leq      \varepsilon^{-1-\eta}[\nabla_y \Gamma^* ]_{C^{0,\eta}(\frac{1}{\varepsilon}\Gamma_x)}\|\nabla^2 G^{*,0} (\cdot,x)\|_{L^\infty(\Gamma_x)}\\
			&\quad +    \varepsilon^{-1}\| \nabla_y \Gamma^*(\cdot/\varepsilon)\|_{L^\infty(\Gamma_x)} [\nabla^2 G^{*,0} (\cdot,x) ]_{C^{0,\eta}(\Gamma_x)}  \\
			& \quad+\varepsilon^{-\eta}[\Gamma^* ]_{C^{0,\eta}(\frac{1}{\varepsilon}\Gamma_x)}\|\nabla^3 G^{*,0} (\cdot,x)\|_{L^\infty(\Gamma_x)}\\
			&\quad + \| \Gamma^*(\cdot/\varepsilon)\|_{L^\infty(\Gamma_x)} \left[\nabla^3 G^{*,0} (\cdot,x)  \right ]_{C^{0,\eta}(\Gamma_x)}.
		\end{align*}
		So we obtain 
		\begin{align*}
			\varepsilon^2 \left[\nabla \left(\Gamma^*(\cdot/\varepsilon): \nabla^2 G^{*,0}  (\cdot,x)\right)\right]_{C^{0,\eta}(\Gamma_x)}  &\leq    C  \varepsilon^{1-\eta},
		\end{align*}where $C $ depends on $d,\mu$ and $[A]_{C^{0,\eta}}.$
		Similarly we have
		\begin{align*}
			\varepsilon^2	\left[\nabla \left(  \chi^* (\cdot/\varepsilon)u^*_{bl}(\cdot/\varepsilon)\!:\! \nabla^2 G^{*,0}  (\cdot,x)  \right)\right]_{C^{0,\eta}(\Gamma_x)}
			&\leq    C  \varepsilon^{1-\eta},
		\end{align*}with $C $ depending on $d,\mu$ and $[A]_{C^{0,\eta}}.$ 
	Consequently, we finally get 
		\begin{align*}
			[\nabla_{\!\!\textup{ tan}}\; \varphi^\varepsilon]_{C^{0,\eta}(\Gamma_x)}  &\leq    C   \varepsilon^{1-\eta} ,
		\end{align*}
		where $C$ depends on $d,\mu$ and $[A]_{C^{0,\eta}}.$ 	\end{proof}
	
	\noindent	Further, we have the following estimates for  source terms in the first equation of (\ref{Pb_WS}).
	\begin{lemma}
		Functions 	$ F^{(1)},  F^{(1)}_{bl},  F^{(2)},  F^{(2)}_{bl}$ satisfy
		\begin{align*}
			\varepsilon\left( \| F^{(1)}\|_{L^{d+l}(D_x)} +\| F^{(1)}_{bl}\|_{L^{d+l}(D_x)}\right) \leq C\varepsilon
		\end{align*}
		and 
		\begin{align*}
			\varepsilon^2\left( \| F^{(2)}\|_{L^{d+l}(D_x)} +\| F^{(2)}_{bl}\|_{L^{d+l}(D_x)}\right)\leq C\varepsilon^2,
		\end{align*}	where the constant $C$ depends on $ d,\mu$ and $[A]_{C^{0,\eta}}$.
	\end{lemma}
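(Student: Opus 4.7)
My plan is to prove this lemma by a direct pointwise bound on each of the four source terms, noting that all coefficients involved are uniformly bounded and the derivatives of $G^{*,0}$ are harmless on $D_x$ because that domain is separated from the singularity.

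First, I would recall the key a priori bounds that make this straightforward. From Lemma \ref{bound_on_correctors} applied to the adjoint correctors (see Remark \ref{Rmk_starcorrectors}), the functions $\chi^*, \pi^*, \Gamma^*, Q^*$ and all their $y$-derivatives are bounded in $L^\infty(Y)$ by a constant depending on $[A]_{C^{0,\eta}}$. From (\ref{bound_Linf_u}), $u^*_{bl}$ is bounded in $L^\infty(\overline{\mathbb{H}}_n)$; and since $u^*_{bl}$ solves the Stokes system (\ref{u*p*}) with smooth data, a direct application of Theorem \ref{unif_estim} (with $\varepsilon=1$) combined with interior regularity yields $\nabla u^*_{bl}\in L^\infty(\overline{\mathbb{H}}_n)$. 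Finally, Remark \ref{bound_inf_derG0} gives $\|\partial^\lambda_{\tilde{x}}G^{*,0}(\cdot,x)\|_{L^\infty(D_x)}\leq C$ for $|\lambda|\leq 4$, since every $\tilde x\in D_x$ satisfies $|\tilde x - x|\geq 1/3$.

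Next I would inspect the explicit formulas (\ref{F1}), (\ref{F1bl}), (\ref{F2}), (\ref{F2bl}). The term $F^{(1)}$ is a sum of products of the form $(\text{entry of }A^*\text{ or its }y\text{-derivative})\cdot(\chi^*,\Gamma^*,\partial_y\Gamma^*,\text{ or }Q^*)(\tilde{x}/\varepsilon)\cdot\partial^3_{\tilde{x}}G^{*,0}(\tilde{x},x)$; $F^{(1)}_{bl}$ has the same structure but with one or two factors involving $u^*_{bl}(\tilde x/\varepsilon)$ or $\nabla u^*_{bl}(\tilde x/\varepsilon)$; and $F^{(2)}, F^{(2)}_{bl}$ are products of such bounded periodic or boundary-layer factors with $\partial^4_{\tilde{x}}G^{*,0}(\tilde{x},x)$. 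Combining the three bullet points above gives
\begin{align*}
\|F^{(1)}\|_{L^\infty(D_x)} + \|F^{(1)}_{bl}\|_{L^\infty(D_x)} + \|F^{(2)}\|_{L^\infty(D_x)} + \|F^{(2)}_{bl}\|_{L^\infty(D_x)} \leq C,
\end{align*}
with $C=C(d,\mu,[A]_{C^{0,\eta}})$, uniformly in $\varepsilon$.

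Finally, since $D_x$ is contained in an annulus of inner radius $1/3$ and outer radius $5/3$, its Lebesgue measure is bounded by a constant depending only on $d$. Hence for any $l>0$ we have the trivial embedding $\|\cdot\|_{L^{d+l}(D_x)}\leq |D_x|^{1/(d+l)}\|\cdot\|_{L^\infty(D_x)}\leq C\|\cdot\|_{L^\infty(D_x)}$, from which
\begin{align*}
\varepsilon\bigl(\|F^{(1)}\|_{L^{d+l}(D_x)} + \|F^{(1)}_{bl}\|_{L^{d+l}(D_x)}\bigr) \leq C\varepsilon,\qquad
\varepsilon^2\bigl(\|F^{(2)}\|_{L^{d+l}(D_x)} + \|F^{(2)}_{bl}\|_{L^{d+l}(D_x)}\bigr) \leq C\varepsilon^2,
\end{align*}
which is the claim. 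The only subtle point is the uniform bound on $\nabla u^*_{bl}$; if one prefers to avoid invoking the uniform Lipschitz estimate, it can be replaced by the observation that $u^*_{bl}$ is a (classical) weak solution of a smooth-coefficient Stokes system with smooth Dirichlet data, so standard Schauder theory gives $\nabla u^*_{bl}\in L^\infty(\overline{\mathbb H}_n)$ with a constant depending only on $[A]_{C^{0,\eta}}$ and $\|\chi^*\|_{C^{1,\eta}}$. This is the only nontrivial ingredient; the rest is bookkeeping.
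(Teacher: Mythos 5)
Your proof is correct and follows essentially the same strategy as the paper: bound every factor pointwise (the periodic correctors and their derivatives via Lemma \ref{bound_on_correctors}, the derivatives of $G^{*,0}$ via Remark \ref{bound_inf_derG0}, $u^*_{bl}$ via estimate (\ref{bound_Linf_u})) and then use the fact that $D_x$ has uniformly bounded measure, so the $L^{d+l}$ norms are $O(1)$ independently of $\varepsilon$. Where you are in fact more careful than the paper's own argument is on $F^{(1)}_{bl}$, in particular the piece $F^{(1)}_{bl,1}$ that contains $\nabla u^*_{bl}$: the paper's proof explicitly verifies $F^{(1)}$, $F^{(2)}$ and $F^{(2)}_{bl}$ but never mentions $F^{(1)}_{bl}$, so the reader is left to infer the uniform bound on $\nabla u^*_{bl}$ that it needs. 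You supply it, observing that the uniform Lipschitz estimate of Theorem \ref{unif_estim} at scale $\varepsilon=1$ near $\partial\mathbb{H}_n$ together with interior regularity gives $\nabla u^*_{bl}\in L^\infty(\overline{\mathbb{H}}_n)$, and since $\|\chi^*\|_{C^{1,\eta}}$ is itself controlled by $[A]_{C^{0,\eta}}$ through Lemma \ref{bound_on_correctors}, the resulting constant depends only on $d,\mu,[A]_{C^{0,\eta}}$, as the lemma asserts. The only alternative the paper has in hand is the $L^2$ control $\|\nabla u^*_{bl}(\cdot/\varepsilon)\|_{L^2(D_x)}\lesssim\varepsilon^{1/2}$ of Lemma \ref{estim_nabu_p}, which would even give a better rate for $F^{(1)}_{bl,1}$; but since only an $O(1)$ bound in $L^{d+l}$ is required, your $L^\infty$ route is simpler and fully adequate.
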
	
	\begin{proof}[Proof]
		By Lemma \ref{bound_on_correctors} the functions $A^*$, $\chi^*$, $\pi^*$, $\Gamma^*$ and $Q^*$ and their derivatives are bounded by a constant $C$ depending on $\mu$ and $[A]_{C^{0,\eta}}$. The quantity $\partial_{\tilde{x}_r}\partial_{ \tilde{x}_{\alpha}} \partial_{\tilde{x}_{\beta}} G_{ji}^{*,0}  (\tilde{x},x)$ is bounded by a constant depending on $d,\mu$ and $[A]_{C^{0,\eta}}$ according to Remark \ref{bound_inf_derG0}.	Moreover the domain $D_x$ is bounded. Therefore each term in
		\begin{align}
			F^{(1)}=& \;  [A^{*,\gamma\alpha}_{rk}(\tilde{y})	\chi^{*,\beta}_{kj}(\tilde{y})+ \partial_{\tilde{y}_\delta} ( A^{*,\delta\gamma}_{rk}(\tilde{y})\Gamma^{*,\alpha\beta}_{kj}(\tilde{y}))+ A^{*,\gamma\delta}_{rk}(\tilde{y})\partial_{\tilde{y}_\delta}\Gamma^{*,\alpha\beta}_{kj}(\tilde{y})] \partial_{\tilde{x}_\gamma}\partial_{ \tilde{x}_{\alpha}} \partial_{\tilde{x}_{\beta}} G_{ji}^{*,0}  (\tilde{x},x)\nonumber\\&		-    Q^{*,\alpha\beta}_{j}(\tilde{y})\partial_{\tilde{x}_r}\partial_{ \tilde{x}_{\alpha}} \partial_{\tilde{x}_{\beta}} G_{ji}^{*,0}  (\tilde{x},x)
		\end{align}
		is in $L^{d+l}(D_x)$. For instance,  we have 
		\begin{align*}
			\int_{D_x} \left|A^{*,\gamma\delta}_{rk}(\tilde{x}/\varepsilon) 	\chi^{*,\beta}_{kj}(\tilde{x}/\varepsilon) \partial_{\tilde{x}_\gamma}\partial_{\tilde{x}_\delta}\partial_{\tilde{x}_{\beta}} G_{ji}^{*,0}  (\tilde{x},x)\right|^{d+l} d\tilde{x}\leq C^{d+l}  |D_x| <\infty
		\end{align*}	where the constant $C$ depends on  $d,\mu$ and $[A]_{C^{0,\eta}}$.	Similarly each term of  $F^{(2)}$ or $F^{(2)}_{bl}$ is in 	 $L^{d+l}(D_x)$. 
	\end{proof}	
	
	\noindent Now the remaining terms are those in $F^0_{bl}.$ Using estimates on the derivative $\nabla u_{bl}^*$ of the boundary layer corrector one can show that the $L^{d+l}$ norm of these terms are $O(\varepsilon^\kappa)$, for some $\kappa>0$. In fact, we have the following.
	
	\begin{lemma}\label{estimF0bl}For $l >0,$ we have
		\begin{align*}
			\left\|	F^{(0)}_{bl} \right\|_{L^{d+l}(D_x)} \leq C\varepsilon^{1/(2(d+l))}.
		\end{align*}
		where $F^{(0)}_{bl}$ is defined in \textup{(\ref{F0bl})} and 	where $C$ depends on $d, \mu, [A]_{C^{0,\eta}}$ and $M_2$ defined in \textup{(\ref{M1})}.
	\end{lemma}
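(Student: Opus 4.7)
The plan is to bound each of the three pieces of $F^{(0)}_{bl} = F^{(0)}_{bl,1} + F^{(0)}_{bl,2} + F^{(0),*}_{bl}$ separately, in each case splitting the integration domain $D_x$ into a thin strip of width $\varepsilon^{1/2}$ along $\partial\mathbb{H}_n$ and its complement. In the strip the integrand can only be controlled by the available $L^\infty$ bounds, but the small Lebesgue measure of the strip will compensate; away from the strip, the corrector derivatives $\nabla u^*_{bl}(\cdot/\varepsilon)$ and the pressure $p^*_{bl}(\cdot/\varepsilon)$ are evaluated at points of height $\geq \varepsilon^{-1/2}$ in the rescaled variable, where they decay, with the quantitative decay encoded in $M_2$.

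More precisely, set
\[
D_x^{\mathrm{near}} := \{\tilde{x}\in D_x : \dist(\tilde{x},\partial\mathbb{H}_n)<\varepsilon^{1/2}\}, \qquad D_x^{\mathrm{far}} := D_x\setminus D_x^{\mathrm{near}}.
\]
On $D_x^{\mathrm{near}}$ I will use the uniform bounds on $\chi^*,\pi^*,A^*, \nabla u^*_{bl}, p^*_{bl}$ together with the bounds on $\nabla^2 G^{*,0}(\cdot,x)$ and $\partial_{\tilde{x}_r}\nabla G^{*,0}(\cdot,x)$ coming from Remark \ref{bound_inf_derG0}. Since $|D_x^{\mathrm{near}}|\leq C\varepsilon^{1/2}$, one gets
\[
\|F^{(0)}_{bl}\|_{L^{d+l}(D_x^{\mathrm{near}})} \leq C\,|D_x^{\mathrm{near}}|^{1/(d+l)} \leq C\varepsilon^{1/(2(d+l))},
\]
with $C=C(d,\mu,[A]_{C^{0,\eta}},M_2)$, which is precisely the announced rate.

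On $D_x^{\mathrm{far}}$ I will exploit the decay of the boundary layer corrector away from $\partial\mathbb{H}_n$. For $\tilde{x}\in D_x^{\mathrm{far}}$ the rescaled point $\tilde{y}=\tilde{x}/\varepsilon$ has $\tilde{y}\cdot n\geq \varepsilon^{-1/2}$, so the quantity $M_2$—which must bound some weighted $L^\infty$ norm (or an averaged version thereof) of $\nabla u^*_{bl}$, $\nabla^2 u^*_{bl}$ and $p^*_{bl}$ in terms of height—yields pointwise bounds of the form $|\nabla u^*_{bl}(\tilde{x}/\varepsilon)|+|p^*_{bl}(\tilde{x}/\varepsilon)|\leq M_2\,\varepsilon^{1/2}$ (and similarly, with a higher power, for $\nabla^2 u^*_{bl}$). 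Combining this with the $L^\infty$ bound on the Green function derivatives and integrating over $D_x^{\mathrm{far}}\subset D_x$, which has bounded measure, yields
\[
\|F^{(0)}_{bl}\|_{L^{d+l}(D_x^{\mathrm{far}})} \leq C\,\varepsilon^{1/2}.
\]
Since $1/2 > 1/(2(d+l))$ the far contribution is of strictly better size and the near contribution dominates.

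The main obstacle is the treatment of $F^{(0)}_{bl,2}$, which contains the second derivative $\nabla^2 u^*_{bl}$: this is the worst-behaved quantity, and its control requires using $M_2$ to the fullest; the exponent $1/2$ in the splitting threshold is chosen precisely to balance the gain from $|D_x^{\mathrm{near}}|\sim \varepsilon^{1/2}$ against the loss from $\nabla^2 u^*_{bl}$ in $D_x^{\mathrm{far}}$. The pressure term $F^{(0),*}_{bl}$ is handled exactly as $F^{(0)}_{bl,1}$ once one recalls that $p^*_{bl}$ satisfies the same type of estimates as $\nabla u^*_{bl}$ via the Stokes problem (\ref{u*p*}). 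Assembling the three contributions yields the announced bound.
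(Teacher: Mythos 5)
Your approach --- splitting $D_x$ into a thin strip of width $\varepsilon^{1/2}$ near $\partial\mathbb{H}_n$ and its complement --- is genuinely different from the paper's, which establishes an $L^1(D_x)$ bound of order $\varepsilon^{1/2}$ via H\"older's inequality together with the $L^2$ estimates of Lemma~\ref{estim_nabu_p} on $\nabla u^*_{bl}(\cdot/\varepsilon)$ and $p^*_{bl}(\cdot/\varepsilon)$, and then interpolates between this $L^1$ bound and a trivial $O(1)$ $L^\infty$ bound to land on $L^{d+l}$ with the exponent $1/(2(d+l))$. Both routes produce the same exponent, and your near-region estimate $\|F^{(0)}_{bl}\|_{L^{d+l}(D_x^{\mathrm{near}})}\le C\varepsilon^{1/(2(d+l))}$ is correct and is indeed the dominant contribution.

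The gap is in the far region. You assert that $M_2$ ``must bound some weighted $L^\infty$ norm (or an averaged version thereof)'' of the corrector derivatives, and from it you extract pointwise decay $|\nabla u^*_{bl}(\tilde{x}/\varepsilon)|+|p^*_{bl}(\tilde{x}/\varepsilon)|\leq M_2\varepsilon^{1/2}$ on $D_x^{\mathrm{far}}$. This is false as stated: $M_2$ is defined in (\ref{M1}) as
\begin{align*}
M_2= \left( \sup_{h\in\mathbb{Z}^{d-1}} \int_0^\infty \!\!\int_{T_h} |\nabla u^*_{bl}(Mz)|^2\, dz \right)^{1/2},
\end{align*}
a purely $L^2$-in-height quantity, and square-integrability in $z_d$ does not imply pointwise decay in $z_d$ (a function that is $1$ on intervals $[n,n+n^{-2}]$ and $0$ elsewhere is in $L^2(0,\infty)$ but does not decay). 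Your far-region estimate therefore has no support, and since without it the far contribution is only $O(1)$, the entire bound collapses. To salvage the argument along your lines, you would have to invoke the pointwise Lipschitz decay encoded in the class (\ref{class}), $|\nabla u^*_{bl}(y)|\le C/(y\cdot n)$, rather than $M_2$; for $p^*_{bl}$ you would further need a chaining/oscillation argument via Theorem~\ref{unif_estim} to upgrade the oscillation control to pointwise decay, and for $\nabla^2 u^*_{bl}$ you would need interior higher-derivative estimates giving $|\nabla^2 u^*_{bl}(y)|\lesssim (y\cdot n)^{-2}$. None of this is in the paper's toolkit; the paper's $L^1$--$L^\infty$ interpolation is precisely designed to avoid needing pointwise decay of the corrector by relying only on the $L^2$-in-height information that $M_2$ actually provides (Lemma~\ref{LinfL2}, $k=1,2$).
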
	
	
	\noindent  We apply the following fact	in the proof of the above Lemma.
	\begin{lemma}\label{LinfL2}
		For $k=1,2$, there exists a constant $C(d,\mu,\eta, [A]_{ C^{0,\eta} },k)>0$, $\eta\in(0,1)$,  such that 
		\begin{align*}
		 \left\|  \| \nabla^k u_{bl}^*(M(z',z_d))\|_{L_{z'}^\infty(\mathbb{R}^{d-1})}  \right\|_{L^2(0,+\infty)} \leq C< +\infty.
		\end{align*}
	\end{lemma}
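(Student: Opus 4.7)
The plan is to establish the pointwise bound
\begin{equation*}
|\nabla^k u^*_{bl}(y)| \leq C \min\bigl\{ 1,\; (y \cdot n)^{-k} \bigr\}, \qquad k = 1, 2,
\end{equation*}
uniformly in the tangential component of $y$, and then integrate in the depth variable $z_d := y \cdot n$ over $(0,+\infty)$ to conclude. The two regimes $z_d \lesssim 1$ and $z_d \gtrsim 1$ are treated separately, the first by boundary regularity, the second by the Poisson-kernel representation of $u^*_{bl}$.

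For the far-field regime, I would first verify that $u^*_{bl}$ satisfies the growth condition \eqref{class}. Using that $u^*_{bl} \in L^\infty(\overline{\mathbb{H}}_n)$ by \eqref{bound_Linf_u}, I rescale the ball $B(y, z_d/2) \subset \mathbb{H}_n$ to unit size and apply Theorem~\ref{unif_estim} (with $\varepsilon = 1/z_d$), which yields $z_d|\nabla u^*_{bl}(y)| \leq C$. Therefore the integral representation recalled in Section~2 applies, and differentiation under the integral gives
\begin{equation*}
\nabla^k u^*_{bl}(y) \;=\; -\int_{\partial \mathbb{H}_n} \nabla_y^k P(y,\tilde y)\, \chi^*(\tilde y)\, d\tilde y,
\end{equation*}
where $P$ is the Poisson kernel corresponding to the coefficient $A^*$. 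For $k = 1$, the star-versions \eqref{estim_mixderG*} and \eqref{estimderPi*_y} of Proposition~\ref{Propo_estim_Green_y} yield $|\nabla_y P(y,\tilde y)| \leq C|y-\tilde y|^{-d}$. For $k = 2$, the analogous bound $|\nabla_y^2 P(y,\tilde y)| \leq C|y-\tilde y|^{-d-1}$ is obtained by applying the uniform Lipschitz estimate a second time to $G(\cdot,\tilde y)$ in a ball $B(y,|y-\tilde y|/2)$ that avoids the singularity, rescaled to unit size. Parametrizing $\partial \mathbb{H}_n$ via $\tilde y = M(\tilde z',0)$ and writing $y = M(z',z_d)$, orthogonality of $M$ yields $|y-\tilde y|^2 = z_d^2 + |z'-\tilde z'|^2$; after substituting $\tilde z' = z_d \xi$ we obtain
\begin{equation*}
|\nabla^k u^*_{bl}(y)| \;\leq\; C\|\chi^*\|_\infty \!\! \int_{\mathbb{R}^{d-1}} \frac{d\tilde z'}{\bigl(z_d^2 + |z' - \tilde z'|^2\bigr)^{(d-1+k)/2}} \;=\; \frac{C}{z_d^k},
\end{equation*}
uniformly in the tangential component $z'$.

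For the near-boundary regime ($z_d \leq 1$), boundary Schauder regularity applied to $(u^*_{bl},p^*_{bl})$ in a unit boundary disk gives $\|\nabla^k u^*_{bl}\|_{L^\infty(\{0 < z_d \leq 1\})} \leq C$, with $C$ depending on $d,\mu,[A]_{C^{0,\eta}}$ and $k$, since $A^*$ is smooth, the Dirichlet data $-\chi^*$ belongs to $C^{k+1,\eta}$ (Lemma~\ref{bound_on_correctors}), and $u^*_{bl}$ is bounded. Combining both regimes yields $\|\nabla^k u^*_{bl}(M(\cdot,z_d))\|_{L^\infty_{z'}(\mathbb{R}^{d-1})} \leq C \min\{1,z_d^{-k}\}$, so that
\begin{equation*}
\int_0^{+\infty} \bigl\|\nabla^k u^*_{bl}(M(\cdot, z_d))\bigr\|_{L^\infty_{z'}}^2 \, dz_d \;\leq\; C^2\Bigl(1 + \int_1^{+\infty} z_d^{-2k}\, dz_d\Bigr) \;<\; +\infty
\end{equation*}
for $k = 1, 2$.

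The delicate technical point is the second-order Poisson-kernel bound $|\nabla_y^2 P(y,\tilde y)| \leq C|y-\tilde y|^{-d-1}$ required for $k=2$: Proposition~\ref{Propo_estim_Green_y} directly supplies only first-order Green-function estimates, and obtaining the second-order one requires iterating the uniform Lipschitz estimate of Theorem~\ref{unif_estim} on $G(\cdot,\tilde y)$ at balls avoiding the diagonal, with constants kept uniform in the ball's location and scale in the Avellaneda--Lin/\cite{GZ} spirit. An alternative, but computationally heavier, route would apply the uniform Lipschitz estimate to tangential derivatives $\partial_\tau u^*_{bl}$ (which satisfy a Stokes-type system with a source term of size $O(|\nabla u^*_{bl}|) = O(1/z_d)$) and recover normal second derivatives from the equation modulo a Bogovskii-type control of the pressure provided by Lemma~\ref{Bogov_p}.
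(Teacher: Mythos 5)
Your proposal takes a genuinely different route from the paper, and for $k=2$ it rests on an ingredient that is not available and would require substantial extra work.

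For $k=1$ your argument is sound and roughly equivalent in difficulty to the paper's. The paper simply invokes the growth condition \eqref{class} (which $u^*_{bl}$ satisfies, precisely for the rescaling reason you give) together with the near-boundary Lipschitz estimate; your Poisson-kernel route gives the same decay $|\nabla u^*_{bl}(y)| \lesssim (y\cdot n)^{-1}$ and is self-contained.

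For $k=2$, however, the paper does something much lighter than what you propose. Instead of trying to prove the pointwise bound $|\nabla^2 u^*_{bl}(y)| \lesssim (y\cdot n)^{-2}$, the paper differentiates the Stokes system in $y_i$, obtaining \eqref{partial_u} with source $\nabla\cdot((\partial_{y_i}A^*)\nabla u^*_{bl})$, and then applies the Caccioppoli inequality (Lemma~\ref{Caccio}) on a chain of interior balls $B(kn,2)$, $k\geq 3$, covering a channel $T_h\times(2,\infty)$. This converts the already-established $L^2$-in-$z_d$ control on $\nabla u^*_{bl}$ directly into $L^2$ control on $\nabla^2 u^*_{bl}$, with no Green-function bounds beyond those in Proposition~\ref{Propo_estim_Green_y}. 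The quantity $M_2$ in \eqref{M1}, which is what is actually consumed downstream in Lemma~\ref{estim_nabu_p} and Lemma~\ref{estimF0bl}, is exactly this per-channel $L^2$ bound, uniform in $h\in\mathbb{Z}^{d-1}$. Your route aims for the strictly stronger pointwise decay, and therefore needs the estimate $|\nabla_y^2 P(y,\tilde y)|\leq C|y-\tilde y|^{-d-1}$, which amounts to controlling third mixed derivatives $\nabla_y^2\nabla_{\tilde y}G^*$ and second derivatives $\nabla_y^2\Pi^*$. None of these appear in Proposition~\ref{Propo_estim_Green_y}, and the claim that one can get them by ``applying the uniform Lipschitz estimate a second time'' to $G(\cdot,\tilde y)$ is not a proof: Theorem~\ref{unif_estim} yields only $\|\nabla u^\varepsilon\|_{L^\infty}$ in terms of $\|u^\varepsilon\|_{L^\infty}$, and iterating it requires first differentiating the equation, at which point the resulting $\nabla\cdot F$-source involves $[\nabla_y G]_{C^{0,\eta}}$ and one is back to needing an estimate at the level one is trying to prove. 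A genuine Schauder estimate for Stokes with smooth coefficients would close the argument (and is plausible since $A\in C^\infty$ by \eqref{regular_A}), but it is a non-trivial additional tool the paper never invokes, and the constant would then depend on more than $[A]_{C^{0,\eta}}$. Your sketched alternative route at the end — tangential differentiation plus the equation and Bogovskii — is in fact much closer in spirit to what the paper actually does (differentiate and Caccioppoli), and would be the more economical way to repair the gap.
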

	\noindent	The proof of Lemma \ref{LinfL2} is given in the Appendix as proofs of Lemma \ref{LinfLq} and Lemma \ref{deriv2_L2}.\\
	
		\noindent The bound in this Lemma can be rewritten more explicitly as follows: for $k=1,2,$ we have
	\begin{align*}
		\int_0^\infty  \| \nabla^k u_{bl}^*(M (z',z_d))\|^2_{L_{z'}^\infty(\mathbb{R}^{d-1})} dz_d =   \int_0^\infty  \sup_{z'\in \mathbb{R}^{d-1}} | \nabla^k u_{bl}^*(M(z',z_d))|^2 dz_d \leq C < +\infty.
	\end{align*}
	In particular, one has 
	\begin{align}\label{M1}
		M_2:= \left( \sup_{h\in\mathbb{Z}^{d-1}} \int_0^\infty \!\!\int_{T_h} |\nabla u^*_{bl}(Mz)|^2 dz \right)^{\!\!\!1/2} \!\!\! <\infty , \quad  T_h:= (0,1)^{d-1}+h, \ h\in\mathbb{Z}^{d-1}.
	\end{align}

	\begin{lemma}\label{estim_nabu_p}
		We have the following estimates on the pair $\left(u_{bl}^*(\cdot/\varepsilon),  p_{bl}^*(\cdot/\varepsilon)\right)$ 
		\begin{align}\label{nab_uespL2}
			\|\nabla u_{bl}^*(\cdot/\varepsilon) \|_{L^2(D_1(0))}\leq  M^{}_2  \,  \varepsilon^{1/2}
		\end{align}
		and
		\begin{align}\label{p_epsL2}
			\|p_{bl}^*(\cdot/\varepsilon) \|_{L^2(D_1(0))}\leq C\varepsilon^{1/2}
		\end{align}
		where $C$ depends on $d, \mu$ and $M_2$.
	\end{lemma}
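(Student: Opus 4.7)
The overall strategy is to rescale back to the macroscopic variable $y = x/\varepsilon$ and then exploit the uniform strip-bound $M_2$ together with a covering argument for the resulting half-ball $D_{1/\varepsilon}(0)$.

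Concretely, the change of variables $y = x/\varepsilon$ gives
\[
\|(\nabla u_{bl}^*)(\cdot/\varepsilon)\|_{L^2(D_1(0))}^2 \;=\; \varepsilon^d\,\|\nabla u_{bl}^*\|_{L^2(D_{1/\varepsilon}(0))}^2.
\]
I would then rotate by $M$ so that $n$ becomes $e_d$, turning $D_{1/\varepsilon}(0)$ into $B_{1/\varepsilon}(0)\cap\{z_d>0\}$. This set is contained in $\bigcup_{h\in\mathcal{H}} T_h\times(0,\infty)$, where $\mathcal{H}$ is the set of $h\in\mathbb{Z}^{d-1}$ with $T_h\cap\{z'\in\mathbb{R}^{d-1}:|z'|\le 1/\varepsilon\}\neq\emptyset$, so that $|\mathcal{H}|\le C_d\,\varepsilon^{-(d-1)}$. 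Summing the uniform strip bound \eqref{M1} over these tiles yields
\[
\|\nabla u_{bl}^*\|_{L^2(D_{1/\varepsilon}(0))}^2 \;\le\; \sum_{h\in\mathcal{H}}\int_0^\infty\!\!\int_{T_h}|\nabla u_{bl}^*(Mz)|^2\,dz \;\le\; C_d\,\varepsilon^{-(d-1)}\,M_2^2,
\]
and combining with the previous display gives \eqref{nab_uespL2}.

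For the pressure estimate \eqref{p_epsL2}, the plan is to reduce it to the velocity-gradient estimate via Lemma \ref{Bogov_p}. Since the constant there depends only on $d$ and $\mu$, a straightforward scaling argument shows that the same inequality holds on any $D_R(0)$ with the same constant: $\|p_{bl}^* - (p_{bl}^*)_{D_R(0)}\|_{L^2(D_R(0))} \le C\,\|\nabla u_{bl}^*\|_{L^2(D_R(0))}$. Applied with $R = 1/\varepsilon$, together with the gradient bound just obtained and the gauge choice $(p_{bl}^*)_{D_{1/\varepsilon}(0)}=0$, this yields $\|p_{bl}^*\|_{L^2(D_{1/\varepsilon}(0))} \le C\,M_2\,\varepsilon^{-(d-1)/2}$, and a second change of variables $y=x/\varepsilon$ then converts this into $\|p_{bl}^*(\cdot/\varepsilon)\|_{L^2(D_1(0))}\le C\,\varepsilon^{1/2}$.

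The main subtlety is the gauge-fixing for the pressure: $p_{bl}^*$ in \eqref{u*p*} is defined only up to an additive constant (possibly depending on the indices $\beta,j$), so one has to verify that the normalization $(p_{bl}^*)_{D_{1/\varepsilon}(0)}=0$ used here is consistent with the normalization implicit in the multi-scale expansion \eqref{expansPi} and with the way $p_{bl}^*(\cdot/\varepsilon)$ is later paired against $\partial_{\tilde x_\beta}G^{*,0}$ in the estimate of $F^{(0)}_{bl}$ in Lemma \ref{estimF0bl}. This is a bookkeeping matter rather than an analytic obstacle; the substantive input is the uniform strip bound $M_2<\infty$ from Lemma \ref{LinfL2}, whose proof is the genuinely delicate point and is deferred to the Appendix.
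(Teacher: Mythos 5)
Your rescaling and covering argument for $\nabla u_{bl}^*$ is exactly the paper's: rescale to $D_{1/\varepsilon}(0)$, rotate to flatten the boundary, cover by $O(\varepsilon^{-(d-1)})$ unit strips $T_h\times(0,\infty)$, sum the uniform strip bound $M_2^2$, and rescale back. That half is fine.

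The pressure step, however, has a genuine gap that you flag and then incorrectly dismiss as bookkeeping. You bound $\|p_{bl}^* - (p_{bl}^*)_{D_{1/\varepsilon}(0)}\|_{L^2(D_{1/\varepsilon}(0))}$ via the Bogovskii estimate, and then announce the ``gauge choice'' $(p_{bl}^*)_{D_{1/\varepsilon}(0)}=0$. But $p_{bl}^*$ is a single fixed function on $\mathbb{H}_n$; its additive constant must be chosen once, independently of $\varepsilon$, because the same $p_{bl}^*$ appears in the expansion \eqref{expansPi} and in the estimate of $F^{(0),*}_{bl}$ for all scales simultaneously. A per-$\varepsilon$ renormalization $(p_{bl}^*)_{D_{1/\varepsilon}(0)}=0$ is therefore inadmissible. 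With a fixed normalization, the quantity you must additionally control is
\[
\big|(p_{bl}^*)_{D_{1/\varepsilon}(0)}\big|\cdot\big|D_{1/\varepsilon}(0)\big|^{1/2}\sim \big|(p_{bl}^*)_{D_{1/\varepsilon}(0)}\big|\,\varepsilon^{-d/2},
\]
and nothing in your argument shows this is $O(\varepsilon^{-(d-1)/2})$. This is precisely the analytic content of Step 2 of Lemma \ref{estim_Lq_nabu_p} in the appendix, which the paper's proof of the present lemma simply invokes: one shows, via the Bogovskii estimate applied on a dyadic sequence of balls $D(0,2^k r)$ together with the covering bound on $\|\nabla u_{bl}^*\|_{L^q}$, that $|(p)_{D(0,2^kr)}-(p)_{D(0,2^{k+1}r)}|\lesssim 2^{-k/q}r^{-1/q}$, so the limit $(p)_\infty:=\lim_{k\to\infty}(p)_{D(0,2^kr)}$ exists; fixing the (single, $\varepsilon$-independent) normalization $(p)_\infty=0$, the telescoping series gives $|(p)_{D(0,R)}|\lesssim R^{-1/q}$, which is exactly the quantitative decay needed to control the extra term. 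Without that dyadic telescoping argument your proof of \eqref{p_epsL2} does not close.
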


	\begin{proof} This is an application of Lemma  \ref{estim_Lq_nabu_p}.\\
		\textbf{Step 1-Control of the velocity $ u_{bl}^*(\cdot/\varepsilon) $}\\
		Recall that   $D_{1}(0):= \{ \tilde{y}\in\mathbb{H}_n : |\tilde{y}|< 1 \}$ and  $D_{1/\varepsilon}(0):=  (1/\varepsilon) D_{1}(0)$. On the one hand, we have 
		\begin{align}
			\int_{D_1(0)} |\nabla u_{bl}^*(\tilde{x}/\varepsilon) |^2 d\tilde{x} &=   \varepsilon^d 	\int_{D_{1/\varepsilon}(0)} |\nabla u_{bl}^*(\tilde{y}|^2d\tilde{y}\nonumber\\
			\|\nabla u_{bl}^*(\cdot/\varepsilon) \|_{L^2(D_{1}(0))} & = \varepsilon^{d/2}	\|\nabla u_{bl}^* \|_{L^2(D_{1/\varepsilon}(0))},\label{nab_u_Dx_nab_u_Dy}
		\end{align}
		on the other hand an application of estimate (\ref{nab_uespL2}) with $r=1/\varepsilon$ and $q=2$  yields
			\begin{align*}
			\| \nabla u^{*}_{bl} \|_{L^2(D_{1/\varepsilon}(0))}&\leq M_2\,  \varepsilon^{1/2} \varepsilon^{-d/2}.
		\end{align*}
		Then by the equality (\ref{nab_u_Dx_nab_u_Dy}) we obtain 	
		\begin{align*}
			\|\nabla u_{bl}^*(\cdot/\varepsilon) \|_{L^2(D_1(0))}\leq M_2 \, \varepsilon^{1/2}.
		\end{align*}

		\noindent \textbf{Step 2-Control of the pressure  $p_{bl}^*(\cdot/\varepsilon) $} \\
		First, remark that 
		\begin{align}
			\int_{D_1(0)} |p_{bl}^*(\tilde{x}/\varepsilon) |^2 d\tilde{x} &=   \varepsilon^d 	\int_{D_{1/\varepsilon}(0)} | p_{bl}^*(\tilde{y}|^2d\tilde{y}\nonumber\\
			\|p_{bl}^*(\cdot/\varepsilon) \|_{L^2(D_{1}(0))} & = \varepsilon^{d/2}	\|p_{bl}^* \|_{L^2(D_{1/\varepsilon}(0))}.\label{p_Dx_p_Dy}
		\end{align}
	Then one applies estimate (\ref{estim_Lq_Dr})  with $r=1/\varepsilon$ and $q=2$  to get
		\begin{align*}
			\|p^{*}_{bl} \|_{L^2(D_{1/\varepsilon}(0))}&\leq C\,  \varepsilon^{1/2} \varepsilon^{-d/2}.
		\end{align*}
		Hence, the equality (\ref{p_Dx_p_Dy}) yields
		\begin{align}
			\|p^{*}_{bl}(\cdot/\varepsilon)\|_{L^2(D_1(0))} \leq   C  \varepsilon^{1/2} ,
		\end{align}where $C$ depends on $d,\mu$ and $M_2$.	
	\end{proof}
	
	\noindent In the above proof of Lemma \ref{estim_nabu_p} the size and the center of the domain $D_1(0)$ have the sole role of simplifying the notation. Therefore, by that Lemma we have
	\begin{align*}
		\|\nabla u_{bl}^*(\cdot/\varepsilon) \|_{L^2(D_2(x))}\leq C\varepsilon^{1/2}\quad \mbox{and }\: \|p_{bl}^*(\cdot/\varepsilon) \|_{L^2(D_2(x))}\leq C\varepsilon^{1/2}, \quad \mbox{for all }  x\in\mathbb{H}_n.
	\end{align*}
	Moreover, seeing that $D_x\subset D_2(x)$ (see (\ref{def_D_x})) we also have 
	\begin{align}\label{bound_on_nab_u_and_p_eps}
		\|\nabla u_{bl}^*(\cdot/\varepsilon) \|_{L^2(D_x)}\leq C\varepsilon^{1/2}\quad \mbox{and }\: \|p_{bl}^*(\cdot/\varepsilon) \|_{L^2(D_x)}\leq C\varepsilon^{1/2}.
	\end{align}

	\begin{proof}[Proof of \textup{Lemma 10}]
		\noindent Recall that  $	F^{(0)}_{bl}= F^{(0)}_{bl,1}+ F^{(0)}_{bl,2}+F^{(0)}_{bl,*}	$. The first term $ F^{(0)}_{bl,1}$ contains quantities involving first order derivatives of  $u_{bl}^*$ such as  $A^{*,\gamma\delta}_{rk}(\tilde{y})\partial_{\tilde{y}_\delta}(u^*_{bl})^{\beta}_{kj} (\tilde{y})\partial_{\tilde{x}_\gamma}\partial_{\tilde{x}_{\beta}} G_{ji}^{*,0} (\tilde{x},x)$. They    are estimated as follows. By Lemma \ref{bound_on_correctors} and by Remark \ref{bound_inf_derG0} there is a constant $C$ depending on $d,\mu, [A]_{C^{0,\eta}}$ such that
		\begin{align*}
			\int_{D_x} |A^{*,\gamma\delta}_{rk}(\tilde{x}/\varepsilon)\partial_{\tilde{y}_\delta} (u^*_{bl})^{\beta}_{kj}(\tilde{x}/\varepsilon)\partial_{\tilde{x}_\gamma}\partial_{\tilde{x}_{\beta}} G_{ji}^{*,0} (\tilde{x},x)|d\tilde{x} 
			&\leq C \int_{D_x} |\nabla_{\tilde{y}} u_{bl}^*(\tilde{x}/\varepsilon) | d\tilde{x}.
		\end{align*}
		Now by using Hölder inequality together with (\ref{bound_on_nab_u_and_p_eps}) we get
		\begin{align*}
			\int_{D_x} |\nabla_{\tilde{y}} u_{bl}^*(\tilde{x}/\varepsilon) | d\tilde{x} &\leq  |D_x|^{1/2} \left(	\int_{D_x} |\nabla_{\tilde{y}} u_{bl}^*(\tilde{x}/\varepsilon) |^2 d\tilde{x} \right)^{1/2}\\
			&\leq  |D_x|^{1/2} M_2\, \varepsilon^{1/2}.
		\end{align*}
		We then  have obtained
		\begin{align*}
			\|(A^* \nabla_{\tilde{y}} u_{bl}^*)(\cdot/\varepsilon) \nabla^2 G_{ji}^{*,0}(\cdot,x)\|_{L^1(D_x)} \leq C(d, M_2) \ \varepsilon ^{1/2} .
		\end{align*}
		Now one makes an interpolation of this last estimate with 
		\begin{align*}
			\|(A^* \nabla_{\tilde{y}} u_{bl}^*)(\cdot/\varepsilon) \nabla^2 G_{ji}^{*,0}(\cdot,x)\|_{L^\infty(D_x)} =O(1)
		\end{align*}
		to get
		\begin{align*}
			\|(A^* \nabla_{\tilde{y}} u_{bl}^*)(\cdot/\varepsilon) \nabla^2 G_{ji}^{*,0}(\cdot,x)\|_{L^{d+l}(D_x)} \leq C \varepsilon^{1/(2(d+l))} .
		\end{align*}
		\noindent The term  $F^0_{bl,2}$ is estimated similarly by using Lemma \ref{LinfL2} with $k=2$.\\
		\noindent The term  $F^{(0),*}_{bl}   =     - (p^*_{bl})^{\beta}_{j}(\tilde{x}/\varepsilon) \partial_{\tilde{x}_r}\partial_{\tilde{x}_{\beta}}G_{ji}^{*,0}  (\tilde{x},x)$  is bounded as follows. By Remark \ref{bound_inf_derG0} there exists a constant   $C$  depending on $d,\mu$ and $[A]_{C^{0,\eta}}$ such that 
		\begin{align*}
			\left\|(p^*_{bl})^{\beta}_{j}(\tilde{x}/\varepsilon) \partial_{\tilde{x}_r}\partial_{\tilde{x}_{\beta}}G_{ji}^{*,0}  (\tilde{x},x) \right\|_{L^1(D_x)}
			\leq& \; C \int_{D_x}  \left|p^*_{bl}(\tilde{x}/\varepsilon) \right| d\tilde{x}.
		\end{align*}
		By H\"older's inequality and by estimate (\ref{p_epsL2}) we get
		\begin{align*}
			\int_{D_x}   \left|p^*_{bl}	(\tilde{x}/\varepsilon) \right| d\tilde{x} &\leq  |D_x|^{1/2}\left(  \int_{D_x} \left|p^*_{bl}(\tilde{x}/\varepsilon)\right|^2  d\tilde{x} \right)^{1/2} \\
			&\leq C \varepsilon ^{1/2}	
		\end{align*}
		where $C$ depends on $d,\mu$ and $M_2$.
		Then one ends up with the estimate
		\begin{align*}
			\left\| p^*_{bl}  (\cdot/\varepsilon)\nabla^2 G^{*,0}(\cdot,x) \right\|_{L^1(D_x)}\leq C \varepsilon ^{1/2} 
		\end{align*}
		where $C$ depends on $d, \mu, [A]_{C^{0,\eta}}$ and $M_2$. Making an interpolation between the  above $L^1$ estimate and estimate
		\begin{align*}
			\left\| p^*_{bl}  (\cdot/\varepsilon) \nabla^2 G^{*,0} (\cdot,x) \right\|_{L^\infty(D_x)} =O(1)
		\end{align*}
		one obtains  \!\!\!
		\begin{align*}
			\left\|p^*_{bl}(\cdot/\varepsilon) \nabla^2 G^{*,0} (\cdot,x)\right\|_{L^{d+l}(D_x)} \leq C \varepsilon^{1/(2(d+l))}
		\end{align*}	where $C$ depends on $d, \mu, [A]_{C^{0,\eta}}$ and $M_2$.
	\end{proof}

	\noindent {\bf {Summary.} }\\
	At the beginning we had the  inequality (\ref{unif_estim_applied})
	then we have successively established the following estimates on the terms in the right hand side of (\ref{unif_estim_applied})
	\begin{align*}
		\| W^{\varepsilon}\|_{L^\infty(D_x)}&\leq C\varepsilon^{1/d},  \\
		\| h^\varepsilon \|_{L^\infty(D_x)} +  [h^\varepsilon]_{C^{0,\eta}(D_x)} &\leq C\varepsilon^{1-\eta}, \\
		\| g^\varepsilon \|_{L^\infty(\Gamma_x)} + \| \nabla_{\!\!\textup{ tan}} \, g^\varepsilon \|_{L^\infty(\Gamma_x)} + [\nabla_{\!\!\textup{ tan}}\; g^\varepsilon]_{C^{0,\eta}(\Gamma_x)} &\leq  C\varepsilon^{1-\eta},\\
		\| F^{(0)}_{bl} \|_{L^{d+l}(D_x)}&\leq  C\varepsilon^{1/(2(d+l))},  \\
		\varepsilon  \left(\| F^{(1)} \|_{L^{d+l}(D_x)}+ \| F^{(1)}_{bl} \|_{L^{d+l}(D_x)} \right)&\leq C \varepsilon,\nonumber\\
		\varepsilon^2 \left(\| F^{(2)} \|_{L^{d+l}(D_x)}+ \| F^{(2)}_{bl} \|_{L^{d+l}(D_x)} \right) &\leq C\varepsilon^2,
	\end{align*}
	where $C$ depends on $A, d, \mu, [A]_{C^{0,\eta}}$ and $M_2$.
	The number $\eta\in (0,1)$ can be taken small enough so that $1-\eta \geq \, 1/(2(d+l))$. We then arrive at the conclusion
	\begin{align}
		\|\nabla W^{\varepsilon}\|_{L^\infty(D_x^0)} + \underset{D_x^0}{osc} \;   [S^\varepsilon] \leq   C \varepsilon^\kappa, \mbox{ with } \kappa=1/(2(d+l)),\quad l>0,
	\end{align}
		with $C$ depending on $A, d, \mu, [A]_{C^{0,\eta}}$ and $M_2$.
	
	\subsection{Expansion of the Poisson kernel }
	\noindent We are looking for an asymptotic expansion of the Poisson kernel $P(y,\tilde{y})$ when $|y-\tilde{y}|\rightarrow \infty$ with  $y\in\mathbb{H}_n$, $\tilde{y}\in \partial\mathbb{H}_n$.	Recall that the Poisson kernel is defined as
	\begin{align*}
		P=(P_{ij})_{i,j}\quad \mbox{ with } \quad 	P_{ij}(y,\tilde{y})
		&=-A_{jk}^{*,\alpha\beta}(\tilde{y})\, \partial_{\tilde{y}_\beta}G_{ki}^*(\tilde{y},y) n_\alpha (\tilde{y})+ \Pi_i^*(\tilde{y},y ) n_j(\tilde{y})\\
		&=-A_{kj}^{\beta\alpha}(\tilde{y})\, \partial_{\tilde{y}_\beta}G_{ki}^*(\tilde{y},y) n_\alpha (\tilde{y})+ \Pi_i^*(\tilde{y},y ) n_j(\tilde{y}).
	\end{align*}
	
	\noindent	We may occasionally write it in the compact form
	\begin{align*}
		P(y,\tilde{y})
		&=-A^*(y)\nabla_{\!\tilde{y}}  G^*(\tilde{y},y) \cdot n (\tilde{y})+  \Pi^*(\tilde{y},y ) \otimes n(\tilde{y}).
	\end{align*}
	
	\noindent 	The expression of the Poisson kernel suggests to seek its expansion through the expansions of $\nabla_{\!\!y}  G^*$ and  $\Pi^*$. We are going to express the expansions  $\nabla 	W^{\varepsilon} (\tilde{x},x)\cdot n$ and  $S^{\varepsilon}(\tilde{x})$ given in (\ref{expansG}) and (\ref{expansPi}) in terms of the variables $y,\tilde{y}$. Here the parameter $\varepsilon$ determines the  scale separation of the two variables. More precisely we have : $y=x/\varepsilon,\; \tilde{y}= \tilde{x}/\varepsilon$ and  $\varepsilon=\frac{|x-\tilde{x}|}{|y-\tilde{y}|} \simeq \frac{1}{|y-\tilde{y}|}$ where the distance $|y-\tilde{y}|$ is supposed to be large.
	
	\subsubsection{Rescaling} 
	
	\noindent	We have the following estimate expressed in terms of the slow variables $x, \tilde{x}$
	\begin{align*}
		\|A(\cdot/\varepsilon)\nabla W^{\varepsilon}\|_{L^\infty(D_x^0)}  + \underset{D_x^0}{osc} \,[S^{\varepsilon}]\leq   C \varepsilon^\kappa, \mbox{ with } \kappa\in (0, 1/2d) \mbox{ and } C=C(A, d, \mu, [A]_{C^{0,\eta}},M_2).	\end{align*}
	\noindent Now we are looking for a large-scale version of the above estimate, i.e. we wish to express it in the variables $\tilde{y},y \in \overline{\mathbb{H}}_n$, satisfying $\tilde{y}\in D_{2R}(y)\bs \overline{D_R(y)}$ where $R:=\frac{2}{3}|y-\tilde{y}|.$

	\begin{remark} We have
		$D_{2R}(y) \bs \overline{D_R(y)}=\{\tilde{y}\in \overline{\mathbb{H}}_n:  \tilde{y} =\tilde{x}/ \varepsilon,  \tilde{x}\in D_x^0\}   $.
	\end{remark}

	\noindent	 In what follows we compute the rescaled  functions $W(\tilde{y})=W^\varepsilon(\varepsilon\tilde{y}),S(\tilde{y})=S^\varepsilon(\varepsilon\tilde{y})$.  We proceed by rewriting the expansion  (\ref{errorW})  and taking into account the scaling properties given in Lemma \ref{rescaled_GP}
	\begin{align*}
		W_{ki}^{\varepsilon} (\tilde{x})=&\frac{1}{\varepsilon^{d-2}}G^{*}_{ki}\!\!\left(\frac{\tilde{x}}{\varepsilon},\frac{x}{\varepsilon} \right)-\frac{1}{\varepsilon^{d-2}}G^{*,0}_{ki}\!\!\left(\frac{\tilde{x}}{\varepsilon},\frac{x}{\varepsilon} \right)-\varepsilon \left[\chi^{*,\beta}_{kj}\!\! \left(\frac{\tilde{x}}{\varepsilon} \right)+(u^*_{bl})^{\beta}_{kj} \!\!\left(\frac{\tilde{x}}{\varepsilon} \right)\right]\frac{1}{\varepsilon^{d-1}}\partial_{\tilde{y}_\beta} G^{*,0}_{ji}\!\!\left(\frac{\tilde{x}}{\varepsilon},\frac{x}{\varepsilon} \right)\\
		&-\varepsilon^2 \left[\Gamma^{*,\alpha\beta}_{kj}\left(\frac{\tilde{x}}{\varepsilon} \right)+ \chi^{*,\alpha}_{kl}\left(\frac{\tilde{x}}{\varepsilon} \right)(u^*_{bl})^{\beta}_{lj}\! \left(\frac{\tilde{x}}{\varepsilon} \right)\right] \frac{1}{\varepsilon^{d}} \partial_{\tilde{y}_\alpha} \partial_{\tilde{y}_\beta}G^{*,0}_{ji}\left(\frac{\tilde{x}}{\varepsilon},\frac{x}{\varepsilon} \right) \\
		=&\frac{1}{\varepsilon^{d-2}} \left\{ G_{ki}^* \left(\frac{\tilde{x}}{\varepsilon},\frac{x}{\varepsilon} \right)-G_{ki}^{*,0}\left(\frac{\tilde{x}}{\varepsilon},\frac{x}{\varepsilon} \right)\right.- \left[\chi^{*,\beta}_{kj} \left(\frac{\tilde{x}}{\varepsilon} \right)+(u^*_{bl})^{\beta}_{kj} \!\left(\frac{\tilde{x}}{\varepsilon} \right)\right]\partial_{\tilde{y}_\beta} G_{ji}^{*,0}\left(\frac{\tilde{x}}{\varepsilon}, \frac{x}{\varepsilon}\right)\\
		&\qquad\qquad\qquad\quad \qquad\qquad-\left. \left[\Gamma^{*,\alpha\beta}_{kj}\!\! \left(\frac{\tilde{x}}{\varepsilon} \right)+ \chi^{*,\alpha}_{kl}\!\!\left(\frac{\tilde{x}}{\varepsilon} \right)\! (u^*_{bl})^{\beta}_{lj}\!\left(\frac{\tilde{x}}{\varepsilon} \right)\right] \partial_{\tilde{y}_\alpha} \partial_{\tilde{y}_\beta}G^{*,0}_{ji}\!\!\left(\frac{\tilde{x}}{\varepsilon},\frac{x}{\varepsilon} \right) \right\}.
	\end{align*}
	We thus get the following formulation of $	W^{\varepsilon} (\tilde{x})$ :
	\begin{align*}
		W_{ki}^{\varepsilon}(\tilde{x}):=	W_{ki}^{\varepsilon}(\tilde{x},x)=\frac{1}{\varepsilon^{d-2}}W_{ki}\left(\frac{\tilde{x}}{\varepsilon},\frac{x}{\varepsilon} \right)
	\end{align*}
	with the function $W$ defined by
	\begin{align*}
		W_{ki}(\tilde{y},y) =&G_{ki}^* \left(\tilde{y},y\right)-G_{ki}^{*,0}\left(\tilde{y},y\right)- \left[\chi^{*,\beta}_{kj} \left(\tilde{y}\right)+(u^*_{bl})^{\beta}_{kj} \left(\tilde{y}\right)\right]\partial_{\tilde{y}_\beta} G_{ji}^{*,0}\left(\tilde{y},y\right)\\
		&\qquad\qquad\quad \qquad\qquad-\left[\Gamma^{*,\alpha\beta}_{kj}\!\! \left(\tilde{y}\right)+ \chi^{*,\alpha}_{kl}\!\!\left(\tilde{y}\right) (u^*_{bl})^{\beta}_{lj}\!\left( \tilde{y}\right)\right] \partial_{\tilde{y}_\alpha} \partial_{\tilde{y}_\beta}G^{*,0}_{ji}\left(\tilde{y},y\right).
	\end{align*}
	This identity yields 
	\begin{align*}
		\nabla_{\tilde{x}}	W^{\varepsilon} (\tilde{x})
		=&\frac{1}{\varepsilon^{d-1}}  \nabla_{\tilde{y}}W \left(\frac{\tilde{x}}{\varepsilon},\frac{x}{\varepsilon} \right)
	\end{align*}
	and then 
	\begin{align*}
		A\left(\frac{\tilde{x}}{\varepsilon}\right)	\nabla_{\tilde{x}}	W^{\varepsilon} (\tilde{x})
		=&\frac{1}{\varepsilon^{d-1}} A\left(\frac{\tilde{x}}{\varepsilon}\right) \nabla_{\tilde{y}}W \left(\frac{\tilde{x}}{\varepsilon},\frac{x}{\varepsilon} \right).
	\end{align*}
	Consequently, the inequality
	\begin{align*}
		\left|A\left(\frac{\tilde{x}}{\varepsilon}\right)	\nabla_{\tilde{x}}	W^{\varepsilon} (\tilde{x})\right|\leq C \varepsilon^\kappa 
	\end{align*}with $ C=C(A, d, \mu, [A]_{C^{0,\eta}},M_2)$,
	implies
	\begin{align}
		\left |  A\left(\frac{\tilde{x}}{\varepsilon}\right) \nabla_{\tilde{y}}W \left(\frac{\tilde{x}}{\varepsilon},\frac{x}{\varepsilon} \right)\right|   &\leq C \varepsilon^{d-1+\kappa} . \nonumber
	\end{align}
	Hence one obtains
	\begin{align}
		\left |  A\left(\tilde{y}\right) \nabla_{\tilde{y}}W \left(\tilde{y},y\right) \right|  &\leq \frac{C}{|y-\tilde{y}|^{d-1+\kappa}}\label{ineq_AnabW} 
	\end{align}
	where  $C=C(A, d, \mu, [A]_{C^{0,\eta}},M_2)$.\\
	
	\noindent We wish to have similar estimates for the pressure term.
	Taking into account the scaling properties of Green's functions and using (\ref{errorP}), we get 
	\begin{align}
		S_i^{\varepsilon} (\tilde{x})=	\frac{1}{\varepsilon^{d-1}} S_i\left(\frac{\tilde{x}}{\varepsilon},\frac{x}{\varepsilon} \right) \label{rescale_R}
	\end{align}
	with
	\begin{align*}
		S_i(\tilde{y},y):=	\Pi_i^{*,\varepsilon} (\tilde{y},y)&-  \Pi_i^{*,0} (\tilde{y},y)-[\pi^{*,\beta}_{j}(\tilde{y}) +(p^*_{bl})^{\beta}_{j} (\tilde{y})]\partial_{\tilde{y}_{\beta}} G_{ji}^{*,0}  (\tilde{y},y)\nonumber\\
		&	-[Q^{*,\alpha\beta}_{j}(\tilde{y})+ \pi^{*,\alpha}_{k}(\tilde{y})(u^*_{bl})^{\beta}_{kj}	(\tilde{y})]\partial_{\tilde{y}_\alpha} \partial_{\tilde{y}_\beta} G_{ji}^{*,0} (\tilde{x},x) .	\\	
	\end{align*}
\noindent We rescale the estimate $\underset{D_x^0}{\textup{osc}} [S^{\varepsilon}]\leq  C \varepsilon^\kappa$ with $ C=C(A, d, \mu, [A]_{C^{0,\eta}},M_2)$ :\\
	If $\tilde{x},\hat{x}\in D_x^0$ and satisfy 
	\begin{align*}
		|S^{\varepsilon} (\tilde{x})- S^{\varepsilon} (\hat{x}) |\leq C \varepsilon^\kappa,
	\end{align*}
	then	 we have, from (\ref{rescale_R}),
	\begin{align*}
		\left|S\left(\frac{\tilde{x}}{\varepsilon},\frac{x}{\varepsilon} \right)-  S \left(\frac{\hat{x}}{\varepsilon},\frac{x}{\varepsilon} \right)\right|  &\leq C \varepsilon^{d-1+\kappa} ,   
	\end{align*}
	hence
	\begin{align*}
		\left| S\left( \tilde{y},y\right)-  S\left( \hat{y},y\right) \right|  &\leq \frac{C}{|y-\tilde{y}|^{d-1+\kappa}}.  
	\end{align*}
	
	\noindent Thus the oscillation estimate  $\underset{D_x^0}{osc}\ [S^{\varepsilon}]\leq   \; C \varepsilon^\kappa$ with $ C=C(A, d, \mu, [A]_{C^{0,\eta}},M_2)$ rescales to 
	\begin{align*}
		\sup_{\tilde{y},\hat{y}\in D_{2R}(y) \bs \overline{D_R(y)}}  \left|  S\left( \tilde{y},y\right)-  S\left( \hat{y},y\right)\right|  &\leq  \sup_{\tilde{y}\in D_{2R}(y) \bs \overline{D_R(y)}}\frac{C}{|y-\tilde{y}|^{d-1+\kappa}} 
	\end{align*}
	yet the bound in the right hand side corresponds to the inverse of $\inf_{\tilde{y}\in D_{2R}(y) \bs \overline{D_R(y)}} |y-\tilde{y}|^{d-1+\kappa}$ which is $R^{d-1+\kappa}$. Therefore we have
	\begin{align*}
		\sup_{\tilde{y}\in D_{2R}(y) \bs \overline{D_R(y)}}\frac{C}{|y-\tilde{y}|^{d-1+\kappa}} = \frac{C}{R^{d-1+\kappa}} . 
	\end{align*}
	We then obtain the following oscillation estimate of the pressure term in the fast variable
	\begin{align}
		\sup_{\tilde{y},\hat{y}\in  D_{2R}(y) \bs \overline{D_R(y)} }  \left|  S\left( \tilde{y},y\right)-   S\left( \hat{y},y\right)\right|  &\leq \frac{C}{R^{d-1+\kappa}}   \nonumber \\
		\osc_{ D_{2R}(y) \bs \overline{D_R(y)}} [ S\left( \cdot,y\right)] &\leq \frac{C}{R^{d-1+\kappa}} \label{estim_osc_S}   \end{align}
	where $ C=C(A, d, \mu, [A]_{C^{0,\eta}},M_2)$.
	
	\noindent Nonetheless this estimate is  a local estimate as it is set in the domain $ D_{2R}(y) \bs \overline{D_R(y)}$ whereas we seek for an estimate valid for all $y\in \mathbb{H}_n$, $\tilde{y}\in \overline{\mathbb{H}}_n$ with $|y-\tilde{y}|$ large. So we extend (\ref{estim_osc_S}) to the entire domain  $\overline{\mathbb{H}}_n\bs \overline{D_R(y)}$.\\
	
	\noindent For any $m\in\mathbb{N}^*$, reasoning on $\tilde{y}_m,\hat{y}_{m}\in  D_{2^mR}(y) \bs \overline{D_{2^{m-1}R}(y) } $ similarly as above leads to 
	\begin{align*}
		\osc_{ D_{2^mR}(y) \bs \overline{D_{2^{m-1}R}(y) }} \:  [S\left( \cdot,y\right)] &\leq \frac{C}{(2^{m-1}R)^{d-1+\kappa}}  \end{align*}
		where $ C=C(A, d, \mu, [A]_{C^{0,\eta}},M_2)$.
		
	\begin{remark}
		The above estimate remains valid when $ D_{2^mR}(y) \bs \overline{D_{2^{m-1}R}(y) }$ is replaced with
		\begin{equation*}
		A_m:= \overline{D_{2^mR}(y)} \bs \overline{D_{2^{m-1}R}(y) }.
		\end{equation*} 
	\end{remark}
	
	\noindent Now decompose the punctured half-space $\mathbb{H}_n\bs \overline{D_R(y)}$ into annuli as follows
	\begin{align*}
		\mathbb{H}_n\bs \overline{D_R(y)} = \bigcup ^\infty_{m=1}  A_m.
	\end{align*}
	Then we get
	\begin{align*}
		\osc_{\mathbb{H}_n\bs \overline{D_R(y)}} \:	[S(\cdot,y)] &\leq \sum_{m=1}^\infty \: \osc_{A_m} \: [S\left( \cdot,y\right) ], \\
		&\leq \sum ^\infty_{m=1} \frac{C}{(2^{m-1}R)^{d-1+\kappa}}, \\
		&\leq  \frac{C}{R^{d-1+\kappa}}    \sum ^\infty_{m=0}  \frac{1}{(2^{d-1+\kappa})^m} ,\\
		&\leq  \frac{C}{R^{d-1+\kappa}}\quad  \mbox{  with }C=C(A, d, \mu, [A]_{C^{0,\eta}},M_2) .  
	\end{align*}
	This can be rephrased as : for any $\tilde{y},\hat{y}\in\mathbb{H}_n\bs \overline{D_R(y)}$ we have 
	\begin{align*}
		\left| S\!\left(\tilde{y},y\right)- S\!\left(\hat{y},y\right) \right|  &\leq \frac{C}{R^{d-1+\kappa}} \quad\mbox{ with } C=C(A, d, \mu, [A]_{C^{0,\eta}},M_2). 
	\end{align*}
	This estimate combined with the fact that 
	\begin{align*}
		\lim_{|\hat{y}|\rightarrow \infty}  S\!\left(\hat{y},y\right)=0
	\end{align*}
	yields 
	\begin{align}
		\left| S\!\left(\tilde{y},y\right) \right|  &\leq \frac{C}{R^{d-1+\kappa}}.\label{ineq_S}
	\end{align}
	Now we put together (\ref{ineq_AnabW}) and (\ref{ineq_S}) to obtain
	\begin{align*}
		\left| 	A(\tilde{y})\nabla W (\tilde{y},y) \right| +	\left| S(\tilde{y},y) \right|  &\leq \frac{C}{R^{d-1+\kappa}} , \\
		\left| 	A(\tilde{y})\nabla W (\tilde{y},y) \right| +	\left| S(\tilde{y},y) \right|  &\leq \frac{C}{|\tilde{y}-y|^{d-1+\kappa}} , \mbox{ for all }  y\in \mathbb{H}_n, \, \tilde{y}\in \partial \mathbb{H}_n,
	\end{align*}where  $C=C(A, d, \mu, [A]_{C^{0,\eta}},M_2)$.\\

	\noindent {\bf Large-scale expansion of the Poisson kernel $P=P(\tilde{y},y).$ }\\
	\noindent We aim at establishing an asymptotic expansion of $P= P(y,\tilde{y})$  when $|y-\tilde{y}| \longrightarrow +\infty$.\\
	\noindent Now we search for terms $P_0,$  $P_1$  and $P_2$ that satisfy
	\begin{align*}
		\left| P(y,\tilde{y})-P_0(y,\tilde{y})- P_1(y,\tilde{y})-P_2(y,\tilde{y}) \right| \leq \frac{C}{|\tilde{y}-y|^{d-1+\kappa}} , \mbox{ for all }  y\in \mathbb{H}_n, \, \tilde{y}\in \partial \mathbb{H}_n.
	\end{align*}
	We obtain the expansion for $P$ from the formula
	\begin{align*}
		P(y,\tilde{y})&=  -A^*(\tilde{y}) \nabla_{\tilde{y}} G^*(\tilde{y},y)\cdot n\, + \Pi^*(\tilde{y},y)\otimes n.
	\end{align*}
	Notice that
	\begin{align*}
		-A(\tilde{y}) \nabla_{\!\tilde{y}} W (\tilde{y},y)\cdot n+     S(\tilde{y},y) \otimes n&=	P(y,\tilde{y})-P_0(y,\tilde{y})- P_1(y,\tilde{y})-P_2(y,\tilde{y})
	\end{align*}
that
	\begin{align*}
		A^*(\tilde{y}) \nabla_{\!\tilde{y}} W (\tilde{y},y)
		=& A^*(\tilde{y}) \nabla_{\!\tilde{y}}G^* \left(\tilde{y},y \right)-A^*(\tilde{y}) \nabla_{\!\tilde{y}}G^{*,0} \left(\tilde{y},y \right)\\
		&- A^*(\tilde{y}) \nabla_{\!\tilde{y}}\left[\chi^* \left(\tilde{y} \right)+ u_{bl}^*\left(\tilde{y} \right)\right]\cdot \nabla G^{*,0}\left(\tilde{y},y\right)\\
		&- A^*(\tilde{y}) \left[\chi^* \left(\tilde{y} \right)+ u_{bl}^*\left(\tilde{y} \right)\right]: \nabla^2G^{*,0}\left(\tilde{y},y\right)\\
		&- A^*(\tilde{y}) \nabla_{\!\tilde{y}}\left[\Gamma^* \left(\tilde{y}\right)+ \chi^*\left(\tilde{y} \right) u_{bl}^*\left(\tilde{y} \right)\right]: \nabla^2 G^{*,0}\left(\tilde{y},y \right) \\
		&- A^*(\tilde{y})\left[\Gamma^* \left(\tilde{y}\right)+ \chi^*\left(\tilde{y} \right) u_{bl}^*\left(\tilde{y} \right)\right]: \nabla^3 G^{*,0}\left(\tilde{y},y \right),
	\end{align*}
		and that
	\begin{align*}
		S(\tilde{y},y)	=&\;\Pi^*( \tilde{y}, y)- \Pi^{*,0}( \tilde{y}, y)- [\pi^*(\tilde{y})+p_{bl}^*(\tilde{y})] \cdot\nabla G^{*,0}(\tilde{y}, y) \\
		&\qquad\qquad - [Q^*(\tilde{y})+\pi^* (\tilde{y})u_{bl}^*(\tilde{y})]:\nabla^2 G^{*,0}(\tilde{y}, y).
	\end{align*}

	\noindent The terms that contain a derivative of $G^{*,0}$ at the same order are grouped together. The terms of the first-order derivatives of $G^{*,0}$ are in
	\begin{align*}
		P_0	 (\tilde{y},y)=& \;-A^* (\tilde{y}) \nabla_{\tilde{y} } G^{*,0}(\tilde{y},y)\cdot n +\Pi^{*,0}(\tilde{y},y)\otimes n\\
		&\:- A^*(\tilde{y}) \nabla_{\!\tilde{y}}\left[\chi^* \left(\tilde{y} \right)+ u_{bl}^*\left(\tilde{y} \right)\right]\cdot \nabla G^{*,0}\left(\tilde{y},y\right)\\
		&\:+\{[\pi^*(\tilde{y})+p_{bl}^*(\tilde{y}) ] \cdot\nabla G^{*,0}(\tilde{y}, y)\}\otimes n.
	\end{align*}
	The term corresponding to the second-order derivatives of $G^{*,0}$ are in
	\begin{align*}
		P_1 (\tilde{y},y) =&\: -A^*(\tilde{y}) \left[\chi^* \left(\tilde{y} \right)+ u_{bl}^*\left(\tilde{y} \right)\right]: \nabla^2 G^{*,0}\left(\tilde{y},y\right)\\
		&\:-\{A^*  \nabla_{\tilde{y}} \Gamma^*(\tilde{y}) :\nabla^2 G^{*,0}(\tilde{y},y) \}\cdot n +\{ Q^*(\tilde{y}):\nabla^2 G^{*,0}(\tilde{y},y)\}\otimes n\\
		&\:-\{ A^*  \nabla_{\tilde{y} }  (\chi^*(\tilde{y}) u_{bl}^*(\tilde{y})): \nabla^2 G^{*,0}_0 (\tilde{y},y)\} \cdot n  +\{ \pi^* (\tilde{y})u_{bl}^*(\tilde{y}): \nabla^2 G^{*,0}(\tilde{y},y)\}\otimes n.
	\end{align*}
	and lastly we set
	\begin{align*}
		P_2(\tilde{y},y)	&= -A^*(\tilde{y})  \Gamma^*(\tilde{y}) \nabla^3 G^{*,0} (\tilde{y},y)\cdot n-A^*(\tilde{y})    \chi^* (\tilde{y})u_{bl}^*(\tilde{y}) \nabla^3 G^{*,0} (\tilde{y},y)\cdot n.
	\end{align*}
	By  summarizing the preceding lines we arrive at the following statement.
	\begin{proposition}\label{propoexpansp_poisson}
		For all $y\in \mathbb{H}_n$ and for all $\tilde{y}\in \partial\mathbb{H}_n$ we have the following expansion of the Poisson kernel
		\begin{align}
			P(y,\tilde{y})=&-A^*(\tilde{y}) \nabla_{\!\tilde{y}}G^{*,0} \left(\tilde{y},y \right)\cdot n+ \Pi^{*,0}( \tilde{y}, y)\otimes n\nonumber\\&-\{A^*(\tilde{y}) \nabla_{\!\tilde{y}}\left[\chi^* \left(\tilde{y} \right)+ u_{bl}^*\left(\tilde{y} \right)\right] \cdot \nabla G^{*,0}\left(\tilde{y},y\right)\}\cdot n\nonumber\\
			&\qquad \qquad + \{ [\pi^*(\tilde{y})+p_{bl}^*(\tilde{y})] \cdot \nabla G^{*,0}(\tilde{y}, y)\}\otimes n \nonumber\\
			&-A^*(\tilde{y}) \left\{ \left[\chi^* \left(\tilde{y} \right)+ u_{bl}^*\left(\tilde{y} \right)\right]:\nabla^2 G^{*,0}\left(\tilde{y},y\right) \right\}\cdot n\label{expans_P}\\
			&-A^*(\tilde{y})\left\{  \nabla_{\!\tilde{y}}[\Gamma^* (\tilde{y})+\chi^*\left(\tilde{y} \right) u_{bl}^*(\tilde{y} )]:\nabla^2 G^{*,0}\left(\tilde{y},y \right) 	\right\}\cdot n\nonumber\\
			&+ \{[Q^*(\tilde{y})+\pi^* (\tilde{y})u_{bl}^*(\tilde{y})]:\nabla^2 G^{*,0}(\tilde{y}, y)\}\otimes n\nonumber \\
			& -A^*(\tilde{y})  \Gamma^*(\tilde{y}) \nabla^3 G^{*,0} (\tilde{y},y)\cdot n-A^*(\tilde{y})    \chi^* (\tilde{y})u_{bl}^*(\tilde{y}) \nabla^3 G^{*,0} (\tilde{y},y)\cdot n\nonumber\\
			&+R(y,\tilde{y})\nonumber
		\end{align}
		with  the remainder $R(y,\tilde{y})$ being bounded as
		\begin{align}
			| R(y,\tilde{y})|\leq \frac{C}{|\tilde{y}-y|^{d-1+\kappa}},\quad \mbox{ for all } \kappa\in (0,1/2d)\label{bound_remainder}
		\end{align}and where the constant $C$ depends on $ d, \mu, [A]_{C^{0,\eta}}$ and $M_2$ defined in \textup{(\ref{M1})}.
	\end{proposition}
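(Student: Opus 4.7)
The plan is to start from the integral representation
\[
P(y,\tilde{y}) = -A^{\ast}(\tilde{y})\nabla_{\tilde{y}}G^{\ast}(\tilde{y},y)\cdot n + \Pi^{\ast}(\tilde{y},y)\otimes n
\]
and substitute the multi-scale expansions \eqref{expansG} and \eqref{expansPi} of the heterogeneous Green functions, taking care of the scaling $\varepsilon \simeq |y-\tilde{y}|^{-1}$. The terms that appear in the statement of the Proposition are exactly the ones coming from the correctors $\chi^{\ast},\pi^{\ast},\Gamma^{\ast},Q^{\ast}$ and the boundary-layer correctors $u_{bl}^{\ast}, p_{bl}^{\ast}$, organised by the order of the derivative of $G^{\ast,0}$ that they multiply. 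Once those terms are identified, the whole discrepancy
\[
R(y,\tilde{y}) := P(y,\tilde{y}) - P_0(y,\tilde{y}) - P_1(y,\tilde{y}) - P_2(y,\tilde{y})
\]
reduces, after the rescaling $\tilde{y}=\tilde{x}/\varepsilon$, $y=x/\varepsilon$, to precisely
\[
R(y,\tilde{y}) = -A^{\ast}(\tilde{y})\nabla_{\tilde{y}}W(\tilde{y},y)\cdot n + S(\tilde{y},y)\otimes n,
\]
where $(W,S)$ is the function built from $(W^{\varepsilon},S^{\varepsilon})$ defined in \eqref{errorW}--\eqref{errorP}. This reduction is purely algebraic and just uses the scaling identities of Lemma \ref{rescaled_GP}.

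The core analytic input is the uniform estimate on $(W^{\varepsilon},S^{\varepsilon})$. I would first fix the frame $(x,\tilde{x}) \in \mathbb{H}_n \times \mathbb{H}_n$ with $|x-\tilde{x}|\simeq 1$, write down the system \eqref{Pb_WS} satisfied by $(W^{\varepsilon},S^{\varepsilon})$ on $D_x$, and apply the uniform Lipschitz estimate of Theorem \ref{unif_estim} on the slightly smaller domain $D_x^0$. The right-hand side gathers interior terms $F^{(0)}_{bl},\varepsilon F^{(1)}, \varepsilon F^{(1)}_{bl}, \varepsilon^2 F^{(2)}, \varepsilon^2 F^{(2)}_{bl}$, the divergence data $H^{\varepsilon}$ and the boundary data $\varphi^{\varepsilon}$; each of them has already been controlled in the preceding Lemmas by $C\varepsilon^{\kappa}$ for some $\kappa>0$. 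Combined with the $L^\infty$ bound $\|W^{\varepsilon}\|_{L^\infty(D_x)}\le C\varepsilon^{1/d}$ coming from Proposition \ref{propo_GepsG0}, this produces
\[
\|A(\cdot/\varepsilon)\nabla W^{\varepsilon}\|_{L^\infty(D_x^0)} + \osc_{D_x^0} S^{\varepsilon} \le C\varepsilon^{\kappa},\qquad \kappa \in (0,1/(2d)).
\]

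To transform the $\varepsilon^{\kappa}$ bound into the claimed decay $|R(y,\tilde{y})|\le C/|y-\tilde{y}|^{d-1+\kappa}$, I would follow a two-step rescaling. First, by the scaling law of Green's functions from Lemma \ref{rescaled_GP}, $\nabla W^{\varepsilon}(\tilde{x}) = \varepsilon^{-(d-1)}\nabla_{\tilde{y}}W(\tilde{x}/\varepsilon,x/\varepsilon)$ and $S^{\varepsilon}(\tilde{x}) = \varepsilon^{-(d-1)}S(\tilde{x}/\varepsilon,x/\varepsilon)$, so the above local estimate rewritten in the fast variables yields the desired pointwise bound on $A(\tilde{y})\nabla_{\tilde{y}}W(\tilde{y},y)$ on any annulus $D_{2R}(y)\setminus\overline{D_R(y)}$, $R=\tfrac{2}{3}|y-\tilde{y}|$. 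For the pressure quantity $S$, which is only controlled in oscillation, I would chain dyadic annuli $A_m := \overline{D_{2^mR}(y)}\setminus \overline{D_{2^{m-1}R}(y)}$ to upgrade the local oscillation bound into a global one on $\mathbb{H}_n \setminus \overline{D_R(y)}$, the geometric series $\sum_m (2^{d-1+\kappa})^{-m}$ being summable, and then use the vanishing of $S(\hat{y},y)$ as $|\hat{y}|\to\infty$ to convert the oscillation into a pointwise bound.

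The main obstacle is establishing the $\varepsilon^{\kappa}$ bound for the most delicate source term $F^{(0)}_{bl}$ in Lemma \ref{estimF0bl}. It involves derivatives of the boundary-layer corrector $u_{bl}^{\ast}$ and the boundary-layer pressure $p_{bl}^{\ast}$, neither of which decays pointwise; only $L^2$-type control in the normal direction (Lemma \ref{LinfL2} and the consequence \eqref{bound_on_nab_u_and_p_eps}) is available. I would handle this by estimating $F^{(0)}_{bl}$ in $L^1(D_x)$ via Cauchy--Schwarz against $\|\nabla u_{bl}^{\ast}(\cdot/\varepsilon)\|_{L^2(D_x)}\le C\varepsilon^{1/2}$ and $\|p_{bl}^{\ast}(\cdot/\varepsilon)\|_{L^2(D_x)}\le C\varepsilon^{1/2}$, then interpolate with the obvious $L^\infty$ bound to reach $L^{d+l}$, giving a rate $\varepsilon^{1/(2(d+l))}$. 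Choosing $\eta\in(0,1)$ small enough that $1-\eta \ge 1/(2(d+l))$ then uniformises all rates, and gives the final $\kappa$ appearing in the statement.
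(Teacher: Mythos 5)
Your proposal is correct and follows essentially the same route as the paper: the decomposition of $P-P_0-P_1-P_2$ into $-A^*\nabla_{\tilde y}W\cdot n + S\otimes n$, the uniform Lipschitz estimate of Theorem \ref{unif_estim} applied to system (\ref{Pb_WS}), the $L^2$-in-normal-direction control of $\nabla u^*_{bl}$ and $p^*_{bl}$ interpolated to $L^{d+l}$ for $F^{(0)}_{bl}$, the rescaling via Lemma \ref{rescaled_GP}, and the dyadic-annuli chaining for the oscillation of $S$. This matches the paper's argument point for point, including the final choice of $\eta$ small enough so that $1-\eta\geq 1/(2(d+l))$.
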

	
	\section{Convergence towards a boundary layer tail}\label{section_converge}
	\noindent In this section we aim at establishing the convergence of the solution $u^s$ to problem (\ref{hsStokes}) as $y\cdot n \rightarrow \infty$. First, we establish the convergence in the case $s=0$. Then we show that the parameter $s\in\mathbb{R}$ is not relevant in this convergence when $n$ is irrational.
		\subsection{Convergence of the boudary layer far away from the boundary in the case $s=0$}	\noindent The method that is going to be employed is based on the representation of the solution $u$ with the Poisson kernel
	\begin{align}
		u(y)=\int_{\partial\mathbb{H}_n} P(y,\tilde{y}) g(\tilde{y}) d\tilde{y}\label{represent_u}.
	\end{align}
	The asymptotic expansion (\ref{expans_P}) of this kernel will be key for studying the convergence properties. 
	
	\noindent We take advantage of the fact that the boundary data $g$    as well as functions $A^*,\chi^* ,\pi^* , \Gamma^*, Q^* $ are quasiperiodic on the hyperplane  $\partial\mathbb{H}_n$. We do so through an application of the following lemma.
	
	\begin{lemma}\textup{(\cite{thm_ergo}, Theorem S.3.)}\label{ergodicity}
		Let $f:\mathbb{R}^m\longmapsto \mathbb{R}$ be a quasiperiodic function then there exists $\mathcal{M}(f)\in \mathbb{R}$ such that for all $\varphi \in L^1(\mathbb{R}^m)$ one has
		\begin{align}\label{conv_ergo}
			\int_{\mathbb{R}^m} \varphi (y) f\left( \frac{y}{\varepsilon}\right)dy \xrightarrow{\varepsilon \rightarrow 0}\mathcal{M}(f)\int_{\mathbb{R}^m} \varphi (y)  dy.
		\end{align}
	\end{lemma}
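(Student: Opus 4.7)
The proof proceeds in three stages: (i) constructing the mean value $\mathcal{M}(f)$ via a torus lift, (ii) establishing (\ref{conv_ergo}) when $\varphi$ is the indicator of a cube using Weyl-type equidistribution, and (iii) extending to $\varphi\in L^1$ by density and boundedness of $f$. By definition of quasiperiodicity there exist an integer $N\geq m$, a continuous $\mathbb{Z}^N$-periodic function $F\colon\mathbb{R}^N\to\mathbb{R}$ (hence bounded by some $M:=\|F\|_\infty$), and a linear map $L\colon\mathbb{R}^m\to\mathbb{R}^N$ such that $f(y)=F(Ly)$ and $L^\top k\neq 0$ for every $k\in\mathbb{Z}^N\setminus\{0\}$; this non-resonance condition is equivalent to density of $L(\mathbb{R}^m)$ in $\mathbb{T}^N$. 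One then defines $\mathcal{M}(f):=\int_{\mathbb{T}^N} F(z)\,dz$.

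For the cube case, fix a cube $Q\subset\mathbb{R}^m$ and change variables:
\begin{equation*}
\int_Q f(y/\varepsilon)\,dy = \varepsilon^m\int_{Q/\varepsilon} F(Lz)\,dz.
\end{equation*}
By Stone--Weierstrass I would approximate $F$ in sup-norm within $\eta$ by a trigonometric polynomial $P(z)=\sum_{|k|\leq K}\hat F_k\,e^{2\pi i k\cdot z}$. The constant mode contributes $\hat F_0\,|Q|=\mathcal{M}(f)|Q|+O(\eta)$, while for each non-zero $k$ the plane wave $e^{2\pi i k\cdot Lz}$ has non-vanishing frequency $L^\top k$, so $\int_{Q/\varepsilon}e^{2\pi i k\cdot Lz}\,dz=O(|L^\top k|^{-1})$ uniformly in $\varepsilon$; multiplying by $\varepsilon^m$ this error vanishes as $\varepsilon\to 0$. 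Letting $\eta\to 0$ afterwards gives $\int_Q f(y/\varepsilon)\,dy\to\mathcal{M}(f)|Q|$, and by linearity the same holds for every finite linear combination of indicators of cubes.

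For the extension to $L^1$, given $\varphi\in L^1(\mathbb{R}^m)$ and $\delta>0$, I would pick a step function $\varphi_\delta$ supported on finitely many cubes with $\|\varphi-\varphi_\delta\|_{L^1}<\delta$. Since $\|f(\cdot/\varepsilon)\|_\infty\leq M$ and $|\mathcal{M}(f)|\leq M$, the triangle inequality gives
\begin{align*}
\Bigl|\int\varphi(y)\,f(y/\varepsilon)\,dy - \mathcal{M}(f)\int\varphi(y)\,dy\Bigr|
&\leq 2M\delta + \Bigl|\int\varphi_\delta(y)\,f(y/\varepsilon)\,dy - \mathcal{M}(f)\int\varphi_\delta(y)\,dy\Bigr|,
\end{align*}
whose second term tends to $0$ by the previous step; sending $\varepsilon\to 0$ then $\delta\to 0$ concludes the proof. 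The main obstacle is the cube step: the non-resonance built into quasiperiodicity must be exploited to kill every non-constant Fourier mode. Crucially, since only finitely many modes appear after the Stone--Weierstrass truncation, no quantitative Diophantine control on $L^\top k$ is needed --- just the qualitative fact that each $L^\top k$ is non-zero --- which is why the lemma holds under the bare definition of quasiperiodicity.
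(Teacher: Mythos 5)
The paper cites this lemma as Theorem S.3 of Shubin's work and gives no proof of its own, so there is no argument in the text to compare against; I can only assess your proposal on its merits. Your strategy -- lift to a torus, reduce to indicators of cubes via Stone--Weierstrass, kill the non-constant Fourier modes by oscillation, then extend to $L^1$ by density -- is the standard and correct route, and the $L^1$ extension step is carried out cleanly.

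One intermediate estimate is, however, wrong as written. You assert that $\int_{Q/\varepsilon}e^{2\pi i k\cdot Lz}\,dz=O(|L^\top k|^{-1})$ \emph{uniformly in} $\varepsilon$, but the non-resonance hypothesis only guarantees that the vector $\xi:=L^\top k$ is nonzero, not that all of its $m$ coordinates are nonzero. The cube integral factorizes as $\prod_{j=1}^m\int_{a_j/\varepsilon}^{b_j/\varepsilon}e^{2\pi i\xi_j z_j}\,dz_j$, and any factor with $\xi_j=0$ equals $(b_j-a_j)/\varepsilon$, which blows up; so in general the integral is only $O(\varepsilon^{-(m-1)}/|\xi_{j_0}|)$, where $j_0$ is a coordinate with $\xi_{j_0}\neq 0$. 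Your conclusion still survives -- multiplying by $\varepsilon^m$ leaves $O(\varepsilon)$, which tends to zero, and only finitely many modes $k$ appear after truncation -- but the stated uniform-in-$\varepsilon$ bound is not the right reason. You should replace it with: at least one coordinate of $L^\top k$ is nonzero, hence at least one factor of the product is bounded independent of $\varepsilon$, so $\varepsilon^m\int_{Q/\varepsilon}e^{2\pi i k\cdot Lz}\,dz=O(\varepsilon)$.

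A second, smaller point: you build the non-resonance condition ($L^\top k\neq 0$ for all $k\in\mathbb{Z}^N\setminus\{0\}$) into your definition of quasiperiodicity. This is fine provided one first reduces to a minimal torus lift, but it is worth saying so, because in the actual applications in this paper $L=N$ and $L^\top k$ is the projection of $k$ onto $n^\perp$, which vanishes for some $k\neq 0$ precisely when $n$ is rational; in that degenerate case $f$ is genuinely periodic on $\mathbb{R}^m$ and one must pass to a subtorus before invoking your argument. This is a harmless reduction, but the proof should acknowledge it rather than leave the reader to supply it.
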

	\noindent	This is a qualitative ergodic theorem for quasiperiodic functions. The hyperplane $\partial\mathbb{H}_n \sim \mathbb{R}^{d-1}$ will play the role of $\mathbb{R}^m$ in  Lemma \ref{ergodicity}. Notice that the convergence in (\ref{conv_ergo})   can be arbitrarily slow. By using  this Lemma we obtain the following theorem.
	\begin{theorem}[Theorem \ref{intro_ThConverge} in the case $s=0$]\label{Thm_s=0}
		As $y\cdot n \rightarrow \infty$	the solution $u=u(y)$ to problem \textup{(\ref{hsStokes})} with $s=0$ converges to a constant vector field $U^{\infty}(n)$ with components
		\begin{align}
			U_i^{\infty}(n)
			=&\:\:	 \mathcal{M} \left(\! - n_\alpha g_j A^{\beta\alpha}_{kj} |_{\partial\mathbb{H}_n} \! \right)\!\!\int_{\partial\mathbb{H}_n}\!\! \partial_{\tilde{y}_\beta} G^{*,0}_{ki} \left(\tilde{y},n\right) d\tilde{y}+  \mathcal{M} \left( n_j g_j|_{\partial\mathbb{H}_n} \right)\!\!\int_{\partial\mathbb{H}_n}\!\!\! \Pi^{*,0}_i\left(\tilde{y},n\right) d\tilde{y}\nonumber\\
			&+\left[-\mathcal{M}(n_\alpha g_l A^{\gamma\alpha}_{kl}\partial_{ \tilde{y}_\gamma}  \chi_{kj}^{*,\beta}|_{\partial\mathbb{H}_n}) + \mathcal{M}\left( n_r g_r \pi_{j}^{*,\beta} |_{\partial\mathbb{H}_n} \right) \right]\!
			\int_{\partial\mathbb{H}_n}\!\!\partial_{\tilde{y}_\beta} G^{*,0}_{ji}(\tilde{y},n) d\tilde{y} \nonumber\\
			&+ \left[ -\mathcal{M}(n_\alpha g_l A^{\gamma\alpha}_{kl}\partial_{ \tilde{y}_\gamma} (u^*_{bl})^{\beta}_{kj} |_{\partial\mathbb{H}_n})+
			\mathcal{M}\left( n_r g_r (p^*_{bl})^{\beta}_{j} |_{\partial\mathbb{H}_n} \right) \right]
			\int_{\partial\mathbb{H}_n}  \partial_{\tilde{y}_\beta}G^{*,0}_{ji}(\tilde{y},n) d\tilde{y}.\label{formul_bltail}
		\end{align}	
	\end{theorem}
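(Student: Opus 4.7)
The plan is to start from the Poisson kernel representation $u(y)=\int_{\partial\mathbb{H}_n} P(y,\tilde{y}) g(\tilde{y})\,d\tilde{y}$ and insert the asymptotic expansion of $P$ from Proposition \ref{propoexpansp_poisson}. This decomposes $u(y)$ into a sum of six integrals of the form $\int_{\partial\mathbb{H}_n} F(\tilde{y}) K(\tilde{y},y) g(\tilde{y})\,d\tilde{y}$ -- where $F$ is built from the periodic coefficients $A^*$ together with periodic interior correctors $\chi^*,\pi^*,\Gamma^*,Q^*$ or with the boundary layer correctors $u^*_{bl},p^*_{bl}$, and $K$ is a derivative of the homogenized Green kernel $G^{*,0}$ or of the homogenized pressure $\Pi^{*,0}$ -- plus a remainder $\int_{\partial\mathbb{H}_n} R(y,\tilde{y}) g(\tilde{y})\,d\tilde{y}$. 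Each of the six main terms will be shown, after a rescaling, to fall within the scope of the ergodic Lemma \ref{ergodicity}.

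The remainder is disposed of first: from the bound $|R(y,\tilde{y})|\le C|\tilde{y}-y|^{-(d-1+\kappa)}$ of (\ref{bound_remainder}) and the boundedness of $g$, a direct polar computation on $\partial\mathbb{H}_n$ yields
\begin{equation*}
\left|\int_{\partial\mathbb{H}_n} R(y,\tilde{y}) g(\tilde{y})\,d\tilde{y}\right|\le C\int_{\partial\mathbb{H}_n} \frac{d\tilde{y}}{|\tilde{y}-y|^{d-1+\kappa}}\le \frac{C}{(y\cdot n)^{\kappa}},
\end{equation*}
which vanishes as $y\cdot n\to\infty$. For each main term, write $y=y_{\tan}+tn$ with $t=y\cdot n$ and change variable $\tilde{y}=y_{\tan}+t\tilde{z}$. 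Using translation invariance of $G^{*,0}$ along $\partial\mathbb{H}_n$ and the homogeneity $G^{*,0}(\lambda\tilde{y},\lambda y)=\lambda^{-(d-2)}G^{*,0}(\tilde{y},y)$ (and its analogues for $\Pi^{*,0}$ and derivatives given in Lemma \ref{rescaled_GP}), the Jacobian $t^{d-1}$ exactly cancels the scaling of $K$, and each main term collapses to
\begin{equation*}
\int_{\partial\mathbb{H}_n} F(y_{\tan}+t\tilde{z})\,K_0(\tilde{z},n)\,d\tilde{z},
\end{equation*}
where $K_0$ is the corresponding derivative of the homogenized kernel evaluated at $(\tilde{z},n)$. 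The estimates of Proposition \ref{Propo_estim_Green_y} applied to $G^{*,0}$ (since $\mathrm{dist}(n,\partial\mathbb{H}_n)=1$) give $|K_0(\tilde{z},n)|\lesssim |\tilde{z}-n|^{-d}$, hence $K_0(\cdot,n)\in L^1(\partial\mathbb{H}_n)$. Lemma \ref{ergodicity} with $\varepsilon=1/t$ and test function $K_0(\cdot,n)$ then produces the limit $\mathcal{M}(F|_{\partial\mathbb{H}_n})\int_{\partial\mathbb{H}_n} K_0(\tilde{z},n)\,d\tilde{z}$, and matching term by term against (\ref{expans_P}) reproduces the six contributions in (\ref{formul_bltail}).

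The main obstacle I expect is the treatment of the boundary layer contributions involving $u^*_{bl}$ and $p^*_{bl}$: unlike $\chi^*,\pi^*,\Gamma^*,Q^*$, these functions are not periodic on $\mathbb{R}^d$, so a direct appeal to Lemma \ref{ergodicity} is not immediate. On $\partial\mathbb{H}_n$ the boundary condition $u^*_{bl}=-\chi^*$ is periodic, which handles the tangential derivatives directly; the normal derivative and the pressure trace then need to be analysed using the Stokes equation (\ref{u*p*}) and the decay estimates of Lemma \ref{LinfL2} together with (\ref{nab_uespL2})--(\ref{p_epsL2}) to justify existence of the mean values $\mathcal{M}(\cdots(u^*_{bl})|_{\partial\mathbb{H}_n})$ and $\mathcal{M}(\cdots (p^*_{bl})|_{\partial\mathbb{H}_n})$ appearing in (\ref{formul_bltail}) and to verify that the rescaled integrals remain uniformly controlled in $t$ before passing to the limit.
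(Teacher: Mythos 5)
Your overall architecture matches the paper's: represent $u$ via Poisson's kernel, insert the expansion of Proposition \ref{propoexpansp_poisson}, kill the remainder, and apply the quasiperiodic ergodic Lemma \ref{ergodicity} after a rescaling. The rescaling you describe (shift by $y_{\tan}$ and use translation invariance plus homogeneity of $G^{*,0}$) is a clean reorganization of what the paper does by rotating via $M$, substituting $\tilde z'\to\tilde z'/\varepsilon$, and then adding and subtracting $G^{*,0}(M(\tilde z',0),M(0,1))$; the paper's add-and-subtract is there precisely to eliminate the residual $y_{\tan}$-dependence hiding in your $F(y_{\tan}+t\tilde z)$, which the bare statement of Lemma \ref{ergodicity} does not address. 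That is a fixable bookkeeping point. Two more substantive issues, however.

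First, the expansion (\ref{expans_P}) has three blocks (proportional to $\nabla G^{*,0}$ or $\Pi^{*,0}$, to $\nabla^2 G^{*,0}$, and to $\nabla^3 G^{*,0}$), and only the first block survives in the limit. For the second and third blocks your ``Jacobian $t^{d-1}$ exactly cancels the scaling of $K$'' claim fails: $\nabla^k G^{*,0}$ scales like $t^{-(d-2+k)}$, so after the Jacobian these terms are $O(t^{-(k-1)})$ and simply vanish as $y\cdot n\to\infty$ (this is the paper's Step 2, using (\ref{estim_der_G0}) and (\ref{bound_int|y-tildy|^d})). As written, your proposal applies the ergodic lemma to ``each main term,'' which would produce spurious contributions from the higher-order blocks; you need to separate the surviving first-order block from the vanishing higher-order blocks.

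Second, and more seriously, your proposed handling of the boundary layer traces is not on the right track. You suggest justifying the existence of $\mathcal{M}(\cdots(u^*_{bl})|_{\partial\mathbb{H}_n})$ and $\mathcal{M}(\cdots(p^*_{bl})|_{\partial\mathbb{H}_n})$ via the $L^2$ decay estimates of Lemma \ref{LinfL2} and (\ref{nab_uespL2})--(\ref{p_epsL2}), but integrability in the normal direction has nothing to do with the existence of a spatial mean on the hyperplane: $\mathcal{M}$ requires a quasiperiodic structure of the trace, not decay. Likewise, the boundary condition $u^*_{bl}=-\chi^*$ only controls tangential derivatives; it says nothing about the trace of the normal derivative or of the pressure. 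What the paper actually uses (Appendix \ref{quasiperiod}) is the explicit quasiperiodic representation: writing $u^s(y)=V(y-(y\cdot n-s)n,y\cdot n-s)$ and $p^s=R(\cdot,\cdot)$ with $V,R$ periodic in $\theta$, one reads off that $\partial_{y_\gamma}u^*_{bl}|_{\partial\mathbb{H}_n}$ (all $\gamma$, including normal) and $p^*_{bl}|_{\partial\mathbb{H}_n}$ are quasiperiodic on the hyperplane, so Lemma \ref{ergodicity} applies directly. Your proof needs this input; the Stokes equation and decay estimates alone will not deliver it.
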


	\begin{proof}[Proof]
		Injecting in (\ref{represent_u}) the expansion of $P(y,\tilde{y})$ given by Proposition \ref{propoexpansp_poisson}  yields
		\begin{align}
			u(y)=&\: U^{0,1}(y) +U^{0,2}(y) +U^{0,3}(y) +U^{0,4}(y) +U^{1,1}(y) +U^{1,2}(y) +U^{1,3}(y) \nonumber\\
			& +U^{2,1}(y) +U^{2,2}(y) +\int_{\partial\mathbb{H}_n} R(y,\tilde{y}) g(\tilde{y}) d\tilde{y}\label{expans_u}
		\end{align}
		where 	
		\begin{align*}
			U^{0,1}(y):=&\int_{\partial\mathbb{H}_n}  -A^*(\tilde{y}) \nabla_{\!\tilde{y}}G^{*,0}\left(\tilde{y},y \right)\cdot n g(\tilde{y})  d\tilde{y},   \qquad U^{0,2}:=\int_{\partial\mathbb{H}_n}\Pi^{*,0}( \tilde{y}, y)\otimes n g(\tilde{y})  d\tilde{y}  \nonumber \\
			U^{0,3}(y):=&\int_{\partial\mathbb{H}_n}-A^*(\tilde{y}) \{\nabla_{\!\tilde{y}}\left[\chi^* \left(\tilde{y} \right)+ u^*_{bl}\left(\tilde{y} \right)\right]\cdot  \nabla G^{*,0}\left(\tilde{y},y\right)\}\cdot n g(\tilde{y}) \   d\tilde{y}   \nonumber\\
			U^{0,4}(y):=& \int_{\partial\mathbb{H}_n}\{\pi^*(\tilde{y})+   p^*_{bl}(\tilde{y}) \} \cdot \nabla G^{*,0}(\tilde{y}, y)\otimes n  g(\tilde{y}) d\tilde{y} \nonumber
		\end{align*}
		are integrals that involve first-order derivatives of $G^{*,0}$ and $\Pi^{*,0}$. The other terms involve higher order derivatives of $G^{*,0}$
		\begin{align*}
			U^{1,1}(y):=&\int_{\partial\mathbb{H}_n}-A^*(\tilde{y}) \left\{\left[\chi^* \left(\tilde{y} \right)+ u_{bl}^*\left(\tilde{y} \right)\right]:\nabla^2 G^{*,0}\left(\tilde{y},y\right)\right\}\cdot n  g(\tilde{y}) \   d\tilde{y}  \nonumber \\
			U^{1,2}(y):=&\ \int_{\partial\mathbb{H}_n}\{[Q(\tilde{y})+\pi^* (\tilde{y})u_{bl}^*(\tilde{y})]:\nabla^2 G^{*,0}(\tilde{y}, y)\}\otimes n g(\tilde{y}) \ d\tilde{y} \nonumber\\
			U^{1,3}(y):=&\int_{\partial\mathbb{H}_n}  \left\{-A^*(\tilde{y}) \nabla_{\!\tilde{y}}\left[\Gamma^* \left(\tilde{y}\right)+ \chi^*\left(\tilde{y} \right) u_{bl}^*\left(\tilde{y} \right)\right]:\nabla^2 G^{*,0}\left(\tilde{y},y \right) 	\right\}\cdot n g(\tilde{y})   \  d\tilde{y}   \\
			U^{2,1}(y):=&\int_{\partial\mathbb{H}_n}  \left\{-A^*(\tilde{y}) \Gamma^* (\tilde{y}):\nabla^3 G^{*,0}\left(\tilde{y},y \right) 	\right\}\cdot n g(\tilde{y})   \  d\tilde{y} \nonumber  \\
			U^{2,2}(y):=&\int_{\partial\mathbb{H}_n}  \{-A^*(\tilde{y}) \chi^*\left(\tilde{y} \right) u_{bl}^*\left(\tilde{y} \right):\nabla^3 G^{*,0}\left(\tilde{y},y \right) \}\cdot n g(\tilde{y})   \  d\tilde{y}.   
		\end{align*}	
		The superscript $i$ in $U^{i,j}$ represents the order of the derivative of $\nabla G^{*,0}(\tilde{y}, y)$ or $\Pi^{*,0}( \tilde{y}, y)$.
	 Remark first that 
		the bound (\ref{bound_remainder}) on the remainder and the boundedness of $g$ imply 
		\begin{align*}
			\left| \int_{\partial\mathbb{H}_n} R(y,\tilde{y}) g(\tilde{y}) d\tilde{y} \right| \leq & \int_{\partial\mathbb{H}_n}  \frac{C}{|\tilde{y}-y|^{d-1+\kappa}} \\ \leq& \frac{C}{(y\cdot n)^\kappa} \stackrel{y\cdot n \rightarrow \infty}{ \longrightarrow } 0.
		\end{align*}
		Now we study the limit of each of these terms $U^{i,j}(y)$ as $y\cdot n \rightarrow \infty.$ \\

		\noindent {\bfseries Step 1 : terms $U^{0,1}, \,U^{0,2}, \,U^{0,3},\,  U^{0,4}$.}\\

		\noindent 
		\underline{Step 1-a : Convergence of the term $U^{0,1}$} \\
			\noindent 	First, we rewrite this quantity into coordinates by considering its $i^{th}$ component
		\begin{align*}
			U^{0,1}_i(y)= &\int_{\partial\mathbb{H}_n} \{[ -A^*(\tilde{y}) \nabla_{\!\tilde{y}}G^{*,0} \left(\tilde{y},y \right)\cdot n] \,g(\tilde{y})\}_i\,d\tilde{y} \\
			=& \int_{\partial\mathbb{H}_n} 
			-n_\alpha A^{\beta\alpha}_{kj}(\tilde{y})\partial_{\tilde{y}_\beta} G^{*,0}_{ki} \left(\tilde{y},y \right)g_j(\tilde{y})\,d\tilde{y} .
		\end{align*}
		For all $y\in \mathbb{H}_n$ and by using an estimate in (\ref{estim_dertildG}) on  Green's functions we get 
		\begin{align*}
			\int_{\partial\mathbb{H}_n}  |\partial_{ \tilde{y}_\beta} G^{*,0} _{ki} \left(\tilde{y},y \right) |\,d\tilde{y}\leq 	C \int_{\partial\mathbb{H}_n}  \frac{y\cdot n}{|\tilde{y}-y|^{d}}\,d\tilde{y} \leq C(d,\mu, [A]_{C^{0,\eta}})< \infty.
		\end{align*}		
		For arbitrary $R>0$,  consider $y\in\mathbb{H}_n $  such that its tangential component is bounded by $R$ 
		i.e. $y':=y-(y\cdot n)n \in B(0,R) \cap\partial\mathbb{H}_n$.
		Applying the rotation $M$, such that  $y=M z,$ $z\in  \mathbb{R}^d_+$, for all $y\in\mathbb{H}_n $, we obtain 
		\begin{align*}
			U^{0,1}_i(y) &= \int_{\mathbb{R}^{d-1}\times\{0\}} -n_\alpha A^{\beta\alpha}_{kj}(M(\tilde{z})) \partial_{ \tilde{y}_\beta}G^{*,0}_{ki} \left(M(\tilde{z}),M(z) \right)g_j(M(\tilde{z}))\,d\tilde{z}  \\
			&=\int_{\mathbb{R}^{d-1}} -n_\alpha A^{\beta\alpha}_{kj}(M(\tilde{z}',0)) \partial_{ \tilde{y}_\beta}G^{*,0} _{ki} (M(\tilde{z}',0), M(z',z_d)) g_j(M(\tilde{z}',0))\,d\tilde{z}'.  
		\end{align*}
		Now we set $\varepsilon := \frac{1}{y\cdot n}$
		and use  the change of variables $\tilde{z}'\rightarrow \tilde{z}'/\varepsilon $ (this change of variables in the integration does not concern the variable $z=(z',z_d)$)
		\begin{align*}
			U^{0,1}_i(y)
			&=\int_{\mathbb{R}^{d-1}} -\partial_{\tilde{y}_\beta} G^{*,0}_{ki}\left(M\left(\frac{\tilde{z}'}{\varepsilon },0\right), M\left( z',z_d  \right)\right) \\
			&\qquad \qquad\qquad \times n_\alpha A^{\beta\alpha}_{kj}\left(M\left(\frac{\tilde{z}'}{\varepsilon },0\right)\right)g_j\left(M\left(\frac{\tilde{z}'}{\varepsilon },0\right)\right)\,\frac{1}{\varepsilon^{d-1} } d\tilde{z}'.
		\end{align*}
		Note that $z_d = y\cdot n = 1/ \varepsilon$. Then the linearity of $M$ implies 
		\begin{align*}
			M(z',z_d)= M \left(\frac{\varepsilon z'}{\varepsilon },\frac{1}{\varepsilon}\right)= \frac{1}{\varepsilon} M (\varepsilon z',1).
		\end{align*}
		We use this remark to perform a slight transformation  in the integral
		\begin{align*}
			U^{0,1}_i(y)	&=\int_{\mathbb{R}^{d-1}} -\frac{1}{\varepsilon^{d-1} } \partial_{\tilde{y}_\beta} G^{*,0}_{ki} \left(\frac{M(\tilde{z}',0)}{\varepsilon },\frac{M (\varepsilon z',1)}{\varepsilon} \right) n_\alpha A^{\beta\alpha}_{kj} \left(\frac{M(\tilde{z}',0)}{\varepsilon }\right) g_j\left(\frac{M(\tilde{z}',0)}{\varepsilon }\right)d\tilde{z}'.
		\end{align*}
		Recall the following rescaling property
		\begin{align*}
			\frac{1}{\varepsilon^{d-1} } \nabla G^{*,0}\left(\frac{\tilde{x}}{\varepsilon },\frac{x}{\varepsilon} \right) =   \nabla G^{*,0}\left(\tilde{x},x\right)
		\end{align*}
		that we apply in the above expression to get 
		\begin{align*}
			U^{0,1}_i(y)
			&=\int_{\mathbb{R}^{d-1}} -\partial_{\tilde{y}_\beta} G^{*,0}_{ki}\left(M(\tilde{z}',0),M (\varepsilon z',1) \right) n_\alpha A^{\beta\alpha}_{kj} \left(\frac{M(\tilde{z}',0)}{\varepsilon }\right) g_j\left(\frac{M(\tilde{z}',0)}{\varepsilon }\right) \,d\tilde{z}'.  
		\end{align*}
		We  use the following trick to introduce the quantity $ G^{*,0}_{ki}\left(\tilde{y},n \right)$ that is independent of $\varepsilon$ and plays the role of $\varphi$ in Lemma \ref{ergodicity}
		\begin{align}\label{addrmove}
			U^{0,1}_i(y)=
			\int_{\mathbb{R}^{d-1}}\!\! &\left( \partial_{\tilde{y}_\beta} G^{*,0}_{ki}\left(M(\tilde{z}',0),M (0,1) \right) 
			- \partial_{\tilde{y}_\beta} G^{*,0}_{ki}\left(M(\tilde{z}',0),M (\varepsilon z',1) \right) \right)\\
			&\qquad \times n_\alpha A^{\beta\alpha}_{kj} \left(\frac{M(\tilde{z}',0)}{\varepsilon }\right) g_j\left(\frac{M(\tilde{z}',0)}{\varepsilon }\right)  \,d\tilde{z}' \nonumber \\
			&+\int_{\mathbb{R}^{d-1}} \!\! - \partial_{\tilde{y}_\beta} G^{*,0}_{ki}\left(M(\tilde{z}',0),M (0,1) \right) n_\alpha A^{\beta\alpha}_{kj} \left(\frac{M(\tilde{z}',0)}{\varepsilon }\right) g_j \! \left(\!\!\frac{M(\tilde{z}',0)}{\varepsilon }\!\right)\! d\tilde{z}'  .  \nonumber\\ \nonumber
		\end{align}
		
		\noindent We shall first show that 
		\begin{align*}
			\lim_{\varepsilon\rightarrow 0} 	\int_{\mathbb{R}^{d-1}}\!\! &\left(\partial_{\tilde{y}_\beta} G^{*,0}_{ki} \left(M(\tilde{z}',0),M (0,1) \right) 
			-\partial_{\tilde{y}_\beta} G^{*,0}_{ki}\left(M(\tilde{z}',0),M (\varepsilon z',1) \right) \right)\\
			&\qquad \times n_\alpha A^{\beta\alpha}_{kj} \left(\frac{M(\tilde{z}',0)}{\varepsilon }\right) g_j\left(\frac{M(\tilde{z}',0)}{\varepsilon }\right)  \,d\tilde{z}' =0
		\end{align*} uniformly in $z'\in B^{d-1}(0,R).$\\

		\noindent The mean value inequality gives 
		\begin{align*}
			\left|  \frac{\partial}{\partial \tilde{y}_\beta} G^{*,0}_{ki} (M(\tilde{z}',0),\right.&M (0,1) )- \left.\frac{\partial}{\partial \tilde{y}_\beta} G^{*,0}_{ki}(M(\tilde{z}',0),M (\varepsilon z',1) )\right|\\
			&\leq \sup_{w'\in B(0,\varepsilon R)} | \nabla_y \nabla_{\tilde{y}} G^{*,0}_{ki}\left(M(\tilde{z}',0),M (w',1) \right)| | \varepsilon z'|\\
			&\leq \varepsilon R \sup_{w'\in B(0,\varepsilon R)} |\nabla_y \nabla_{\tilde{y}} G^{*,0}_{ki}(M(\tilde{z}',0),M (w',1) )|.
		\end{align*}
		Using a version of  estimate  (\ref{estim_mixder}) 
		\begin{align*}
			|  \nabla_y \nabla_{\tilde{y}} G^{*,0}_{ki}(\tilde{y},y ) | &\leq\frac{C}{|y-\tilde{y}|^{d}}, \quad \mbox{ with } C=C(d,\mu,[A]_{C^{0,\eta}} ),
		\end{align*}
		we obtain 
		\begin{align*}
			\left|\nabla_y \nabla_{\tilde{y}} G^{*,0}_{ki}\left(M(\tilde{z}',0),\, M (w',1) \right)\right|
			&\leq \frac{C}{(1+|w'-\tilde{z}'|^2)^{d/2}}
		\end{align*}
		leading to
		\begin{align*}
			\left| \partial_{\tilde{y}_\beta} G^{*,0}_{ki}\left(M(\tilde{z}',0),M (\varepsilon z',1) \right) -  \partial_{\tilde{y}_\beta} G^{*,0}_{ki}(M(\tilde{z}',0),M (0,1) ) \right|
			&\leq \varepsilon R \!\!\sup_{w'\in B(0,\varepsilon R)}\frac{C}{(1+|w'-\tilde{z}'|^2)^{d/2}}.
		\end{align*}
		Taking $\varepsilon$ small enough, so that $\varepsilon R\leq 1,$ we get  
		\begin{align*}
			\left| \partial_{\tilde{y}_\beta} G^{*,0}_{ki}\left(M(\tilde{z}',0),M (\varepsilon z',1) \right) -  \partial_{\tilde{y}_\beta} G^{*,0}_{ki}(M(\tilde{z}',0),M (0,1) ) \right|&\leq \varepsilon R \!\!\! \sup_{w'\in B(0,1)}\frac{C}{(1+|w'-\tilde{z}'|^2)^{d/2}}.
		\end{align*}
		In the present context, $B(0,1)$ denotes the unit ball of $\mathbb{R}^{d-1}$.
		So we have the inequality
		\begin{align*}
			\int_{{\mathbb{R}}^{d-1}}&	\left| \partial_{\tilde{y}_\beta} G^{*,0}_{ki} \left(M(\tilde{z}',0),M (\varepsilon z',1) \right)
			-\partial_{\tilde{y}_\beta} G^{*,0}_{ki}(M(\tilde{z}',0),M (0,1) )  \right| \\
			&\qquad \times n_\alpha A^{\beta\alpha}_{kj} \left(\frac{M(\tilde{z}',0)}{\varepsilon }\right) g_j\left(\frac{M(\tilde{z}',0)}{\varepsilon }\right)  \,d\tilde{z}' \\
			&	\leq C\varepsilon R \int_{{\mathbb{R}}^{d-1}}\sup_{w'\in B(0,1)}\frac{1}{(1+|w'-\tilde{z}'|^2)^{d/2}}d\tilde{z}'
		\end{align*} where $C$ depends on $d,\mu$ and $[A]_{C^{0,\eta}}$.
	Moreover, one has 
		\begin{align*}
			\int_{{\mathbb{R}}^{d-1}} \sup_{w'\in B(0,1)}\frac{1}{(1+|w'-\tilde{z}'|^2)^{d/2}}  d\tilde{z}'= \int_{B(0,1)}& \sup_{w'\in B(0,1)}\frac{ 1}{(1+|w'-\tilde{z}'|^2)^{d/2}} d\tilde{z}'\\
			& + \int_{{\mathbb{R}}^{d-1}\bs B(0,1) } \sup_{w'\in B(0,1)}\frac{1}{(1+|w'-\tilde{z}'|^2)^{d/2}} d\tilde{z}'.
		\end{align*}
		If $ \tilde{z}'\in B(0,1)$ then $$\inf_{w'\in B(0,1)}  (1+|w'-\tilde{z}'|^2)=1$$ and if  $ \tilde{z}'\in{\mathbb{R}}^{d-1}\bs B(0,1)$ then $$\inf_{w'\in B(0,1)}  (1+|w'-\tilde{z}'|^2)=1+(|\tilde{z}'|- 1)^2.$$
		Thus  we obtain
		\begin{align*}
			\int_{{\mathbb{R}}^{d-1}} \sup_{u'\in B(0,1)}\frac{1}{(1+|u'-\tilde{z}'|^2)^{d/2}}  d\tilde{z}'=& \int_{B(0,1)}1\ d\tilde{z}' + \int_{{\mathbb{R}}^{d-1}\bs B(0,1) } \frac{1}{(1+(|\tilde{z}'|-1)^2)^{d/2}} d\tilde{z}'\\
			=& \ C(d) + C(d)\int_1^\infty \frac{r^{d-2}}{(1+(r-1)^2)^{d/2}} dr < +\infty.
		\end{align*}
		
		\noindent Finally we obtain 
		\begin{align*}
			\int_{{\mathbb{R}}^{d-1}}&	\left| \partial_{\tilde{y}_\beta} G^{*,0}_{ki}\left(M(\tilde{z}',0),M (\varepsilon z',1) \right)
			-\partial_{\tilde{y}_\beta} G^{*,0}_{ki}(M(\tilde{z}',0),M (0,1) )  \right| \\
			&\qquad \times n_\alpha A^{\beta\alpha}_{kj} \left(\frac{M(\tilde{z}',0)}{\varepsilon }\right) g_j\left(\frac{M(\tilde{z}',0)}{\varepsilon }\right)  \,d\tilde{z}' 	\leq C\varepsilon R
		\end{align*}	where $C$ depends on $d,\mu$ and $[A]_{C^{0,\eta}}$ and not on $\varepsilon$.	We let $\varepsilon\rightarrow 0$ and hold $R$ fixed. Then the above estimate directly gives
		\begin{align}\label{conv01}
			\int_{{\mathbb{R}}^{d-1}}&	\left| \partial_{\tilde{y}_\beta} G^{*,0}_{ki} \left(M(\tilde{z}',0),M (\varepsilon z',1) \right)
			-\partial_{\tilde{y}_\beta} G^{*,0}_{ki}(M(\tilde{z}',0),M (0,1) )  \right| \nonumber\\
			&\qquad \times n_\alpha A^{\beta\alpha}_{kj} \left(\frac{M(\tilde{z}',0)}{\varepsilon }\right) g_j\left(\frac{M(\tilde{z}',0)}{\varepsilon }\right)  \,d\tilde{z}' 	\longrightarrow 0\quad \mbox{ as } \varepsilon \rightarrow 0.
		\end{align}	
		Now one handles the term 
		\begin{align*}
			\int_{\mathbb{R}^{d-1}} &-\partial_{\tilde{y}_\beta} G^{*,0}_{ki} \left(M(\tilde{z}',0),M (0,1) \right)  n_\alpha A^{\beta\alpha}_{kj}\left(\frac{M(\tilde{z}',0)}{\varepsilon }\right) g_j\left(\frac{M(\tilde{z}',0)}{\varepsilon }\right) d\tilde{z}'
		\end{align*}
		through an application of Lemma \ref{ergodicity}. Recall that the function  $- n_\alpha A^{\beta\alpha}_{kj}g(M(\cdot,0))g_j(M(\cdot,0))$ is  quasiperiodic in $\mathbb{R}^{d-1}$.   In addition, one has  
		\begin{align*}
			\partial_{\tilde{y}_\beta} G^{*,0}_{ki}\left(M(\cdot,0),M (0,1) \right) \in L^1(\mathbb{R}^{d-1}).
		\end{align*}
		Now that all the elements required by Lemma \ref{ergodicity} are in hand one is ready to state that  
		\begin{align}
			\int_{\mathbb{R}^{d-1}}- \partial_{\tilde{y}_\beta} G^{*,0}_{ki}&\left(M(\tilde{z}',0),M (0,1) \right)  n_\alpha A^{\beta\alpha}_{kj}\left(\frac{M(\tilde{z}',0)}{\varepsilon }\right) g_j\left(\frac{M(\tilde{z}',0)}{\varepsilon }\right)\,d\tilde{z}' \nonumber\\
			&\xrightarrow{\varepsilon\rightarrow 0}\mathcal{M} \left(  n_\alpha A^{\beta\alpha}_{kj} g_j(M(\cdot ,0)\right)\int_{\mathbb{R}^{d-1}} - \partial_{\tilde{y}_\beta} G^{*,0}_{ki} \left(M(\tilde{z}',0),M (0,1) \right) \,d\tilde{z}'\nonumber\\
			&\xrightarrow{\varepsilon\rightarrow 0} \mathcal{M} \left( n_\alpha A^{\beta\alpha}_{kj} g_j(M(\cdot ,0))\right)\int_{\partial\mathbb{H}_n}- \partial_{ \tilde{y}_\beta} G^{*,0}_{ki} \left(\tilde{y},n\right) \,d\tilde{y}\label{convblderv}.\\ \nonumber
		\end{align}
		 Putting (\ref{addrmove}), (\ref{conv01}) and (\ref{convblderv}) together we obtain
		\begin{align}\label{convblp1}
			U_i^{0,1} ( y) 
			&\xrightarrow{y\cdot n\rightarrow \infty} \mathcal{M} \left(  n_\alpha A^{\beta\alpha}_{kj}(\cdot) g_j(\cdot)|_{\partial\mathbb{H}_n}  \right)\!\!\int_{\partial\mathbb{H}_n}\!\!\!\! - \frac{\partial}{\partial \tilde{y}_\beta} G^{*,0}_{ki} \left(\tilde{y},n\right) \,d\tilde{y}.\\ \nonumber
		\end{align}
		
	\noindent	\underline{Step 1-b :\, Convergence of the term $U^{0,2}$}\\
	\noindent One focuses on the $i^{th}$ component of $U^{0,2}$
		\begin{align*}
			U_i^{0,2}(y)
			=&  \int_{\partial\mathbb{H}_n}   \Pi_i^{*,0}(\tilde{y}, y) n_j g_j(\tilde{y})\,d\tilde{y} 
		\end{align*}
		and apply the method  carried out in the study of the convergence of $U^{0,1}$.
		
		\noindent Consider $y\in\mathbb{H}_n $  such that $y':=y-(y\cdot n)n \in B(0,R) \cap\partial\mathbb{H}_n$ for arbitrary $R>0$.
		The above-mentioned method  leads to 
		\begin{align}\label{addremove1}
			U_i^{0,2}(y)
			=\int_{\mathbb{R}^{d-1}} &\left( \Pi^{*,0}_i\left(M(\tilde{z}',0),M (\varepsilon z',1) \right) -  \Pi^{*,0}_i \left(M(\tilde{z}',0),M (0,1) \right) \right) n_j g_j\left(\frac{M(\tilde{z}',0)}{\varepsilon }\right)\,d\tilde{z}' \nonumber \\
			&	+\int_{\mathbb{R}^{d-1}} \Pi^{*,0}_i \left(M(\tilde{z}',0),M (0,1) \right) n_j g_j\left(\frac{M(\tilde{z}',0)}{\varepsilon }\right)\,d\tilde{z}'.\nonumber
		\end{align}
		where $M$ is the the rotation defined above and $\varepsilon := \frac{1}{y\cdot n}$.

		\noindent First, we are going to show that 
		\begin{align*}
			\lim_{\varepsilon\rightarrow 0} \int_{\mathbb{R}^{d-1}} \left( \Pi^{*,0}_i\left(M(\tilde{z}',0),M (\varepsilon z',1) \right) -  \Pi^{*,0}_i\left(M(\tilde{z}',0),M (0,1) \right)\right )  n_j g_j\left(\frac{M(\tilde{z}',0)}{\varepsilon }\right)\,d\tilde{z}' =0
		\end{align*} uniformly in $z'\in B^{d-1}(0,R).$\\
		
		\noindent Using the mean value inequality  
		\begin{align*}
			| \Pi^{*,0}_i\left(M(\tilde{z}',0),M (\varepsilon z',1) \right) -  \Pi^{*,0}_i(&M(\tilde{z}',0),M (0,1) ) |\\ &\leq \sup_{u'\in B(0,\varepsilon R)} | \nabla_y \Pi^{*,0}_i\left(M(\tilde{z}',0),M (u',1) \right)\!|\ |\varepsilon z'|\\
			&\leq \varepsilon R \sup_{u'\in B(0,\varepsilon R)} |\nabla_y  \Pi^{*,0}_i(M(\tilde{z}',0),M (u',1) )|.
		\end{align*}
		along with the  estimate (\ref{estimderPi*_y}) 
		\begin{align*}
			|  \nabla_y  \Pi^{*,0}_i(\tilde{y},y ) | &\leq\frac{C}{|y-\tilde{y}|^{d}}\\
			|\nabla_y  \Pi^{*,0}_i(M(\tilde{z}',0),M (u',1) )|
			&\leq \frac{C}{(1+|u'-\tilde{z}'|^2)^{d/2}}
		\end{align*}
		one gets
		\begin{align*}
			| \Pi^{*,0}_i\left(M(\tilde{z}',0),M (\varepsilon z',1) \right) -  \Pi^{*,0}_i(M(\tilde{z}',0),M (0,1) ) |
			&\leq \varepsilon R \sup_{u'\in B(0,1)}\frac{C}{(1+|u'-\tilde{z}'|^2)^{d/2}}
		\end{align*}
		with $\varepsilon$ small enough so as $\varepsilon R\leq 1$. Which leads to
		\begin{align*}
			\int_{{\mathbb{R}}^{d-1}}	\left| \Pi^{*,0}_i\left(M(\tilde{z}',0),M (\varepsilon z',1) \right)\right. - &\left. \Pi^{*,0}_i(M(\tilde{z}',0), M (0,1) ) \right|  \left| n_j g_j\left(\frac{M(\tilde{z}',0)}{\varepsilon }\right) \right|  d\tilde{z}'\\
			&\leq C\varepsilon R \int_{{\mathbb{R}}^{d-1}}\sup_{u'\in B(0,1)}\frac{1}{(1+|u'-\tilde{z}'|^2)^{d/2}}d\tilde{z}'\\
			&\leq C\varepsilon R. 
		\end{align*}
		Hence one obtains the convergence
		\begin{align}
			\int_{{\mathbb{R}}^{d-1}}	| \Pi^{*,0}_i\left(M(\tilde{z}',0),M (\varepsilon z',1) \right) - & \Pi^{*,0}_i(M(\tilde{z}',0), M (0,1) ) |  \left| n_j g_j\left(\frac{M(\tilde{z}',0)}{\varepsilon }\right) \right|  d\tilde{z}'\rightarrow 0
		\end{align}
		as $\varepsilon \rightarrow 0.$\\
		
		\noindent Now one applies Lemma \ref{ergodicity} with the fact that $g(M(\cdot,0))$ is quasiperiodic in $\mathbb{R}^{d-1}$ and
		\begin{align*}
			\Pi^{*,0}_i\left(M(\cdot,0),M (0,1) \right) \in L^1(\mathbb{R}^{d-1}).
		\end{align*}
		We then get
		\begin{align}
			\int_{\mathbb{R}^{d-1}} \Pi^{*,0}_i(M(\tilde{z}',0)&,M (0,1) ) n_j g_j\left(\!\frac{M(\tilde{z}',0)}{\varepsilon }\right)d\tilde{z}'\nonumber\\ &\xrightarrow{\varepsilon\rightarrow 0} \mathcal{M} \left( n_j g_j(M(\cdot ,0)\right)\int_{\mathbb{R}^{d-1}} \Pi^{*,0}_i\left(M(\tilde{z}',0),M (0,1) \right) d\tilde{z}'\nonumber\\
			&\xrightarrow{\varepsilon\rightarrow 0} \mathcal{M} \left( n_j g_j(\cdot )|_{\partial\mathbb{H}_n} \right)\int_{\partial\mathbb{H}_n} \Pi^{*,0}_i\left(\tilde{y},n\right) d\tilde{z}'. \nonumber
		\end{align}
		In summary, we have 
		\begin{align}
			U_i^{0,2}(y)\xrightarrow{y\cdot n\rightarrow \infty} \mathcal{M} \left( n_j g_j(\cdot )|_{\partial\mathbb{H}_n} \right)\int_{\partial\mathbb{H}_n} \Pi^{*,0}_i\left(\tilde{y},n\right) \,d\tilde{z}.'\label{convblp2}\\\nonumber
		\end{align}
	\noindent	\underline{Step 1-c : 	Convergence of the term $U^{0,3}$ }\\
		We study the convergence of the term 
		\begin{align*}
			U^{0,3}(y):=&\int_{\partial\mathbb{H}_n}-A(\tilde{y}) \{\nabla_{\!\tilde{y}}\left[\chi^* \left(\tilde{y} \right)+ u_{bl}^*\left(\tilde{y} \right)\right] \cdot \nabla G^{*,0}\left(\tilde{y},y\right)\}\cdot n g(\tilde{y}) \   d\tilde{y} 
		\end{align*}
		which we rewrite  into coordinates as
		\begin{align*}
			U_i^{0,3}(y)
			&= \int_{\partial\mathbb{H}_n}-n_\alpha A^{\gamma\alpha}_{kl}(\tilde{y})\left(\partial_{ \tilde{y}_\gamma} [\chi_{kj}^{*,\beta}\left(\tilde{y} \right) + (u^*_{bl})^{\beta}_{kj}\left(\tilde{y} \right)]\right) \partial_{ \tilde{y}_\beta} G^{*,0}_{ji}\left(\tilde{y},y\right)  g_l(\tilde{y}) d\tilde{y}\\
			&=\int_{\partial\mathbb{H}_n}-n_\alpha A^{\gamma\alpha}_{kl}(\tilde{y})\left(\partial_{ \tilde{y}_\gamma} \chi_{kj}^{*,\beta}\left(\tilde{y} \right)  \right) \partial_{ \tilde{y}_\beta} G^{*,0}_{ji}\left(\tilde{y},y\right)  g_l(\tilde{y}) d\tilde{y}\\&\qquad +\int_{\partial\mathbb{H}_n}-n_\alpha A^{\gamma\alpha}_{kl}(\tilde{y})\left(\partial_{ \tilde{y}_\gamma}  (u^*_{bl})^{\beta}_{kj}(\tilde{y})\right) \partial_{ \tilde{y}_\beta} G^{*,0}_{ji}\left(\tilde{y},y\right)  g_l(\tilde{y}) d\tilde{y}.
		\end{align*}
		Seeing that the quantity 
		\begin{align*}
			\int_{\partial\mathbb{H}_n}-n_\alpha A^{\gamma\alpha}_{kl}(\tilde{y})\left(\partial_{ \tilde{y}_\gamma} \chi_{kj}^{*,\beta}\left(\tilde{y} \right)  \right) \partial_{ \tilde{y}_\beta} G^{*,0}_{ji}\left(\tilde{y},y\right)  g_l(\tilde{y}) d\tilde{y}
		\end{align*}
		 is of the form 
		\begin{align*}
			\int_{\partial\mathbb{H}_n}-B_{j}^{\beta}(\tilde{y})\partial_{ \tilde{y}_\beta} G^{*,0}_{ji}(\tilde{y},y) d\tilde{y}
		\end{align*}
		where $B_j^{\beta}:\mathbb{R}^d \longmapsto \mathbb{R}$,\,  $B_j^{\beta}(\tilde{y}):=n_\alpha A^{\gamma\alpha}_{kl}(\tilde{y})\partial_{ \tilde{y}_\gamma} \chi_{kj}^{*,\beta}(\tilde{y}) g_l(\tilde{y}) $ is periodic, one can apply the same reasoning  that has led to (\ref{convblp1}). Then one obtains
		\begin{align*}
			\int_{\partial\mathbb{H}_n} -B_{j}^{\beta}(\tilde{y})\partial_{ \tilde{y}_\beta} G^{*,0}_{ji}(\tilde{y},y) d\tilde{y}  \xrightarrow{y\cdot n\rightarrow \infty}-\mathcal{M}(B_{j}^{\beta}(\cdot)|_{\partial\mathbb{H}_n})
			\int_{\partial\mathbb{H}_n} \partial_{ \tilde{y}_\beta} G^{*,0}_{ji}(\tilde{y},y) d\tilde{y} .
		\end{align*}
		
		\noindent 	As for the term 
		\begin{align*}
			\int_{\partial\mathbb{H}_n}\!\!-n_\alpha A^{\gamma\alpha}_{kl}(\tilde{y})\partial_{ \tilde{y}_\gamma}  (u^*_{bl})^{\beta}_{kj}\left(\tilde{y} \right) \partial_{ \tilde{y}_\beta} G^{*,0}_{ji}\left(\tilde{y},y\right)  g_l(\tilde{y}) d\tilde{y}
		\end{align*}
		we point out that the function $(u^*_{bl})^{\beta}_{kj}$ may not be  periodic and one can not directly apply the argument that has just been carried out on the term involving $ \chi_{kj}^{*,\beta}$. Nonetheless, it is quasiperiodic on the hyperplane  $\partial\mathbb{H}_n$ (by Appendix \ref{quasiperiod}) then it can be written 
		\begin{align}
			(u^*_{bl})^{\beta}_{kj}(\tilde{y})=  V_{kj}^{*,\beta}( N\tilde{z}',0 ),\quad \mbox{ for }\tilde{y}\in\partial\mathbb{H}_n,	\label{id_quasiperiod_u}
		\end{align}
		where $V_{kj}^{*,\beta}(\theta,t),\, \theta\in \mathbb{T}^d, \, t>0,$ is periodic in $\theta$.
		From this identity, we find that the derivative $\partial_{ \tilde{y}_\gamma} (u^*_{bl})^{\beta}_{kj}$ is quasiperiodic on $\partial\mathbb{H}_n$.	Then we obtain the convergence similarly to the preceding cases
		\begin{align*}
			\int_{\partial\mathbb{H}_n} -n_\alpha A^{\gamma\alpha}_{kl}(\tilde{y})\partial_{ \tilde{y}_\gamma}  (u^*_{bl})^{\beta}_{kj}(\tilde{y})&\partial_{\tilde{y}_\beta} G^{*,0}_{ji}(\tilde{y},y) g_l(\tilde{y}) d\tilde{y} \\
			& \xrightarrow{y\cdot n\rightarrow \infty}-\mathcal{M}(n_\alpha g_l A^{\gamma\alpha}_{kl}\partial_{ \tilde{y}_\gamma}  (u^*_{bl})^{\beta}_{kj}|_{\partial\mathbb{H}_n})
			\int_{\partial\mathbb{H}_n} \!\partial_{ \tilde{y}_\beta}G^{*,0}_{ji}(\tilde{y},y) d\tilde{y} .
		\end{align*}
		In conclusion, we have the following convergence for the term $U_i^{0,3}$
		\begin{align}
			U_i^{0,3}(y)
			& \xrightarrow{y\cdot n\rightarrow \infty}\left[-\mathcal{M}(n_\alpha g_l A^{\gamma\alpha}_{kl}\partial_{ \tilde{y}_\gamma}  \chi_{kj}^{*,\beta}|_{\partial\mathbb{H}_n})-\mathcal{M}(n_\alpha g_l A^{\gamma\alpha}_{kl}\partial_{ \tilde{y}_\gamma}  (u^*_{bl})^{\beta}_{kj}|_{\partial\mathbb{H}_n}) \right]\!
			\int_{\partial\mathbb{H}_n}\!\! \partial_{\tilde{y}_\beta}G^{*,0}_{ji}(\tilde{y},y) d\tilde{y} \label{convbl3}.
		\end{align}	
		
	\noindent	\underline{Step 1-d :	Convergence of $	U^{0,4}$}\\
	\noindent 	We treat this quantity
			\begin{align*}
			U_i^{0,4}(y):=& \int_{\partial\mathbb{H}_n}\{ \pi_{j}^{*,\beta}(\tilde{y})+(p^*_{bl})^{\beta}_{j}(\tilde{y}) \} \partial_{\tilde{y}_\beta} G_{ji}^{*,0}(\tilde{y}, y) n_r  g_r(\tilde{y}) d\tilde{y} .
		\end{align*}
		 the same way we have dealt with $	U^{0,3} $. 
Since the function $\pi_{j}^{*,\beta}$ is quasiperiodic on the hyperplane $\partial\mathbb{H}_n$, Lemma \ref{ergodicity} yields 
		\begin{align*}
			\int_{\partial\mathbb{H}_n}\pi_{j}^{*,\beta}(\tilde{y}) \partial_{\tilde{y}_\beta} G^{*,0}_{ji}(\tilde{y}, y) \,n_r g_r(\tilde{y})  d\tilde{y}\xrightarrow{y\cdot n\rightarrow \infty}\mathcal{M}\left( n_r g_r\pi_{j}^{*,\beta}(\cdot) |_{\partial\mathbb{H}_n} \right)
			\int_{\partial\mathbb{H}_n}  \partial_{\tilde{y}_\beta}G^{*,0}_{ji}(\tilde{y},n) d\tilde{y}.
		\end{align*}
		As for the  term 
		\begin{align*}
			\int_{\partial\mathbb{H}_n}(p^*_{bl})^{\beta}_{j}(\tilde{y})  \partial_{\tilde{y}_\beta} G^{*,0}_{ji} (\tilde{y}, y) \,n_r g_r(\tilde{y})  d\tilde{y}
		\end{align*}
		one uses the quasiperiodicity of    the pressure  $p^*_{bl}$ along 
		$\partial\mathbb{H}_n$, given in Appendix \ref{quasiperiod},
		$$
	(p^*_{bl})^{\beta}_{j}(\tilde{y})= (p^*_{bl})^{\beta}_{j}(N (\tilde{z}', 0))=R _{j}^{*,\beta}( N\tilde{z}',0 )$$
		where $ R _{j}^{*,\beta}=R_{j}^{*,\beta}( \theta,t )$ is periodic in $\theta$. Then by Lemma \ref{ergodicity} one obtains
		\begin{align*}
			\int_{\partial\mathbb{H}_n}   (p^*_{bl})^{\beta}_{j}(\tilde{y}) \partial_{\tilde{y}_\beta} G^{*,0}_{ji}(\tilde{y}, y) \,n_r g_r(\tilde{y})  d\tilde{y}\xrightarrow{y\cdot n\rightarrow \infty}\mathcal{M}\left( n_r g_r (p^*_{bl})^{\beta}_{j}(\cdot) |_{\partial\mathbb{H}_n} \right)
			\int_{\partial\mathbb{H}_n}  \partial_{\tilde{y}_\beta}G^{*,0}_{ji}(\tilde{y},n) d\tilde{y}.
		\end{align*}
		In conclusion, we have the convergence of $U^{0,4}$
		\begin{align}
			U_i^{0,4}(y)\xrightarrow{y\cdot n\rightarrow \infty}\left[ \mathcal{M}\left( n_r g_r \pi_{j}^{*,\beta}(\cdot) |_{\partial\mathbb{H}_n} \right) +
			\mathcal{M}\left( n_r g_r (p^*_{bl})^{\beta}_{j}(\cdot) |_{\partial\mathbb{H}_n} \right) \right]
			\int_{\partial\mathbb{H}_n}  \partial_{\tilde{y}_\beta}G^{*,0}_{ji}(\tilde{y},n) d\tilde{y}.\label{convbl4}
		\end{align}
		
		\noindent {\bfseries Step 2 :	Terms $U_i^{1,1}(y)$, $U_i^{1,2}(y)$, $U_i^{1,3 }(y)$
		$U_i^{2,1}(y)$, $U_i^{2,2}(y)$}.\\ We show that these terms converge to zero when $y\cdot n \rightarrow \infty.$ 	We use the estimate (\ref{estim_der_G0}) on the derivatives of Green's functions $ G^{*,0}$ associated with the constant coefficients operator 
		\begin{align*}
			|U^{1,1}(y)|\leq &\int_{\partial\mathbb{H}_n}\left|A(\tilde{y}) \left\{\left[\chi^* \left(\tilde{y} \right)+ u_{bl}^*\left(\tilde{y} \right)\right]:\nabla^2 G^{*,0}\left(\tilde{y},y\right)\right\}\cdot n  g(\tilde{y})   \right|   d\tilde{y} \\
			\leq & \, C\int_{\partial\mathbb{H}_n}\left|\nabla^2 G^{*,0}\left(\tilde{y},y\right)  \right|   d\tilde{y} \\
			\leq & \,  C\int_{\partial\mathbb{H}_n}\frac{C}{|y-\tilde{y}|^{d}}   d\tilde{y}.
		\end{align*}
		From estimate	(\ref{bound_int|y-tildy|^d}) one gets
		\begin{align*}
			\int_{\partial\mathbb{H}_n}\frac{1}{|y-\tilde{y}|^{d}}   d\tilde{y}\leq \frac{C}{y\cdot n} 
		\end{align*}
		from which we  draw  
		\begin{align}
			\lim_{y\cdot n \rightarrow \infty} 	|U^{1,1}(y)|\leq  \lim_{y\cdot n \rightarrow \infty}  \frac{C}{y\cdot n} = 0.\label{convbl11}
		\end{align}
		So are the limits of the remaining terms:
		\begin{align}
			\lim_{y\cdot n \rightarrow \infty} 	U^{1,2}(y)= \lim_{y\cdot n \rightarrow \infty} 	U^{1,3}(y)  = 	\lim_{y\cdot n \rightarrow \infty} 	U^{2,1}(y)=	\lim_{y\cdot n \rightarrow \infty} 	U^{2,2}(y)=0\label{convbl5}.
		\end{align}
		Now we arrive at a conclusion : from  (\ref{expans_u})  (\ref{convblp1}), (\ref{convblp2}), (\ref{convbl3}), (\ref{convbl4}), (\ref{convbl11}) and (\ref{convbl5}) we obtain
		\begin{align*}
			\lim_{y\cdot n \rightarrow \infty} 	u_i(y)=& \lim_{y\cdot n \rightarrow \infty} 	U_i^{0,1}(y)  +	\lim_{y\cdot n \rightarrow \infty} 	U_i^{0,2}(y)+	\lim_{y\cdot n \rightarrow \infty} 	U_i^{0,3}(y)+	\lim_{y\cdot n \rightarrow \infty} 	U_i^{0,4}(y)\\
			=&\:\:	 \mathcal{M} \left(\! - n_\alpha g_j A^{\beta\alpha}_{kj} |_{\partial\mathbb{H}_n} \! \right)\!\!\int_{\partial\mathbb{H}_n}\!\! \partial_{\tilde{y}_\beta} G^{*,0}_{ki} \left(\tilde{y},n\right) d\tilde{y}+  \mathcal{M} \left( n_j g_j|_{\partial\mathbb{H}_n} \right)\!\!\int_{\partial\mathbb{H}_n}\!\!\! \Pi^{*,0}_i\left(\tilde{y},n\right) d\tilde{y}\\
			&+\left[-\mathcal{M}(n_\alpha g_l A^{\gamma\alpha}_{kl}\partial_{ \tilde{y}_\gamma}  \chi_{kj}^{*,\beta}|_{\partial\mathbb{H}_n}) + \mathcal{M}\left( n_r g_r \pi_{j}^{*,\beta} |_{\partial\mathbb{H}_n} \right) \right]\!
			\int_{\partial\mathbb{H}_n}\!\!\partial_{\tilde{y}_\beta} G^{*,0}_{ji}(\tilde{y},n) d\tilde{y} \\
			&+ \left[ -\mathcal{M}(n_\alpha g_l A^{\gamma\alpha}_{kl}\partial_{ \tilde{y}_\gamma}  (u^*_{bl})^{\beta}_{kj}|_{\partial\mathbb{H}_n})+
			\mathcal{M}\left( n_r g_r (p^*_{bl})^{\beta}_{j}|_{\partial\mathbb{H}_n} \right) \right]
			\int_{\partial\mathbb{H}_n}  \partial_{\tilde{y}_\beta}G^{*,0}_{ji}(\tilde{y},n) d\tilde{y}.\qquad \qquad\qedhere\\
		\end{align*}
		
	\end{proof}
	
	\subsection{Convergence in the case of general $s\in\mathbb{R}$}
	\noindent The previous theorem gives the limit, as $y\cdot n \rightarrow \infty$, of the half-space boundary layer $u^s$ defined on the half-space ${\mathbb{H}}_n(s)$ in the particular case $s=0$. Next, we are going to investigate what the limit would be in the case of an arbitrary $s\in\mathbb{R}.$	\noindent Recall that for $s\in\mathbb{R}$ the pair $(u^s,p^s)$ denotes the solution to the system  
	\begin{equation*}
		\left\{
		\begin{array}{rcll}
			-\nabla_y\cdot A(y ) \: \nabla_y u^s +  \nabla_y p^s&=& 0& \mbox{in } {\mathbb{H}}_n(s),\\
			\nabla_y\cdot u^s&=& 0,    &\\
			u^s(y) &=& g(y)  \quad & \mbox{on } \partial{\mathbb{H}}_n(s).
		\end{array}
		\right. 
	\end{equation*}
	We shall use Theorem \ref{Thm_s=0} to determine the limit of $u^s(y)$ as $y\cdot n \rightarrow \infty$. To that end we introduce the pair 	$(\tilde{u}^s,\tilde{p}^s)$ solution of a similar problem set on the half-space $\mathbb{H}_n=\mathbb{H}_n(0)$. More precisely, we define
	$\tilde{u}^s(y)=u^s(y+sn),$ $\tilde{p}^s(y)=p^s(y+sn)$ and consequently the pair 	$(\tilde{u}^s,\tilde{p}^s)$ satisfies
	\begin{equation}\label{hspace_prob_tild}
		\left\{
		\begin{array}{rcll}
			-\nabla_y\cdot A^s(y ) \: \nabla_y \tilde{u}^s +  \nabla_y \tilde{p}^s&=& 0 & \mbox{in } {\mathbb{H}}_n,\\
			\nabla_y\cdot\tilde{u}^s &=& 0  ,    &\\
			\tilde{u}^s &=& g^s(y)  \quad & \mbox{on } \partial{\mathbb{H}}_n.
		\end{array}
		\right. 
	\end{equation}
	where 	 $A^s(y):=A(y+sn)$ and $g^s(y):=g(y+sn).$	\noindent More generally, for a function $B$ defined on $\mathbb{R}^d$ and for $s\in\mathbb{R}$, we will use the notation $B^s(y)$ to denote  $B(y+sn).$ We then have $\partial_{y_\beta} (B^s(y))=(\partial_{y_\beta} B)^s(y)$ for all $\beta=1,...,d,$ so one can simply denote it $\partial_{y_\beta} B^s(y)$ without confusion.
		\noindent 	An application of Proposition \ref{propoexpansp_poisson} yields :
		\begin{corollary}
		For all $s\in\mathbb{R}$, for all $y\in \mathbb{H}_n$ and for all $\tilde{y}\in \partial\mathbb{H}_n$ the following expansion of the Poisson kernel  $P^s(y,\tilde{y})$ associated to the system \textup{(\ref{hspace_prob_tild})} holds
		\begin{align*}
			P^s(y,\tilde{y})=&-(A^*)^s(\tilde{y}) \nabla_{\!\tilde{y}}G^{*,0} \left(\tilde{y},y \right)\cdot n+ \Pi^{*,0}( \tilde{y}, y)\otimes n\\
			&-\{(A^*)^s(\tilde{y}) \nabla_{\!\tilde{y}}\left[(\chi^*)^s \left(\tilde{y} \right)+ (u^*_{bl})^s\left(\tilde{y} \right)\right]\cdot \nabla G^{*,0}\left(\tilde{y},y\right)\}\cdot n\\
			&\qquad \qquad + \{(\pi^*)^s(\tilde{y})+(p_{bl}^*)^s(\tilde{y}) \} \cdot \nabla G^{*,0}(\tilde{y}, y)\otimes n \\
			&-(A^*)^s(\tilde{y}) \left\{ \left[(\chi^*)^s \left(\tilde{y} \right)+ (u_{bl}^*)^s \left(\tilde{y} \right)\right]:\nabla^2 G^{*,0}\left(\tilde{y},y\right) \right\}\cdot n\\
			&-(A^*)^s(\tilde{y})\left\{  \nabla_{\!\tilde{y}}[(\Gamma^*)^s (\tilde{y})+ (\chi^*)^s\left(\tilde{y} \right)  (u_{bl}^*)^s(\tilde{y} )]:\nabla^2 G^{*,0}\left(\tilde{y},y \right) 	\right\}\cdot n\\
			&+ \{[(Q^*)^s(\tilde{y})+(\pi^*)^s (\tilde{y}) (u_{bl}^*)^s(\tilde{y})]:\nabla^2 G^{*,0}(\tilde{y}, y)\}\otimes n \\
			& -(A^*)^s(\tilde{y}) ( \Gamma^*)^s(\tilde{y}) :\nabla^3 G^{*,0} (\tilde{y},y)\cdot n-(A^*)^s(\tilde{y})    (\chi^*)^s (\tilde{y})(u_ {bl}^*)^s(\tilde{y}): \nabla^3 G^{*,0} (\tilde{y},y)\cdot n\\
			&+R(y,\tilde{y})
		\end{align*}
		with  the remainder $R(y,\tilde{y})$ satisfying
		\begin{align}
			| R(y,\tilde{y})|\leq \frac{C}{|\tilde{y}-y|^{d-1+\kappa}},\quad \mbox{ for all } \kappa\in (0,1/2d),
		\end{align}  where the constant $C$ depends on $ d, \mu, [A]_{C^{0,\eta}}$ and $M_1$.  
	\end{corollary}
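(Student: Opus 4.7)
The plan is to reduce the general $s \in \mathbb{R}$ case to the $s = 0$ case treated in Proposition \ref{propoexpansp_poisson}, via the translation $\tilde{u}^s(y) := u^s(y+sn)$ already introduced before the statement. Indeed, the pair $(\tilde{u}^s, \tilde{p}^s)$ solves problem (\ref{hspace_prob_tild}) set in the fixed half-space $\mathbb{H}_n$, and its Poisson kernel is precisely the kernel $P^s$ appearing in the statement. So the corollary amounts to applying Proposition \ref{propoexpansp_poisson} to this translated problem, with $A$ replaced by the coefficient matrix $A^s$.

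First I would verify that $A^s$ satisfies the standing hypotheses (\ref{ellipticity})--(\ref{regular_A}) with the same constants as $A$. Ellipticity is invariant under pointwise translation. Periodicity is preserved because $A^s(y+h) = A(y+sn+h) = A(y+sn) = A^s(y)$ for $h \in \mathbb{Z}^d$. Smoothness is obvious, with $[A^s]_{C^{0,\eta}} = [A]_{C^{0,\eta}}$. The cell correctors associated with $(A^s)^*$ coincide with the translates of the original adjoint correctors, by uniqueness of the mean-zero solutions to (\ref{chipi}) and (\ref{GamQ}); this justifies the use of the notation $(\chi^*)^s, (\pi^*)^s, (\Gamma^*)^s, (Q^*)^s$. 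The half-space correctors $(u^*_{bl})^s, (p^*_{bl})^s$ are defined as the solutions of the analogue of (\ref{u*p*}) with coefficients $(A^*)^s$ and boundary data $-(\chi^*)^s$ on $\partial\mathbb{H}_n$; they satisfy the $L^\infty$ bound (\ref{bound_Linf_u}) with the same constant, and, crucially, the quantity defined by (\ref{M1}) computed from $(u^*_{bl})^s$ equals the original $M_2$ by a change of variables on the periodic cell.

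Next, since averaging over the unit cell is translation-invariant, the homogenized matrix satisfies $(A^s)^0 = A^0$, so the homogenized adjoint Green function $G^{*,0}$ appearing in the expansion is unchanged. Feeding the translated data into Proposition \ref{propoexpansp_poisson} yields the formula in the statement verbatim, with every occurrence of an original corrector or coefficient replaced by its $s$-translated counterpart. The remainder satisfies (\ref{bound_remainder}) with a constant depending on $d, \mu, [A^s]_{C^{0,\eta}}$ and the shifted analogue of $M_2$, both of which equal their unshifted values by the considerations above.

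I do not expect any substantive difficulty in this argument: the whole point is that every object entering the statement of Proposition \ref{propoexpansp_poisson} behaves naturally under translation in the $n$-direction, and no new estimates are needed. The only point requiring minor care is the translation invariance of $M_2$, which follows because the supremum over $h \in \mathbb{Z}^{d-1}$ combined with integration over $T_h$ in (\ref{M1}) produces the same value whether one starts from $u^*_{bl}$ or from its shift.
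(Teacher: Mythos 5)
Your overall strategy --- translate, verify the standing hypotheses for $A^s$, and invoke Proposition \ref{propoexpansp_poisson} --- is exactly what the paper does; the paper gives no proof beyond the line ``An application of Proposition \ref{propoexpansp_poisson} yields.'' Most of your fleshing-out is correct: $A^s$ inherits ellipticity, periodicity, and the H\"older seminorm; the periodic cell correctors $(\chi^*)^s,(\pi^*)^s,(\Gamma^*)^s,(Q^*)^s$ for $(A^*)^s$ are the $sn$-translates of the originals by uniqueness of mean-zero cell solutions; and $(A^s)^0 = A^0$ since the cell average of a $\mathbb{Z}^d$-periodic integrand is invariant under any translation, so $G^{*,0}$ and $\Pi^{*,0}$ are unchanged.

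There is, however, a flaw in your treatment of the boundary layer correctors. You define $(u^*_{bl})^s$, correctly, as the solution of the analogue of \textup{(\ref{u*p*})} with coefficients $(A^*)^s$ and Dirichlet data $-(\chi^*)^s$ on $\partial\mathbb{H}_n$ --- but then you claim the quantity \textup{(\ref{M1})} computed from $(u^*_{bl})^s$ \emph{equals} $M_2$ ``by a change of variables on the periodic cell.'' That argument does not go through, because $(u^*_{bl})^s$ is \emph{not} the $sn$-translate of $u^*_{bl}$. The translate $u^*_{bl}(\cdot+sn)$ solves the problem on the shifted half-space $\{y\cdot n>-s\}$ with data on $\{y\cdot n=-s\}$, not on $\partial\mathbb{H}_n$; equivalently, $(u^*_{bl})^s(\cdot-sn)$ is the boundary layer corrector for the original operator on $\mathbb{H}_n(s)$, a genuinely different function from $u^*_{bl}$. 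A change of variables therefore yields no identity between the two $M_2$'s. The correct and simpler point is that the constant in Lemma \ref{LinfL2} depends only on $d,\mu,\eta,[A]_{C^{0,\eta}}$, all of which are unchanged when $A$ is replaced by $A^s$; hence the $M_2$-type quantity attached to $(u^*_{bl})^s$ is bounded \emph{uniformly in $s$} by the same data, which is all that is needed to keep the remainder constant in the corollary independent of $s$. (Note also the ``$M_1$'' in the corollary's statement is a misprint for $M_2$ as defined in \textup{(\ref{M1})}; your $M_2$ is the right object.)
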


	\begin{corollary}
 For  arbitrary  $s\in\mathbb{R}$  	we have the convergence of $u^s$ towards a boundary layer tail :
	\begin{align}
		\lim_{y\cdot n \rightarrow \infty} u^s(y)=  U^{s,\infty}(n)
	\end{align}	
	with
	\begin{align*}
		U_i^{s,\infty}(n)
		=&\:\:	 \mathcal{M} \left(\! - n_\alpha g^s_j (A^s)^{\beta\alpha}_{kj} |_{\partial\mathbb{H}_n} \! \right)\!\!\int_{\partial\mathbb{H}_n}\!\! \partial_{\tilde{y}_\beta} G^{*,0}_{ki} \left(\tilde{y},n\right) d\tilde{y}+  \mathcal{M} \left( n_j g^s_j|_{\partial\mathbb{H}_n} \right)\!\!\int_{\partial\mathbb{H}_n}\!\!\! \Pi^{*,0}_i\left(\tilde{y},n\right) d\tilde{y}\\
		&+\left[-\mathcal{M}(n_\alpha g^s_l (A^s)^{\gamma\alpha}_{kl}\partial_{ \tilde{y}_\gamma}  (\chi^*)_{kj}^{s,\beta}|_{\partial\mathbb{H}_n}) + \mathcal{M}\left( n_r g^s_r (\pi^*)_{j}^{s,\beta} |_{\partial\mathbb{H}_n} \right) \right]\!
		\int_{\partial\mathbb{H}_n}\!\!\partial_{\tilde{y}_\beta} G^{*,0}_{ji}(\tilde{y},n) d\tilde{y} \\
		&+ \left[ -\mathcal{M}(n_\alpha g^s_l (A^s)^{\gamma\alpha}_{kl}\partial_{ \tilde{y}_\gamma}  (u_{bl}^*)_{kj}^{s,\beta}|_{\partial\mathbb{H}_n})+
		\mathcal{M}\left( n_r g^s_r( p_{bl}^*)_{j}^{s,\beta} |_{\partial\mathbb{H}_n} \right) \right]
		\int_{\partial\mathbb{H}_n}  \partial_{\tilde{y}_\beta}G^{*,0}_{ji}(\tilde{y},n) d\tilde{y}.\\
	\end{align*}	
	\end{corollary}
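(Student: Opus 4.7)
The plan is to reduce the general case to the case $s=0$ already treated in Theorem \ref{Thm_s=0} via a rigid translation normal to the boundary, exploiting the fact that translation sends the half-space $\mathbb{H}_n(s)$ onto $\mathbb{H}_n$ and preserves all the periodic/elliptic/regularity structure. First I would introduce $\tilde u^s(y) := u^s(y+sn)$ and $\tilde p^s(y) := p^s(y+sn)$ for $y \in \mathbb{H}_n$, and observe, via a routine chain rule, that $(\tilde u^s, \tilde p^s)$ solves the translated Stokes system \textup{(\ref{hspace_prob_tild})} on $\mathbb{H}_n$ with coefficient matrix $A^s$ and boundary data $g^s$. I would then verify that the hypotheses \textup{(\ref{ellipticity})--(\ref{regular_g})} are preserved under $y \mapsto y+sn$: $A^s$ and $g^s$ are $\mathbb Z^d$-periodic (since the period lattice is translation invariant), smooth, and $A^s$ satisfies the ellipticity bound with the same constant $\mu$ and the same H\"older semi-norm $[A^s]_{C^{0,\eta}}=[A]_{C^{0,\eta}}$.

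Next I would identify the correctors for the translated problem. A direct inspection of cell problems \textup{(\ref{chipi})} and \textup{(\ref{GamQ})} shows that $(\chi^s, \pi^s)$ and $(\Gamma^s, Q^s)$, obtained by translation, solve the cell problems associated to $A^s$; here the point is that translating by $sn$ only shifts $P^\beta_j(y)=y_j e_\beta$ by a constant, whose gradient vanishes. A change of variables then yields $(A^s)^0 = A^0$: the homogenized matrix is translation-invariant because the cell integral \textup{defining} $A^0$ can be computed on any fundamental cell by periodicity. Consequently $G^{*,0}$ and $\Pi^{*,0}$ are \emph{unchanged}. The boundary-layer correctors $(u_{bl}^*)^s$ and $(p_{bl}^*)^s$ for the translated problem are defined as the solutions of \textup{(\ref{u*p*})} with $A^*$ and $\chi^*$ replaced by $(A^*)^s$ and $(\chi^*)^s$; one checks they inherit uniform bounds in $s$ (in particular the $L^\infty$-bound \textup{(\ref{bound_Linf_u})} and the $M_2$-estimate \textup{(\ref{M1})} depend only on the structural constants of $A$, which are translation-invariant).

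With this setup in place, Theorem \ref{Thm_s=0} applies directly to $\tilde u^s$, giving
\begin{equation*}
    \lim_{y \cdot n \to \infty} \tilde u^s(y) \;=\; U^{s,\infty}(n),
\end{equation*}
where $U^{s,\infty}(n)$ is exactly formula \textup{(\ref{formul_bltail})} with $(A,g,\chi^*,\pi^*,u_{bl}^*,p_{bl}^*)$ replaced by their $s$-superscripted counterparts, and with $G^{*,0},\Pi^{*,0}$ unchanged. Finally, reversing the change of variables, $u^s(y) = \tilde u^s(y - sn)$, and since $y \cdot n \to \infty$ in $\mathbb{H}_n(s)$ is equivalent to $(y-sn)\cdot n \to \infty$ in $\mathbb{H}_n$, the stated limit follows. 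The main obstacle is not conceptual but rather a matter of careful bookkeeping: one must check that every object invoked in Theorem \ref{Thm_s=0}---correctors, homogenized Green's functions, uniform constants in the Poisson kernel expansion---behaves correctly under translation, in particular that the constants controlling the remainder \textup{(\ref{bound_remainder})} remain uniform in $s$ so that Lemma \ref{ergodicity} can be applied to the translated quasiperiodic data $g^s, (A^s)^*, (\chi^*)^s, \ldots$ without loss.
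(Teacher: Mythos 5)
Your proposal is correct and follows essentially the same route as the paper: both define $\tilde u^s(y)=u^s(y+sn)$, observe that the translated problem on $\mathbb{H}_n$ has the same structure with coefficients $A^s$ and data $g^s$, and then invoke Theorem \ref{Thm_s=0} (together with the $s$-version of the Poisson kernel expansion) for the translated system before reversing the change of variables. Your explicit verification that the correctors, the homogenized matrix and Green's functions, and the uniform constants are translation-compatible is a welcome bookkeeping step that the paper leaves implicit.
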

	
\begin{proof}
	\noindent Injecting the above expansion of $P^s(y,\tilde{y})$ in the representation of the solution $\tilde{u}^s$
\begin{align}
	\tilde{u}^s	(y)=\int_{\partial\mathbb{H}_n} P^s(y,\tilde{y}) g^s(\tilde{y}) d\tilde{y}
\end{align} we obtain an expansion of $\tilde{u}^s$ for all $s\in\mathbb{R}$
\begin{align}
	\tilde{u}^s	(y)=& \; U_s^{0,1}(y) +U_s^{0,2}(y) +U_s^{0,3}(y) +U_s^{0,4}(y) +U_s^{1,1}(y) +U_s^{1,2}(y) +U_s^{1,3}(y) \nonumber\\
	& +U_s^{2,1}(y) +U_s^{2,2}(y)+\int_{\partial\mathbb{H}_n} R(y,\tilde{y}) g^s(\tilde{y}) d\tilde{y} \label{expans_tild_u_s}
\end{align}
where 
\begin{align*}
	U_s^{0,1}(y):=&\int_{\partial\mathbb{H}_n}  -(A^*)^s(\tilde{y}) \nabla_{\!\tilde{y}}G^{*,0}\left(\tilde{y},y \right)\cdot n g^s(\tilde{y})  d\tilde{y},   \qquad U_s^{0,2}:=\int_{\partial\mathbb{H}_n}\Pi^{*,0}( \tilde{y}, y)\otimes n g^s(\tilde{y})  d\tilde{y}   \\
	U_s^{0,3}(y):=&\int_{\partial\mathbb{H}_n}-(A^*)^s(\tilde{y}) \{\nabla_{\!\tilde{y}}\left[(\chi^*)^s \left(\tilde{y} \right)+ (u_{bl}^*)^s\left(\tilde{y} \right)\right] \nabla G^{*,0}\left(\tilde{y},y\right)\}\cdot n g^s(\tilde{y}) \   d\tilde{y}   \\
	U_s^{0,4}(y):=& \int_{\partial\mathbb{H}_n}\{(\pi^*)^s(\tilde{y})+(p_{bl}^*)^s(\tilde{y}) \} \nabla G^{*,0}(\tilde{y}, y)\otimes n  g^s(\tilde{y}) d\tilde{y} 
\end{align*}
are the terms involving $\nabla G^{*,0},\,\Pi^{*,0}$. The remaining terms involve second and third order derivatives of $G^{*,0}$:
\begin{align*}
	U_s^{1,1}(y)=&\int_{\partial\mathbb{H}_n}-(A^*)^s(\tilde{y}) \left\{\left[(\chi^*)^s \left(\tilde{y} \right)+ (u_{bl}^*)^s\left(\tilde{y} \right)\right]:\nabla^2 G^{*,0}\left(\tilde{y},y\right)\right\}\cdot n  g^s(\tilde{y}) \   d\tilde{y}   \\
	U_s^{1,2}(y)=&\ \int_{\partial\mathbb{H}_n}\{[(Q^*)^s(\tilde{y})+(\pi^*)^s (\tilde{y}) (u_{bl}^*)^s(\tilde{y})]:\nabla^2 G^{*,0}(\tilde{y}, y)\}\otimes n g^s(\tilde{y}) \ d\tilde{y} \\
	U_s^{1,3}(y)=&\int_{\partial\mathbb{H}_n}  \left\{-(A^*)^s(\tilde{y}) \nabla_{\!\tilde{y}}\left[(\Gamma^*)^s \left(\tilde{y}\right)+ (\chi^*)^s\left(\tilde{y} \right) (u_{bl}^*)^s\left(\tilde{y} \right)\right]:\nabla^2 G^{*,0}\left(\tilde{y},y \right) 	\right\}\cdot n g^s(\tilde{y})   \  d\tilde{y}   \\
	U_s^{2,1}(y)=&\int_{\partial\mathbb{H}_n}  \left\{-(A^*)^s(\tilde{y}) (\Gamma^*)^s\left(\tilde{y}\right):\nabla^3 G^{*,0}\left(\tilde{y},y \right) 	\right\}\cdot n g^s(\tilde{y})   \  d\tilde{y}   \\
	U_s^{2,2}(y)=&\int_{\partial\mathbb{H}_n}  \{-(A^*)^s(\tilde{y}) (\chi^*)^s\left(\tilde{y} \right) (u_{bl}^*)^s\left(\tilde{y} \right):\nabla^3 G^{*,0}\left(\tilde{y},y \right) \}\cdot n g^s(\tilde{y})     d\tilde{y}.
\end{align*}
The superscript $i$ in $U_s^{i,j}$ represents the order of the derivative of $\nabla G^{*,0}(\tilde{y}, y)$ or $\Pi^{*,0}( \tilde{y}, y)$.\\

\noindent Remark that since $g^s$ is bounded the remaining term $ \int_{\partial\mathbb{H}_n} R(y,\tilde{y}) g^s(\tilde{y}) d\tilde{y}$
satisfies 
\begin{align*}
	\left| \int_{\partial\mathbb{H}_n} R(y,\tilde{y}) g^s(\tilde{y})d\tilde{y} \right|  \leq& \frac{C}{(y\cdot n)^\kappa} \stackrel{y\cdot n \rightarrow \infty}{ \longrightarrow } 0.
\end{align*}
Using the relation between $u^s$ and $\tilde{u}^s$ together with an application of Theorem \ref{Thm_s=0} to the solution of problem (\ref{hspace_prob_tild})  one obtains the convergence of $u^s$ for  arbitrary  $s\in\mathbb{R}$  :
\begin{align}
	\lim_{y\cdot n \rightarrow \infty} u^s(y)=\lim_{y\cdot n \rightarrow \infty} \tilde{u}^s(y)=  U^{s,\infty}(n)
\end{align}	
with
\begin{align*}
	U_i^{s,\infty}(n)
	=&\:	\: \mathcal{M} \left(\! - n_\alpha g^s_j (A^s)^{\beta\alpha}_{kj} |_{\partial\mathbb{H}_n} \! \right)\!\!\int_{\partial\mathbb{H}_n}\!\! \partial_{\tilde{y}_\beta} G^{*,0}_{ki} \left(\tilde{y},n\right) d\tilde{y}+  \mathcal{M} \left( n_j g^s_j|_{\partial\mathbb{H}_n} \right)\!\!\int_{\partial\mathbb{H}_n}\!\!\! \Pi^{*,0}_i\left(\tilde{y},n\right) d\tilde{y}\\
	&+\left[-\mathcal{M}(n_\alpha g^s_l (A^s)^{\gamma\alpha}_{kl}\partial_{ \tilde{y}_\gamma}  (\chi^*)_{kj}^{s,\beta}|_{\partial\mathbb{H}_n}) + \mathcal{M}\left( n_r g^s_r (\pi^*)_{j}^{s,\beta} |_{\partial\mathbb{H}_n} \right) \right]\!
	\int_{\partial\mathbb{H}_n}\!\!\partial_{\tilde{y}_\beta} G^{*,0}_{ji}(\tilde{y},n) d\tilde{y} \\
	&+ \left[ -\mathcal{M}(n_\alpha g^s_l (A^s)^{\gamma\alpha}_{kl}\partial_{ \tilde{y}_\gamma}  (u^*_{bl})_{kj}^{s,\beta}|_{\partial\mathbb{H}_n})+
	\mathcal{M}\left( n_r g^s_r( p_{bl}^*)_{j}^{s,\beta} |_{\partial\mathbb{H}_n} \right) \right]
	\int_{\partial\mathbb{H}_n}  \partial_{\tilde{y}_\beta}G^{*,0}_{ji}(\tilde{y},n) d\tilde{y}.\qquad\qedhere\\
\end{align*}	
\end{proof}

	\noindent  Now we compare two boundary layer tails $U^{s,\infty}(n)$ and  $U^{t,\infty}(n)$ associated respectively to arbitrary  $s,t\in\mathbb{R}.$ We  proceed through a study of the difference $U^{s,\infty}(n) -U^{t,\infty}(n)$.
	
	\begin{proposition}
		There exists $C>0$ such that, for all $s,t\in\mathbb{R}$,	we have 
		\begin{align*}
			|U^{s,\infty}(n) -U^{t,\infty}(n)| &\leq C |s-t|^{\eta}
		\end{align*}
			with  $C=C\left(\|A\|_{C^{0,\eta}(\mathbb{R}^d)},   \|g\|_{C^{0,\eta}(\mathbb{R}^d)}\right).$
	\end{proposition}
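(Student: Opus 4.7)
The plan is to leverage the closed-form formula for $U^{s,\infty}(n)$ derived in the preceding Corollary. Each component $U^{s,\infty}_i(n)$ is a finite linear combination of products $\mathcal{M}(F^s|_{\partial\mathbb{H}_n})\cdot I(n)$, where $F^s$ is a quasiperiodic function and $I(n)$ is an integral on $\partial\mathbb{H}_n$ of some derivative of $G^{*,0}$ or $\Pi^{*,0}$, depending only on $n$, $d$, $\mu$ and $A^0$. These integrals are finite by the estimates of Proposition \ref{Propo_estim_Green_y}. Telescoping $\mathcal{M}(F^s)I(n) - \mathcal{M}(F^t)I(n)$ therefore reduces the claim to establishing, for each factor $F$ that appears,
\[
|\mathcal{M}(F^s|_{\partial\mathbb{H}_n}) - \mathcal{M}(F^t|_{\partial\mathbb{H}_n})| \leq C\,|s-t|^\eta.
\]

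Since the ergodic mean of Lemma \ref{ergodicity} is a limit of spatial averages, it satisfies the contraction $|\mathcal{M}(F) - \mathcal{M}(G)| \leq \|F-G\|_{L^\infty(\partial\mathbb{H}_n)}$, reducing the problem further to an $L^\infty$ bound on $F^s - F^t$. Each factor $F$ is a product built from either purely periodic quantities or from boundary-layer quantities. For the periodic ones ($A^s$, $g^s$, $\chi^{*,s}$, $\pi^{*,s}$ and their derivatives), a direct estimate gives $\|A^s - A^t\|_\infty \leq \|A\|_{C^{0,\eta}}|s-t|^\eta$, $\|g^s - g^t\|_\infty \leq \|g\|_{C^{0,\eta}}|s-t|^\eta$, and analogous bounds for the correctors and their derivatives with constants depending only on $\|A\|_{C^{0,\eta}}$, since Schauder theory applied to the cell problem (\ref{chipi}) promotes $C^{0,\eta}$ regularity of $A$ into $C^{0,\eta}$ regularity of $\chi^*$, $\pi^*$ and their derivatives (cf.\ Lemma \ref{bound_on_correctors}). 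A product-rule telescope then gives a Hölder bound for products of such factors, as all the individual factors are uniformly bounded.

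The main obstacle is the boundary-layer factors $u^{*,s}_{bl}$, $p^{*,s}_{bl}$ and their tangential derivatives restricted to $\partial\mathbb{H}_n$. For these I would analyze the pair $w := u^{*,s}_{bl} - u^{*,t}_{bl}$, $q := p^{*,s}_{bl} - p^{*,t}_{bl}$, which satisfies in $\mathbb{H}_n$ a Stokes system with coefficient $(A^*)^s$, zero divergence, Dirichlet data $(\chi^*)^t - (\chi^*)^s$ on $\partial\mathbb{H}_n$, and a divergence-form source $\nabla\cdot\bigl(((A^*)^s - (A^*)^t)\,\nabla u^{*,t}_{bl}\bigr)$. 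Both the boundary data and the tensor $(A^*)^s - (A^*)^t$ are $O(|s-t|^\eta)$ in $C^{0,\eta}$, while $\nabla u^{*,t}_{bl}$ is uniformly bounded on $\overline{\mathbb{H}}_n$ by Theorem \ref{unif_estim} applied to $u^{*,t}_{bl}$ itself (whose boundary data $-\chi^{*,t}$ is $C^{1,\eta}$). A Poisson-kernel representation, combined with the integrability of $P^{*,s}(y,\cdot)$ on $\partial\mathbb{H}_n$, first yields $\|w\|_{L^\infty(\mathbb{H}_n)} \leq C|s-t|^\eta$; then applying Theorem \ref{unif_estim} to $(w, q)$ on a unit disk centered at any boundary point $y_0 \in \partial\mathbb{H}_n$ produces
\[
|\nabla w(y_0)| + \osc_{D_{1/2}(y_0)} q \leq C\,|s-t|^\eta
\]
uniformly in $y_0$, with constant depending only on $\|A\|_{C^{0,\eta}}$, $\|g\|_{C^{0,\eta}}$, $d$ and $\mu$. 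Assembling the periodic and boundary-layer Hölder bounds through the telescope yields the proposition.
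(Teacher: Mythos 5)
You take a genuinely different route from the paper, and there is a real gap in how you handle the boundary-layer corrector.

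The paper's proof expresses $U^{s,\infty}(n)-U^{t,\infty}(n)$ as $\lim_{y\cdot n\to\infty}(\tilde u^s(y)-\tilde u^t(y))$, bounds each expansion term $|U_s^{i,j}(y)-U_t^{i,j}(y)|$ pointwise in $y$, and lets the higher-order terms vanish in the limit. You instead work directly with the closed-form expression from the Corollary and telescope the factors inside $\mathcal{M}$, using the $L^\infty$-contraction of the ergodic mean. This is conceptually cleaner since it avoids passing through the expansion twice and isolates the $s$-independent integrals $\int_{\partial\mathbb{H}_n}\partial_{\tilde y_\beta}G^{*,0}(\tilde y,n)\,d\tilde y$. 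For the purely periodic factors (powers of $A$, $g$, $\chi^*$, $\pi^*$ and their derivatives) both routes reduce to the same H\"older-in-$s$ estimates, and your reduction is sound.

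The divergence is in the boundary-layer piece. The paper treats $(u^*_{bl})^s$ as the literal translate $u^*_{bl}(\cdot+sn)$ of one fixed corrector, writes $D^\beta_j(\tilde y):=n_\alpha A^{\gamma\alpha}_{kl}(\tilde y)\partial_{\tilde y_\gamma}(u^*_{bl})^\beta_{kj}(\tilde y)g_l(\tilde y)$, and obtains $|D^\beta_j(\tilde y+sn)-D^\beta_j(\tilde y+tn)|\le C|s-t|^\eta$ directly from pointwise $C^{0,\eta}$-regularity of $\nabla u^*_{bl}$ near $\partial\mathbb{H}_n$ (Schauder for a smooth-coefficient Stokes system with smooth data). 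You instead interpret $u^{*,s}_{bl}$ as the solution of the $(A^*)^s$-Stokes problem on the fixed half-space $\mathbb{H}_n$ with data $-(\chi^*)^s$, and analyze $w:=u^{*,s}_{bl}-u^{*,t}_{bl}$ by a perturbation argument. The gap is the divergence-form source $\nabla\cdot\bigl(((A^*)^s-(A^*)^t)\,\nabla u^{*,t}_{bl}\bigr)$: it is supported on all of $\mathbb{H}_n$, and $\nabla u^{*,t}_{bl}$ decays in the normal direction but is only quasiperiodic tangentially, so it lies in no $L^p(\mathbb{H}_n)$. A Green-function representation of the source contribution requires $\int_{\mathbb{H}_n}|\nabla_{\tilde y}G^{*,s}(y,\tilde y)|\,|\nabla u^{*,t}_{bl}(\tilde y)|\,d\tilde y<\infty$, but the kernel bound $|\nabla_{\tilde y}G|\lesssim|y-\tilde y|^{1-d}$ is not integrable over the half-space without tangential decay of the other factor, and the quantity $M_2<\infty$ of \textup{(\ref{M1})} (normal $L^2$-decay uniform in the tangential variable) does not suffice, since the Green kernel is likewise not square-integrable at large tangential distances; an energy argument hits the same obstruction. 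Consequently your step ``$\|w\|_{L^\infty(\mathbb{H}_n)}\le C|s-t|^\eta$'' is not justified, and the subsequent application of Theorem \ref{unif_estim} to $(w,q)$ near $\partial\mathbb{H}_n$ inherits the gap. To repair it along your lines you would need a localized source estimate (e.g.\ decomposing $\mathbb{H}_n$ into unit boxes and summing weighted contributions with the help of $M_2$ and the $(y\cdot n)/|y-\tilde y|^d$ bound for the Green kernel), or simply adopt the paper's simpler viewpoint of regularity of the single corrector $u^*_{bl}$ under a shift in the normal direction.
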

	\begin{proof}[Proof]
		
		The boundary layer tail $U^{s,\infty}(n)$  can be seen as the limit of the expansion (\ref{expans_tild_u_s}) as  $y\cdot n\rightarrow \infty$ and the difference  $U^{s,\infty}(n) -U^{t,\infty}(n)$ is then the limit of the difference of the correspponding expansions. More precisely, we have 
		\begin{align*}
			U^{s,\infty}(n) -U^{t,\infty}(n)= 	\lim_{y\cdot n \rightarrow \infty}	\tilde{u}^s(y)-\tilde{u}^t(y)
		\end{align*}
		and using the expansion (\ref{expans_tild_u_s}) this is written
		\begin{align*}
			U^{s,\infty}(n) -U^{t,\infty}(n)= &	\sum_{i,j} \lim_{y\cdot n \rightarrow \infty}  (U_s^{i,j}(y) -U_t^{i,j}(y)) +   \lim_{y\cdot n \rightarrow \infty} \int_{\partial\mathbb{H}_n} R(y,\tilde{y})(g^s(\tilde{y})-g^t(\tilde{y}))\ d\tilde{y}\\
			=&	\sum_{i,j} \lim_{y\cdot n \rightarrow \infty}  (U_s^{i,j}(y) -U_t^{i,j}(y)) 
		\end{align*}
		since
		\begin{align*}
			\lim_{y\cdot n \rightarrow \infty} \int_{\partial\mathbb{H}_n} R(y,\tilde{y})(g^s(\tilde{y})-g^t(\tilde{y}))\ d\tilde{y}=0.
		\end{align*}
		Now we aim at establishing that for $s,t\in\mathbb{R}$ we have 
		\begin{align*}
			|	U^{i,j}_{s}(y)- 	U^{i,j}_{t}(y) |
			\leq &\; C |s-t|, \quad \mbox{for some constant } C>0. \\
		\end{align*}

		\noindent {\bfseries Step 1 :	Bound of the difference $U_s^{0,1}- 	U^{0,1}_{t} $ }\\
		Consider the  $i^{th}$ component of this quantity
		\begin{align*}
			U^{0,1}_{s,i}(y)- 	U^{0,1}_{t,i}(y) 
			=&
			\int_{\partial\mathbb{H}_n} 
			n_\alpha (A^t)^{\beta\alpha}_{kj}(\tilde{y})g^t_j(\tilde{y})\frac{\partial}{\partial \tilde{y}_\beta} G^{*,0}_{ki} \left(\tilde{y},y \right)\,d\tilde{y} \\
			&	- \int_{\partial\mathbb{H}_n} 
			n_\alpha (A^s)^{\beta\alpha}_{kj}(\tilde{y})g^s_j(\tilde{y})\frac{\partial}{\partial \tilde{y}_\beta} G^{*,0}_{ki} \left(\tilde{y},y \right)\,d\tilde{y} \\
			=& \int_{\partial\mathbb{H}_n} 
			n_\alpha A^{\beta\alpha}_{kj}(\tilde{y}+tn)g_j(\tilde{y}+tn)\frac{\partial}{\partial \tilde{y}_\beta} G^{*,0}_{ki} \left(\tilde{y},y \right)\,d\tilde{y} \\
			&	- \int_{\partial\mathbb{H}_n} 
			n_\alpha A^{\beta\alpha}_{kj}(\tilde{y}+sn)g_j(\tilde{y}+sn)\frac{\partial}{\partial \tilde{y}_\beta} G^{*,0}_{ki} \left(\tilde{y},y \right)
			\,d\tilde{y} 
		\end{align*}
		The function $\tilde{y}\mapsto A^{\beta\alpha}_{kj}(\tilde{y})g_j(\tilde{y})$ is regular enough to satisfy \begin{align*}
			|A^{\beta\alpha}_{kj}g_j(\tilde{y}+sn)- A^{\beta\alpha}_{kj}g_j(\tilde{y}+tn)|\leq C |s-t|^\eta, \quad \mbox{ with } C=C(\|A\|_{C^{0,\eta}(\mathbb{R}^d)}, \|g\|_{C^{0,\eta}(\mathbb{R}^d)})>0.
		\end{align*}We then obtain
		\begin{align*}
			|	U^{0,1}_{s,i}(y)- 	U^{0,1}_{t,i}(y) |
			&\leq  C |s-t|^\eta\int_{\partial\mathbb{H}_n} 
			|	\partial_{\tilde{y}_\beta} G^{*,0}_{ki} \left(\tilde{y},y \right)|\,d\tilde{y}\\ &\leq C |s-t|^\eta,
		\end{align*}
	where $C$ depends on $d, \mu, \|A\|_{C^{0,\eta}(\mathbb{R}^d)}$ and $\|g\|_{C^{0,\eta}(\mathbb{R}^d)}.$\\
	
		\noindent {\bfseries Step 2 : Estimating the difference $U_s^{0,2}- U_t^{0,2}$}:\\
		The $i^{th}$ component of  this quantity reads
		\begin{align*}
			U_{s,i}^{0,2}(y)- U_{t,i}^{0,2}(y)	=&  \int_{\partial\mathbb{H}_n}   \Pi_i^{*,0}(\tilde{y}, y) n_j g_j(\tilde{y}+sn)\,d\tilde{y} - \int_{\partial\mathbb{H}_n}   \Pi_i^{*,0}(\tilde{y}, y) n_j g_j(\tilde{y}+tn)\,d\tilde{y}\\
			=&\int_{\partial\mathbb{H}_n}   \Pi_i^{*,0}(\tilde{y}, y) n_j [g_j(\tilde{y}+sn)- g_j(\tilde{y}+tn)]\,d\tilde{y}.
		\end{align*}
		Hence 		we have 
		\begin{align*}
			|	U_{s,i}^{0,2}(y)- U_{t,i}^{0,2}(y)|	\leq &  \, [g]_{C^{0,\eta}(\mathbb{R}^d)}|s-t|^\eta
			\int_{\partial\mathbb{H}_n}  | \Pi_i^{*,0}(\tilde{y}, y)|\,d\tilde{y}.
		\end{align*}		
		\noindent By  the fact that
		$	\left\|\Pi^{*,0}_i\left(\cdot,y \right)\right\|_{ L^1(\partial\mathbb{H}_n)}\leq C(d, \mu, [A]_{C^{0,\eta}})
		$ uniformly in $y$
		we get
		\begin{align}
			|	U_{s,i}^{0,2}(y)- U_{t,i}^{0,2}(y)|	\leq &  \, C|s-t|^\eta	\nonumber
		\end{align}with $C=C\left(d, \mu, [A]_{C^{0,\eta}(\mathbb{R}^d)}, [g]_{C^{0,\eta}(\mathbb{R}^d) }  \right).$\\
		
		\noindent {\bfseries Step 3 : We treat the term $U_s^{0,3}- U_t^{0,3}$}:\\
		\noindent	We write this difference into coordinates 
		\begin{align*}
			U_{s,i}^{0,3}(y)- 	U_{t,i}^{0,3}(y)
			=&\int_{\partial\mathbb{H}_n}-n_\alpha (A^s)^{\gamma\alpha}_{kl} (\tilde{y})\left(\partial_{ \tilde{y}_\gamma} (\chi^{*})^{s,\beta}_{kj}\left(\tilde{y} \right)  \right) \partial_{ \tilde{y}_\beta} G^{*,0}_{ji}\left(\tilde{y},y\right)  g^s_l(\tilde{y}) d\tilde{y}\\
			&+	\int_{\partial\mathbb{H}_n}n_\alpha (A^t)^{\gamma\alpha}_{kl} (\tilde{y})\left(\partial_{ \tilde{y}_\gamma} (\chi^{*})^{t,\beta}_{kj}\left(\tilde{y} \right)  \right) \partial_{ \tilde{y}_\beta} G^{*,0}_{ji}\left(\tilde{y},y\right)  g^t_l(\tilde{y}) d\tilde{y}\\
			& +\int_{\partial\mathbb{H}_n}-n_\alpha (A^s)^{\gamma\alpha}_{kl} (\tilde{y})\left(\partial_{ \tilde{y}_\gamma} (u_{bl}^{*})^{s,\beta}_{kj} \left(\tilde{y} \right)\right) \partial_{ \tilde{y}_\beta} G^{*,0}_{ji}\left(\tilde{y},y\right)  g^s_l(\tilde{y}) d\tilde{y}\\
			&+\int_{\partial\mathbb{H}_n}n_\alpha (A^t)^{\gamma\alpha}_{kl} (\tilde{y})\left(\partial_{ \tilde{y}_\gamma}(u_{bl}^{*})^{t,\beta}_{kj}  \left(\tilde{y} \right)\right) \partial_{ \tilde{y}_\beta} G^{*,0}_{ji}\left(\tilde{y},y\right)  g^t_l(\tilde{y}) d\tilde{y}
		\end{align*}
		which is also written
		\begin{align*}
			U_{s,i}^{0,3}(y)- 	U_{t,i}^{0,3}(y)
			=&\int_{\partial\mathbb{H}_n}- [B^{\beta}_{j}\left(\tilde{y}+sn \right) -B^{\beta}_{j}\left(\tilde{y}+tn \right)]    \partial_{ \tilde{y}_\beta} G^{*,0}_{ji}\left(\tilde{y},y\right) d\tilde{y}\\
			&+\int_{\partial\mathbb{H}_n}- [D^{\beta}_{j}\left(\tilde{y}+sn \right) -D^{\beta}_{j}\left(\tilde{y}+tn \right)]    \partial_{ \tilde{y}_\beta} G^{*,0}_{ji}\left(\tilde{y},y\right) d\tilde{y}
		\end{align*}
		where  $B_j^{\beta}(\tilde{y}):=n_\alpha A^{\gamma\alpha}_{kl}(\tilde{y})\partial_{ \tilde{y}_\gamma} \chi_{kj}^{*,\beta}(\tilde{y}) g_l(\tilde{y}) $ and   $D_j^{\beta}(\tilde{y}):=n_\alpha A^{\gamma\alpha}_{kl}(\tilde{y})\partial_{ \tilde{y}_\gamma}(u_{bl}^{*})^{\beta}_{kj} (\tilde{y}) g_l(\tilde{y}). $ 
		
		\noindent	These functions  $B_j^{\beta}$ and   $D_j^{\beta}(\tilde{y})$   satisfy   
		\begin{align*}
			|B^{\beta}_{j}\left(\tilde{y}+sn \right) -B^{\beta}_{j}\left(\tilde{y}+tn \right)| &\leq C |s-t|^\eta , \\
			|D^{\beta}_{j}\left(\tilde{y}+sn \right) -D^{\beta}_{j}\left(\tilde{y}+tn \right)|&\leq  C  |s-t|^\eta ,
		\end{align*}
		where $C>0$ is a constant depending on $\mu$, $\|A\|_{C^{0,\eta}(\mathbb{R}^d)}$ and on  $\|g\|_{C^{0,\eta}(\mathbb{R}^d)}$. Then we have 
		\begin{align*}
			|U_{s,i}^{0,3}(y)- 	U_{t,i}^{0,3}(y)|
			\leq &\: C  |s-t|^\eta \int_{\partial\mathbb{H}_n} |\partial_{ \tilde{y}_\beta} G^{*,0}_{ji}(\tilde{y},y)| d\tilde{y},\\
			\leq &\: C  |s-t|^\eta ,
		\end{align*}
		with $C=C(d,\mu,\|A\|_{C^{0,\eta}(\mathbb{R}^d)},\|g\|_{C^{0,\eta}(\mathbb{R}^d)})$ .\\
		
		\noindent {\bfseries  Step 4 : Term $U_s^{0,4}- U_t^{0,4}$}:\\
		Consider the $i^{th}$-component of the difference 	$U_{s}^{0,4}(y)-U_{t}^{0,4}(y)$
		\begin{align*}
			U_{s,i}^{0,4}(y)-U_{t,i}^{0,4}(y)=& \int_{\partial\mathbb{H}_n}\left( (\pi^{*})_{j}^{s,\beta}(\tilde{y})+(p_{bl}^{*})_{j}^{s,\beta}(\tilde{y}) \right) \partial_{\tilde{y}_\beta} G_{ji}^{*,0}(\tilde{y}, y) n_r  g^s_r(\tilde{y}) d\tilde{y} \\
			&  - \int_{\partial\mathbb{H}_n}    \left( (\pi^{*})_{j}^{t,\beta}(\tilde{y})+(p_{bl}^{*})_{j}^{t,\beta}(\tilde{y}) \right)  \partial_{\tilde{y}_\beta} G_{ji}^{*,0}(\tilde{y}, y) n_r  g^t_r(\tilde{y}) d\tilde{y} \\
			=& \int_{\partial\mathbb{H}_n}n_r \left( g^s_r (\tilde{y}) (\pi^{*})_{j}^{s,\beta}(\tilde{y})-g^t_r (\tilde{y}) (\pi^{*})_{j}^{t,\beta}(\tilde{y}) \right) \partial_{\tilde{y}_\beta} G_{ji}^{*,0}(\tilde{y}, y) (\tilde{y}) d\tilde{y} \\
			&  - \int_{\partial\mathbb{H}_n} n_r \left( g^s_r (\tilde{y}) (p_{bl}^{*})_{j}^{s,\beta}(\tilde{y})-g^t_r (\tilde{y}) (p_{bl}^{*})_{j}^{t,\beta}(\tilde{y}) \right) \partial_{\tilde{y}_\beta} G_{ji}^{*,0}(\tilde{y}, y) (\tilde{y})   d\tilde{y} .
		\end{align*}
		The quantity  $g^s_r (\tilde{y}) (\pi^{*})_{j}^{s,\beta}(\tilde{y})$  is Lipschitz continuous in $s$
		\begin{align*}
			|g^s_r (\tilde{y}) (\pi^{*})_{j}^{s,\beta}(\tilde{y})-g^t_r (\tilde{y}) (\pi^{*})_{j}^{t,\beta}(\tilde{y}) | &= |g_r (\tilde{y}+sn) \pi_{j}^{*,\beta}(\tilde{y}+sn )-g_r (\tilde{y}+tn) \pi_{j}^{*,\beta}(\tilde{y}+tn) | \\
			&\leq \: C |s-t|^\eta
		\end{align*}
	where 	$C=C(d,\mu,\|A\|_{C^{0,\eta}(\mathbb{R}^d)},\|g\|_{C^{0,\eta}(\mathbb{R}^d)}).$ \\
		
	\noindent The same inequality holds for  $ g^s_r (\tilde{y}) (p_{bl}^{*})_{j}^{s,\beta}(\tilde{y})$:
		\begin{align*}
			|g^s_r (\tilde{y}) (p_{bl}^{*})_{j}^{s,\beta}(\tilde{y})-g^t_r (\tilde{y}) (p_{bl}^{*})_{j}^{t,\beta}(\tilde{y})| 
			&\leq \: C |s-t|^\eta.
		\end{align*}  
		Hence we get 
		\begin{align*}
			|U_{s,i}^{0,4}(y)-U_{t,i}^{0,4}(y)| &\leq  C |s-t|^\eta\int_{\partial\mathbb{H}_n} | \partial_{\tilde{y}_\beta} G^{*,0}_{ji} (\tilde{y}, y)| d\tilde{y}\\
			&\leq  C |s-t|^\eta
		\end{align*}with $C$ depending on $d,\mu,\|A\|_{C^{0,\eta}(\mathbb{R}^d)}$ and $\|g\|_{C^{0,\eta}(\mathbb{R}^d)}$.\\
		
	\noindent	All the other terms in the expansion (\ref{expans_tild_u_s}) converge to $0$ as $y\cdot n \rightarrow \infty .$\\
		
		\noindent 	Putting together steps 1 through 4 one obtains
		\begin{align*}
			\sum_{k=1}^4|U_{s}^{0,k}(y)-U_{t}^{0,k}(y)| &\leq  C |s-t|^\eta, \quad \mbox{ for all } y\in \mathbb{H}_n,
		\end{align*}where $C$ depends on $d,\mu,\|A\|_{C^{0,\eta}(\mathbb{R}^d)}$ and $\|g\|_{C^{0,\eta}(\mathbb{R}^d)}$.
		This inequality implies 
		\begin{align*}
			\lim_{y\cdot n \rightarrow \infty}\sum_{k=1}^4|U_{s}^{0,k}(y)-U_{t}^{0,k}(y)| &\leq  C |s-t|^\eta, 
		\end{align*}
		and subsequently
		\begin{align*}
			| U^{s,\infty}(n) -U^{t,\infty}(n) | 
			\leq &	\:\sum_{i,j} \lim_{y\cdot n \rightarrow \infty}  |U_s^{i,j}(y) -U_t^{i,j}(y)| \\
			\leq & \: C |s-t|^\eta
		\end{align*}
		where  $C=C\left(d,\mu,\|A\|_{C^{0,\eta}(\mathbb{R}^d)},\|g\|_{C^{0,\eta}(\mathbb{R}^d)}\right).$
	\end{proof}
	\noindent A direct consequence of this Proposition is that the boundary layer tail $U^{\infty,s}(n)$ is uniformly 
	continuous in $s\in \mathbb{R}.$

	\begin{proposition}
		For $n\notin \mathbb{R}\mathbb{Z}^d$ and for all $s,t\in\mathbb{R}$,	we have 
		\begin{align*}
			U^{t,\infty}(n) =U^{s,\infty}(n).
		\end{align*}
		
	\end{proposition}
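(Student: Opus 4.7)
The plan is to combine a translation-invariance identity for $U^{s,\infty}(n)$ under lattice shifts with the Hölder continuity of $s\mapsto U^{s,\infty}(n)$ established in the preceding proposition. The core algebraic identity to prove first is
\begin{align*}
U^{s,\infty}(n)=U^{s-h\cdot n,\infty}(n)\qquad\text{for every }h\in\mathbb{Z}^d.
\end{align*}
Given $u^s$ as in \eqref{hsStokes} lying in the uniqueness class \eqref{class}, I will consider the translated pair $(v(y),q(y)):=(u^s(y+h),p^s(y+h))$. By $\mathbb{Z}^d$-periodicity of $A$ and $g$, this pair solves the same Stokes system with coefficients $A$ and boundary data $g$, but on the shifted half-space $\{y\cdot n>s-h\cdot n\}$. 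Since $\sup|v|=\sup|u^s|$ and $\sup_{y}(y\cdot n-(s-h\cdot n))|\nabla v(y)|=\sup_{y}(y\cdot n-s)|\nabla u^s(y)|$, the function $v$ lies in the uniqueness class of the shifted problem, hence $v=u^{s-h\cdot n}$ by Theorem \ref{intro_ThConverge}. Taking $y\cdot n\to\infty$ and noting that $y\cdot n\to\infty$ if and only if $(y+h)\cdot n\to\infty$ yields the identity.

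The second ingredient is the density in $\mathbb{R}$ of the additive subgroup $\Lambda_n:=\{h\cdot n:h\in\mathbb{Z}^d\}$ under the hypothesis $n\notin\mathbb{R}\mathbb{Z}^d$. Every subgroup of $(\mathbb{R},+)$ is either dense or of the form $\alpha\mathbb{Z}$ for some $\alpha>0$; if $\Lambda_n=\alpha\mathbb{Z}$, then each coordinate $n_i=e_i\cdot n$ belongs to $\alpha\mathbb{Z}$, so $n\in\alpha\mathbb{Z}^d\subset\mathbb{R}\mathbb{Z}^d$, contradicting irrationality. Hence $\Lambda_n$ is dense in $\mathbb{R}$.

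These two facts combine immediately. For any $s,t\in\mathbb{R}$ and $\varepsilon>0$, density gives $h\in\mathbb{Z}^d$ with $|s-h\cdot n-t|<\varepsilon$, and the Hölder estimate of the preceding proposition yields
\begin{align*}
|U^{s,\infty}(n)-U^{t,\infty}(n)|=|U^{s-h\cdot n,\infty}(n)-U^{t,\infty}(n)|\le C\,|s-h\cdot n-t|^{\eta}\le C\varepsilon^{\eta},
\end{align*}
and letting $\varepsilon\to 0$ concludes. I do not expect a serious obstacle: the only point that requires explicit justification is that the translation $y\mapsto y+h$ preserves the uniqueness class \eqref{class}, which is immediate since both quantities defining that class are invariant under the lattice shift once the threshold $s$ is replaced by $s-h\cdot n$. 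This step is the mechanism that converts the $\mathbb{Z}^d$-periodicity of the coefficients into an exact discrete symmetry of the boundary-layer tail, after which the irrationality of $n$ does the rest through density and continuity.
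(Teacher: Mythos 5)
Your proposal is correct and follows essentially the same route as the paper's proof: lattice translation invariance of the boundary layer tail (via periodicity of $A$, $g$ and uniqueness in the class \eqref{class}), density of $\{h\cdot n:h\in\mathbb{Z}^d\}$ in $\mathbb{R}$ for irrational $n$, and the Hölder continuity of $s\mapsto U^{s,\infty}(n)$ from the preceding proposition. The only cosmetic differences are that you translate by $+h$ from an arbitrary $s$ (yielding $U^{s,\infty}=U^{s-h\cdot n,\infty}$) where the paper translates by $-h$ from $s=0$, and that you spell out the subgroup-classification argument for density while the paper asserts it.
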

	
	\begin{proof}[Proof]
		
			Let $(u,p)$ be the solution to the problem
		\begin{equation*}
			\left\{
			\begin{array}{rcll}
				-\nabla_{\!y}\cdot A(y)\nabla_{\!y}  u +  \nabla_{\!y} p&=& 0 & \mbox{in } \{y\cdot n >0 \},\\
				\nabla_{\!y}	\cdot u&=& 0 ,    &\\
				u(y) &=& g(y) & \mbox{on }\{y\cdot n =0 \}.
			\end{array}
			\right. 
		\end{equation*}
	For arbitrary $h\in\mathbb{Z}^d$, we set
		$u^h:=u(\cdot-h),\;  p^h:=p(\cdot-h)$. Then, by the periodicity of $A=A(y)$ and $g=g(y)$,    the pair $(u^h,p^h)$ satisfies
		\begin{equation*}
			\left\{
			\begin{array}{rcll}
			-\nabla_{\!y}\cdot A(y)\nabla_{\!y} u^h +  \nabla_{\!y} p^h&=& 0& \mbox{in } \{y\cdot n - h\cdot n>0 \},\\
		\nabla_{\!y}	\cdot u^h&=& 0 ,    &\\
				u^h(y) &=& g(y)   & \mbox{on }\{y\cdot n - h\cdot n=0 \}.
			\end{array}
			\right. 
		\end{equation*}
		The pair $(u^h,p^h)$ coincides with the solution  $(u^s,p^s)$ to problem (\ref{hsStokes})  with $s=h\cdot n$. Hence, by the definition of $U^{s,\infty}(n),$ we have $\lim_{y\cdot n \rightarrow \infty} u^h (y)= U^{s,\infty}(n),\;  s=h\cdot n$.
		Then we also have $\lim_{y\cdot n \rightarrow \infty} u^h (y)=\lim_{y\cdot n \rightarrow \infty} u (y-h)= \lim_{y\cdot n \rightarrow \infty} u (y)= U^{\infty}(n):= U^{0,\infty}(n)$. So we obtain the following equality on the boundary layer tails 
		\begin{equation*}
			U^{s,\infty}(n):= U^{\infty}(n),\quad \mbox{ for all } s\in \{h\cdot n: h\in \mathbb{Z}^d \}.
		\end{equation*}
		Because $n\in\mathbb{S}^{d-1}$ is irrational the set  $\{h\cdot n: h\in \mathbb{Z}^d \}$ is a dense subset of $\mathbb{R}$. Now, the uniform continuity on $\mathbb{R}$  of $s\mapsto	U^{s,\infty}(n)$  allows to conclude that 	$U^{s,\infty}(n):= U^{\infty}(n)$ for all  $s\in  \mathbb{R}$.\qedhere\\
	\end{proof}

	\appendix

	\begin{appendices}
	\section{Proof of some estimates}\label{append_estim}
\noindent In what follows we provide some regularity estimates for solutions $u^s\in L^\infty$ of   the  system
\begin{equation}\label{StokesinHs}
	\left\{
	\begin{array}{rcll}
		-\nabla\cdot A (y) \: \nabla u^s +  \nabla p^s&=& 0  &\mbox{in } {\mathbb{H}}_n(s),\\
		\nabla\cdot u^s &=& 0, &\\
		u^s &=& g (y)   &\mbox{on } \partial{\mathbb{H}}_n(s),
	\end{array}
	\right. 
\end{equation}
 satisfying the condition (\ref{class}).\\

\noindent First, remark that if the vector  $u^s$ satisfies (\ref{class}) then, for $q\in(1,\infty)$, one has 
\begin{align}
	\int_{1}^\infty  \| \nabla u^s(M (\cdot,z_d))\|^q_{L_{z'}^\infty(\mathbb{R}^{d-1})} dz_d &=   \int_{1}^\infty  \sup_{z'\in \mathbb{R}^{d-1}} | \nabla u^s(M(z',z_d))|^q dz_d\nonumber \\
	&\leq C   \int_{1}^\infty \frac{1}{\delta^q} d\delta\nonumber \\
	&\leq C ,  \label{estim_deriv_u_delta0} 
\end{align}where the constant $C$ depends on $d,\mu$ and $[A]_{C^{0,\eta}}$.\\

\noindent The bound (\ref{class})  does not allow a control of the $L^q$-norm of $ \nabla_{\! y} u^s$ up to the boundary  $ \partial{\mathbb{H}}_n(s)$. However, one gains control of this derivative near the boundary by using the following local boundary Lipschitz estimate.

\begin{lemma}\label{localboundaryLipsch}
	For arbitrary $\bar{y} \in \partial{\mathbb{H}}_n(s)$ we define  $D_r (\bar{y})$ and  $\Gamma_r (\bar{y})$ as  $D_r (\bar{y} ):= B_r (\bar{y} )\cap {\mathbb{H}}_n(s)$ and  $\Gamma_r (\bar{y}):= B_r (\bar{y})\cap \partial{\mathbb{H}}_n(s)$. Then we have 
	\begin{equation}\label{localboundaryLipsch_estim}
		\| \nabla_{\! y} u^s \|_{L^{\infty}(D_1(\bar{y}))} \leq  C<\infty
	\end{equation}
	where $C$ depends on $d,\mu, [A]_{C^{0,\eta}}$ and $\| g \|_{ C^{1,\eta}(\Gamma_2 ( \bar{y} ) ) }$.
\end{lemma}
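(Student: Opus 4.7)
The plan is to reduce the statement to a direct application of Theorem \ref{unif_estim} (the uniform boundary Lipschitz estimate of Gu--Zhuge) in the non-oscillating regime $\varepsilon=1$, combined with translation and a fixed rescaling. Since $\bar y\in\partial\mathbb{H}_n(s)$, I would first translate coordinates by $-\bar y$ and set $\tilde u(y):=u^s(y+\bar y)$, $\tilde p(y):=p^s(y+\bar y)$, $\tilde A(y):=A(y+\bar y)$, $\tilde g(y):=g(y+\bar y)$. Because translation preserves ellipticity, the $C^{0,\eta}$ semi-norm of $A$, and the $C^{1,\eta}$ semi-norm of $g$, the pair $(\tilde u,\tilde p)$ solves the Stokes system with coefficient $\tilde A$ in a half-space through the origin, with Dirichlet data $\tilde g$ on its boundary. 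Restricting to $D_2(0)$ one gets exactly the local Stokes problem
\begin{equation*}
\left\{
\begin{array}{rcll}
-\nabla\cdot \tilde A(y)\nabla \tilde u+\nabla \tilde p&=&0&\mbox{in }D_2(0),\\
\nabla\cdot \tilde u&=&0&\mbox{in }D_2(0),\\
\tilde u&=&\tilde g&\mbox{on }\Gamma_2(0).
\end{array}
\right.
\end{equation*}

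Next I would rescale by a factor $2$ to land on the unit disk $D(0,1)$ appearing in the statement of Theorem \ref{unif_estim}. Setting $v(z):=\tilde u(2z)$, $q(z):=2\tilde p(2z)$, $B(z):=\tilde A(2z)$, $h(z):=\tilde g(2z)$ one obtains a Stokes system with coefficient $B$ on $D(0,1)$; the ellipticity constant is preserved, the $C^{0,\eta}$ semi-norm of $B$ changes by the factor $2^\eta$, and the $C^{1,\eta}$ norm of $h$ on $\Gamma(0,1)$ is controlled by $\|\tilde g\|_{C^{1,\eta}(\Gamma_2(0))}=\|g\|_{C^{1,\eta}(\Gamma_2(\bar y))}$. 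This is exactly the setting \eqref{Stokes_system_D1} of Theorem \ref{unif_estim} with source terms $f,F,h$ vanishing and with $\varepsilon=1$. Applying the theorem and then undoing the rescaling yields
\begin{equation*}
\|\nabla u^s\|_{L^\infty(D_1(\bar y))}\le C\Big(\|u^s\|_{L^\infty(D_2(\bar y))}+\|g\|_{L^\infty(\Gamma_2(\bar y))}+\|\nabla_{\textup{tan}} g\|_{L^\infty(\Gamma_2(\bar y))}+[\nabla_{\textup{tan}} g]_{C^{0,\eta}(\Gamma_2(\bar y))}\Big),
\end{equation*}
with constant depending only on $d,\mu$ and $[A]_{C^{0,\eta}}$.

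To conclude, it remains to bound $\|u^s\|_{L^\infty(D_2(\bar y))}$ by a quantity depending only on the data. Here I would invoke the standing hypothesis that $u^s\in L^\infty(\mathbb{H}_n(s))$, which gives the trivial bound $\|u^s\|_{L^\infty(D_2(\bar y))}\le\|u^s\|_{L^\infty(\mathbb{H}_n(s))}$; combined with the representation \eqref{represintgral} via the Poisson kernel and the uniform $L^1$-control of $P$ established in Proposition \ref{Propo_estim_Green_y} (in particular estimates \eqref{estim_dertildG}, \eqref{estimPi_y} and the bound \eqref{bound_int|y-tildy|^d}), one has $\|u^s\|_{L^\infty(\mathbb{H}_n(s))}\le C\|g\|_{L^\infty(\partial\mathbb{H}_n(s))}$ with $C=C(d,\mu,[A]_{C^{0,\eta}})$, which absorbs the first term on the right-hand side into the $C^{1,\eta}$ norm of $g$.

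The main obstacle is a matter of care rather than depth: one must verify that the rescaling $y\mapsto 2z$ does not degrade the hypotheses of Theorem \ref{unif_estim} (the ellipticity constant is invariant, the Hölder semi-norm picks up a harmless factor $2^\eta$) and that Theorem \ref{unif_estim}, which is stated only on $D(0,1)$, can be legitimately transported to an arbitrary boundary point $\bar y$ by translation --- this is where the translation invariance of the ellipticity and Hölder constants of $A$ is used, noting that the constants in the conclusion remain independent of $\bar y$.
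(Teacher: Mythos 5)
Your reduction to the uniform boundary Lipschitz estimate is correct and matches the paper's strategy in substance: the paper invokes the rescaled version from \cite{optimBoundEstimate} (Theorem~2.7) directly, while you re-derive it from Theorem~\ref{unif_estim} of \cite{GZ} by translating to the origin and rescaling by a factor $2$; these are interchangeable, and your bookkeeping of how the ellipticity constant and the H\"older semi-norm transform under translation and dilation is accurate.

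The problem is in your final step, where you try to close the estimate by bounding $\|u^s\|_{L^\infty(\mathbb{H}_n(s))}\le C\|g\|_{L^\infty(\partial\mathbb{H}_n(s))}$ via the Poisson-kernel representation \eqref{represintgral}. That representation is asserted in the paper to hold only \emph{after} uniqueness under the condition \eqref{class} is established, and the uniqueness proof in Appendix~\ref{uniqueness} relies on Lemma~\ref{estim_Lq_nabu_p}, whose constant $M_q$ is finite precisely because of Lemma~\ref{LinfLq}, whose proof uses the boundary Lipschitz estimate \eqref{bound_nab_u_boundry}, which is exactly the consequence of Lemma~\ref{localboundaryLipsch} that you are trying to prove. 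Your argument is therefore circular. The paper avoids this entirely: since $u^s\in L^\infty(\mathbb{H}_n(s))$ is part of the standing hypotheses on the class of solutions considered, one simply writes $\|u^s\|_{L^\infty(D_2(\bar y))}\le\|u^s\|_{L^\infty(\mathbb{H}_n(s))}=:C<\infty$ and stops there. (This does mean the constant in the lemma implicitly depends on $\|u^s\|_{L^\infty(\mathbb{H}_n(s))}$ in addition to the quantities listed, a mild imprecision in the paper's statement, but the quantified bound by $\|g\|_{L^\infty}$ that you reach for is not needed and cannot be obtained at this stage without breaking the logical order.) Replace the last paragraph of your argument by this direct appeal to the $L^\infty$ hypothesis and the proof is sound.
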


\begin{proof}
	
	\noindent
	
	\noindent A Lipschitz estimate given in   \cite{optimBoundEstimate}, Theorem 2.7, yields
	\begin{equation*}\label{estimLips}
		\| \nabla_y \: u^s \|_{L^{\infty}(D_1( \bar{y} ))} \leq  C\left\{ \left(  \dashint_{D_2 ( \bar{y} )} |u^s(y)|^2  dy  \right)^{\!\!\!1/2} + \| \nabla g \|_{ L^{\infty}(\Gamma_2 ( \bar{y} ) ) } +\| \nabla g \|_{ C^{0,\eta}(\Gamma_2 ( \bar{y} ) ) }    \right\}.
	\end{equation*}
	Moreover, for $u^s\in L^\infty({\mathbb{H}}_n(s))$  there is a constant $C$ with $0<C<\infty$ such  that
	\begin{equation*}
	|u^s(y)| \leq   C , \quad \mbox{ for all } y\in\mathbb{H}_n(s),
\end{equation*}
which implies
	\begin{equation*}
	\left(  \dashint_{D_2 ( \bar{y} )} |u^s(y)|^2  dy  \right)^{\!\!\!1/2} \leq C  .
\end{equation*}
Hence one has
	\begin{equation*}
		\| \nabla_y u^s \|_{L^{\infty}(D_1( \bar{y} ))} \leq  C
	\end{equation*}	where $C$ depends on $d,\mu, \eta$ and $[A]_{C^{0,\eta}}$ and $ \| g \|_{ C^{1,\eta}( \partial{\mathbb{H}}_n(s) ) }$.
\end{proof}
\noindent By Lemma \ref{localboundaryLipsch} we have
\begin{align*}
	\| \nabla_y u^s \|_{L^{\infty}(D_1( \bar{y} ))}&\leq  C, \qquad \mbox{ for all }\bar{y} \in \partial{\mathbb{H}}_n(s).
\end{align*}
 This fact implies 
\begin{equation}\label{bound_nab_u_boundry}
	\| \nabla_y u^s \|_{L^{\infty}(\{ 0<y\cdot n -s <1 \})} \leq  C,
\end{equation}
where $C$ depends on  $d,\mu, \eta$, $[A]_{C^{0,\eta}}$ and $ \| g \|_{ C^{1,\eta} (\partial{\mathbb{H}}_n(s)  ) }$.\\

\noindent	The following Lemma gives  the  $L^q$-boundedness of the derivative $\nabla u$ in the normal direction uniformally in the tangential variable. 
\begin{lemma}\label{LinfLq}
	For $q\in(1,\infty)$ we have 
	\begin{align*}
		\int_0^\infty  \sup_{z'\in \mathbb{R}^{d-1}} | \nabla u(M(z',z_d))|^q dz_d \leq C < \infty,
	\end{align*}
	where $C$ depends on  $d,\mu,\eta, [A]_{ C^{0,\eta} }$, $\eta\in(0,1)$ and $ \| g \|_{ C^{1,\eta} (\partial{\mathbb{H}}_n(s)  ) }$.
\end{lemma}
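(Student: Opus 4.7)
The plan is to split the integration domain $(0,\infty)$ into the near-boundary strip $(0,1)$ and the interior tail $(1,\infty)$, control each piece by a different estimate, and then add.

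For $z_d \in (0,1)$, I invoke the consequence (\ref{bound_nab_u_boundry}) of the local boundary Lipschitz estimate in Lemma \ref{localboundaryLipsch}, namely
\begin{equation*}
	\| \nabla u \|_{L^\infty(\{0 < y\cdot n - s < 1\})} \leq C,
\end{equation*}
where $C$ depends on $d,\mu,\eta,[A]_{C^{0,\eta}}$ and $\|g\|_{C^{1,\eta}}$. Since $\{M(z',z_d):z'\in\mathbb{R}^{d-1},\,z_d\in(0,1)\}$ is exactly the slab $\{0 < y\cdot n - s < 1\}$ (up to the translation by $s$, which is taken to be $0$ in this appendix), one gets
\begin{equation*}
	\int_0^1 \sup_{z'\in\mathbb{R}^{d-1}} |\nabla u(M(z',z_d))|^q\, dz_d \leq C^q.
\end{equation*}

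For $z_d \in [1,\infty)$, I use the tail estimate (\ref{estim_deriv_u_delta0}) already established from the class condition (\ref{class}): since $\sup_y(y\cdot n - s)|\nabla u(y)| < \infty$ and $M(z',z_d)\cdot n = z_d$, one has $|\nabla u(M(z',z_d))| \leq C/z_d$ uniformly in $z'$, so
\begin{equation*}
	\int_1^\infty \sup_{z'\in\mathbb{R}^{d-1}} |\nabla u(M(z',z_d))|^q\, dz_d \leq C^q \int_1^\infty \frac{dz_d}{z_d^q} < \infty,
\end{equation*}
where the convergence of the last integral requires precisely $q>1$, which is the standing assumption. Adding the two pieces yields the desired bound with a constant depending on $d,\mu,\eta,[A]_{C^{0,\eta}}$ and $\|g\|_{C^{1,\eta}}$ (and on $q$ through the integral $\int_1^\infty z_d^{-q}\,dz_d = 1/(q-1)$).

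There is no real obstacle here: the two ingredients are already in hand, the class condition giving decay away from the boundary and the local Lipschitz estimate giving an $L^\infty$ bound up to the boundary. The only thing to watch is that the decay rate $z_d^{-1}$ from (\ref{class}) is not integrable on $(1,\infty)$ for $q=1$, which is why the lemma is stated for $q>1$; nothing stronger is needed.
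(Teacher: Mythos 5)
Your proposal is correct and follows essentially the same argument as the paper: both split the integral at $z_d=1$, bound the near-boundary piece by the uniform local boundary Lipschitz estimate (\ref{bound_nab_u_boundry}), and bound the tail by the decay $|\nabla u| \lesssim 1/z_d$ coming from the class condition (\ref{class}), using $q>1$ to integrate. The only cosmetic difference is that the paper quotes the pre-packaged tail bound (\ref{estim_deriv_u_delta0}) while you re-derive it inline from (\ref{class}); the content is identical.
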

\noindent In particular, one has 
\begin{align}\label{Mq}
	M_q:= \left( \sup_{h\in\mathbb{Z}^{d-1}}\int_0^\infty  \!\!\!\! \int_{T_h} |\nabla u(Mz)|^q dz \right)^{\!\!\!1/q} <\infty , \quad  T_h:= (0,1)^{d-1}+h, \ h\in\mathbb{Z}^{d-1}.
\end{align}
\begin{proof}
	\noindent	Using estimates (\ref{estim_deriv_u_delta0}) and (\ref{bound_nab_u_boundry}) we find that there exists $C=C(d,\mu,\eta, [A]_{C^{0,\eta}},  \| g \|_{ C^{1,\eta} (\partial{\mathbb{H}}_n(s)  ) } )$ such that
	\begin{align}
		\int_{0}^\infty  \| \nabla u^s(M (\cdot,z_d))\|^q_{L_{z'}^\infty(\mathbb{R}^{d-1})} dz_d
		&\leq  C  <\infty . \label{L^2_nab_u}
	\end{align}
	Indeed, we have
	\begin{align*}
		\int_{0}^\infty  \| \nabla u^s(M (\cdot,z_d))\|^q_{L_{z'}^\infty(\mathbb{R}^{d-1})} dz_d &=  	\int_{0}^1 \| \nabla u^s(M (\cdot,z_d))\|^q_{L_{z'}^\infty(\mathbb{R}^{d-1})} dz_d\\ &\qquad\qquad+	\int_{1}^\infty  \| \nabla u^s(M (\cdot,z_d))\|^q_{L_{z'}^\infty(\mathbb{R}^{d-1})} dz_d \\
		&	\leq \int_{0}^1  C\left( \| g \|_{\infty} + \| g \|_{ C^{1,\eta}(\partial\mathbb{H}_n(s) ) } \right)^q dz_d+C \| g \|^q_{\infty} \\
		&\leq  C\left( \| g \|_{\infty}+ \| g \|_{ C^{1,\eta}(\partial\mathbb{H}_n(s) ) } \right)^q +C \| g \|^q_{\infty} <\infty . \qedhere\\
	\end{align*}
\end{proof}

\noindent	{\bfseries Control of the second order derivative}
\begin{lemma}\label{deriv2_L2}
	For all $i=1,...,d,$ one has 	\begin{align*}
		\int_{0}^\infty  \|\partial_{y_i}  \nabla u^s(M (\cdot,z_d))\|^2_{L_{z'}^\infty(\mathbb{R}^{d-1})} dz_d<\infty.
	\end{align*}
\end{lemma}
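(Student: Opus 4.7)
The argument mimics the proof of Lemma \ref{LinfLq} one derivative higher. Write
\[
\int_0^\infty \|\partial_{y_i}\nabla u^s(M(\cdot,z_d))\|^2_{L^\infty_{z'}(\mathbb{R}^{d-1})}\,dz_d
= \int_0^1 (\cdots)\,dz_d + \int_1^\infty (\cdots)\,dz_d
\]
and bound the two contributions separately.

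For the interior part $z_d\geq 1$, the target is the pointwise estimate
\[
|\nabla^2 u^s(y)| \leq \frac{C}{(y\cdot n-s)^2}, \qquad y\in \mathbb{H}_n(s),
\]
which makes the squared norm integrable since $\int_1^\infty z_d^{-4}\,dz_d<\infty$. To obtain it, fix $y_0$ with $\delta:=y_0\cdot n-s\geq 1/2$, note that $B(y_0,\delta/2)\subset \mathbb{H}_n(s)$, and observe that the pair $(\partial_{y_k} u^s,\partial_{y_k} p^s)$ solves in this ball the perturbed Stokes system
\[
-\nabla\cdot A(y)\nabla (\partial_{y_k} u^s) + \nabla(\partial_{y_k} p^s) = \nabla\cdot\bigl((\partial_{y_k}\!A)\,\nabla u^s\bigr), \qquad \nabla\cdot\partial_{y_k} u^s = 0,
\]
with smooth coefficient $A$. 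After rescaling the ball to unit size, the source term lies in $C^{0,\eta}$ with norm controlled by $\|\nabla u^s\|_{L^\infty(B(y_0,\delta/2))}\leq C/\delta$ (using hypothesis (\ref{class}) and the smoothness of $A$). Applying the uniform interior version of Theorem \ref{unif_estim} and scaling back delivers $\|\nabla(\partial_{y_k} u^s)\|_{L^\infty(B(y_0,\delta/4))}\leq C/\delta^2$.

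For the boundary part $z_d\in(0,1)$, the goal is the uniform bound
\[
\|\nabla^2 u^s\|_{L^\infty(\{0<y\cdot n-s<1\})} \leq C.
\]
This is obtained by applying the local boundary Lipschitz estimate of Lemma \ref{localboundaryLipsch} (or equivalently of Theorem \ref{unif_estim} with $\varepsilon=1$) to the differentiated pair $(\partial_{y_k} u^s,\partial_{y_k} p^s)$ on disks $D_1(\bar y)$, $\bar y\in\partial\mathbb{H}_n(s)$. The tangential derivatives of the boundary datum $g$ are again in $C^{1,\eta}(\partial\mathbb{H}_n(s))$ by the smoothness assumption on $g$, while the normal part of $\nabla^2 u^s$ at the boundary can be recovered algebraically from the Stokes equation and the divergence-free condition. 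The $L^\infty$-control on $\partial_{y_k} u^s$ needed to feed the estimate comes from Lemma \ref{localboundaryLipsch} itself. Periodicity of $A$ and $g$ (together with the boundedness of the tangential translate orbit on a fundamental strip) makes the bound uniform in $\bar y$, hence in the tangential variable.

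The only real obstacle is producing a \emph{uniform} second-order boundary estimate: the first-order estimate of Lemma \ref{localboundaryLipsch} is not directly applicable to $\nabla u^s$ because it was stated for solutions in the Lipschitz class. The way around it is to notice that the differentiated pair $(\partial_{y_k} u^s,\partial_{y_k} p^s)$ satisfies a Stokes system of the form \eqref{Stokes_system_D1} with an extra divergence source $\nabla\cdot((\partial_{y_k}A)\nabla u^s)$ and boundary data $\partial_{y_k} g\in C^{1,\eta}$, a setting to which Theorem \ref{unif_estim} applies verbatim (with $\varepsilon=1$); combined with the already established $L^\infty$ bound (\ref{localboundaryLipsch_estim}) on $\nabla u^s$ this yields the required control on $\nabla^2 u^s$ near the boundary, and concatenating with the interior estimate finishes the proof.
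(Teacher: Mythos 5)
Your overall structure mirrors the paper's (differentiate the Stokes system, split the half-space into a near-boundary strip and an interior region, and estimate each piece), but the mechanism you use for the interior part contains a real error, and the claimed decay rate is both wrong and, as it happens, unnecessary.

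The rescaling step is the problem. Setting $r=\delta/2$ and blowing the ball $B(y_0,\delta/2)$ up to unit size produces a source $\hat{F}(\hat y)=(\delta/2)(\partial_kA)(y_0+r\hat y)\nabla u^s(y_0+r\hat y)$. Since $\partial_kA$ is a fixed periodic function that oscillates at unit scale in the original variable, its H\"older seminorm in the rescaled variable grows like $r^\eta$: one finds $[\hat F]_{C^{0,\eta}(B(0,1))}\gtrsim r\cdot r^\eta\cdot r^{-1}=r^\eta\sim\delta^\eta$, which is unbounded as $\delta\to\infty$. Theorem~\ref{unif_estim} is uniform in the oscillation of the leading coefficient $A(\cdot/\varepsilon)$, not in the oscillation of the source $F$, so the $[\hat F]_{C^{0,\eta}}$ term dominates and, after scaling back, you only get $|\nabla^2u^s|\lesssim\delta^{\eta-1}$, not $\delta^{-2}$. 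In fact $|\nabla^2 u^s|\lesssim \delta^{-2}$ is genuinely false here: since $A$ oscillates at unit scale, $\nabla^2 u^s$ also oscillates at unit scale with amplitude comparable to $\|\nabla u^s\|\sim \delta^{-1}$, so the best one can hope for is $|\nabla^2 u^s|\lesssim\delta^{-1}$. Fortunately that is enough, since $\int_1^\infty z_d^{-2}\,dz_d<\infty$; your target rate $\delta^{-2}$ was an overshoot.

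The fix is to work at unit scale and not rescale at all: on a fixed unit ball $B_1(y_0)$ with $\delta=y_0\cdot n-s\geq 2$, subtract a constant from $u^s$, apply interior Schauder to the original pair to get $[\nabla u^s]_{C^{0,\eta}(B_1)}\lesssim\delta^{-1}$, hence $\|F\|_{L^\infty}+[F]_{C^{0,\eta}}\lesssim\delta^{-1}$ for $F=(\partial_kA)\nabla u^s$, and then apply the interior Lipschitz estimate to the differentiated pair $(\partial_ku^s,\partial_kp^s)$ at unit scale, yielding $|\nabla^2u^s|\lesssim\delta^{-1}$. This still requires an interior estimate (the paper only states the boundary version in Theorem~\ref{unif_estim}), and at the boundary your remark about recovering the normal-normal second derivative ``algebraically'' needs to be made precise: only the tangential derivatives $\partial_\tau u^s$ carry Dirichlet data $\partial_\tau g$ directly, and $\partial_n\partial_n u^s$ must indeed be solved for from the PDE and incompressibility. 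The paper sidesteps all of these $C^{1,\eta}$ technicalities by running a plain Caccioppoli argument at unit scale (Lemma~\ref{Caccio}): it bounds $\|\nabla^2 u^s\|_{L^2(B(kn,2))}$ by $\|\nabla u^s\|_{L^2(B(kn,3))}$, covers the channel $T\times(2,\infty)$ by these balls, and then sums using the $L^2$-integrability of $\nabla u^s$ from Lemma~\ref{LinfLq}. This is a purely $L^2$ argument, requires no pointwise control of $\nabla^2 u^s$, and does not touch the source's H\"older norm at all; it is both simpler and more robust than the pointwise route you propose.
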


\begin{proof}
	We shall make use of the previous result through an application of the following Caccioppoli inequality.
	\begin{lemma}\label{Caccio}
		Let  $\Omega \subset \mathbb{R}^d$ be an open subset. Suppose $(u^s,p^s)$ satisfy
		\begin{equation*}
			\left\{
			\begin{array}{rcll}
				-\nabla\cdot A (y) \: \nabla u^s +  \nabla p^s&=& \nabla\cdot F& \mbox{in } \Omega,\\
				\nabla\cdot u^s&=& 0& \mbox{in } \Omega. \\
			\end{array}
			\right. 
		\end{equation*}
		Then for all $y_0\in\Omega $ such that $B(y_0,3)\subset \Omega$ one has
		\begin{equation*}\label{intlips}
			\| \nabla u^s \| _{L^2(B(y_0,2))} \leq  C \left( \| u^s\|_{L^2(B(y_0,3))} + \| F \|_{L^2(B(y_0,3))} \right).\\ 
		\end{equation*}
	\end{lemma}

	\noindent By differentiating the  first two equations of (\ref {StokesinHs})
	we see that the pair $(u^s,p^s)$  satisfies 
	\begin{equation}\label{partial_u}
		\left\{
		\begin{array}{rcll}
			-\nabla\cdot A (y) \: \nabla \partial_{y_i} u^s +  \nabla\partial_{y_i} p^s&=& \nabla\cdot( (\partial_{y_i} A(y) ) \nabla u^s) \;  \quad  &\mbox{in }  \mathbb{H}_n(s),\\
			\nabla\cdot\partial_{y_i} u^s&=& 0 \;  \qquad \qquad \qquad\qquad &\mbox{in }  \mathbb{H}_n(s) \\
		\end{array}
		\right. 
	\end{equation}
	for all $i=1,...,d$.\\
	\noindent Applying Lemma \ref{Caccio} to system (\ref{partial_u}) with $y_0=kn$, for $k\in \mathbb{N}$ and  $k\geq 3$,   yields
	\begin{align}
		\| \nabla \partial_{y_i} u^s \| _{L^2(B(y_0,2))}& \leq  C \left( \|   \partial_{y_i}u^s\|_{L^2(B(y_0,3))} + \|  \partial_{y_i} A(y)  \nabla u^s \|_{L^2(B(y_0,3))} \right)\nonumber\\ 
		\|\partial_{y_i} \nabla_y u^s\|_{L^2 (B(kn,2))}&\leq C (1+ \| \nabla A\|_{\infty} ) \|\nabla_y u^s\|_{L^2(B(kn,3)}\nonumber	\\
		&\leq C (A)\|\nabla_y u^s\|_{L^2(B(kn,3)}	\label{partlipsu}
	\end{align}
	Let $T:=\{ \bar{y}\in\partial\mathbb{H}_n: 0\leq | \bar{y}|\leq 1\}$. Then we have the following covering of the channel 
	$T\times (2,+\infty)$
	\begin{equation*}
		T\times (2,+\infty)	\subset \bigcup^\infty_{k=3} B(kn,2)
	\end{equation*}
	and consequently we obtain the inequality
	\begin{align*}
		\int_{T\times (2,+\infty)}|\partial_{y_i} \nabla_y u^s|^2	\leq  \sum^\infty_{k=3} \int_{B(kn,2)}|\partial_{y_i} \nabla_y u^s|^2
	\end{align*}
	then by (\ref{partlipsu}) and (\ref{L^2_nab_u}) we get 
	\begin{align*}
		\int_{T\times (2,+\infty)}|\partial_{y_i} \nabla_y u^s|^2
		&\leq C(A)\sum^\infty_{k=3}  \|\nabla_y u^s\|^2_{L^2(B(kn,3)}	\\
		&\leq C 	\int_{2}^\infty  \| \nabla u^s(M (\cdot,z_d))\|^2_{L_{z'}^\infty(\mathbb{R}^{d-1})} dz_d\leq C(A,g)<\infty.
	\end{align*}
	On the other hand one applies Lemma \ref{localboundaryLipsch} to show 
	\begin{equation}
		\| \nabla_y u^s \|_{L^{\infty}(T\times (0,2))} \leq  C(g),
	\end{equation}
	and subsequetly
	\begin{align*}
		\int_{T\times (0,+\infty)}|\partial_{y_i} \nabla_y u^s|^2
		&\leq C(A,g)<\infty.
	\end{align*}
	This estimate holds also when $T$ is replaced with $\tilde{y} + T$ for all  $\tilde{y} \in \partial\mathbb{H}_n(s)$. Then we arrive at 
	\begin{align*}
		\int_{0}^\infty  \|\partial_{y_i}  \nabla u^s(M (\cdot,z_d))\|^2_{L_{z'}^\infty(\mathbb{R}^{d-1})} dz_d\leq C(A,g)<\infty,\quad \mbox{for all } i=1,...,d.\qquad \qquad \qquad\qquad \qquad \qedhere
	\end{align*}
\end{proof}

\begin{lemma} 
	Let $r>0$. For $q\in(1,\infty)$ and for a pair $(u^s,p^s)$ satisfying the first two equations of system \textup{(\ref{hsStokes})} we have \begin{equation}
		\|p^s-(p^s)_{D_r} \|_{L^q(D_r)}\leq C\|\nabla u^s \|_{L^q(D_r)},\label{estim_Bogov_p_Appendix} 
	\end{equation}where $C$ depends $d$ and $\mu$
\end{lemma}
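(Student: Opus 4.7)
The plan is to argue by duality, exactly as in the $L^2$ version due to Boguvski\u{\i}, but replacing the $L^2$-based Boguvski\u{\i} operator by its $L^q$-version. More precisely, I will use that
\begin{align*}
\|p^s - (p^s)_{D_r}\|_{L^q(D_r)} = \sup \left\{ \int_{D_r} (p^s - (p^s)_{D_r})\,\varphi\, dy : \varphi \in L^{q'}(D_r),\ \int_{D_r}\varphi=0,\ \|\varphi\|_{L^{q'}(D_r)}\le 1 \right\},
\end{align*}
where $q'$ is the conjugate exponent of $q$. Notice that one may replace $p^s$ by $p^s - (p^s)_{D_r}$ in the integral because $\int_{D_r}\varphi = 0$.

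For each such $\varphi$, I invoke the Boguvski\u{\i} operator on $D_r$: since $D_r$ is a bounded Lipschitz domain that is star-shaped with respect to an interior ball (in fact it is a half-ball, which is a finite union of star-shaped pieces), there exists $v \in W_0^{1,q'}(D_r)^d$ solving $\nabla\cdot v = \varphi$ in $D_r$ with
\begin{align*}
\|\nabla v\|_{L^{q'}(D_r)} \le C(d,q)\,\|\varphi\|_{L^{q'}(D_r)},
\end{align*}
see e.g.\ Galdi \cite{Gld}, Chapter III. Using $v$ as a test function in the weak formulation of the first two equations of (\ref{hsStokes}) in $D_r$ and integrating by parts (the boundary terms vanish since $v\in W_0^{1,q'}(D_r)$) yields
\begin{align*}
\int_{D_r} (p^s - (p^s)_{D_r})\,\varphi\, dy = \int_{D_r} p^s\, \nabla\cdot v\, dy = \int_{D_r} A(y) \nabla u^s : \nabla v\, dy.
\end{align*}

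By H\"older's inequality and the ellipticity/boundedness condition on $A$ (which gives $|A(y)|\le 1/\mu$ uniformly), the right-hand side is bounded by $\tfrac{1}{\mu} \|\nabla u^s\|_{L^q(D_r)} \|\nabla v\|_{L^{q'}(D_r)} \le C(d,\mu,q)\, \|\nabla u^s\|_{L^q(D_r)}\,\|\varphi\|_{L^{q'}(D_r)}$. Taking the supremum over admissible $\varphi$ yields (\ref{estim_Bogov_p_Appendix}). The main technical point is the existence of the Boguvski\u{\i} operator with the quantitative $L^{q'}\to W^{1,q'}_0$ bound on the half-ball $D_r$; this is the only step that genuinely uses the geometry of the domain, and it is standard for Lipschitz domains that are finite unions of star-shaped subdomains. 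Once this is in hand, the rest of the argument is a one-line duality computation that is exponent-independent.
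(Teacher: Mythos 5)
Your proof follows essentially the same route as the paper's: both rest on the Bogovski\u{\i} solvability of $\nabla\cdot v=\varphi$ on $D_r$ with the $L^{q'}\to W_0^{1,q'}$ bound, then test the Stokes equations with $v$, integrate by parts, and apply H\"older. The only cosmetic difference is that the paper takes an \emph{arbitrary} $\varphi\in L^{q'}(D_r)$ and solves $\nabla\cdot\psi=\varphi-(\varphi)_{D_r}$, whereas you restrict the dual pairing to mean-zero $\varphi$.

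One small imprecision worth flagging: the identity
\begin{align*}
\|p^s-(p^s)_{D_r}\|_{L^q(D_r)}=\sup\Bigl\{\int_{D_r}(p^s-(p^s)_{D_r})\varphi\,dy:\ \varphi\in L^{q'},\ \textstyle\int\varphi=0,\ \|\varphi\|_{L^{q'}}\le 1\Bigr\}
\end{align*}
is not an equality for $q\ne 2$. The extremizing $\varphi_0=|p^s-(p^s)|^{q-1}\mathrm{sgn}(p^s-(p^s))$ need not have zero mean, so restricting the supremum to mean-zero test functions can genuinely lower it. What is true is the two-sided comparability $\sup\le\|p^s-(p^s)\|_{L^q}\le 2\sup$ (replace $\varphi$ by $\varphi-(\varphi)_{D_r}$ and use $\|\varphi-(\varphi)_{D_r}\|_{L^{q'}}\le 2\|\varphi\|_{L^{q'}}$), which is all you need and only changes $C$. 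The paper's formulation avoids this by keeping $\varphi$ unconstrained and inserting $\varphi-(\varphi)_{D_r}$ inside the integral, which gives a genuine supremum over the unit ball of $L^{q'}$ and hence the exact duality. Either way the conclusion and constants (up to a factor of $2$) are the same, and your reduction to the Bogovski\u{\i} estimate on the half-ball is exactly the step the paper leans on as well.
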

\begin{proof}
By a rescaling argument one can assume $r=1$ without loss of generality.
Let $q'\in(1,\infty)$ be such that $\frac{1}{q}+\frac{1}{q'}=1$ and let $\varphi\in L^{q'}(D_1)$. By the solvability of the divergence equation, see \cite{Bogovskii} (Theorem 4.1), there exists $\psi\in W_0^{1,q'}(D_1)$ satisfying 
	\begin{align}
		\nabla \cdot \psi =\varphi -(\varphi)_{D_1} ,\quad \mbox{ and } \quad  \| \psi \|_{W_0^{1,q'}(D_1)} \leq C \| \varphi -(\varphi)_{D_1} \|_{L^{q'}(D_1)}\label{Bogov_Lq}
	\end{align} where the constant $C$ depends on $d.$
	We then get 
	\begin{align*}
		\int_{D_1}(p^s-(p^s)_{D_1}) \varphi & = \int_{D_1}(p^s-(p^s)_{D_1}) (\varphi -(\varphi)_{D_1})\\
		&= \int_{D_1}(p^s-(p^s)_{D_1}) \nabla \cdot \psi
	\end{align*}
	and  integrate by parts to obtain 
	\begin{align*}
		\int_{D_1}(p^s-(p^s)_{D_1}) \varphi 
		&= \int_{D_1}-\nabla p^s\cdot \psi  . 
	\end{align*}
	Now one uses the first equation in (\ref{hsStokes}) and integrate by parts again
	\begin{align*}
		\int_{D_1}(p^s-(p^s)_{D_1}) \varphi 
		&= \int_{D_1}[- \nabla\cdot A (y)  \nabla u^s ]\cdot \psi  \\  
		&= \int_{D_1} A (y)  \nabla u^s \cdot \nabla  \psi  .  
	\end{align*}
	The H\"older  inequality then yields 
	\begin{align*}
		\int_{D_1}(p^s-(p^s)_{D_1}) \varphi 
		&\leq  C \int_{D_1}  | \nabla u^s |\, |\nabla  \psi | \\
		&\leq  C  \| \nabla u^s \|_{L^q(D_1)} \, \|\nabla  \psi \|_{L^{q'}(D_1)}.
	\end{align*}
	Now one combines the latter inequality with the estimate (\ref{Bogov_Lq}) to get
	\begin{align*}
		\int_{D_1}(p^s-(p^s)_{D_1}) \varphi 
		&\leq  C  \| \nabla u^s \|_{L^q(D_1)} \, \| \varphi -(\varphi)_{D_1} \|_{L^{q'}(D_1)}\\
		&\leq  C  \| \nabla u^s \|_{L^q(D_1)} \,  \| \varphi  \|_{L^{q'}(D_1)}.
	\end{align*}
	Seeing that $\varphi$ is an arbitrary element of $L^{q'}(D_1)$ one draws from the last estimate  that 
	\begin{align}
		\|p^s-(p^s)_{D_1} \|_{L^q(D_1)} 
		&\leq  C  \| \nabla u^s \|_{L^q(D_1)}.\label{Bogov_D_1}\qedhere
	\end{align}

\end{proof}

\noindent We also have the following  lemma that will be applied in the proof of the uniqueness of the solution to problem (\ref{hsStokes}).

\begin{lemma}\label{estim_Lq_nabu_p}
Let the pair $\left(u, p\right)$ satisfy
		\begin{equation*}
		\left\{
		\begin{array}{rcll}
			-\nabla\cdot A (y) \nabla u +  \nabla p&=& 0& \mbox{ in } \mathbb{H}_n,\\
			\nabla\cdot u &=& 0 .
		\end{array}
		\right. 
	\end{equation*}
	Then for $r>0$ and $q>1$ we have the following estimates :
	\begin{align}\label{nab_uespL2}
		\|\nabla u  \|^q_{L^q(D_r(0))}\leq  M_q^q\; r^{d-1},
	\end{align}
	and
	\begin{align}\label{estim_Lq_Dr}
		\|p \|^q_{L^q(D_r(0))}\leq C r^{d-1},
	\end{align}
	where $C$ depends on $d, \mu$, $M_q$ and $q$. The constant $M_q$ is defined in \textup{(\ref{Mq})}.
\end{lemma}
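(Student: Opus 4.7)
The plan is to prove the two estimates in sequence: the velocity bound first by a direct covering argument in rotated coordinates, and the pressure bound as an immediate consequence of the Bogovski\u{\i}-type inequality from Lemma \ref{Bogov_p}.

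For the velocity estimate, I would use the rotation $M$ sending $e_d$ to $n$ and pass to the coordinates $y = Mz$, $z=(z',z_d)\in \mathbb{R}^{d-1}\times(0,\infty)$, in which $\mathbb{H}_n$ becomes $\{z_d>0\}$ and $D_r(0)$ is contained in $M(B_r(0)\cap\{z_d>0\})$. The projection of this set on the hyperplane $\{z_d=0\}$ is the $(d-1)$-dimensional disk of radius $r$, which is covered by at most $C(d)\,r^{d-1}$ unit tangential cubes $T_h$, indexed by the set $H_r := \{h\in\mathbb{Z}^{d-1} : T_h\cap B_r^{d-1}(0)\neq \emptyset\}$. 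Consequently $D_r(0) \subset M\!\left(\bigcup_{h\in H_r} T_h\times(0,\infty)\right)$, and the very definition of $M_q$ in (\ref{Mq}) yields
$$\|\nabla u\|_{L^q(D_r(0))}^q \;\leq\; \sum_{h\in H_r}\int_0^\infty\!\!\int_{T_h} |\nabla u(Mz)|^q\,dz \;\leq\; |H_r|\, M_q^q \;\leq\; C(d)\,M_q^q\, r^{d-1}.$$

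For the pressure, my plan is to apply Lemma \ref{Bogov_p} directly on the domain $D_r(0)$, producing
$$\|p - (p)_{D_r(0)}\|_{L^q(D_r(0))} \;\leq\; C(d,\mu)\,\|\nabla u\|_{L^q(D_r(0))} \;\leq\; C\, M_q\, r^{(d-1)/q},$$
after combining with the velocity bound just established. Since the first two equations of (\ref{hsStokes}) determine the pressure only up to an additive constant, fixing the normalization so that $(p)_{D_r(0)}=0$ (or reading (\ref{estim_Lq_Dr}) as the $L^q$ estimate on the mean-zero representative) immediately yields $\|p\|^q_{L^q(D_r(0))} \leq C\, r^{d-1}$ with $C$ depending on $d,\mu,q$ and $M_q$.

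The argument is essentially mechanical; the only delicate point is that the pressure estimate is meaningful only modulo constants, so one must either adopt a pressure normalization such as $(p)_{D_r(0)}=0$ at the outset, or interpret the left-hand side of (\ref{estim_Lq_Dr}) accordingly. Beyond this, no serious obstacle is expected: the covering step rests entirely on the explicit geometry of $D_r(0)$ in the rotated frame and the translation-invariant definition of $M_q$, while the pressure bound is a direct consequence of Lemma \ref{Bogov_p} combined with the velocity estimate.
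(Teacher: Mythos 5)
Your velocity estimate follows the same covering argument as the paper and is fine. The pressure estimate, however, has a genuine gap that you have noticed but waved away. Lemma \ref{Bogov_p} gives you $\|p - (p)_{D_r(0)}\|_{L^q(D_r(0))} \leq C\|\nabla u\|_{L^q(D_r(0))} \leq C M_q\, r^{(d-1)/q}$, and you then propose to normalize $(p)_{D_r(0)} = 0$ to absorb the mean. But that normalization depends on $r$, whereas the lemma is a statement about a \emph{single} pair $(u,p)$ and must hold for every $r>0$ simultaneously with one fixed choice of the additive constant. In the application (Step 4 of the uniqueness proof in Appendix \ref{uniqueness}) the estimate is invoked for all $R\ge 1$ with the same $p$; if you were allowed to renormalize at each scale, the statement would be vacuous there.

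What is actually required is to control $(p)_{D_r(0)}$ itself once the constant is fixed, and this is what the paper's proof supplies: it shows that the telescoping series $\sum_{k\ge 0}\bigl[(p)_{D(0,2^k r)} - (p)_{D(0,2^{k+1}r)}\bigr]$ converges absolutely, with $\bigl|(p)_{D(0,2^k r)} - (p)_{D(0,2^{k+1}r)}\bigr| \le C\, 2^{-k/q}\, r^{-1/q}$, so that the limit $(p)_\infty := \lim_{k\to\infty}(p)_{D(0,2^k r)}$ exists and is independent of $r$. Normalizing $(p)_\infty = 0$ then forces $|(p)_{D(0,r)}| \le C\, r^{-1/q}$, and this, combined with your Bogovski\u{\i} step, delivers the full bound. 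Each link in this chain uses the velocity estimate at scale $2^{k+1}r$, so the proof is not a one-shot application of Lemma \ref{Bogov_p} on $D_r(0)$; the dyadic iteration is the missing idea in your proposal.
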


\begin{proof}[Proof of \textup{Lemma \ref{estim_Lq_nabu_p}}]

	\noindent \textbf{Step 1 : Control of the velocity $\nabla u$}\\
	We set $D_{1}(0):= \{ z\in\mathbb{H}_n : |z|< 1\}$ and  $D_{r}(0):=  r D_{1}(0)$. 
	We shall use the following fact :  for all $k\in\mathbb{N},$ one has
	\begin{align}\label{nab_u_NM}
		\| \nabla u\|^q_{L^q(2^{k}D_{r}(0))}\leq N_k\cdot M^q_q
	\end{align}
	where $N_k$ is the minimum number of cells $T_h$ that it takes  to cover $\mathbb{R}^{d-1}\cap \overline{2^{k}D_{r}(0)}$. Therefore, the number $N_k$ is at the order of the volume of $\mathbb{R}^{d-1}\cap \overline{2^{k}D_{r}(0)}$ since $|T_h|=1$. More precisely we have 
	\begin{align}\label{number_N}
		N_k =O \left((2^kr)^{d-1}\right).
	\end{align}
Thus, taking $k=0$ in estimates  (\ref{nab_u_NM}) and (\ref{number_N}) yields 
	\begin{align*}
		\| \nabla u \|^q_{L^q(D_{r}(0))}&\leq M^q_q  \;  r^{d-1}.
	\end{align*}

	\noindent \textbf{Step 2 : Control of the pressure  $p$}\, : \\
	We show that the limit  $\lim_{k\rightarrow \infty}(p)_{D(0,2^k r)}$  exists by showing that the series $$\lim_{N\rightarrow \infty}\sum^N_{k=0} (p)_{D(0,2^k r)} -  (p)_{D(0,2^{k+1} r)} = \sum^\infty_{k=0} (p)_{D(0,2^k r)} -  (p)_{D(0,2^{k+1} r)}  $$ 
	converges. From the identity 
	\begin{align}
		(p)_{D(0,2^k r)} -  (p)_{D(0,2^{k+1} r)}  	= \dashint_{D(0,2^k r)} \left(  p(y) -  (p)_{D(0,2^{k+1} r)}   \right) dy\nonumber
	\end{align}	one gets (using Jensen's inequality for instance)
	\begin{align}
		\left| (p)_{D(0,2^k r)} -  (p)_{D(0,2^{k+1} r)} \right|
		&\leq  \left( \dashint_{D(0,2^kr)} \left| p(y) -  (p)_{D(0,2^{k+1} r)}     \right|^q dy \right)^{\!\!1/p} \nonumber  \\
		&\leq  \left( \frac{1}{2^{dk}r^d|D_1(0)|} \int_{D(0,2^{k+1} r)} \left| p(y)-  (p)_{D(0,2^{k+1} r)} \right|^q dy  \right)^{\!\!1/q} \nonumber\\
		&\leq \frac{1}{|D_1(0)|^{1/q}} \frac{1}{2^{dk/q}\, r^{d/q}}\left(\int_{D(0,2^{k+1} r)} \left|   p(y)-  (p)_{D(0,2^{k+1} r)}\right|^q dy \right)^{\!\!1/q} .\nonumber
	\end{align}	
	Then by the estimate 
	\begin{align}
		\left\|  p(y)-  (p)_{D(0,2^{k+1} r)} \right\|_{L^q \left({D(0,2^{k+1} r)}  \right)} \leq C(d,\mu) \| \nabla u \|_{L^q\left(D(0,2^{k+1} r) \right)}\nonumber
	\end{align}
	we obtain the bound
	\begin{align}
		\left| (p)_{D(0,2^k r)} -  (p)_{D(0,2^{k+1} r)}    \right|
		&\leq \frac{C(d,\mu)}{|D_1(0)|^{1/q}} \frac{1}{2^{dk/q}} \frac{1}{r^{d/q}}\| \nabla u \|_{L^q\left(D(0,2^{k+1} r) \right)} .\nonumber
	\end{align}
	Using 	(\ref{nab_u_NM}) together with (\ref{number_N})  we control the quantity $\|\nabla u\|_{L^q(D(0,2^{k+1} r) )} $ as follows
	\begin{align*}
		\|\nabla u \|^q_{L^q(D(0,2^{k+1} r) )}
		\leq & \ M^q_q (2^{k+1}r)^{d-1}\\
		\leq & \ M^q_q \cdot 2^{(k+1)(d-1)}r^{d-1}.
	\end{align*}
	We integrate this fact in the preceding inequality which yields
	\begin{align}
		\left|   (p)_{D(0,2^k r)} -  (p)_{D(0,2^{k+1} r)} \right|
		&\leq M_q \frac{C(d,\mu)}{|D_1(0)|^{1/q}} \frac{1}{2^{dk/q}}\frac{1}{r^{d/q}}\,  2^{(k+1)(d-1)/q}
		\, r^{(d-1)/q} \nonumber \\ 
		&\leq 2^{(d-1)/q}M_q\frac{C(d,\mu)}{|D_1(0)|^{1/q}} \frac{  2^{(d-1)k/q} }{2^{dk/q}}
		\,r^{-1/q} \nonumber\\
		&\leq 2^{(d-1)/q}M_q\frac{C(d,\mu)}{|D_1(0)|^{1/q}} \frac{1}{2^{k/q}} 
		\,r^{-1/q}.\label{pdk}
	\end{align}
	The estimate (\ref{pdk}) yields
	\begin{align*}
		\sum^\infty_{k=0}\left|  (p)_{D(0,2^k r)} -  (p)_{D(0,2^{k+1} r)}  \right| &\leq 2^{(d-1)/q}M_q\frac{C(d,\mu)}{|D_1(0)|^{1/q}} \sum^\infty_{k=0}\left(\frac{1}{2^{1/q}} \right)^k  r^{-1/q}\\
		&\leq C r^{-1/q},
	\end{align*}	
	where $C$ depends on $d,\mu$ and $M_q$. The latter estimate shows that the series  $$\sum^\infty_{k=0}    (p)_{D(0,2^k r)} -  (p)_{D(0,2^{k+1} r)}  $$ is absolutely convergent which implies that the series itself converges. As a consequence the quantity $(p)_\infty :=\lim_{k\rightarrow \infty}(p)_{D(0,2^k r)}$ is well defined.  One can assume that $(p)_\infty =0$ since the pressure $p$ is defined up to a constant. Indeed, if we make the switch $p\rightarrow p- (p)_\infty$ then the limit becomes
	\begin{align*}
		\lim_{k\rightarrow \infty} \left( p-(p)_\infty \right)_{D(0,2^k r)}
		=&\,  \lim_{k\rightarrow \infty} (p)_{D(0,2^{k} r)} - (p)_\infty \\
		=& \: (p)_\infty - (p)_\infty \\
		=& \: 0.
	\end{align*}	
	
	\noindent  Therefore the quantity $p$ is also written 
	\begin{align}
		p
		=&\, p -(p)_{D(0,r)}   +\sum^\infty_{k=0}   (p)_{D(0,2^k r)} -  (p)_{D(0,2^{k+1} r)} \label{def_q}
	\end{align}	
	which implies
	\begin{align}
		\|p\|_{L^q(D(0,r))} 
		\leq&\, \|p -(p)_{D(0,r)} \|_{L^q(D(0,r))}  +\sum^\infty_{k=0}   \|(p)_{D(0,2^k r)} -  (p)_{D(0,2^{k+1} r)} \|_{L^q(D(0,r))}. \label{triangul_ineq}
	\end{align}	
	The estimate (\ref{estim_Bogov}) yields 
	\begin{align*}
		\|p-(p)_{D(0,r)} \|_{L^q(D(0,r))}& \leq C(d,\mu) \|\nabla u\|_{L^q(D(0,r))} \\
		& \leq C(d,\mu) M_q \,r^{d/q}r^{-1/q}.
	\end{align*}	
	
	\noindent Now  let us show that 
	\begin{align*}
		\sum^\infty_{k=0}    \left\| (p)_{D(0,2^k r)} -  (p)_{D(0,2^{k+1} r)}   \right\|_{L^q( D(0,r))}   \leq   C(d,\mu) M_q \,r^{d/q}r^{-1/q}
	\end{align*}
	with $C$ depending on $d,\mu$ and  $M_q$. The estimate (\ref{pdk})  implies 
	\begin{align*}
		\left\|   (p)_{D(0,2^k r)} -  (p)_{D(0,2^{k+1} r)}   \right\|_{L^q( D(0,r))}
		&\leq 2^{(d-1)/q}M_q\cdot \frac{C(d,\mu)}{| D(0, 1 )|^{1/q}} \frac{1}{2^{k/q}} 
		\,r^{-1/q}\cdot |D(0,r)|^{1/q}\\
		&\leq  M_q \cdot C(d,\mu,q)\ \frac{1}{2^{k/q}} 
		\,r^{-1/q} r^{d/q}. 
	\end{align*}
	Then, by summing over $k\in\mathbb{N}$, we obtain
	\begin{align*}
		\sum^\infty_{k=0} \left\| (p)_{D(0,2^k r)} -  (p)_{D(0,2^{k+1} r)}   \right\|_{L^q( D(0,r))}
		&\leq M_q\cdot C(d,\mu,q)  \left( \sum^\infty_{k=0}  \frac{1}{2^{k/q}} \right) 
		\, \,r^{-1/q} r^{d/q}  \\
		&\leq C(M_q, d,\mu,q) \, r^{d/q}r^{-1/q}.
	\end{align*}
	
	\noindent Hence by the inequality (\ref{triangul_ineq})	 we get 
	\begin{align*}
		\|	p\|_{L^q(D(0,r))}\
		\leq \;  &   C(M_q, d,\mu,q) r^{d/q}r^{-1/q}. \qedhere\\
\end{align*}	\end{proof}

	\section{Proof of Proposition \ref{Propo_estim_Green_y}}\label{append_Gfunction}
	
	\begin{proof}[Proof of \textup{Proposition \ref{Propo_estim_Green_y}}]
		The proof is divided into several steps.\\
		\noindent  \underline{Step 1 : Proof of the estimate}  $| G(y,\tilde{y})| \leq C  \frac{1 }{|y-\tilde{y}|^{d-2}}.$ \\ 
		\noindent  Let $y,\tilde{y}\in \mathbb{H}_n$. 
		Set $r:=|y-\tilde{y}|$ and let $(u,p)$ be the solution of 
		\begin{equation}\label{system_(u,p)}
			\left\{
			\begin{array}{rcll}
				-\nabla\cdot A(y) \: \nabla u  +  \nabla p&=&  f \;  \quad& \mbox{in } {\mathbb{H}}_n,\\
				\nabla\cdot u&=& 0, \;  \quad & \\
				u&=& 0\quad & \mbox{on } \partial{\mathbb{H}}_n,
			\end{array}
			\right. 
		\end{equation}
		where $f\in C^\infty_0(B(\tilde{y},r/3))$.

	\noindent Then  the velocity $u$ admits the following representation 
		\begin{align}
			u(y)&=  \int_{\mathbb{H}_n}  G(y,z)\cdot f(z) dz\\
			&=  \int_{\mathbb{H}_n\cap B(\tilde{y},r/3)}  G(y,z)\cdot f(z) dz.\label{u_equal_int Gfdz}
		\end{align}
		From the fact that $f$ vanishes on $D_{r/3}(y)$ one draws that 
		\begin{equation}
			\left\{
			\begin{array}{rcll}
				-\nabla_{\!z}\cdot A(z) \nabla_{\!z} u(z)  +  \nabla_{\!z}p(z)&=& 0 \;  \quad& \mbox{in } D_{r/3}(y),\\
				\nabla_{\!z}\cdot u(z)&=& 0 \;  \quad & \mbox{in } D_{r/3}(y),\\
				u(z)&=& 0\quad & \mbox{on }  \Gamma_{r/3}(y).
			\end{array}
			\right. 
		\end{equation}
		Then, by a rescaled version of Lemma \ref{Holder_estim}   there exists a constant $C$ depending on $d, \mu, \|A\|_{C^{0,\eta}}$, such that, for $\lambda\in(0,1)$, we have
		\begin{align}
			[u]_{C^{0,\lambda}(D_{r/6}(y))} &\leq C \frac{1}{r^{d/2+\lambda}}  \| u\|_{L^2(D_{r/3}(y))}.   \label{rescaled_Holder}
		\end{align}
		If $z_1,z_2\in  D_{r/6}(y)$ then $|z_1-z_2|\leq r/3$ and $3^\lambda\frac{|u(z_1)-u(z_2) |}{r^\lambda} \leq \frac{|u(z_1)-u(z_2) |}{|z_1-z_2|^\lambda}$. Hence, the estimate (\ref{rescaled_Holder})  leads to
		\begin{align*}
			|u(y)|\leq C	\left(\dashint_{D(y,r/3)} |u(z)|^2 dz\right)^{\!\!\!1/2}.
		\end{align*}
		Rewriting the latter inequality with the representation formula (\ref{represent_u})
		one obtains
		\begin{align*}
			\left| \int_{\mathbb{H}_n\cap B(\tilde{y},r/3)} G(y ,z)\cdot f(z) dz \right|&\leq  C	\left(\dashint_{\mathbb{H}_n\cap B(y,r/3)} |u(z)|^2 dz\right)^{\!\!\!1/2}\\
			&\leq  C	\left(\dashint_{\mathbb{H}_n\cap B(y,r/3)} |u(z)|^{\frac{2d}{d-2}} dz\right)^{\!\!\!\frac{d-2}{2d}} \mbox{ by H\"older inequality}\\
			&\leq  C	\left(\frac{1}{r^d}\int_{\mathbb{H}_n} |u(z)|^{\frac{2d}{d-2}} dz\right)^{\!\!\!\frac{d-2}{2d}}\\
			&\leq  C	\frac{1}{r^\frac{d-2}{2}}	\left(\int_{\mathbb{H}_n} |u(z)|^{\frac{2d}{d-2}} dz\right)^{\!\!\!\frac{d-2}{2d}}.
		\end{align*}
		We then use the Sobolev inequality to get
		\begin{align}
			\left| \int_{\mathbb{H}_n\cap B(\tilde{y},r/3)} G(y ,z)\cdot f(z) dz \right|
			&\leq  \frac{C}{r^\frac{d-2}{2}}	\left(\int_{\mathbb{H}_n} |\nabla u(z)|^{2} dz\right)^{\!\!\!\frac{1}{2}}\nonumber
		\end{align}
		which, combined with  the energy estimate \begin{align*}
			\|\nabla u \|_{L^{2}(\mathbb{H}_n)}
			&\leq  Cr\| f \|_{L^{2}(\mathbb{H}_n)},
		\end{align*}
		yields
		\begin{align*}
			\left| \int_{\mathbb{H}_n\cap B(\tilde{y},r/3)} G(y ,z)\cdot f(z) dz \right|
			&\leq  \frac{Cr}{r^{\frac{d}{2}-1}}	\; \| f \|_{L^{2}(\mathbb{H}_n)}
		\end{align*}
		and by duality this inequality gives
		\begin{align}
			\left( \int_{\mathbb{H}_n\cap B(\tilde{y},r/3)} |G(y ,z)|^2 dz \right)^{1/2}
			&\leq  \frac{Cr}{r^{\frac{d}{2}-1}}	\nonumber\\
			&\leq  \frac{Cr^2}{r^{d/2}}	\label{ineq_r^2_r^(d-2)}.
		\end{align}
		Now one multiplies both sides of (\ref{ineq_r^2_r^(d-2)}) by $\frac{1}{r^{d/2}}$ 
		\begin{align}
			\left( \dashint_{\mathbb{H}_n\cap B(\tilde{y},r/3)} |G(y ,z)|^2 dz \right)^{1/2}
			&\leq  C \frac{1}{r^{d-2}} \label{L2_norm_of_G}.
		\end{align}
		By applying Lemma \ref{Holder_estim} to the system
		\begin{equation*}
			\left\{
			\begin{array}{rcll}
				-\nabla_z\cdot A^* (z)  \nabla_z G^*(z,y) +  \nabla_z \Pi^*(z,y)&=&0 \ &\mbox{in } \mathbb{H}_n\cap B(\tilde{y},r/3),\\
				\nabla_z\cdot G^*(z,y)&=& 0 \;  &\mbox{in } \mathbb{H}_n\cap B(\tilde{y},r/3),\\
				G^*(z,y) &=& 0  & \mbox{on } \partial \mathbb{H}_n\cap B(\tilde{y},r/3) .
			\end{array}
			\right. 
		\end{equation*}
		we obtain 
		\begin{align}
			|  G^*(\tilde{y},y)   |\leq C	\left(\dashint_{\mathbb{H}_n\cap B(\tilde{y},r/3)} |   G^*(z,y)      |^2 dz\right)^{\!\!\!1/2}.\label{Gy_tildy_leq_L2norm_of_G}
		\end{align}
		Now the  estimates (\ref{L2_norm_of_G}) and  (\ref{Gy_tildy_leq_L2norm_of_G}) together give
		\begin{align*}
			|  G^*(\tilde{y},y)   |\leq C	\frac{1}{r^{d-2}} 
		\end{align*}
		Then by the symmetry property (\ref{sym}) we get
		\begin{align*}
			|  G(y,\tilde{y})   |\leq C	\frac{1}{r^{d-2}} .\\
		\end{align*}

		\noindent \underline{Step 2 : Proof of the estimates}  $| \nabla_y G(y,\tilde{y})| \leq C  \frac{1 }{|y-\tilde{y}|^{d-1}}$ and  $| \nabla_{\tilde{y}} G(y,\tilde{y})| \leq C  \frac{1 }{|y-\tilde{y}|^{d-1}}.$\\
		Let us set $r:=\frac{2}{3}|y-\tilde{y}|.$ 
		We apply a rescaled version of Theorem \ref{unif_estim} to the system of equations 
		\begin{equation}
			\left\{
			\begin{array}{rcll}
				-\nabla_z\cdot A (z)  \nabla_z G(z,\tilde{y}) +  \nabla_z \Pi(z,\tilde{y})&=& 0 &\mbox{in } D_r(y),\\
				\nabla_z\cdot G(\cdot,\tilde{y})&=& 0    &\mbox{in } D_r(y),\\
				G(z,\tilde{y}) &=& 0  & \mbox{on } \Gamma_r(y).
			\end{array}
			\right. 
		\end{equation}
		to obtain
		\begin{align}
			\|\nabla_y G(\cdot ,\tilde{y})\|_{L^{\infty}(D_{r/2}(y))} 
			&\leq \frac{C}{r} \|  G(\cdot ,\tilde{y})\|_{L^{\infty}(D_{r}(y))}.\label{estimLinf}
		\end{align}
		From  (\ref{estim_G_y}) one draws
		\begin{align*}
			\|  G(\cdot ,\tilde{y})\|_{L^{\infty}(D_{r}(y))} =& \sup_{z\in{D_{r}(y)}} |G(z ,\tilde{y})|\nonumber \\
			\leq& \sup_{z\in{D_{r}(y)}} \frac{C }{|z-\tilde{y}|^{d-2}}
		\end{align*}
		and, since $$r=\frac{2}{3}|y-\tilde{y}| \mbox{  implies  } |z-\tilde{y}| \geq r/2  \mbox{ for all } z \in{D_{r}(y)},$$ 
		we have
		\begin{align*}
			\|  G(\cdot ,\tilde{y})\|_{L^{\infty}(D_{r}(y))} 
			& \leq\frac{ C }{r^{d-2}}. 
		\end{align*}
		Then by the inequality (\ref{estimLinf}) one obtains
		\begin{align}
			|\nabla_y G(y,\tilde{y})|  &\leq\|\nabla_y G(\cdot ,\tilde{y})\|_{L^{\infty}(D_{r/2}(y))}\nonumber\\ 
			& \leq\frac{C}{r}  \frac{1}{r^{d-2}}\nonumber\\
			& \leq \frac{C}{r^{d-1}} \label{dminus1}
		\end{align}
		that is to say
		\begin{align*}
			|\nabla_y G(y ,\tilde{y})|
			\leq&  \frac{C }{|y-\tilde{y}|^{d-1}},
		\end{align*}
		where $C$ depends on $d, \mu$ and  $[A]_{C^{0,\eta}}$.

		\noindent 	As for the proof of the estimate $| \nabla_{\tilde{y}} G(y,\tilde{y})| \leq C  \frac{1 }{|y-\tilde{y}|^{d-1}}$,  we first  apply the preceding argument to the system
		\begin{equation}
			\left\{
			\begin{array}{rcll}
				-\nabla_{\tilde{y}}\cdot A^* (\tilde{y}) \: \nabla_{\tilde{y}} G^*(\tilde{y},y) +  \nabla_{\tilde{y}} \Pi^*(\tilde{y},y)&=& 0 &\mbox{in }  D_r( \tilde{y}),\\
				\nabla_{\tilde{y}} \cdot G^*(\tilde{y},y)&=& 0 \,  &\mbox{in } D_r( \tilde{y} ),\\
				G^*(\tilde{y},y) &=& 0 \,  &\mbox{on } \Gamma_r(\tilde{y})
			\end{array}
			\right.  
		\end{equation}	
		to obtain
		\begin{equation*}
			|\nabla_{\tilde{y}} G^*(\tilde{y},y)| \leq \frac{C }{|y-\tilde{y}|^{d-1}}.
		\end{equation*}
		Then the symmetry property	(\ref{sym}) yields 
		\begin{equation}\label{estimtild}
			|\nabla_{\tilde{y}} G(y,\tilde{y})| =|\nabla_{\tilde{y}} G^T\!(y,\tilde{y})|
			=|\nabla_{\tilde{y}} G^*(\tilde{y},y)| \leq \frac{C }{|y-\tilde{y}|^{d-1}}.
		\end{equation}

		\noindent \underline{Step 3 : Proof of the estimates}  $| G(y,\tilde{y})| \leq C  \frac{y\cdot n}{|y-\tilde{y}|^{d-1}}$ and  $| G(y,\tilde{y})| \leq C  \frac{ \tilde{y} \cdot n}{|y-\tilde{y}|^{d-1}}.$\\
		In this proof we set $r:=|y-\tilde{y}|.$ 
		First, we suppose $y\cdot n \geq \frac{r}{3}$.  Then by the estimate $$| G(y,\tilde{y})| \leq \frac{C}{|y-\tilde{y}|^{d-2}}$$  we have 
		\begin{align*}
			| G(y,\tilde{y})| &\leq 3\;  \frac{y\cdot n}{r}\frac{C}{|y-\tilde{y}|^{d-2}} \\
			&\leq \;C\frac{y\cdot n}{|y-\tilde{y}|^{d-1}} .
		\end{align*}
		Now we assume that $y\cdot n< \frac{r}{3}$  and let  $\bar{y}\in\partial\mathbb{H}_n(0)$ be such that $|y-\bar{y}|=y\cdot n$. 
		The mean value inequality yields 
		\begin{align*}
			| G(y,\tilde{y})- G(\bar{y},\tilde{y})| &\leq \|\nabla_y G(\cdot ,\tilde{y})\|_{L^{\infty}(D(\bar{y}, \frac{r}{3})} \; |y-\bar{y}| .
		\end{align*}
		Then by (\ref{estim_Dy_G}) and the fact that $G(\bar{y},\tilde{y})=0$,    we get 
		\begin{align}\label{G_sup_dminus1}
			| G(y,\tilde{y})  | &\leq  \sup_{z\in D(\bar{y}, \frac{r}{3})} \frac{C }{|z-\tilde{y}|^{d-1}}  \; y\cdot n .
		\end{align}
		Moreover one has  $$|z-\tilde{y}| \geq \frac{r}{3}  \mbox{ for all } z \in{D(\bar{y}, r/3)    }.$$ 
		Then estimate (\ref{G_sup_dminus1}) gives 
		\begin{align*}
			| G(y,\tilde{y})  | &\leq  C\, \frac{y\cdot n }{r^{d-1}},\\       
			&\leq  C\, \frac{y\cdot n }{ |y-\tilde{y}| ^{d-1}}.\\
		\end{align*}	
		
		\noindent As for the other estimate
		we apply the same reasoning right above by
	interchanging  $y$ and  $\tilde{y},$ and using the mean value inequality along with (\ref{estim_dertildG}).\\
		
		\noindent \underline{Step 4 : Proof of the estimates}  $| \nabla_y G(y,\tilde{y})| \leq C  \frac{\tilde{y}\cdot n}{|y-\tilde{y}|^{d}}$ and  $| \nabla_{\tilde{y}}G(y,\tilde{y})| \leq C  \frac{ y \cdot n}{|y-\tilde{y}|^{d}}.$\\
		Let  $r:=\frac{2}{3}|y-\tilde{y}|.$ 
		A rescaled version of Theorem \ref{unif_estim} applied to the system of equations 
		\begin{equation}\label{system_D_r}
			\left\{
			\begin{array}{rcll}
				-\nabla_z\cdot A (z)  \nabla_z G(z,\tilde{y}) +  \nabla_z \Pi(z,\tilde{y})&=& 0    &\mbox{in } D_r(y),\\
				\nabla_z\cdot G(\cdot,\tilde{y})&=& 0   &\mbox{in } D_r(y),\\
				G(z,\tilde{y}) &=& 0  & \mbox{on } \Gamma_r(y).
			\end{array}
			\right. 
		\end{equation}
		yields
		\begin{align}
			\|\nabla_y G(\cdot ,\tilde{y})\|_{L^{\infty}(D_{r/2}(y))} 
			&\leq \frac{C}{r} \|  G(\cdot ,\tilde{y})\|_{L^{\infty}(D_{r}(y))}.\label{estimL}
		\end{align}
		From (\ref{estim_G_y}) one draws
		\begin{align*}
			\|  G(\cdot ,\tilde{y})\|_{L^{\infty}(D_{r}(y))} =& \sup_{z\in{D_{r}(y)}} |G(z ,\tilde{y})|\nonumber \\
			\leq& \; C\sup_{z\in{D_{r}(y)}}  \frac{\tilde{y}\cdot n }{|z-\tilde{y}|^{d-1}}
		\end{align*}
		and, since $$r=\frac{2}{3}|y-\tilde{y}| \mbox{  implies  } |z-\tilde{y}| \geq r/2  \mbox{ for all } z \in{D_{r}(y)},$$ 
		we have
		\begin{align*}
			\|  G(\cdot ,\tilde{y})\|_{L^{\infty}(D_{r}(y))} 
			& \leq  2^{d-1} C  \;\frac{\tilde{y}\cdot n }{r^{d-1}}. \label{boundCrd2}
		\end{align*}
		Then by the estimate (\ref{estimL}) one obtains
		\begin{align*}
			|\nabla_y G(y,\tilde{y})|  &\leq\|\nabla_y G(\cdot ,\tilde{y})\|_{L^{\infty}(D_{r/2}(y))}\nonumber\\ 
			& \leq 2^{d-2} C \; \frac{1  }{r} \frac{\tilde{y}\cdot n  }{r^{d-1}}\nonumber\\
			& \leq C \; \frac{\tilde{y}\cdot n }{r^{d}} 
		\end{align*} 
		and one replaces $r$ by $\frac{2}{3}|y-\tilde{y}|$ to obtain
		\begin{equation}
			|\nabla_y G(y ,\tilde{y})|
			\leq C \; \frac{\tilde{y}\cdot n  }{|y-\tilde{y}|^{d}}.\\ 
		\end{equation}
		
		\noindent To prove  the other estimate, we first apply the the preceding estimate to the system 
		\begin{equation}
			\left\{
			\begin{array}{rcll}
				-\nabla_{\tilde{y}}\cdot A^* (\tilde{y}) \: \nabla_{\tilde{y}} G^*(\tilde{y},y) +  \nabla_{\tilde{y}} \Pi^*(\tilde{y},y)&=& 0   &\mbox{in }  D_r( \tilde{y} ) ,\\
				\nabla_{\tilde{y}} \cdot G^*(\tilde{y},y)&=& 0  & \mbox{in }  D_r( \tilde{y} ) ,\\
				G^*(\tilde{y},y) &=& 0 \,   &\mbox{on } \Gamma_r( \tilde{y}),
			\end{array}
			\right.  
		\end{equation}	
		to get
		\begin{equation*}
			| \nabla_{\tilde{y}} G^*(\tilde{y},y)| \leq C \; \frac{y\cdot n}{|y-\tilde{y}|^d} .
		\end{equation*}
		We then use the symmetry (\ref{sym}) to obtain
		\begin{equation*}
			| \nabla_{\tilde{y}} G(y,\tilde{y})| =  | \nabla_{\tilde{y}} G^T\!(y,\tilde{y})| = | \nabla_{\tilde{y}} G^*(\tilde{y},y)|   \leq C \; \frac{y\cdot n}{|y-\tilde{y}|^d} .\\
		\end{equation*}
		
		\noindent \underline{Step 5 : Proof of the estimate}  $| G(y,\tilde{y})| \leq C  \frac{(y\cdot n)(\tilde{y}\cdot n) }{|y-\tilde{y}|^d} .$\\
		We apply again the reasoning carried out in Step  3. 
		Let $r:=|y-\tilde{y}|.$ 
		First, we suppose $y\cdot n \geq \frac{r}{3}$.  Then by the estimate (\ref{estim_G_y})  we have 
		\begin{align*}
			| G(y,\tilde{y})| &\leq 3\;  \frac{y\cdot n}{r}\left( C\, \frac{ \tilde{y}\cdot n}{|y-\tilde{y}|^{d-1}} \right)\\
			&\leq \;C\,\frac{  (y\cdot n)  (\tilde{y}\cdot n)  }{|y-\tilde{y}|^{d}} .
		\end{align*}
		Now we assume that $y\cdot n< \frac{r}{3}$  and let  $\bar{y}\in\partial\mathbb{H}_n(0)$ be such that $|y-\bar{y}|=y\cdot n$. 
		The mean value inequality yields 
		\begin{align*}
			| G(y,\tilde{y})- G(\bar{y},\tilde{y})| &\leq \|\nabla_y G(\cdot ,\tilde{y})\|_{L^{\infty}(D(\bar{y}, \frac{r}{3})} \; |y-\bar{y}| .
		\end{align*}
		Therefore, by  (\ref{estim_Dy_G}) together with $G(\bar{y},\tilde{y})=0$,    we get 
		\begin{align*}
			| G(y,\tilde{y})  | &\leq  C \sup_{z\in D(\bar{y}, \frac{r}{3})} \frac{ \tilde{y}\cdot n}{|z-\tilde{y}|^{d}}  \; y\cdot n ,
		\end{align*}
		which leads to
		\begin{align*}
			| G(y,\tilde{y})  | &\leq  C\, \frac{(y\cdot n) (\tilde{y}\cdot n) }{r^{d}},\\       
			&\leq  C\, \frac{ (y\cdot n) (\tilde{y}\cdot n)   }{ |y-\tilde{y}| ^{d}}.\\
		\end{align*}

		\noindent \underline{Step 6 : Proof of the estimate}  $| \nabla_y \nabla_{\tilde{y}}G(y,\tilde{y})| \leq C  \frac{(y\cdot n)(\tilde{y}\cdot n) }{|y-\tilde{y}|^d} .$\\
		Let  $r:=\frac{2}{3}|y-\tilde{y}|.$ 
		First, one differentiates the system (\ref{system_D_r}) with respect to the variable  $\tilde{y}_k$, $k=1,...,d$,  to obtain 
		\begin{equation}
			\left\{
			\begin{array}{rcll}
				-\nabla_y\cdot A(y) \: \nabla_y ( \partial_{\tilde{y}_k}G(y,\tilde{y}) )  +  \nabla_y  ( \partial_{\tilde{y}_k} \Pi(y,\tilde{y}) ) &=& 0 \ &\mbox{in } D_r(y),\\
				\nabla_y\cdot  \partial_{\tilde{y}_k}G(y,\tilde{y})&=& 0 \;   & \mbox{in }  D_r(y),\\
				\partial_{\tilde{y}_k}G(y,\tilde{y}) &=& 0  & \mbox{on } \Gamma_r(y).
			\end{array}
			\right. 
		\end{equation}	
		Then Theorem \ref{unif_estim} applied to this system yields 
		\begin{align}
			\|\nabla_y  (  \partial_{\tilde{y}_k} G(\cdot,\tilde{y}) )\|_{L^{\infty}(D_{r/2}(y))} &\leq \frac{C}{r}  \left(  \dashint_{D_{r}(y)} |  \partial_{\tilde{y}_k} G(z,\tilde{y})|^2 dz \right)^{1/2}\nonumber \\
			&\leq \frac{C}{r} \| \partial_{\tilde{y}_k}   G(\cdot,\tilde{y})\|_{L^{\infty}(D_{r}(y))}. \label{Lipschitk}
		\end{align}
		Furthermore, by the estimate (\ref{estimtild}), one has
		\begin{align}
			\|  \partial_{\tilde{y}_k}  G(\cdot,\tilde{y})\|_{L^{\infty}(D_{r}(y))}&= \sup_{z\in{D_{r}}}  |  \partial_{\tilde{y}_k}  G(z,\tilde{y})| \nonumber\\
			&\leq \sup_{z\in{D_{r}}}   \frac{C }{|z-\tilde{y}|^{d-1}}\nonumber\\
			&\leq   \frac{C }{r^{d-1}}. \label{kboundr}
		\end{align}
		The inequalities  (\ref{Lipschitk}) and (\ref{kboundr}) jointly give
		\begin{align}
			\|\nabla_y (\partial_{\tilde{y}_k}  G (\cdot,\tilde{y}) )\|_{L^{\infty}(D_{r/2}(y))} 
			&\leq   \frac{C }{r^d} \nonumber
		\end{align}
		hence
		\begin{align*}
			\|\nabla_y ( \partial_{\tilde{y}_k}   G(\cdot,\tilde{y}) )\|_{L^{\infty}(D_{r/2}(y))} 
			&\leq   \frac{C }{|y-\tilde{y}|^d} .\\
		\end{align*}
		\noindent \underline{Step 7 : Proof of the estimates} \,  $ | \Pi(y,\tilde{y}) -  \Pi(z,\tilde{y})| \leq C \min \;\left\{\frac{1 }{|y-\tilde{y}|^{d-1}}  ,\frac{\tilde{y}\cdot n }{|y-\tilde{y}|^{d}} \right\} .$\\
		Let $\tilde{y}\in\mathbb{H}_n$ be fixed and let $y,z\in\mathbb{H}_n$ be such that $|y-\tilde{y}|\leq |z-\tilde{y}|$. We set $r:=|y-\tilde{y}|$ and consider the annuli $A_k:=D(\tilde{y}, 2^kr)\bs \overline{D(\tilde{y},2^{k-1}r)},\, k=1,2, ..$. For each $k$ we find a covering of $\overline{A}_k$ as follows. For instance, for $k=1$ there  is a covering $(B_{1j})_{j=1,...,N}$ of $\overline{A}_1$   where,  for all $j=1,...,N$, 
		$B_{1j}$ is a ball satisfying $\diam B_{1j}= 2 r$ and $\dist(\tilde{y},  B_{1j})\geq 2^{-1} r$. Then, through the map $\sigma_k : y\mapsto 2^{k-1} (y-\tilde{y}) + \tilde{y}$  one can see that the balls $B_{kj}= \sigma_k(B_{ 1j}), j=1,...,N$ form a covering of $\overline{A}_k$  and satisfy $\diam B_{kj}= 2^k r$ and $\dist(\tilde{y},  B_{kj})\geq 2^{k-2} r$. Moreover, the smallest finite number $N$ that can be taken depend only on $d$.
		Now one applies Theorem \ref{unif_estim} to the system  
		\begin{equation}
			\left\{
			\begin{array}{rcll}
				-\nabla_z\cdot A (z)  \nabla_z G(z,\tilde{y}) +  \nabla_z \Pi(z,\tilde{y})&=& 0   &\mbox{in } (5/4) B_{kj} ,\\
				\nabla_z\cdot G(\cdot,\tilde{y})&=& 0   &\mbox{in }   (5/4)B_{kj},\\
				G(z,\tilde{y}) &=& 0  & \mbox{on } (5/4)	B_{kj}\cap \partial\mathbb{H}_n
			\end{array}
			\right. 
		\end{equation}
		to obtain
		\begin{align}
			\underset{B_{kj}}{\textup{osc}}	|  \Pi(\cdot,\tilde{y})|
			&\leq  \frac{C}{2^kr}  \|  G(\cdot,\tilde{y})\|_{L^{\infty}((5/4) B_{kj})}\label{estim_Linf_Bkj}.
		\end{align}
		Here, $(5/4) B_{kj}$ denotes the ball that is $5/4$ times larger than $ B_{kj}$ and with the same center. The inequality $\dist(\tilde{y},  B_{kj})\geq 2^{k-2} r$ then gives $\dist(\tilde{y},  (5/4)B_{kj})\geq 2^{k-3} r$. 
		From (\ref{estim_G_y}) one draws
		\begin{align*}
			\|  G(\cdot ,\tilde{y})\|_{L^{\infty}( (5/4)B_{kj})} &= \sup_{z\in{   (5/4)B_{kj}}} |G(z ,\tilde{y})|\nonumber \\
			&\leq  C \min \left\{ \sup_{z\in{  (5/4)B_{kj}}} \frac{1 }{|z-\tilde{y}|^{d-2}};	
			\sup_{z\in{(5/4)B_{kj}}}	\frac{\tilde{y}\cdot n }{|y-\tilde{y}|^{d-1}}  \right\}
		\end{align*}
		and, since $$ \dist(\tilde{y},(5/4) B_{kj})\geq 2^{k-3} r  \mbox{  implies  } |z-\tilde{y}| \geq 2^{k-3} r  \mbox{ for all } z \in{  (5/4)B_{kj}},$$ 
		we have
		\begin{align*}
			\|  G(\cdot ,\tilde{y})\|_{L^{\infty}(  (5/4) B_{kj})} 
			& \leq  \min \left\{ \frac{ C }{2^{k(d-2)}r^{d-2}}, \frac{ \tilde{y}\cdot n }{2^{k(d-1)}r^{d-1}}  \right\} .
		\end{align*}
		Then by (\ref{estim_Linf_Bkj}) we obtain
		\begin{align*}
			\underset{B_{kj}}{\textup{osc}}	|  \Pi(\cdot,\tilde{y})| &\leq \frac{C}{2^kr}   \min \left\{  \frac{1 }{ 2^{k(d-2)}r^{d-2}  };	\frac{\tilde{y}\cdot n }{2^{k(d-1)}r^{d-1}  }\right\}\\
			&\leq C \min \left\{  \frac{1 }{ 2^{k(d-1)} }  \frac{1 }{ r^{d-1}  };	\frac{1}{2^{kd} }	\frac{ \tilde{y}\cdot n  }{r^{d}  }\right\}	.  
		\end{align*}
		From the fact
		\begin{align*}
			\mathbb{H}_n\bs D_r(\tilde{y})\subseteq \bigcup_{k\geq 1} \overline{A_k}	 \subseteq \bigcup_{k\geq 1}  \left( \bigcup^{N}_{j=1}  B_{kj}	  \right)
		\end{align*}
		one draws
		\begin{align*}
			\underset{ \mathbb{H}_n\bs D_r(\tilde{y})  }{\textup{osc}}	|  \Pi(\cdot,\tilde{y})| 
			&\leq \sum^{\infty}_{k=1} \sum^{N}_{j=1}	\underset{B_{kj}}{\textup{osc}}	|  \Pi(\cdot,\tilde{y})|\\
			&\leq  C \sum^{\infty}_{k=1} \sum^{N}_{j=1}	\min \left\{  \frac{1 }{ 2^{k(d-1)} }  \frac{1 }{ r^{d-1}  };	\frac{1}{2^{kd} }	\frac{ \tilde{y}\cdot n }{r^{d}  }\right\}\\
			&\leq  C N	\min \left\{ \sum^{\infty}_{k=1}   \left(\frac{1 }{ 2^{d-1} }\right)^k  \!\!\frac{1 }{ r^{d-1}  }, \sum^{\infty}_{k=1}  \left( \frac{1}{2^{d} }\right)^k \! \frac{ \tilde{y}\cdot n }{r^{d}  }\right\}\\
			&\leq  C N	\min \left\{  \frac{1 }{ r^{d-1}  },	\frac{ \tilde{y}\cdot n }{r^{d}  }\right\},
		\end{align*}
		that is to say
		\begin{align*}
			\underset{ \mathbb{H}_n\bs D_r(\tilde{y})  }{\textup{osc}}	|  \Pi(\cdot,\tilde{y})|
			&\leq  C 	\min \left\{  \frac{1 }{ |y-\tilde{y}|^{d-1}  };	\frac{\tilde{y}\cdot  n}{|y-\tilde{y}|^{d}  }\right\}.
		\end{align*}
		This estimate clearly implies
		\begin{align*}
			| \Pi(y,\tilde{y}) -  \Pi(z,\tilde{y})| 
			&\leq  C 	\min \left\{  \frac{1 }{ |y-\tilde{y}|^{d-1}  };	\frac{\tilde{y}\cdot n}{|y-\tilde{y}|^{d}  }\right\},\quad \mbox{ for all } z\in  \mathbb{H}_n, |z-\tilde{y}|\geq |y-\tilde{y}|.\\
		\end{align*}
		
		\noindent \underline{Proof of the estimates} \,  $ | \Pi(y,\tilde{y})| \leq C \min \;\left\{\frac{1 }{|y-\tilde{y}|^{d-1}}  ,\frac{\tilde{y}\cdot n }{|y-\tilde{y}|^{d}}   \right\} .$\\
By using arguments similar to those in  \cite{HigPrZhuge} (Appendix B) one obtains 
	$$ |\Pi(z,\tilde{y})|\xrightarrow[z\cdot n>2]{|z|\rightarrow\infty}0.$$ 
Then, by taking the limits in estimates (\ref{estim_diff_Pi_y}) as $|z|\xrightarrow[z\cdot n>2]{}\infty$ one gets
		\begin{align*}
			| \Pi(y,\tilde{y}) | 
			&\leq  C 	\min \left\{  \frac{1 }{ |y-\tilde{y}|^{d-1}  };	\frac{\tilde{y}\cdot n}{|y-\tilde{y}|^{d}  }\right\}.
		\end{align*}

		\noindent \underline{Step 8 : Proof of the estimates} \,  $	|\nabla_{\tilde{y}} \Pi(y,\tilde{y}) - \nabla_{\tilde{y}} \Pi(z,\tilde{y})|\leq \;C \min \;\left\{\frac{ 1}{|y-\tilde{y}|^{d}}; \frac{\tilde{y}\cdot n }{|y-\tilde{y}|^{d+1}}   \right\} .$\\	
		Again we set $r:=|y-\tilde{y}|$ and consider the annuli $A_k:=D(\tilde{y}, 2^kr)\bs D(\tilde{y},2^{k-1}r),\, k=1,2, ...$ 
		and the covering $(B_{kj})_{j=1,...,N}$ of $\overline{A}_k$ defined in the previous step. We apply Theorem \ref{unif_estim} to the system  
		\begin{equation}
			\left\{
			\begin{array}{rcll}
				-\nabla_y\cdot A(y) \nabla_y ( \partial_{\tilde{y}_k}G(y,\tilde{y}) )  +  \nabla_y  ( \partial_{\tilde{y}_k} \Pi(y,\tilde{y}) ) &=& 0 \   \quad \mbox{in }  (5/4) B_{kj}\\
				\nabla_y\cdot  \partial_{\tilde{y}_k}G(y,\tilde{y})&=& 0 \;   \quad  \mbox{in }  (5/4) B_{kj} ,\\
				\partial_{\tilde{y}_k}G(y,\tilde{y}) &=& 0 \; \quad  \mbox{on }  (5/4)	B_{kj}\cap \partial\mathbb{H}_n,
			\end{array}
			\right. 
		\end{equation}
		to obtain 
		\begin{align}
			\underset{B_{kj}}{\textup{osc}}	| \partial_{\tilde{y}_k} \Pi(\cdot,\tilde{y})|
			&\leq  \frac{C}{2^kr}  \|\partial_{\tilde{y}_k}G(\cdot,\tilde{y})\|_{L^{\infty}((5/4)B_{kj})}\label{estim_Linf_Bkj_der_Pi}
		\end{align}
		From estimates  (\ref{estim_dertildG}) one draws
		\begin{align*}
			\| \partial_{\tilde{y}_k} G(\cdot ,\tilde{y})\|_{L^{\infty}(  (5/4)B_{kj})} 
			&	\leq C \min \left\{ \sup_{z\in{ (5/4) B_{kj}}}  \frac{1 }{|z-\tilde{y}|^{d-1}}; \sup_{z\in{ (5/4)B_{kj}}} 
			\frac{y\cdot n }{|z-\tilde{y}|^{d}}  \right\}
		\end{align*}
		which leads to
		\begin{align*}
			\| \partial_{\tilde{y}_k} G(\cdot ,\tilde{y})\|_{L^{\infty}((5/4)  B_{kj})} 
			& \leq   \min \left\{ \frac{ C }{2^{k(d-1)}r^{d-1}}, \frac{ y\cdot n }{2^{kd}r^{d}}  \right\} .
		\end{align*}
		Then by (\ref{estim_Linf_Bkj_der_Pi}) we obtain
		\begin{align*}
			\underset{B_{kj}}{\textup{osc}}	| \partial_{\tilde{y}_k} \Pi(\cdot,\tilde{y})| 
			&\leq C \min \left\{  \frac{1 }{ 2^{kd} }  \frac{1 }{ r^{d}  };	\frac{1}{2^{k(d+1)} }	\frac{y\cdot n}{r^{d+1}  }\right\}	.  
		\end{align*}
		From the fact
		\begin{align*}
			\mathbb{H}_n\bs D_r(\tilde{y})\subset \bigcup_{k\geq 1} \overline{A}_k	 \subset \bigcup_{k\geq 1}  \left( \bigcup^{N}_{j=1}  B_{kj}	  \right)
		\end{align*}
		one draws
		\begin{align*}
			\underset{ \mathbb{H}_n\bs D_r(\tilde{y})  }{\textup{osc}}	|   \partial_{\tilde{y}_k}\Pi(\cdot,\tilde{y})| &\leq   \sum^{\infty}_{k=1} \sum^{N}_{j=1}	\underset{B_{kj}}{\textup{osc}}	| \partial_{\tilde{y}_k} \Pi(\cdot,\tilde{y})|\\
			&\leq  C \sum^{\infty}_{k=1} \sum^{N}_{j=1}	\min \left\{  \frac{1 }{ 2^{kd} }  \frac{1 }{ r^{d}  };	\frac{1}{2^{k(d+1)} }	\frac{y\cdot n}{r^{d+1}  }\right\}\\
			&\leq  C N	\min \left\{ \sum^{\infty}_{k=1}   \left(\frac{1 }{ 2^{d} }\right)^k  \frac{1 }{ r^{d}  }; \sum^{\infty}_{k=1}  \left( \frac{1}{2^{d+1} }\right)^k	\frac{y\cdot n}{r^{d+1}  }\right\}\\
			&\leq  C N	\min \left\{  \frac{1 }{ r^{d}  };	\frac{y\cdot n}{r^{d+1}  }\right\}
		\end{align*}
		that is to say
		\begin{align*}
			\underset{ \mathbb{H}_n\bs D_r(\tilde{y})  }{\textup{osc}}	|   \partial_{\tilde{y}_k}\Pi(\cdot,\tilde{y})|
			&\leq  C 	\min \left\{  \frac{1 }{ |y-\tilde{y}|^{d}  };	\frac{y\cdot n}{|y-\tilde{y}|^{d+1}  }\right\}.
		\end{align*}
		This estimate implies
		\begin{align*}
			| \partial_{\tilde{y}_k} \Pi(y,\tilde{y}) -  \partial_{\tilde{y}_k} \Pi(z,\tilde{y})| 
			&\leq  C 	\min \left\{  \frac{1 }{ |y-\tilde{y}|^{d}  };	\frac{y\cdot n}{|y-\tilde{y}|^{d+1}  }\right\},\quad \mbox{ for all } z\in  \mathbb{H}_n, |z-\tilde{y}|\geq |y-\tilde{y}|.
		\end{align*}
		Now, we take the limit as $|z|\rightarrow\infty$ and use the fact $\lim_{|z|\rightarrow\infty} | \partial_{\tilde{y}_k}\Pi(z,\tilde{y})|=0$  in the latter estimate to get
		\begin{align*}
			|  \partial_{\tilde{y}_k}\Pi(y,\tilde{y}) | 
			&\leq  C 	\min \left\{  \frac{1 }{ |y-\tilde{y}|^{d}  };	\frac{y\cdot n}{|y-\tilde{y}|^{d+1}  }\right\}.\qedhere\\
		\end{align*}
		
	\end{proof}

	\section{Uniqueness in Theorem \ref{intro_ThConverge}}\label{uniqueness}
\noindent	Here we give a proof of the uniqueness of the solution $u^s$ to problem (\ref{hsStokes}) under hypothesis (\ref{class}). Once the uniqueness property is satisfied the solution to problem such as (\ref{hsStokes}) can be represented by an integral formula e.g. through Poisson's kernel.

	\begin{proof}[Proof of the uniqueness of the solution to problem \textup{(\ref{hsStokes})}]
		If $(u^s_1, p^s_1)$ and $(u^s_2,p^s_2)$ are two solutions of (\ref{hsStokes}) then the pair $(u,p)$, $u:=u_1^s-u^s_2, \, p: =p_1^s - p_2^s,$ satisfies 
		the problem
		\begin{equation}\label{RStokes0}
			\left\{
			\begin{array}{rcl}
				-\nabla\cdot A (y) \nabla u +  \nabla p&=& 0 \;\quad \mbox{ in } \mathbb{H}_n(s)  ,\\
				\nabla\cdot u &=& 0  ,\\
				u	&=& 0  \quad \mbox{ on }  \partial\mathbb{H}_n(s).
			\end{array}
			\right. 
		\end{equation}
		We intend to show that $u=0$  by showing that for any $f\in C^\infty_c( \mathbb{H}_n(s) )$ one has 
		\begin{align}\label{targeted_result}
			\int_{  \mathbb{H}_n(s) }u(y)\cdot f(y) dy = 0.\end{align}
		
		\noindent So let $f\in C^\infty_c( \mathbb{H}_n(s)  )$ and consider the pair $(V,S)$ satisfying 
		\begin{equation}\label{RStokes_VR}
			\left\{
			\begin{array}{rcl}
				-\nabla\cdot A^* (y) \nabla V + \:  \nabla S&=& f \;\quad \mbox{ in } \mathbb{H}_n(s),\\
				\nabla\cdot V&=& 0  ,\\
				V	&=& 0  \quad \mbox{ on }  \partial \mathbb{H}_n(s).
			\end{array}
			\right. 
		\end{equation}
		The solution $V$ admits a representation formula through Green's kernel $G^*$ associated with the operator $	-\nabla\cdot A^* (y) \nabla$ and the domain $\mathbb{H}_n(s)$
		\begin{align}\label{repr_V}
			V(y)=\int_{ \mathbb{H}_n(s) } G^*(y,\tilde{y}) f(\tilde{y}) d\tilde{y}.
		\end{align}
		
		\noindent We claim that $u$, solution to problem (\ref{RStokes0}), satisfies 
		\begin{align}\label{target_result}
			\int_{ \mathbb{H}_n(s)  } f(y)\cdot u(y) dy = 0.
		\end{align}
		
		\noindent Indeed, by carrying out integrations by parts one obtains
		\begin{align*}
			\int_{ \mathbb{H}_n(s)  }f(y)\cdot u(y) dy=&	\int_{ \mathbb{H}_n(s)}   f^i(y) u^i(y) dy\\ = &	\int_{ \mathbb{H}_n(s)  }[-\partial_{y_\alpha} (A^*)^{\alpha\beta}_{ij}(y) \partial_{y_\beta} V^j(y)  + \:  \partial_{y_i}S (y)]\,  u^i(y) dy\\
			= &	\int_{ \mathbb{H}_n(s)  } (A^*)^{\alpha\beta}_{ij}(y) \partial_{y_\beta} V^j(y)\partial_{y_\alpha}u^i(y)   dy - \: \int_{ \mathbb{H}_n(s)  } S (y) \partial_{y_i} u^i(y) dy\\
			=&	\int_{ \mathbb{H}_n(s) }  A^* (y) \nabla V\cdot \nabla u \, dy + \:\int_{  \mathbb{H}_n(s) } -S \, (\nabla \cdot \ \!   u) dy.
		\end{align*}
			\noindent	The integrations by parts that we perform here are justified below.
		Now, by using the second equation in (\ref{RStokes0}) the last line reduces to 
		\begin{align*}
			\int_{ \mathbb{H}_n(s) }f(y)\cdot u(y) dy
			=&	\int_{ \mathbb{H}_n(s) } \nabla u\cdot A^* (y) \nabla V dy\\
			=&	\int_{  \mathbb{H}_n(s) } A(y)\nabla u\cdot  \nabla V dy.
		\end{align*}
		We integrate by parts again and use the first equation in (\ref{RStokes0})
		\begin{align*}
			\int_{ \mathbb{H}_n(s)  }f^i(y) u^i(y) dy=&	\int_{  \mathbb{H}_n(s) }    A^{\alpha\beta}_{ij}(y) \partial_{y_\beta} u^j(y)\, \partial_{y_\alpha}\! V^i(y) dy\\
			=& \int_{  \mathbb{H}_n(s) } (- \partial_{y_\alpha}\! A^{\alpha\beta}_{ij}(y) \partial_{y_\beta} u^j(y))\,  V^i(y)  dy\\
			=& \int_{ \mathbb{H}_n(s) } - \partial_i p(y) V^i(y) dy.
		\end{align*}
		Another integration by parts yields
		\begin{align*}
			\int_{ \mathbb{H}_n(s) } - \partial_i p(y) V^i(y) dy=& \int_{ \mathbb{H}_n(s) }   p(y)\partial_i V^i(y) dy\\
			=&\int_{  \mathbb{H}_n(s) }  p\,  (\nabla \cdot V )dy
		\end{align*}
		Then we use the second equation in (\ref{RStokes_VR}) to get
		\begin{align*}
			\int_{ \mathbb{H}_n(s) }  p \,  (\nabla \cdot  V )dy=0.
		\end{align*}
		This proves that 
		\begin{align*}
			\int_{ \mathbb{H}_n(s) }f(y)\cdot u(y) dy=0
		\end{align*}
		for arbitrary $f\in C^\infty_c( \mathbb{H}_n(s) )$. Hence $u=0$.
	\end{proof}

		\noindent	Next, we set forth a justification of  the  integrations by parts used in the above proof of the uniqueness of $u^s$.\\

		\noindent   Without loss of generality, we set $s=0$.
		
		\noindent {\bf Integration by parts of the term $ \int_{ \mathbb{H}_n  }   [-\partial_{y_\alpha} (A^*)^{\alpha\beta}_{ij}(y) \partial_{y_\beta} V^j(y)  ]\,  u^i(y) dy $}.\\
		We use the following equality
		\begin{align*}
			\int_{  \mathbb{H}_n } [-\partial_{y_\alpha} (A^*)^{\alpha\beta}_{ij}(y) \partial_{y_\beta} V^j(y)  ]\,  u^i(y)  dy = \lim_{R\rightarrow\infty} \int_{D(0,R)} [-\partial_{y_\alpha} (A^*)^{\alpha\beta}_{ij}(y) \partial_{y_\beta} V^j(y)  ]\,  u^i(y) dy
		\end{align*}
	to reduce the integration over an unbounded domain to an integration over a bounded domain. Now we carry out integrations by parts for the integral in the right-hand-side :
		\begin{align*}
			\int_{D(0,R)}& [-\partial_{y_\alpha} (A^*)^{\alpha\beta}_{ij}(y) \partial_{y_\beta} V^j(y)  ]\,  u^i(y)  dy\\ &= 	\int_{D(0,R)}  (A^*)^{\alpha\beta}_{ij}(y) \partial_{y_\beta} V^j(y)  \, \partial_{y_\alpha} u^i(y)   + 	\int_{\partial D(0,R) }  -u^i(y)e^\alpha (A^*)^{\alpha\beta}_{ij}(y) \partial_{y_\beta} V^j(y) dy
		\end{align*} where $e^\alpha$ is the $\alpha^{th}$ component of the unit exterior normal $e$ of $\partial D(0,R)$.	Then taking the limits as $R\rightarrow\infty$ we obtain
		\begin{align*}
			\lim_{R\rightarrow\infty} 	\int_{D(0,R)} (A^*)^{\alpha\beta}_{ij}(y) \partial_{y_\beta} V^j(y)  \, \partial_{y_\alpha} u^i(y) dy  = \int_{    \mathbb{H}_n}  (A^*)^{\alpha\beta}_{ij}(y) \partial_{y_\beta} V^j(y)  \, \partial_{y_\alpha} u^i(y)dy
		\end{align*}
		provided
		\begin{equation}\label{int_bound_term}
			\lim_{R\rightarrow\infty} 	\int_{\partial D(0,R) }  -  u^i(y)e^\alpha (A^*)^{\alpha\beta}_{ij}(y) \partial_{y_\beta} V^j(y) dy = 0.
		\end{equation}
	We will show that (\ref{int_bound_term}) holds below.\\
		
		\noindent {\bf Integration by parts of the term $ \int_{ \mathbb{H}_n  }  \partial_{y_i}S (y)\,  u^i(y) dy$}.\\
		Similarly, for the term involving the pressure $S$ one has 
		\begin{align*}
			\int_{  \mathbb{H}_n  }  [ \partial_{y_i}S (y)]\,  u^i(y) dy= & 	\lim_{R\rightarrow\infty}  \int_{  D(0,R) }  [\partial_{y_i}S (y)]\,  u^i(y)   dy \\
			=&		\lim_{R\rightarrow\infty} \left( \int_{ D(0,R)  } -S(y)\partial_{y_i} u^i(y)dy   + 	\int_{\partial D(0,R)  }  S(y) u^i(y) e^i dy \right) \\
			=&		\lim_{R\rightarrow\infty}  \int_{  D(0,R)  } -S(y)\partial_{y_i} u^i(y)dy   \\
			=&		 \int_{ \mathbb{H}_n}    -S(y)\partial_{y_i} u^i(y) dy
		\end{align*}
		as long as
		\begin{equation}
			\lim_{R\rightarrow\infty} 	\int_{ \partial D(0,R)  }   S(y) u^i(y) e^i  dy
			=		 0 .\label{int_bound_term_2}
		\end{equation}
		We will show that (\ref{int_bound_term_2}) holds below.\\	
		
		\noindent {\bf Integration by parts of the term $  \int_{\mathbb{H}_n}  A^{\alpha\beta}_{ij}(y) \partial_{y_\beta} u^j(y)\, \partial_{y_\alpha}\! V^i(y)  dy $}.\\ 
		On the one hand we have the limiting equality
		\begin{align*}
			\int_{  \mathbb{H}_n}& A^{\alpha\beta}_{ij}(y) \partial_{y_\beta} u^j(y)\, \partial_{y_\alpha}\! V^i(y) dy
		= 	\lim_{R\rightarrow\infty} 	\int_{D(0,R)}  A^{\alpha\beta}_{ij}(y) \partial_{y_\beta} u^j(y)\, \partial_{y_\alpha}\! V^i(y) dy
		\end{align*}and on the other hand we have the integration by parts for the integral in the right hand side
		\begin{align*}
			\int_{D(0,R)}&   A^{\alpha\beta}_{ij}(y) \partial_{y_\beta} u^j(y)\, \partial_{y_\alpha}\! V^i(y)  dy \\ 	&=\int_{ D(0,R) } (- \partial_{y_\alpha}\! A^{\alpha\beta}_{ij}(y) \partial_{y_\beta} u^j(y))\,  V^i(y)  dy  + 	\int_{\partial D(0,R) }   V^i(y)e^\alpha A^{\alpha\beta}_{ij}(y) \partial_{y_\beta} u^j(y) dy .
		\end{align*}
		Now one lets $R \rightarrow\infty$ in the preceding equality to obtain
		\begin{align*}
			\lim_{R\rightarrow\infty}	\int_{  D(0,R) } A^{\alpha\beta}_{ij}(y) \partial_{y_\beta} u^j(y)\, \partial_{y_\alpha}\! V^i(y) dy = &\lim_{R\rightarrow\infty}	\int_{  D(0,R) }   (- \partial_{y_\alpha}\! A^{\alpha\beta}_{ij}(y) \partial_{y_\beta} u^j(y))\,  V^i(y) dy  \\
			& + \lim_{R\rightarrow\infty}	\int_{\partial  D(0,R) }  V^i(y)e^\alpha A^{\alpha\beta}_{ij}(y) \partial_{y_\beta} u^j(y) dy \\
			= &	\int_{   \mathbb{H}_n }  (- \partial_{y_\alpha}\! A^{\alpha\beta}_{ij}(y) \partial_{y_\beta} u^j(y))\,  V^i(y)     dy
		\end{align*}
		as long as
		\begin{equation}\label{int_bound_term_3}
			\lim_{R\rightarrow\infty}	\int_{\partial D(0,R)}  V^i(y)e^\alpha A^{\alpha\beta}_{ij}(y) \partial_{y_\beta} u^j(y)  dy=0.\\\nonumber
		\end{equation}
			We will show that (\ref{int_bound_term_3}) holds below.\\
			
		\noindent {\bf Integration by parts of the term $ \int_{   \mathbb{H}_n }  - \partial_i p(y) V^i(y)dy$}.\\ 
		For this term involving a pressure, we have 
		\begin{align*}
			\int_{\mathbb{H}_n} - \partial_i p(y) V^i(y) dy=&	\int_{ \mathbb{H}_n }  - \nabla p \cdot V dy\\
			=&\lim_{R\rightarrow\infty}	\int_{D(0,R)}  - \nabla p \cdot V dy\\
			=& \lim_{R\rightarrow\infty} \left(\int_{D(0,R)}  p\,  (\nabla \cdot  V )dy -\int_{\partial D(0,R)  }   p (y)\,   V(y)  \cdot e \,  dy \right)\\
			=&  \int_{ \mathbb{H}_n}  p(y)\,  (\nabla \cdot  V (y))   dy  
		\end{align*}
		as long as 
		\begin{equation}\label{int_bound_term_4}
			\lim_{R\rightarrow\infty} 	\int_{\partial D(0,R)}   p(y) \,  V(y) \cdot e \, dy =0 
		\end{equation}
		where the vector $e$ is the unit exterior normal of $\partial D(0,R)$.
			We will show that (\ref{int_bound_term_4}) holds below.\\

		\noindent Now we prove that the integrals  over the boundary $ \partial D(0,R)$ in (\ref{int_bound_term})-(\ref{int_bound_term_4})   vanish as $R\rightarrow\infty$.
	\begin{lemma}
			Let 
			$f\in C^\infty_c(  \mathbb{H}_n )$ and let the pair $(V,S)$ be a solution of
			\begin{equation}\label{Stokes_VS}
				\left\{
				\begin{array}{rcl}
					-\nabla\cdot A^* (y) \nabla V + \:  \nabla S&=& f \;\quad \mbox{ in }  \mathbb{H}_n,\\
					\nabla\cdot V&=& 0  ,\\
					V	&=& 0  \quad \mbox{ on }  \partial\mathbb{H}_n ,
				\end{array}
				\right. 
			\end{equation}   
			then there exists a constant $C>0$ such that  for  $y\in\mathbb{H}_n  $ and $|y|$ large enough, we have
			\begin{align}
				|V(y)| &\leq C \frac{y\cdot n}{|y|^d}\label{estim_V}, \\
				|\nabla V(y)| &\leq C \frac{1}{|y|^d},\label{estim_nab_V} \\
				|S(y)| &\leq C \frac{1}{|y|^d}.\label{estim_on_R}
			\end{align}
		\end{lemma}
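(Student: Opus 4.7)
The plan is to represent the solution $(V,S)$ to \textup{(\ref{Stokes_VS})} through the Green's function of the operator $-\nabla\cdot A^*\nabla$ in $\mathbb{H}_n$ and then plug in the pointwise bounds of \textup{Proposition \ref{Propo_estim_Green_y}} (in their ``starred'' form, per the remark that follows it). Since $f\in C_c^\infty(\mathbb{H}_n)$, by uniqueness of the solution in the class where $V$ decays at infinity,
\begin{align*}
V(y) = \int_{\mathbb{H}_n} G^*(y,\tilde y)\, f(\tilde y)\, d\tilde y, \qquad S(y) = \int_{\mathbb{H}_n} \Pi^*(y,\tilde y)\cdot f(\tilde y)\, d\tilde y,
\end{align*}
where the additive constant in $\Pi^*$ is fixed by $S(y)\to 0$ as $|y|\to\infty$. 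Both representations follow from the standard duality argument: test the PDE for $(V,S)$ against $(G_k^*(\cdot,y_0),\Pi_k^*(\cdot,y_0))$, use that $G_k^*$ is divergence-free with zero Dirichlet data, and read off the identity from the $\delta$-source.

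Fix $R_0>0$ with $\Supp(f)\subset B(0,R_0)$ and restrict attention to $|y|\geq 2R_0$, so that $|y-\tilde y|\geq |y|/2$ for every $\tilde y\in\Supp(f)$. The starred counterparts of the bounds in \textup{Proposition \ref{Propo_estim_Green_y}} give, throughout the integration domain,
\begin{align*}
|G^*(y,\tilde y)|\leq C\frac{(y\cdot n)(\tilde y\cdot n)}{|y-\tilde y|^d},\quad |\nabla_y G^*(y,\tilde y)|\leq C\frac{\tilde y\cdot n}{|y-\tilde y|^d},\quad |\Pi^*(y,\tilde y)|\leq C\frac{\tilde y\cdot n}{|y-\tilde y|^d}.
\end{align*}
Substituting into the two integral representations, using $|y-\tilde y|^{-d}\leq 2^d|y|^{-d}$, and noting that $\int_{\Supp(f)}(\tilde y\cdot n)|f(\tilde y)|\,d\tilde y$ is a finite constant depending only on $f$, the three claimed bounds $|V(y)|\leq C(y\cdot n)/|y|^d$, $|\nabla V(y)|\leq C/|y|^d$, and $|S(y)|\leq C/|y|^d$ drop out at once. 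Differentiating under the integral sign for $\nabla V$ is legitimate because the integrand is smooth in $y$, uniformly in $\tilde y\in\Supp(f)$, in the regime $|y|\geq 2R_0$.

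The main subtlety is justifying the integral representation of $S$ with the correct normalization, i.e., checking that the kernel $\Pi^*$ defined in \textup{(\ref{GPj})} yields, via the pairing $\int \Pi^*\cdot f$, the pressure of the original system up to the ambiguous constant, which is then pinned down by decay at infinity. I note that this direct kernel approach is essential for the sharp exponent: the naive alternative of applying \textup{Theorem \ref{unif_estim}} on balls of radius $|y|/4$ away from $\Supp(f)$, then summing oscillations over dyadic annuli, yields only $|S|\leq C/|y|^{d-1}$ (controlled by $\|V\|_{L^\infty}\leq C/|y|^{d-1}$), which is insufficient for the boundary integral \textup{(\ref{int_bound_term_2})} in the uniqueness proof. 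The sharp decay $|S|\leq C/|y|^d$ is achieved precisely because of the extra factor $\tilde y\cdot n$ in the half-space estimate for $\Pi^*$, which pays off against the compact support of $f$.
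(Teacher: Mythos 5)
The paper states this lemma without proof, so there is no argument to compare against; your proposal supplies one, and it is correct. The Green-function representation $V(y)=\int G^*(y,\tilde y)f(\tilde y)\,d\tilde y$ together with the pointwise kernel bounds of Proposition~\ref{Propo_estim_Green_y} (in their starred form) is the natural route given the tools the paper has already assembled, and the compact support of $f$ in the interior of $\mathbb{H}_n$ is exactly what absorbs the source-side factors $\tilde y\cdot n$ into a constant, producing the sharp $|y|^{-d}$ decay for $\nabla V$ and $S$. You are also right to flag that the lemma's statement tacitly selects the decaying solution and a pressure normalization; pinning the additive constant by $S\to 0$ at infinity is what the uniqueness proof later requires when this lemma is invoked in \eqref{int_bound_term_2}.

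One inaccuracy in your closing remark: you claim that applying Theorem~\ref{unif_estim} on balls away from $\Supp(f)$ and summing oscillations over dyadic annuli yields only $|S|\leq C/|y|^{d-1}$. In fact, once $|V(z)|\leq C(z\cdot n)/|z|^d$ is known, the rescaled Lipschitz estimate gives $\osc_{D_{r/2}(z)}[S]\leq \frac{C}{r}\|V\|_{L^\infty(D_r(z))}\leq C/|z|^d$ with $r\sim|z|$, and chaining these oscillations over annuli $\{2^k|y|\leq|z|\leq 2^{k+1}|y|\}$ out to infinity (where $S\to 0$) telescopes to $|S(y)|\leq C\sum_{k\geq 0}2^{-kd}|y|^{-d}\leq C|y|^{-d}$, the same exponent. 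This is precisely the oscillation-chaining device the paper uses for $\Pi$ in Steps~7--8 of Appendix~\ref{append_Gfunction}. So the kernel representation for $S$ is more direct but not indispensable for the sharp rate; your main argument stands either way.
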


		\noindent {\bf Step 1 : proof of (\ref{int_bound_term})}\\
		Seeing that $u=0$ on $\partial\mathbb{H}_n $ we have
		\begin{align*}
			\int_{\partial D(0,R) }  - u^i(y)e^\alpha (A^*)^{\alpha\beta}_{ij}(y) \partial_{y_\beta} V^j(y) dy = C \int_{\partial D(0,R)\bs \partial\mathbb{H}_n}  - u^i(y)e^\alpha (A^*)^{\alpha\beta}_{ij}(y) \partial_{y_\beta} V^j(y) dy
		\end{align*}
		which yields
		\begin{align*}
			\left| \int_{\partial D(0,R) }  - u^i(y)e^\alpha (A^*)^{\alpha\beta}_{ij}(y) \partial_{y_\beta} V^j(y)dy  \right| &\leq C \int_{ \partial D(0,R)\bs \partial \mathbb{H}_n }  |u(y)|\cdot | \nabla V(y) |dy.
		\end{align*}
		From  $u:=u_1^s-u_2^s$ with $u_1^s, u_2^s$ belonging to $L^\infty(\mathbb{H}_n(s))$, we get
		\begin{align}
			|u(y)|&\leq |u^s_1(y)| +  |u^s_2(y)| \nonumber\\
			&\leq C , \qquad \qquad \qquad \mbox{ for all } y\in  \mathbb{H}_n,\label{boundedness_of_v}
		\end{align}
		and from
		estimate (\ref{estim_nab_V}) we draw 
		\begin{align*}
			| \nabla V(y) |
			&  \leq   \frac{C}{R^d},\qquad \qquad \mbox{ for  } y\in \partial D(0,R)\bs \partial \mathbb{H}_n ,
		\end{align*}
		where $C>0$ is independent of $R$.  Then we obtain
		\begin{align*}
			\left| \int_{\partial D(0,R) }  -  u^i(y)e^\alpha (A^*)^{\alpha\beta}_{ij}(y) \partial_{y_\beta} V^j(y)  dy  \right|
			& \leq C \,\frac{1}{R^d} \int_{\partial D(0,R) }  dy\\
			& \leq C  \,\frac{1}{R^d} C(d) R^{d-1}\\
			& \leq C \frac{1}{R} ,
		\end{align*}
		where $C$ is independent of $R.$ This implies 
		\begin{align*}
			\left| \int_{\partial D(0,R) }   -   u^i(y)e^\alpha (A^*)^{\alpha\beta}_{ij}(y) \partial_{y_\beta} V^j(y)dy \right|\xrightarrow{R\rightarrow \infty}0.
		\end{align*}

		\noindent {\bf Step 2 : proof of (\ref{int_bound_term_2})}\\
		Given that $u=0$ on $\partial\mathbb{H}_n $ one has the equality
		\begin{align*}
			\int_{\partial D(0,R)}   S(y) u^i(y) e^i  dy= & 	\int_{\partial D(0,R)\bs \partial\mathbb{H}_n }    S(y) u^i(y) e^i dy
		\end{align*}
		which implies 
		\begin{align*}
			\left|  \int_{\partial D(0,R)}   S(y) u^i(y) e^i  dy \right| \leq  & 	\int_{\partial D(0,R)\bs \partial\mathbb{H}_n    }  |S(y)| |u(y)|dy.
		\end{align*}
		Then by using estimates (\ref{estim_on_R}) and (\ref{boundedness_of_v}) one obtains
		\begin{align*}
			\left| \int_{\partial D(0,R)}   S(y) u^i(y) e^i dy\right| \leq  & 	\; C\int_{  \partial D(0,R) \bs \partial\mathbb{H}_n}    \frac{  1}{R^{d}} dy\\ 
			\leq  & 	\; C    \frac{1}{R^{d}} \cdot R^{d-1}\\
			\leq  & 	\; C   \, \frac{1}{R^{}} 
		\end{align*}
		hence
		\begin{align*}
			\lim_{R\rightarrow\infty} 	\int_{  \partial D(0,R) }    S(y) u^i(y) e^i dy
			=&		 0 .
		\end{align*}
		
		\noindent {\bf Step 3 : proof of (\ref{int_bound_term_3})}.\\		
		Because $V=0$ on  $\partial\mathbb{H}_n $   we have the equality 
		\begin{align*}
			\int_{  \partial D(0,R)  }  V^i(y)e^\alpha A^{\alpha\beta}_{ij}(y) \partial_{y_\beta} u^j(y)  dy= \int_{  \partial D(0,R) \bs \partial\mathbb{H}_n}   V^i(y)e^\alpha A^{\alpha\beta}_{ij}(y) \partial_{y_\beta} u^j(y) dy
		\end{align*}
		which implies 
			\begin{align*}
			\left|\int_{  \partial D(0,R)}   V^i(y)e^\alpha A^{\alpha\beta}_{ij}(y) \partial_{y_\beta} u^j(y)  dy \right| 
			&\leq \int_{  \partial D(0,R)  \bs \partial\mathbb{H}_n  } | V (y)|\cdot  |\nabla u(y)|  dy.
		\end{align*}
	As $u^s_1$ and $u^s_2$ satisfy (\ref{class}) one has
		\begin{align*}
			|\nabla u(y)| &\leq |\nabla u^s_1(y)| + |\nabla u^s_2(y)|,    \\
			&\leq   \frac{C}{y\cdot n}, \qquad    \qquad   \qquad \mbox{ for all } y\in\mathbb{H}_n .
		\end{align*}
		Therefore, by using estimate (\ref{estim_V}) one gets
		\begin{align*}
			\left|\int_{  \partial D(0,R)}   V^i(y)e^\alpha A^{\alpha\beta}_{ij}(y) \partial_{y_\beta} u^j(y)  dy \right| 
			&\leq C \int_{\partial D(0,R) \bs \partial\mathbb{H}_n }    \frac{1}{y\cdot n} \frac{y\cdot n}{|y|^d}dy,\\
			&\leq C \int_{ \partial D(0,R) \bs \partial\mathbb{H}_n }    \frac{1}{|y|^d}dy,\\
			&\leq C \int_{ \partial D(0,R) \bs \partial\mathbb{H}_n  }    \frac{1}{R^{d}}dy,\\
			&\leq C  \frac{1}{R},
		\end{align*}
		where $C$ is independent of $R$. Which yields 
		\begin{align*}
			\int_{  \partial D(0,R) \bs \partial\mathbb{H}_n  }    V^i(y)e^\alpha A^{\alpha\beta}_{ij}(y) \partial_{y_\beta} u^j(y)dy  \xrightarrow[]{R\rightarrow\infty} 0.\\
		\end{align*}
		
		\noindent {\bf Step 4 : proof of  (\ref{int_bound_term_4})}.\\
		The estimate (\ref{estim_Lq_Dr}) gives : for $q>1,$
		\begin{align*}
			\int_{D(0,R)} |p(y)|^q dy
			\leq \;  &   C(M_q, d,\mu) R^{d-1},\quad \mbox{ for all  } R\geq 1,
		\end{align*}	
		which  yields a sequence  $(R_k)_k$  of positive numbers satisfying  $\lim_{k\rightarrow\infty} R_k =\infty$ and
		\begin{align}\label{estim_of_q_on_partialD_R_k}
			\int_{ \partial D(0,R_k)  } |p(y)|^q dy \leq C R_k^{d-2+\alpha}, \quad \mbox{ for some } \alpha\in[0,1).
		\end{align}	
		One can replace $R$ with $R_k$ in (\ref{int_bound_term_4}). Indeed, we have
		\begin{align*}
			\int_{ \mathbb{H}_n } - \nabla p(y)  \cdot V(y) dy=&\lim_{R_k\rightarrow\infty}	\int_{D(0,R_k)}  - \nabla p(y) \cdot V(y) dy.
		\end{align*}
		As $V=0$ on $\partial\mathbb{H}_n $ one can write
		\begin{align*}
			\left|	\int_{\partial D(0,R_k)}   p(y) \, V(y) \cdot e \,  dy\right| & =	\left|\int_{\partial D(0,R_k)\bs \partial\mathbb{H}_n}   p(y) V(y) \cdot e \, dy \right|\\
			& \leq\int_{ \{ |z|=R_k\}  }	\left|  p(y) V(y)\right| dy .
		\end{align*}
		Then taking  $q,q'\in(1,\infty)$, with $\frac{1}{q}+\frac{1}{q'}=1$,  and applying the H\"older inequality one obtains 
		\begin{align*}
			\left|	\int_{\partial D(0,R_k)}   p(y)   V(y) dy\right| 	
			& \leq	\left(\int_{  \{ |z|=R_k\}  }   | p(y)|^q dy\right)^{\frac{1}{q}}\left(\int_{ \{ |z|=R_k\}   }   |  V(y)|^{q'} dy \right)^{\frac{1}{q'}}.
		\end{align*}
		Then, estimate  (\ref{estim_V}) yields
		\begin{align*}
			\left|	\int_{\partial D(0,R_k)}   p(y)   V(y) dy\right| 	
			&\leq  C\left(\int_{  \{ |y|=R_k\}  }   |p(y)|^q dy\right)^{\frac{1}{q}}\left(\int_{ \{ |y|=R_k\}   } \frac{(y\cdot n)^{q'}}{(|y|^d)^{ q'}} dy \right)^{\frac{1}{q'}}
		\end{align*}
		and using the fact $0\leq y\cdot n \leq |y|= R_k$ and estimate (\ref{estim_of_q_on_partialD_R_k}) we get
		\begin{align*}
			\left|	\int_{\partial D(0,R_k)}   p(y)  V(y) \cdot e \, dy\right| 	
			&\leq C\left(\int_{  \{ |y|=R_k\}  }   | p(y)|^q dy\right)^{\frac{1}{q}} \left(\int_{ \{ |y|=R_k\}   } \frac{R^{q'}_k}{R_k^{dq'}} dy\right)^{\frac{1}{q'}}\\
			&\leq C \left(    R_k^{d-2+\alpha } \right)^{\frac{1}{q}} \left(  R_k^{q'-dq'} R_k^{d-1} \right)^{\frac{1}{q'}}\\
			&\leq  C R_k^{\frac{d}{q}+\frac{\alpha-1}{q}-\frac{1}{q}}  R_k^{1-d} R_k^{\frac{d}{q'}-\frac{1}{q'}} \\
			&\leq  C R_k^{\frac{\alpha-1}{q}}, 
		\end{align*}
		where $\alpha<1$ and $C$ is independent of $R_k$.
		Thus, it follows from the last inequality that
		\begin{align*}
			\left|	\int_{\partial D(0,R_k)}   p(y) V(y)\cdot e \, dy\right| \xrightarrow{R_k\rightarrow\infty} 0.
		\end{align*}
\section{Quasiperiodicity of $\partial_{y_\gamma} u^s_{}$  and $p^s_{}$ on the hyperplane $\{y\cdot n-s=0\}$}\label{quasiperiod}
\noindent 	Let $\theta\in\mathbb{R}^d$ and let the pair $(u^{\theta\cdot n},p^{\theta\cdot n})$ satisfy
\begin{equation*}
	\left\{
	\begin{array}{rcll}
		-\nabla_{\!y}\cdot A(y)\nabla_{\!y}  u^{\theta\cdot n} +  \nabla_{\!y} p^{\theta\cdot n}&=& 0 & \mbox{in } \{y\cdot n- \theta\cdot n>0 \},\\
		\nabla_{\!y}	\cdot u^{\theta\cdot n}&=& 0 ,    &\\
		u^{\theta\cdot n}(y) &=& g(y)   & \mbox{on }\{y\cdot n- \theta\cdot n =0 \}.
	\end{array}
	\right. 
\end{equation*}
For arbitrary $h\in\mathbb{Z}^d$, we set
$u^h:= u^{\theta\cdot n}(\cdot-h),\; p^h:= p^{\theta\cdot n}(\cdot-h)$. Then, by the periodicity of $A=A(y)$ and $g=g(y)$,    the pair $(u^h,p^h)$ satisfies
\begin{equation*}
	\left\{
	\begin{array}{rcll}
		-\nabla_{\!y}\cdot A(y)\nabla_{\!y} u^h +  \nabla_{\!y} p^h&=& 0& \mbox{in } \{y\cdot n - \theta\cdot n- h\cdot n>0 \},\\
		\nabla_{\!y}	\cdot u^h&=& 0  ,    &\\
		u^h(y) &=& g(y)   & \mbox{on }\{y\cdot n- \theta\cdot n - h\cdot n=0 \}.
	\end{array}
	\right. 
\end{equation*}
Now one can observe that he pair $(u^h,p^h)$ coincides with the solution  $(u^s,p^s)$ to problem (\ref{hsStokes})  with $s=\theta\cdot n+h\cdot n$. Therefore, by the uniqueness of the solution we have 
\begin{align*}
	u^{(\theta+h)\cdot n}(y)&= u^h(y)\\
	&=u^{\theta\cdot n}(y-h),\qquad \mbox{ for all }\;  y\cdot n - (\theta+ h)\cdot n>0.
\end{align*}
and 
\begin{align*}
	p^{(\theta+h)\cdot n}(y)
	&=p^{\theta\cdot n}(y-h),\qquad \mbox{ for all }\;  y\cdot n - (\theta+ h)\cdot n>0
\end{align*}
In particular, for $y=\theta+h+tn, \; t>0$,
we get 
\begin{align*}
	u^{(\theta+h)\cdot n}(\theta+h+tn)&=u^{\theta\cdot n}(\theta+tn)\\
		p^{(\theta+h)\cdot n}(\theta+h+tn)&=p^{\theta\cdot n}(\theta+tn).
\end{align*}
Recalling from formula (\ref{def_Vtheta,t}) 	the latter equalities are also written 	
		 \begin{align*}
			V(\theta+h,t)&=V(\theta,t)\\
			R(\theta+h,t)&=R(\theta,t)	
		\end{align*}
		where the pair $(V,R)$ satisfies problem (\ref{prob_V}).	Since this holds for arbitrary $\theta\in\mathbb{R}^d$, $h\in\mathbb{Z}^d$
one concludes  that $V(\theta,t)$ and  $R(\theta,t)$ are periodic in  $\theta$. Then the derivatives
\begin{align*}
	\partial_{\theta_\gamma} V(\theta,t),\quad   D_\theta V(\theta,t) ,\quad \partial_t V(\theta,t)
\end{align*} 	
are also periodic in $\theta$. Now, one differentiates the  identity  
\begin{align*}
	u^s(y)&=V(y-(y\cdot n- s)n,y\cdot n- s),
\end{align*} 		
to get
\begin{align*}
	\partial_{y_\gamma} u^s(y)=&\; \partial_{\theta_\gamma} V(y-(y\cdot n- s)n,y\cdot n- s)\\
	&-n_\gamma D_\theta V(y-(y\cdot n- s)n,y\cdot n- s) n\\
	&+n_\gamma \partial_t V(y-(y\cdot n- s)n,y\cdot n- s) .
\end{align*} 
Now, it appears that when restricted to the hyperplane $\{y\cdot n- s=0\}$	
\begin{align*}
	\partial_{y_\gamma} u^s(y)=&\; \partial_{\theta_\gamma} V(y,0)-n_\gamma D_\theta V(y,0) n+n_\gamma \partial_t V(y,0) 
\end{align*}
is quasiperiodic. Similarly, the identity 
\begin{align*}
	p^s(y)&=R(y-(y\cdot n- s)n,y\cdot n- s)
\end{align*} 	
shows that $p^s(y)$ is quasiperiodic on $\{y\cdot n- s=0\}$.
		
	\end{appendices}

	\vspace{0.2in}
	
	\noindent	{\bfseries Acknowledgements}. The author would like to thank his  Ph.D. supervisor Christophe Prange for introducing  him to the homogenization theory in PDEs and for the valuable discussions he had with him. Also, the author would like to thank CY Cergy Paris University and the CNRS who  supported this research.

	\bibliographystyle{amsplain}

\begin{thebibliography}{99}
		
		\bibitem{Bogovskii}
G. Acosta, R. G. Durán, and M. A. Muschietti, \emph{Solutions of the divergence operator on John domains},
Adv. Math. 206(2) , 373–401, (2006)
		\bibitem{aleyksan}	H. Aleksanyan, H. Shahgholian, P. Sjölin,
		\emph{Applications of Fourier analysis in homogenization of Dirichlet problem III. Polygonal domains},
		J. Fourier Anal. Appl. 20 (2014), no. 3, 524–546. MR 3217486
		
		\bibitem{AA} G. Allaire, M. Amar, \emph{Boundary layer tails in periodic homogenization}, ESAIM Control Optim. Calc. Var., 4, p. 209–243, (1999).
		
		\bibitem{AveLin}
		M. Avellaneda,  F. Lin, \emph{Compactness methods in the theory of homogenization}, Comm. Pure Appl. Math, 40(6), p. 803–847, (1987).
		
		
		\bibitem{AKMP} S. Armstrong, T. Kuusi, J-C. Mourrat, C. Prange, \emph{Quantitative analysis of boudary layers in periodic homogenization},	Archive for Rational Mechanics and Analysis, 226, 695-741 (2017)
		
		\bibitem{fully_nonlin}	Guy Barles, Elisabeth Mironescu. \emph{On homogenization problems for fully nonlinear equations with
			oscillating Dirichlet boundary conditions}, Asymptotic Analysis,  vol. 82, no. 3-4, pp. 187-200, 2013
		
		\bibitem{BLP} A. Bensoussan, J.L. Lions, G. Papanicolaou, \emph{Asymptotic analysis for periodic structures},  Studies in Mathematics and its Applications, 5. North-Holland, Amsterdam,
		1978.
		
		
		\bibitem{choi}
		J. Choi, M. Yang,
		\emph{
			Fundamental solutions for stationary Stokes systems
			with measurable coefficients,}
		J. Differential Equations 263 3854–3893 (2017).
		
		\bibitem{Feldman} W. M. Feldman,
		\emph{Homogenization of the oscillating Dirichlet boundary condition in general domains,}
		J. Math. Pures Appl. 101(5)  599–622 (2014).		
		
	\bibitem{Gld}	G.P. Galdi,
		\emph{An Introduction to the
		Mathematical Theory of the
		Navier-Stokes Equations, second edition}, Springer Monographs in Mathematics, Springer New York (2011)
		
		
		
		\bibitem{homogen_bl} D. Gerard-Varet, N. Masmoudi, \emph{Homogenization and boundary layers},	Acta Math., 209 , 133–178, (2012)
		
		\bibitem{homogen_polygon} D. Gerard-Varet, N. Masmoudi, \emph{Homogenization in polygonal domains}, J. Eur. Math. Soc., 13 1477--1503, (2011).
		
		\bibitem{Slip_cond} D. Gerard-Varet, N. Masmoudi, \emph{Relevance of the slip condition for fluid flows near an irregular boundary},	Commun. Math. Phys. 295, 99–137 (2010)		
		
		\bibitem{hom_stokes} S. Gu, Z. Shen, \emph{Homogenization of Stokes systems and uniform regularity estimates,}
		SIAM Journal on Mathematical Analysis,
		vol. 47,
		no.5,
		p. 4025--4057,
		(2015)
		
		\bibitem{optimBoundEstimate} S. Gu, Q. Xu, \emph{Optimal Boundary Estimates for Stokes Systems in Homogenization Theory},
		SIAM Journal on Mathematical Analysis,
		vol. 49,
		no.5,
		p. 3831-3853,
		(2017)
		
		\bibitem{GZ}
		S. Gu,  J. Zhuge, 
		\emph{Periodic homogenization of Green's functions for Stokes systems}, Calc. Var., 58,114 (2019). 
		
		\bibitem{HigPrZhuge}	M. Higaki, C. Prange, J. Zhuge, \emph{Large-scale regularity for the stationary Navier-Stokes equations over non-Lipschitz boundaries,} Analysis \& PDE, 17(1), 171-242 (2024)
		
	\bibitem{KLS_period_hom}	C. Kenig, F. Lin and Z. Shen, \emph{Periodic homogenization of Green and Neumann functions}, Commun. Pure Appl. Math., 67 , 1219-1262 (2014)
	
\bibitem{Kenig2011ConvergenceRate}
	C. Kenig, F. Lin and Z. Shen, \emph{Convergence Rates in $L^2$ for Elliptic Homogenization Problems}, Archive for Rational Mechanics and Analysis, 
		203, 1009-1036, (2011)
		
		\bibitem{KLS}
		C. Kenig, F. Lin and Z. Shen, 
		\emph{Homogenization of elliptic systems with Neumann boundary conditions}, J. Amer. Math. Soc. 901--937 (2013). 	
		
		
		\bibitem{Vogelius} S. Moskow, M. Vogelius, \emph {
			First-order corrections to the homogenized eigenvalues of a
			periodic composite medium. A convergence proof.} Proc. Roy. Soc. Edinburgh Sect. A, 127(6):1263–
		1299, 1997
		
			\bibitem{Dirich_eq_rectangle}	S. A. Nazarov, \emph{Asymptotics of the solution of the Dirichlet problem for an equation with
			rapidly oscillating coefficients in a rectangle}, Math. USSR Sb., 73 (1992), pp. 692–722.
		
		\bibitem{Math_bl} O. A. Oleinik, V. N. Samokhin,
		\emph{Mathematical models in boundary layer theory}, CRC Press,
		Vol. 15,(1999).
		
		\bibitem{Prange} C. Prange, \emph{Asymptotic  analysis of boundary layer correctors in periodic homogenization}, SIAM J. Math. Anal., 45(1), 345--387 (2013).
		
		\bibitem{Prange_eigen} C. Prange, \emph{First-order expansion for the Dirichlet eigenvalues of an elliptic system with oscillating coefficients}, Asymptot. Anal., 83(3):207–235, 2013.
	
		
		\bibitem{num}	M. Sarkis and H. Versieux, \emph{Convergence Analysis for the Numerical Boundary Corrector for Elliptic Equations with Rapidly Oscillating Coefficients,}
		SIAM Journal on Numerical Analysis, 46(2) , pp. 545-576 (2008).
		
	\bibitem{Shen_bound_estim} Z. Shen, \emph{Boundary Estimates in Elliptic Homogenization}, Analysis \& PDE, Anal. PDE 10(3), pp.653-694 (2017)
		
		\bibitem{num_bound}	Z. Shen, J. Zhuge,  \emph{Boundary layers in periodic homogenization of Neumann problems}, Comm. Pure
		Appl. Math. 71(11), 2163–2219 (2018)
		
		
		\bibitem{regul_hmgnized_data} Z. Shen, J. Zhuge, \emph{Regularity of  homogenized boundary data in periodic homogenization of elliptic systems},	J. Eur. Math. Soc., 22 (9), 2751-2776 (2020).
	\bibitem{thm_ergo}	M. A. Šubin, \emph{ Differential and pseudo-differential operators in spaces of almost periodic functions}, Mat. Sb. 95 (137), No. 4 Vol. 24 (1974).
	
		
		\bibitem{tartar}	L. Tartar, \emph{The General Theory of Homogenization}. Lecture Notes of the Unione Matematica Italiana, 7. Springer Berlin, Heidelberg, (2009).
		
		\bibitem{hom_finite_dom}  J. Zhuge, \emph{Homogenization and boundary layers in domains of finite type},	Comm. Partial Diff. Equ, 43(4), 549--584(2018).
		\bibitem{hom_eigen}  J. Zhuge, \emph{First-order expansions for eigenvalues and eigenfunctions in periodic homogenization},  Proc. Roy. Soc. Edinburgh Sect. A, (2020).\\	
		

		
	\end{thebibliography}

\end{document}